\newcommand{\nc}{\newcommand}
\newcommand{\cC}{\mathtt C}
\newcommand{\cD}{\mathtt D}
\newcommand{\uz}{{\underline{z}}}
\newcommand{\ul}{{\underline{\lambda}}}
\newcommand{\umu}{{\underline{\mu}}}
\newcommand{\Hom}{\operatorname{Hom}}
\newcommand{\C}{\mathbb{C}}
\newcommand{\R}{\mathbb R}
\newcommand{\PP}{\mathbb P}
\renewcommand{\sl}{\mathfrak{sl}}
\newcommand{\cE}{\mathcal E}
\newcommand{\cX}{\mathcal X}
\newcommand{\wt}{\widetilde}
\newcommand{\Cx}{\C^\times}
\nc{\eps}{\varepsilon}
\nc{\Gr}{\mathbb{G}}
\nc{\A}{{\mathcal A}}
\nc{\ol}{\overline}
\nc\tboxtimes{\wt{\boxtimes}}
\nc{\alp}{\alpha}
\nc{\Wh}{\operatorname{Wh}}
\renewcommand{\l}{\lambda}
\newcommand{\spn}{\operatorname{span}}
\nc{\BA}{{\mathbb{A}}} \nc{\BC}{{\mathbb{C}}}
\nc{\BQ}{{\mathbb{Q}}}
\nc{\BM}{{\mathbb{M}}} \nc{\BN}{{\mathbb{N}}}
\nc{\BP}{{\mathbb{P}}} \nc{\BR}{{\mathbb{R}}}
\nc{\BZ}{{\mathbb{Z}}} \nc{\BS}{{\mathbb{S}}}
\nc{\CA}{{\mathcal{A}}} \nc{\CB}{{\mathcal{B}}}
\nc{\CE}{{\mathcal{E}}} \nc{\CF}{{\mathcal{F}}}
\nc{\CG}{{\mathcal{G}}} \nc{\CH}{{\mathcal{H}}}
\nc{\CI}{{\mathcal{I}}}  \nc{\CJ}{{\mathcal{J}}}
\nc{\CL}{{\mathcal{L}}}
\nc{\CM}{{\mathcal{M}}} \nc{\CN}{{\mathcal{N}}}
\nc{\CO}{{\mathcal{O}}} \nc{\CP}{{\mathcal{P}}}
\nc{\CQ}{{\mathcal{Q}}} \nc{\CR}{{\mathcal{R}}}
\nc{\CS}{{\mathcal{S}}} \nc{\CT}{{\mathcal{T}}}
\nc{\CU}{{\mathcal{U}}} \nc{\CV}{{\mathcal{V}}}
\nc{\CY}{{\mathcal Y}}
\nc{\CW}{{\mathcal{W}}} \nc{\CZ}{{\mathcal{Z}}}
\nc{\ff}{{\mathfrak{f}}} \nc{\fv}{{\mathfrak{v}}}
\nc{\fa}{{\mathfrak{a}}} \nc{\fb}{{\mathfrak{b}}}
\nc{\fd}{{\mathfrak{d}}} \nc{\fe}{{\mathfrak{e}}}
\nc{\fg}{{\mathfrak{g}}} \nc{\fgl}{{\mathfrak{gl}}}
\nc{\fh}{{\mathfrak{h}}} \nc{\fri}{{\mathfrak{i}}}
\nc{\fj}{{\mathfrak{j}}} \nc{\fk}{{\mathfrak{k}}}
\nc{\fm}{{\mathfrak{m}}} \nc{\fn}{{\mathfrak{n}}}
\nc{\ft}{{\mathfrak{t}}} \nc{\fu}{{\mathfrak{u}}}
\nc{\fw}{{\mathfrak{w}}} \nc{\fz}{{\mathfrak{z}}}
\nc{\fl}{{\mathfrak{l}}}
\nc{\fp}{{\mathfrak{p}}} \nc{\frr}{{\mathfrak{r}}}
\nc{\fs}{{\mathfrak{s}}} \nc{\fsl}{{\mathfrak{sl}}}
\nc{\hsl}{{\widehat{\mathfrak{sl}}}}
\nc{\hgl}{{\widehat{\mathfrak{gl}}}}
\nc{\hg}{{\widehat{\mathfrak{g}}}}
\nc{\chg}{{\widehat{\mathfrak{g}}}{}^\vee}
\nc{\hn}{{\widehat{\mathfrak{n}}}}
\nc{\chn}{{\widehat{\mathfrak{n}}}{}^\vee}
\nc{\Xiset}{\Xi\text{-}\mathtt{Set}}
\nc{\Lset}{\Lambda_+\text{-}\mathtt{Set}}
\nc{\set}{\mathtt{Set}}
\nc{\gcrys}{\fg\text{-}\mathtt{Crys}}
\nc{\sqcu}{\bigsqcup\limits}
\newcommand{\sslash}{\mathbin{/\mkern-6mu/}}
\DeclareMathOperator{\can}{can} 
 \DeclareMathOperator{\gr}{gr}
\DeclareMathOperator{\ad}{ad} 
\DeclareMathOperator{\id}{id}
 \DeclareMathOperator{\End}{End}
 \DeclareMathOperator{\Spec}{Spec}
 \DeclareMathOperator{\rk}{rk}
\newtheorem{cor}[subsection]{Corollary}
\newtheorem{lem}[subsection]{Lemma}
\newtheorem{prop}[subsection]{Proposition}
\newtheorem{conj}[subsection]{Conjecture}
\newtheorem{thm}[subsection]{Theorem}
\newtheorem{quest}[subsection]{Question}
\theoremstyle{definition}
\newtheorem{defn}[subsection]{Definition}
\newtheorem{rem}[subsection]{Remark}
\newtheorem{eg}[subsection]{Example}
\theoremstyle{remark}
\newcommand{\thmref}[1]{Theorem~\ref{#1}}
\newcommand{\secref}[1]{Section~\ref{#1}}
\newcommand{\lemref}[1]{Lemma~\ref{#1}}
\newcommand{\propref}[1]{Proposition~\ref{#1}}
\newcommand{\corref}[1]{Corollary~\ref{#1}}
\newcommand{\conjref}[1]{Conjecture~\ref{#1}}
\newcommand{\defnref}[1]{Definition~\ref{#1}}
\title{Crystals and monodromy of Bethe vectors}
\author{Iva Halacheva}
\author{Joel Kamnitzer}
\author{Leonid Rybnikov}
\author{Alex Weekes}
\begin{document}
	\begin{abstract}
Fix a semisimple Lie algebra $ \fg$.  Gaudin algebras are commutative algebras acting on tensor product multiplicity spaces for $ \fg$-representations.  These algebras depend on a parameter which is a point in the Deligne-Mumford moduli space of marked stable genus 0 curves.  When the parameter is real, then the Gaudin algebra acts with simple spectrum on the tensor product multiplicity space and gives us a basis of eigenvectors.  In this paper, we study the monodromy of these eigenvectors as the parameter varies within the real locus; this gives an action of the fundamental group of this moduli space, which is called the cactus group.

We prove a conjecture of Etingof which states that the monodromy of eigenvectors for Gaudin algebras agrees with the action of the cactus group on tensor products of $ \fg$-crystals.   In fact, we prove that the coboundary category of normal $ \fg$-crystals can be reconstructed using the coverings of the moduli spaces.

Our main tool is the construction of a crystal structure on the set of eigenvectors for shift of argument algebras, another family of commutative algebras which act on any irreducible $\fg$-representation.  We also prove that the monodromy of such eigenvectors is given by the internal cactus group action on $ \fg$-crystals.
\end{abstract}
	
	\maketitle

	\section{Introduction}
	\subsection{Crystals and their tensor products}
	Let $ \fg $ be a semisimple Lie algebra.  Crystals are combinatorial objects which model bases of representations of $ \fg $.  They can be pictured as labelled graphs.  Attached to each irreducible representation $ V(\l) $ of $ \fg $, we have a connected crystal $ B(\l) $.  In this paper we will consider ``normal'' crystals; these are crystals which are a disjoint union of the crystals $ B(\l) $.
	
	Given two crystals $ B, C$, we can form their tensor product $ B \otimes C $, a crystal whose underlying vertex set is just the Cartesian product $ B \times C $ (see \defnref{Def:Tensor}).  The tensor product $ B(\l_1) \otimes B(\l_2) $ decomposes as a disjoint union of copies of $ B(\mu) $, with multiplicities equal to the usual tensor product multiplicities in $ V(\l_1) \otimes V(\l_2) $.
	
	The definition of $ B \otimes C $ is not manifestly symmetric.  In \cite{HK}, Henriques and the second author constructed a natural isomorphism $ \sigma_{B,C} : B \otimes C \rightarrow C \otimes B $ (see \secref{se:commutor} for the definition).  We also proved that with this commutor, the category, $ \gcrys$, of normal $ \fg$-crystals forms a coboundary category (see \defnref{Def:Coboundary}).
	
	In a coboundary category, any tensor product $ B_1 \otimes \cdots \otimes B_n $ of objects carries an action of a finitely-presented group called the cactus group $C_n$ (see \secref{se:CactusCn}).   In particular, for any dominant weights $ \l_1, \dots, \l_n, \mu $, we obtain an action\footnote{Actually, $ C_n $ doesn't quite act on this set, as it also permutes the $ \lambda_i $.} of $ C_n $ on the set $ B(\ul)^\mu $ of copies of $ B(\mu) $ in $ B(\ul) := B(\l_1) \otimes \cdots \otimes B(\l_n) $.
	
	The cactus group $C_n $ first appeared in the work of Davis-Januszkiewicz-Scott \cite{DJS} as the equivariant fundamental group of $ \CM_{n+1}(\BR) $, the real locus of the Deligne-Mumford space of stable genus 0 curves with $ n+1 $ marked points.  By the usual correspondence between actions of the fundamental group and covering spaces, we get coverings of the moduli space $  \CM_{n+1}(\BR) $ whose fibres are the sets of $ B(\ul)^\mu $.
	
	The action of the cactus group on a tensor product of crystals is analogous to the action of the braid group on a tensor product of representations of quantum groups.  The famous Drinfeld-Kohno theorem shows that this action of the braid group can be realized as the monodromy of the Knizhnik-Zamalodchikov connection.
	
	This leads us to the following question, which we will answer in this paper.
	\begin{quest} \label{question}
	Can we realize the action of the cactus group on $ B(\ul)^\mu $ or the corresponding coverings of $  \CM_{n+1}(\BR) $ in some natural way, directly from representation theory?
	\end{quest}	
	
	\subsection{Gaudin algebras and the monodromy of their eigenlines}
	The cardinality of the set $ B(\ul)^\mu $ is the same as the dimension of the tensor product multiplicity space
	$$ V(\ul)^\mu := \Hom_\fg(V(\mu), V(\ul)) $$
	where $ V(\ul) := V(\l_1) \otimes \cdots \otimes V(\l_n)$.
	
	The vector spaces $ V(\ul)^\mu $ can be studied with the help of Gaudin algebras.   The Gaudin  algebra $ \A(z_1, \dots, z_n) $ is a maximal commutative subalgebra of $ (U(\fg)^{\otimes n})^{\fg} $, which depends on $ n $ distinct complex numbers (see \secref{se:Gaudin}).  The quadratic part of the Gaudin algebras is spanned by the Gaudin Hamiltonians which are simple expressions involving the Casimir; the full definition of the algebra is more complicated and involves the Feigin-Frenkel centre \cite{FF} of the affine Lie algebra at the critical level (see \secref{se:QuantumGaudin}).  Since the Gaudin algebra commutes with the diagonal copy of $ \fg $, it acts on $ V(\ul)^\mu $.  Recently, the third author \cite{Ryb16} proved that $ \A(z_1, \dots, z_n) $ acts cyclically on $ V(\ul)^\mu $\footnote{In the present paper, we give an independent proof of a stronger fact (see Theorem~\ref{th:CyclicGeneral} and Corollary~\ref{co:SimpleSpecGeneral}) generalizing this to non-homogeneous Gaudin algebras}.  If these $z_1, \dots, z_n $ are distinct real numbers, then the Gaudin algebra is generated by Hermitian operators and hence acts semi-simply.  Thus the cyclicity result implies that in this case, $ \A(z_1, \dots, z_n) $ decomposes $ V(\ul)^\mu $ into a direct sum of eigenlines.  The Gaudin algebras are examples of Bethe algebras and their eigenvectors are generically found by the algebraic Bethe Ansatz method, thus these eigenlines are also called Bethe vectors.

	The tuple $ \uz = (z_1, \dots, z_n) $ can be regarded as a point in $ \CM_{n+1} $ by taking $ \PP^1 $ and marking the points $ z_1, \dots, z_n $ and $ \infty $; in this way such tuples correspond to the non-boundary points of $ \CM_{n+1}$.  In \cite{AFV}, Aguirre-Felder-Veselov proved that the family of subspaces spanned by the Gaudin Hamiltonians extends to a family parametrized by $ \CM_{n+1} $.  In \cite{Ryb13}, the third author extended this result and proved that the family of subalgebras $ \A(\uz) $ extends to a family of subalgebras parametrized by $ \CM_{n+1} $.  This allows us to define a covering $ \CE(\ul)^\mu $ of $ \CM_{n+1}(\BR) $, whose fibre at a point $ \uz \in \CM_{n+1}(\BR) $ is the set of eigenlines for the action of $ \A(\uz) $ on $ V(\ul)^\mu $ (see \secref{se:covering}).  	
	
	The following result was conjectured by Etingof (see \cite{Ryb13}) and will be proven in this paper.
	\begin{thm} \label{co:Etingof}
	The action of $ C_n $ on $ B(\ul)^\mu $ coming from the crystal commutor agrees with the action of $ C_n $ by monodromy on the cover $ \CE(\ul)^\mu $.
	\end{thm}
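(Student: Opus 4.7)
The plan is to reduce the theorem to a local statement near codimension one boundary strata of $\CM_{n+1}(\BR)$ and then appeal to the paper's auxiliary results on shift of argument algebras. Since the cactus group $C_n$ is generated by the involutions $s_{p,q}$ attached to intervals $[p,q] \subseteq [1,n]$, it suffices to check that each generator acts identically on $B(\ul)^\mu$ (via the coboundary structure) and on $\CE(\ul)^\mu$ (via monodromy). The generator $s_{p,q}$ is realized by a loop in $\CM_{n+1}(\BR)$ that passes near the codimension one boundary divisor $D_{p,q}$ on which $z_p, \dots, z_q$ collide into a single bubble; by a standard van Kampen argument, the monodromy around $D_{p,q}$ is computed locally on a transverse slice, so the problem is local near this stratum.

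The key geometric step is to describe the limit of the Gaudin algebra $\A(\uz)$ as $\uz$ approaches $D_{p,q}$. Using the extension of the family $\A(\uz)$ to all of $\CM_{n+1}$ and the factorization properties of the Feigin--Frenkel center, the limiting algebra at a generic point of $D_{p,q}$ should factor as the product of an outer Gaudin algebra (for the remaining $n-(q-p)$ marked points on the main component) and an inner algebra acting on $V(\l_p) \otimes \cdots \otimes V(\l_q)$ through the bubble component. The essential identification is that this inner algebra, after the natural change of coordinates on the bubble, coincides with (a specialization of) the shift of argument algebra. Consequently, the fibre of $\CE(\ul)^\mu$ at such a limit point decomposes over intermediate weights $\nu$, and within each summand the eigenlines are precisely those of a shift of argument algebra acting on a tensor product multiplicity space $\Hom_\fg\bigl(V(\nu),\, V(\l_p)\otimes\cdots\otimes V(\l_q)\bigr)$, regarded itself as a representation via the Kostant-type embedding.

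The final step invokes the paper's two preparatory results: that eigenlines for the shift of argument algebra carry a natural $\fg$-crystal structure, and that monodromy within the parameter space of shift of argument algebras realizes the internal cactus action on this crystal. Applied to the inner factor above, this identifies the crystal on eigenlines with $B(\l_p)\otimes\cdots\otimes B(\l_q)$ (after matching intermediate weights $\nu$), and identifies the monodromy around $D_{p,q}$ with the internal cactus action on this subfactor. By the definition of the commutor and of $s_{p,q}$ in a coboundary category, this is exactly how $s_{p,q}$ acts on $B(\ul)^\mu$, completing the comparison. The main obstacle is the degeneration analysis in the middle step: one must control the family of Gaudin subalgebras sufficiently carefully to identify the inner factor with a shift of argument algebra in a canonical way, and to ensure that the resulting crystal structure on eigenlines is compatible with the tensor decomposition $B(\ul)^\mu = \bigsqcup_\nu B(\l_1,\dots,\nu,\dots)^\mu \times B(\l_p,\dots,\l_q)^\nu$. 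Once this compatibility is in place, the cactus action is determined by its generators and both sides agree.
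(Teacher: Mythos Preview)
Your proposal has a genuine gap in its central geometric claim. When the Gaudin algebra $\A(\uz)$ degenerates to a boundary point of $\CM_{n+1}$ where $z_p,\dots,z_q$ collide, the limiting algebra does \emph{not} factor through a shift of argument algebra on the bubble. By \thmref{th:GaudinBoundary}, the limit is generated by an outer Gaudin algebra together with an \emph{inner Gaudin algebra} $\A(\uz')\subset (U(\fg)^{\otimes(q-p+1)})^\fg$ for the collided points; this inner factor is what acts on $\Hom_\fg\bigl(V(\nu),V(\l_p)\otimes\cdots\otimes V(\l_q)\bigr)$. A shift of argument algebra lives in $U(\fg)$ and acts on a single $V(\l)$, not on such a multiplicity space, so the identification you appeal to does not exist. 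Relatedly, the ``internal cactus action'' you cite is an action of $C_\Delta$ on a single $B(\l)$, not the action of $C_n$ you need on $B(\l_p)\otimes\cdots\otimes B(\l_q)$; these are different groups acting on different objects.

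There is a second, more structural gap: you never construct the bijection $\CE_\uz(\ul)^\mu\cong B(\ul)^\mu$ along which the two $C_n$-actions are to be compared. Since the boundary degeneration only yields smaller Gaudin eigenline sets, your local argument is circular---it reduces the comparison for $n$ points to the same comparison for fewer points, with no base case other than $n=2$, where $\CM_3$ is a point and there is no monodromy to compute. The paper resolves this by bringing in the \emph{inhomogeneous} Gaudin algebras $\A_\chi(\uz)$: the shift of argument algebra $\A_\chi$ acts on each $V(\l)$ and its eigenlines $\CE_\chi(\l)$ are given a crystal structure isomorphic to $B(\l)$; the one-parameter family $\A_\chi(z,0)$, interpolating between $\A_\chi\otimes\A_\chi$ at $z=\infty$ and $\Delta(\A_\chi)\cdot\A(1,0)$ at $z=0$, then supplies a crystal isomorphism $\CE_\chi(\l_1)\otimes\CE_\chi(\l_2)\to\bigsqcup_\mu\CE(\l_1,\l_2)^\mu\times\CE_\chi(\mu)$. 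This is what produces the bijection and the tensor functor; compatibility with the commutor is then checked directly using that the $w_0$-action on $\CE_\chi(\l)$ realizes the Sch\"utzenberger involution (\propref{pr:FullSI}), together with the symmetries of $\A_\chi(z,0)$ under $s$ and $w_0$. In short, shift of argument algebras enter not as the inner factor of a degenerate Gaudin algebra, but as an auxiliary family that bridges eigenlines and crystals.
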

	This result was first proved by the third author \cite{Ryb13} for $ \fg = \sl_2 $.  The proof uses an older paper of Varchenko \cite{Varchenko}, which first indicated a link between Bethe vectors and crystals.
	
	The result was proved for $ \fg = \sl_m $ by White \cite{W} (see below).
		
	\subsection{Operadic coverings and coboundary categories}
	 Our approach to \thmref{co:Etingof} is to study the entire category $ \gcrys $ at once.  We will exploit a close connection between coloured operadic coverings of the moduli spaces $ \CM_{n+1} $ and coboundary categories.
	
	As motivation, recall that there is a close relationship between genus 0 modular functors and semisimple braided monoidal categories (see Theorem~5.4.1 of \cite{BaK}).  A genus 0 modular functor is a collection of $D$-modules on the moduli spaces $ \CM_{n+1}(\C) $, along with compatibilities between them when we pass to the boundary strata of $ \CM_{n+1} $.
	
	In this paper, we introduce the notion of a $\Xi$-coloured operadic covering of the moduli spaces $ \CM_{n+1}(\BR) $, where $ \Xi$ is a set (see \secref{se:Operadic}).  This is a sequence of covering spaces $    \cX_{n+1} \rightarrow \CM_{n+1}(\BR) \times \Xi^{n+1} $ which forms a coloured operad over the operad $ \CM_{n+1}(\BR)$.  We prove that a $ \Xi $-coloured operadic covering $ \cX $ leads to a coboundary category $ \cC(\cX) $, whose underlying category is the category of $ \Xi$-graded sets (\thmref{th:CoverToCategory}).
	
	Using an operadic description of the Gaudin algebras $ \A(\uz) $ associated to boundary points $ \uz \in \CM_{n+1}(\BR) $, we prove that the coverings $ \CE(\ul)^\mu $ of $ \CM_{n+1}(\BR) $ fit into a $\Lambda_+$-coloured operadic covering (\thmref{th:OperadFromEigenlines}).  Thus we obtain a coboundary category $ \cC(\CE) $ whose tensor product multiplicity sets are encoded by the sets $ \CE(\ul)^\mu $.  Our main theorem (\thmref{th:Main}) is the following.
	
	\begin{thm} \label{th:mainintro}
	There is an equivalence of coboundary categories $ \cC(\CE) \cong \gcrys $.
	\end{thm}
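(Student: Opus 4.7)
The plan is to prove the equivalence of the two coboundary categories by constructing it directly on underlying categories, then matching the coboundary data. Observe first that both $\cC(\CE)$ and $\gcrys$ have underlying category equivalent to $\Lambda_+$-graded sets: in $\gcrys$ each object decomposes as a disjoint union of $B(\l)$'s, while $\cC(\CE)$ is built operadically from the same indexing set $\Lambda_+$ (\thmref{th:OperadFromEigenlines}). Moreover, the cardinalities of the distinguished fibre sets agree, since $\#\CE(\ul)^\mu = \dim V(\ul)^\mu = \#B(\ul)^\mu$. Hence the problem reduces to (i) producing a natural bijection $\Phi_{\ul,\mu}\colon\CE(\ul)^\mu\xrightarrow{\sim} B(\ul)^\mu$ and (ii) checking that $\Phi$ intertwines the coboundary structures.

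For step (i), I would exploit the main new tool announced in the abstract: a crystal structure on the set of eigenlines for the shift of argument algebra $\A_\chi$ acting on a single irreducible $V(\l)$, producing a bijection of this set with the Kashiwara crystal $B(\l)$. At suitably deep boundary points $\uz_0$ of $\CM_{n+1}(\BR)$ — those corresponding to binary ``caterpillar'' stable curves — the Gaudin algebra $\A(\uz_0)$ factorizes (by the inductive/operadic description used to prove \thmref{th:OperadFromEigenlines}) into a product whose factors are shift of argument algebras on the tensor factors living at each internal vertex of the tree. Consequently the eigenline set $\CE(\ul)^\mu$ at $\uz_0$ decomposes as an iterated fibre product of shift of argument eigenline sets; pasting together the single-irrep crystal bijections along this iterated product yields $\Phi_{\ul,\mu}$.

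For step (ii), by \thmref{th:CoverToCategory} the coboundary category $\cC(\CE)$ is recovered from the operadic covering structure on $\CE$: the associators and commutors are read off from the loops in $\CM_{n+1}(\BR)$ corresponding to insertion and swapping of boundary components. On the crystal side, in a coboundary category the associator and commutor on $n$-fold tensor products are generated by the binary associators and the binary commutor. Thus it suffices to verify two compatibilities: that the operadic composition of $\CE$ at a boundary point matches the tensor product of crystals under $\Phi$ (which is essentially the content of the caterpillar decomposition above), and that the binary commutor $\sigma_{B,C}$ of \cite{HK} matches the monodromy along the loop in $\CM_4(\BR)\cong S^1$ that swaps two neighbouring points. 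For the latter, degenerating to a deepest caterpillar reduces the binary commutor case to the internal cactus group action on shift of argument eigenvectors for a single irrep; this is precisely the other main theorem promised by the paper (the monodromy of shift of argument eigenvectors realises the internal cactus action on $\fg$-crystals), and can be applied as a black box.

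The main obstacle is step (i): identifying the abstract crystal structure arising from the shift of argument algebra with the honest Kashiwara crystal $B(\l)$. This is where the serious representation theory enters — it cannot be avoided by formal operadic manipulations, and controlling it uniformly in $\l$ is what allows the caterpillar gluing to match crystal tensor products compatibly across all $(\ul,\mu)$. Once this identification is in hand, the operadic covering framework converts the monodromy/commutor matching into bookkeeping about loops in $\CM_{n+1}(\BR)$, and the equivalence of coboundary categories follows.
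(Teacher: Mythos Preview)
Your overall architecture is right: build the equivalence on the simple objects using the shift of argument crystal $\CE_\chi(\l)\cong B(\l)$, and then check the tensor and commutor data. But the key mechanism in step~(i) is misidentified, and this gap propagates into step~(ii).

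The claim that at a caterpillar point $\uz_0\in\CM_{n+1}(\BR)$ the Gaudin algebra $\A(\uz_0)$ factorizes into \emph{shift of argument} algebras is false. By the operadic description (\thmref{th:GaudinBoundary}), $\A(\uz_0)$ factors into \emph{smaller Gaudin algebras}; at a binary caterpillar you get a chain of copies of $\A(1,0)\subset (U(\fg)^{\otimes 2})^\fg$, never $\A_\chi\subset U(\fg)$. The shift of argument algebras do not lie in the closure of the family $\A(\uz)$ at all. The paper's bridge between the two is a different family, the \emph{inhomogeneous} Gaudin algebras $\A_\chi(z,0)\subset U(\fg)^{\otimes 2}$: as $z\to\infty$ this limits to $\A_\chi\otimes\A_\chi$, while as $z\to 0$ it limits to the algebra generated by $\Delta(\A_\chi)$ and $\A(1,0)$ (\propref{pr:MultiClosure1}). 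Parallel transport $p_{\infty,0}$ along the positive real axis in this $\BP^1$-family is what defines the tensor structure $\phi:\CE_\chi(\l_1)\otimes\CE_\chi(\l_2)\to\sqcu_\mu\CE(\l_1,\l_2)^\mu\times\CE_\chi(\mu)$, and proving that this is a \emph{crystal} isomorphism (\thmref{th:TensorCE}) requires a further pentagon coming from the blow-up family of \propref{pr:MultiClosure2}. Associator compatibility then needs the three-point inhomogeneous family of \propref{pr:MultiClosure3}. None of this is visible from the homogeneous Gaudin operad alone.

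There is also a smaller confusion in step~(ii): the binary commutor in $\cC(\CE)$ is not a loop in $\CM_4(\BR)$ but simply the $S_2$-flip on $\cX_3$ over the point $\CM_3$. The paper matches this with the crystal commutor $\sigma=\xi\circ(\xi\otimes\xi)\circ s$ by analyzing how the flip $s$ and the action of $w_0$ transform the path $p_{\infty,0}$ (using \lemref{le:InvarianceA} and \lemref{le:ConjugateA}), together with the identification of $w_0$ with the full Sch\"utzenberger involution (\propref{pr:FullSI}). So the internal cactus input is used, but in the specific form ``$w_0=\xi$ on $\CE_\chi(\l)$'' rather than as the full monodromy theorem applied as a black box.
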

	
	This theorem implies \thmref{co:Etingof}.

	\subsection{Shift of argument algebras}
	In order to connect eigenlines for Gaudin algebras with crystals, we work with inhomogenous Gaudin algebras $ \A_\chi(\uz) \subset U(\fg)^{\otimes n} $, which depend on $n$ distinct complex numbers $ \uz $ and a regular Cartan element $ \chi $ (these algebras were introduced by the third author in \cite{R} and independently by Feigin-Frenkel-Toledano Laredo \cite{FFTL}).  These are maximal commutative subalgebras and we prove that they act cyclically on the tensor product $ V(\ul) $ (\thmref{th:CyclicGeneral}).
	
	Of particular interest is the case $ n = 1$, in which we obtain the shift of argument algebras $ \A_\chi \subset U(\fg) $.  We prove (\thmref{th:dCPFamily}) that the closure of the family of shift of argument algebras $ \A_\chi $ is parametrized by the de Concini-Procesi wonderful compatification $ \CM_\Delta $, which is a compactification of $ \fh^{reg} / \Cx $ obtained by succesive blowups along intersections of root hyperplanes (see \secref{sect-prelim}).  We prove this by combining work of Shuvalov \cite{Sh} on the limits of classical shift of argument algebras and work of Aguirre-Felder-Veselov \cite{AFV2} on the limits of their quadratic subspaces.
	
	In \cite{FFR}, Feigin, Frenkel, and the third author proved that $ \A_\chi$ acts cyclically on $ V(\l) $ for any $ \chi \in \fh^{reg} $ and we extend this result to all $ \chi \in \CM_\Delta $.  In particular, when $ \chi \in \CM_\Delta(\BR) $, we get a decomposition of $ V(\l) $ into eigenlines $ \CE_\chi(\l) $.  This provides us with a covering of the space $ \CM_\Delta(\BR) $ by these eigenlines; the fibres of this covering have the same size as the dimension of $ V(\l) $.
	
	The set of eigenlines $ \CE_\chi(\l) $ is of independent interest; for example, when $ \fg = \sl_m $, and $ \chi $ corresponds to a caterpillar point (see \cite{S}, page 16) of $ \CM_\Delta = \CM_{m+1} $, then $ \CE_\chi(\l) $ is the Gelfand-Tsetlin basis.  In \secref{se:CrystalFibre}, we construct a crystal structure on any such set of eigenlines $ \CE_\chi(\l) $; this definition uses the parallel transport in the covering.  Using restriction to Levi subalgebras, we prove the following (\thmref{th:CEBlambda}).
	
	\begin{thm}
		There is a crystal isomorphism $ \CE_\chi(\l) \cong B(\l) $.
	\end{thm}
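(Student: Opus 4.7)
The plan is to establish the crystal isomorphism by induction on the semisimple rank of $\fg$, with restriction to Levi subalgebras providing the inductive mechanism. The base case $\fg = \fsl_2$ should reduce, via the closure result $\A_\chi$ parametrized by $\CM_\Delta$ and the FFR cyclicity, to a direct identification of the weight-string ordering on eigenlines with the standard $\fsl_2$-crystal on $V(\l)$; this should already be implicit in Varchenko's analysis referenced earlier.

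For the inductive step, first observe that since $Z(\fg)\subset \A_\chi$ and $\A_\chi$ is maximal commutative, every eigenline of $\A_\chi$ lies in a single weight space of $V(\l)$, giving a weight map $\CE_\chi(\l)\to\fh^*$ whose fiber cardinalities automatically agree with the $\fh$-weight multiplicities of $B(\l)$. It thus remains to identify the Kashiwara operators $e_i, f_i$. Fix a simple root $\alpha_i$ and the associated Levi $\fl_i$. The key input is the behavior of $\A_\chi$ as $\chi$ approaches the boundary stratum of $\CM_\Delta$ corresponding to $\fl_i$: by the results of Shuvalov and Aguirre-Felder-Veselov invoked in \thmref{th:dCPFamily}, the limiting algebra $\A_{\chi_0}$ contains the center $Z(\fl_i)$ together with a shift of argument subalgebra $\A^{\fl_i}_{\chi'}\subset U(\fl_i)$ for a regular element $\chi'\in (\fh\cap[\fl_i,\fl_i])^{reg}$. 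Consequently the eigenline decomposition of $V(\l)$ under $\A_{\chi_0}$ refines its decomposition into $\fl_i$-isotypic components, and on each such component the eigenlines are precisely those of $\A^{\fl_i}_{\chi'}$ on the corresponding $V_{\fl_i}(\mu)$.

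The crystal structure on $\CE_\chi(\l)$ is defined via parallel transport inside the cover $\CE(\l)\to\CM_\Delta(\BR)$; using a path from $\chi$ to the above boundary point $\chi_0$, the $i$-th Kashiwara operators on $\CE_\chi(\l)$ are transported to the $\fl_i$-Kashiwara operators on the disjoint union $\bigsqcup_\mu \CE^{\fl_i}_{\chi'}(\mu)$. By the inductive hypothesis applied to $\fl_i$ (which has smaller semisimple rank, or in the rank-one direction reduces to the $\fsl_2$ base case), each $\CE^{\fl_i}_{\chi'}(\mu)$ is isomorphic as an $\fl_i$-crystal to $B_{\fl_i}(\mu)$. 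This matches exactly the known $i$-th crystal structure on $B(\l)$, which depends only on its restriction to $\fl_i$. Since this holds for every simple root $i$ and the weights agree, the bijection obtained by sending the unique highest weight eigenline (extremal for the $\A_\chi$-cyclic vector of weight $\l$) to the highest weight element of $B(\l)$, and extending by the Kashiwara operators, yields the desired crystal isomorphism.

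The main obstacle will be the compatibility between parallel transport in $\CE(\l)$ and the Levi-limit description of $\A_{\chi_0}$: one must verify that as $\chi\to\chi_0$ the eigenline decomposition of $\A_\chi$ degenerates continuously into the eigenline decomposition for the limit algebra, i.e.\ that the cover $\CE(\l)$ genuinely extends over the boundary strata of $\CM_\Delta$ in a way compatible with the Levi factorization. This will require an analysis of the flat family of algebras $\A_\chi$ near the boundary, combined with the extension of cyclicity to all of $\CM_\Delta$ (which the authors have already established as a prerequisite). Once this continuity is in hand, the inductive identification of crystal operators is forced by uniqueness of the connected normal $\fg$-crystal of highest weight $\l$.
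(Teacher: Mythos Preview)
Your argument has a genuine gap at the last step. Everything you describe — transporting each $e_i,f_i$ to a boundary point where the algebra factors through the rank-one Levi $\fl_i$, and identifying the resulting $\fsl_2$-crystal on each string — establishes only that $\CE_\chi(\l)$ is \emph{semi-normal} and has the correct weight multiplicities. This is exactly the content of the paper's \propref{pr:seminormal}. But a semi-normal crystal with the weight multiplicities of $B(\l)$ and a unique element of weight $\l$ need not be isomorphic to $B(\l)$: the $i$-strings can be correct for every $i$ separately while the way they interlock for $i\ne j$ is wrong. Your proposed construction, ``send the highest-weight eigenline to the highest-weight element of $B(\l)$ and extend by Kashiwara operators,'' is not well-defined unless you already know the crystal is normal; different words in the $f_i$'s that coincide in $B(\l)$ could a priori diverge in $\CE_\chi(\l)$. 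The uniqueness statement you invoke at the end presupposes normality rather than proving it.

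The paper closes this gap with two ingredients you are missing. First, Kashiwara's criterion (\thmref{th:rank2}): a semi-normal crystal is normal iff its restriction to every rank-two Levi is normal. Combined with the restriction theorem (\thmref{th:RestrictCrystal}, whose content is close to what you sketch), this reduces the problem to $\fg$ of rank two. Second, and this is the step your induction cannot reach, the rank-two case is handled not by further Levi restriction but by the tensor product result \thmref{th:TensorCE}: one picks a multiplicity-free tensor generator $V(\omega)$, uses \propref{pr:MultFree} to get $\CE_\chi(\omega)\cong B(\omega)$ for free, and then embeds $\CE_\chi(\l)$ as a subcrystal of $\CE_\chi(\omega)^{\otimes N}$, which is normal. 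Your inductive scheme bottoms out at $\fsl_2$ and never produces the rank-two information needed to invoke Kashiwara's criterion.
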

	
	\subsection{Completion of the main proof}
	The closure of the space of the subalgebras $ \A_\chi(\uz) $ is unknown in general, but in \secref{subsect-PieceClosure}, we study some subfamilies.  In particular, as $ z\rightarrow \infty $,  the subalgebra $\A_\chi(z,0)$ limits to $ \A_\chi \otimes \A_\chi $, and as $ z \rightarrow 0 $,  it limits to the subalgebra generated by $ \Delta(\A_\chi) $ and $ \A(1,0)$.  When we consider eigenlines for these algebras acting on $V(\l_1) \otimes V(\l_2) $, we can use parallel transport in this family to get an isomorphism of crystals (see \thmref{th:TensorCE})
	$$
	 \CE_{\chi}(\l_1) \times \CE_\chi(\l_2) \rightarrow \sqcu_\mu \CE(\l_1, \l_2)^\mu \times \CE_\chi(\mu)
$$
This proves that the eigenlines $ \CE(\l_1,\l_2)^\mu $  for the Gaudin algebra $ \A(1,0) $ are in bijection with the crystal-theoretic tensor product multiplicity sets.  This allows us to construct the tensor functor which realizes the equivalence in \thmref{th:mainintro}.  The proof that this equivalence is compatible with the coboundary category structures uses further results about the closure of the family of inhomogeneous Gaudin algebras.

	\subsection{Another monodromy result}
	For any dominant weight $ \l $, we have the family $ \CE_\chi(\l) $ of eigenlines of shift of argument algebras $ \A_\chi$, as $ \chi $ varies over $ \CM_\Delta(\BR) $.  Thus, we get an action of $ C_\Delta = \pi_1^W(\CM_\Delta(\BR))$, the cactus group of type $ \Delta $, on any fibre $ \CE_\chi(\l) $.
	
	On the other hand, in \secref{sec:action}, we define an action of $ C_\Delta $ on any normal $ \fg$-crystal, by partial Sch\"utzenberger involutions.  This generalizes a construction of Berenstein-Kirillov \cite{BeK} in the case $ \fg = \sl_m $. It can be thought as the crystal-theoretic analog of Lusztig's quantum Weyl group action on representations of quantum groups.
	
	We prove the following result (\thmref{th:Etingof2}), which is analogous to \thmref{co:Etingof} and was also conjectured by the third author in \cite{Ryb13}.
	\begin{thm} \label{th:Etingof2Intro}
	 The isomorphism of crystals $ \CE_\chi(\l) \cong B(\l) $ intertwines the monodromy action of $ C_\Delta $ with the crystal-theoretic one.
	 \end{thm}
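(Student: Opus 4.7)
The plan is to verify the intertwining on a generating set of $C_\Delta$. The cactus group $C_\Delta$ is generated by elements $s_J$ indexed by connected subdiagrams $J$ of the Dynkin diagram of $\fg$, and each $s_J$ corresponds to a distinguished loop in $\CM_\Delta(\BR)$ based at a generic $\chi$ which visits the boundary stratum along which the shift of argument algebra degenerates relative to the Levi $\fg_J$. On the crystal side, $s_J$ acts on $B(\l)$ as the partial Schützenberger involution $\xi_J$, defined by decomposing into $\fg_J$-isotypic components and applying the Schützenberger involution of $\fg_J$ on each. Matching the two actions on these generators suffices, since both descend to $C_\Delta$.

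My first step is to describe the degeneration of $\A_\chi$ along the stratum of $\CM_\Delta$ corresponding to $J$. By \thmref{th:dCPFamily} and the inputs of Shuvalov and of Aguirre-Felder-Veselov used in its proof, as $\chi$ approaches this stratum, $\A_\chi$ limits to an algebra generated by the shift of argument algebra $\A_{\chi_J}^{\fg_J} \subset U(\fg_J)$ together with commutative elements built from the centralizer of $\fg_J$. Consequently, near this stratum the eigenlines in $\CE_\chi(\l)$ organize according to the branching $V(\l)|_{\fg_J} = \bigoplus V_{\fg_J}(\mu)^{\oplus m_\mu}$: each eigenline is labelled by a $\fg_J$-highest weight $\mu$, a multiplicity label, and an element of $\CE_{\chi_J}^{\fg_J}(\mu)$. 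Under the isomorphism $\CE_\chi(\l) \cong B(\l)$ of \thmref{th:CEBlambda}, this stratification matches the $\fg_J$-crystal decomposition, because the crystal structure of \secref{se:CrystalFibre} was built from exactly this Levi branching.

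Next, I would compute the monodromy of $s_J$ using the above decomposition. The loop $s_J$ can be chosen to preserve the $(\mu,\text{multiplicity})$-labels and to act on each factor $\CE_{\chi_J}^{\fg_J}(\mu)$ as the full monodromy loop for the de Concini-Procesi family of $\fg_J$. Thus the theorem reduces to the case $J = \Delta$: that the long loop $s_\Delta$ acts on $\CE_\chi(\l)$ as the Schützenberger involution of $\fg$. For this, the key input is the symmetry $\A_{-\chi} = \A_\chi$, which realizes $s_\Delta$ as a half-monodromy in the quotient family by $\chi \mapsto -\chi$, combined with the fact that the Cartan involution of $\fg$ intertwines the spectrum of $\A_\chi$ on $V(\l)$ with that of $\A_{-\chi}$ on $V(\l)^*$; the composition of these identifications is, by definition, the Schützenberger involution. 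The $\sl_2$ base case provides the comparison of signs, by \cite{Ryb13}.

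The main obstacle is the case $J = \Delta$: identifying the long monodromy with the full Schützenberger involution. Once this is in place, the Levi-compatibility from the second step propagates the identification to every generator $s_J$, and the cactus relations are automatically preserved, since both actions factor through $C_\Delta$. A conceptually cleaner completion would package the argument in the style of \thmref{th:mainintro}, replacing the coloured operad of $\CM_{n+1}$ by the de Concini-Procesi space $\CM_\Delta$, and comparing two Weyl-equivariant coverings built over it; but for the present theorem it suffices to check the generators as described.
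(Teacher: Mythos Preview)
Your reduction strategy matches the paper's: check the generators $s_J$, and reduce the partial case to the full case via Levi restriction. The paper's Theorem~\ref{th:RestrictCrystal} plays exactly the role of your second paragraph, and Proposition~\ref{pr:PartialSI} carries out the reduction from $s_J$ to the full involution for $\fg_J$ just as you describe.

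The gap is in your treatment of the full case. You propose to identify the monodromy of $s_I$ with $\xi$ by invoking the Cartan involution, the dual $V(\l)^*$, and the $\sl_2$ base case from \cite{Ryb13}. But none of this is needed, and it is not clear how your outline would close. First, the Sch\"utzenberger involution is \emph{not} by definition the composition you describe; it is characterized by naturality together with the exchange relations \eqref{eq:Schutz}. Second, since $w_0(\chi)=-\chi$ gives $w_0(\chi)=\chi$ in $\CM_\Delta(\BR)$, the path $p_I$ is constant, and the monodromy action of $s_I$ is literally the action of $w_0\in N_G(T)/T$ on lines in $V(\l)$ (cf.\ Proposition~\ref{cor-Wactioncover}); there is no ``half-monodromy'' to analyze.

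What the paper does instead (Proposition~\ref{pr:FullSI}) is verify directly that this $w_0$-action on eigenlines satisfies the characterizing property $w_0(e_i(L)) = f_{\theta(i)}(w_0(L))$. This uses the very definition of the crystal operator $e_i$ on $\CE_\chi(\l)$ as $p_{\chi,\chi^i}^{-1}\circ e_{\alpha_i}\circ p_{\chi,\chi^i}$: since $w_0$ conjugates $\A_{\chi^i}$ to $\A_{\chi^{\theta(i)}}$ and sends the path $p_{\chi,\chi^i}$ to $p_{\chi,\chi^{\theta(i)}}$ (both inside $\CM_\Delta(\BR)_+$), one unwinds the conjugations via Lemma~\ref{le:SquareMonodromies} and the claim drops out in a few lines. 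No $\sl_2$ base case and no dual representation enter. Your proposal misses this mechanism, and without it the full case remains unproved.
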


Much as \thmref{co:Etingof} is analogous to the Drinfeld-Kohno theorem, \thmref{th:Etingof2Intro} is analogous to Toledano Laredo's theorem \cite{TL, TL2} describing the monodromy of the Casimir connection.	 	
		
	\subsection{Schubert intersections and Opers}
	As mentioned above, Etingof's conjecture (\thmref{co:Etingof}) was proven for $ \fg = \sl_m $ by White \cite{W}.  His paper uses a connection between the eigenvalues of Gaudin algebras and intersections of Schubert varieties from the work of Mukhin-Tarasov-Varchenko \cite{MTV}.  Working on the Schubert intersection side, Speyer \cite{S} constructed a cover of $ \CM_{n+1}(\BR) $ whose monodromy gives the crystal commutor.  White showed (somewhat indirectly) that the cover from Gaudin eigenlines is isomorphic to Speyer's cover.

	The relationship between Gaudin algebras and intersections of Schubert varieties only holds for $ \fg = \sl_m $.  However, for any $ \fg $, we can give a geometric description of Gaudin eigenvalues using opers for the Langlands dual Lie algebra.  In a followup paper, we will investigate this further and develop the theory of opers on nodal curves.  In particular, we will give a direct proof of the relationship between Speyer's cover and ours.  We also hope that this oper perspective will allow us to develop a higher genus version of the constructions from this paper.
		
	\subsection{Weyl groups and quantum cohomology}
	A further conjecture along the lines of \thmref{co:Etingof} and \thmref{th:Etingof2Intro} was also suggested by Etingof.  In \cite{Losev}, Losev constructed an action of the Cactus group $ C_\Delta $ on the Weyl group $ W $.  The definition of this action involves perverse equivalences coming from wall-crossing functors in category $ \CO$.  (A more elementary definition was given shortly afterwards by Bonnaf\'e \cite{Bon}.)  In the case $ W = S_n$, this action is a special case of the action coming from the crystal commutor;  we can take $\fg = \sl_n$,  $ B(\omega_1)^{\otimes n } $ and look at the 0-weight set $ B(\omega_1)^{\otimes n }_0 $ which is naturally in bijection with $ S_n $ and carries an action of $ C_n $ defined using the crystal commutor.
	
	On the other hand, there is a family of algebras, $ D_\chi $, which act on the group algebra $ \BC W $ (they are generated by degenerations of Dunkl operators).  These algebras depend on a parameter $ \chi \in \fh^{reg}$, so it is natural to expect that they compactify to a family parametrized by $ \CM_\Delta $.  Then, assuming a simple spectrum result, we would get a covering of $ \CM_\Delta(\BR) $ whose fibres are the eigenlines $ \CE_{D_\chi}(\BC W)$.  Etingof has conjectured the following.
	
	\begin{conj} \label{co:Etingof3}
	The monodromy of this covering agrees with the action of $C_\Delta $ on $ W $.
	\end{conj}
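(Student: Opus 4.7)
The plan is to mirror the strategy that was successful for shift of argument algebras in \thmref{th:Etingof2Intro}. First I would establish the geometric foundations left as hypotheses in the conjecture. Compactification of the family $D_\chi$ over $\CM_\Delta$ should follow the pattern of \thmref{th:dCPFamily}: as $\chi$ approaches a root hyperplane, the leading terms of the degenerate Dunkl operators factor through the Dunkl algebra of the corresponding parabolic $W_I$, so iterated blowups along intersections of hyperplanes produce the limits over all of $\CM_\Delta$. Cyclicity and simple real spectrum on $\BC W$ can be checked at a caterpillar point, where $D_\chi$ degenerates to a tower of rank-one Dunkl algebras attached to a maximal chain of parabolics, reducing the question to an explicit induction on $|W|$.

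Next, at each caterpillar point the eigenbasis of $D_\chi$ on $\BC W$ should be naturally indexed by sequences of coset representatives for a maximal chain of parabolics, yielding a canonical bijection $\CE_{D_\chi}(\BC W) \cong W$. Parallel transport along the flat family over $\CM_\Delta(\BR)$ then gives bijections between these identifications at different caterpillar points, and reading off the combinatorics of the codimension-one wall crossings (each labelled by a rank-two parabolic) produces a $C_\Delta$-action on $W$ by involutions indexed by parabolics $W_I$, matching the shape of Bonnaf\'e's reformulation of Losev's action. This step parallels \secref{se:CrystalFibre}.

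The substantive point is identifying this monodromy $W$-action with the homologically defined one of Losev. A natural route is via the rational Cherednik category $\CO_c$ for $W$: Losev's wall-crossing functors act on $\CO_c$ by perverse equivalences implementing the $C_\Delta$-action, while KZ-type monodromy provides an analytic realisation on the image of the KZ functor, which contains $\BC W$. If the Dunkl family above can be realised as a rational degeneration of the KZ connection, then its monodromy at caterpillars matches Losev's wall-crossings on the KZ image. As a sanity check, the case $\fg = \sl_n$, $W = S_n$ should follow from \thmref{co:Etingof} applied to $B(\omega_1)^{\otimes n}_0 \cong S_n$, together with the already-established compatibility between Losev's $S_n$-action and the crystal commutor action on that zero weight set.

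The principal obstacle will be this last identification for general $W$. The proof of \thmref{th:Etingof2Intro} leverages the operadic and tensor categorical structure of $\gcrys$, for which there is no direct analogue on $\BC W$. Bridging the analytic definition of monodromy with Losev's homological definition --- presumably by routing through Cherednik category $\CO$, or via Bonnaf\'e's more elementary combinatorial formulas --- is where the real work of the conjecture lies.
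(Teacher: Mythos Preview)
The statement you are attempting to prove is labelled \conjref{co:Etingof3} in the paper and is explicitly presented there as an \emph{open conjecture} of Etingof, not as a theorem. The paper gives no proof of it; the surrounding text even hedges the very hypotheses underlying the statement (``it is natural to expect that they compactify to a family parametrized by $\CM_\Delta$'', ``assuming a simple spectrum result''). So there is no paper proof against which to compare your proposal.

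Your outline is a plausible program, and you correctly identify where the real difficulty lies: matching the analytic monodromy with Losev's homological definition in the absence of the tensor-categorical scaffolding that drives the proofs of \thmref{co:Etingof} and \thmref{th:Etingof2Intro}. But several of your preliminary steps are themselves nontrivial open problems rather than routine adaptations. The compactification of the $D_\chi$ family over $\CM_\Delta$ is not known; the paper merely says it is ``natural to expect''. Cyclicity and simple real spectrum on $\BC W$ are assumed, not established, and your proposed caterpillar-point check would need a concrete description of the degenerate Dunkl algebras at those boundary points, which is not in the literature. Finally, your suggestion to realise the Dunkl family as a rational degeneration of the KZ connection and thereby import Losev's wall-crossing is an attractive heuristic, but making it precise is exactly the content of the conjecture and of the related Bonnaf\'e--Rouquier conjectures the paper cites. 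In short: your plan is a reasonable research program, but it is not a proof, and the paper does not claim one either.
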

	This conjecture is closely related to conjectures of Bonnaf\'e-Rouquier \cite{BR} on a Galois group action on the Weyl group.
	
	We expect that this circle of ideas admits a further generalization.  Suppose that we are given a conical symplectic singularity $ X \rightarrow X_0 $.  Then we obtain a family of symplectic resolutions $ X_\theta $ and corresponding quantizations $ A_\theta$, indexed by generic elements $ \theta \in H^2(X, \BR) $.  Losev \cite{Losev2} has studied wall-crossing functors which correspond to moving $ \theta$ across the walls of a hyperplane arrangement in $ H^2(X, \BR) $.  He proved that these wall-crossing functors give perverse equivalences.  We expect that these perverse equivalences lead to the action of a finitely-generated group analogous to the cactus group on the set of simple objects in a suitable category of $ A_\theta $-modules.
	
	 On the other hand, we can consider the equivariant quantum cohomology algebra $ QH_{\Cx}^\bullet(X) $.  This algebra depends on a parameter $ q \in H^2(X, \BC)$ and acts on $ H_{\Cx}^\bullet(X)$.  Assuming a simple spectrum result for real $ q $, we will then get a covering of the compactified real parameter space, leading to an action of the fundamental group of this space which we expect should be the same as the ``cactus'' group coming from the above perverse equivalences (or perhaps an affine version of this group).  Moreover, we expect a bijection between the eigenlines for the quantum cohomology algebra and the simple objects above compatible with this cactus group.  \thmref{co:Etingof} and \thmref{th:Etingof2Intro} can be seen as special cases of this setup, applied to affine Grassmannian slices and finite-type quiver varieties respectively, whereas \conjref{co:Etingof3} is an instance of this setup for the Springer resolution.
	 	
		\subsection{Notation}
		A complete list of notation may be found in Appendix \ref{append}.
		
	\subsection{Acknowledgements}
	We would like to thank P. Etingof for his conjecture and for many helpful discussions.  We also thank R. Bezrukavnikov, A. Braverman, I. Losev, M. McBreen, E. Mukhin, D. Speyer, V. Toledano Laredo and N. White for helpful discussions.  This project was begun when the authors were visiting {\'E}PFL; we thank {\'E}PFL and Swiss-MAP for making these visits possible. The second author was supported by an NSERC discovery grant, a Sloan fellowship, and a Simons fellowship.  The first and fourth authors were supported in part by graduate scholarships from NSERC and the Government of Ontario.  This research was supported in part by Perimeter Institute for Theoretical Physics. Research at Perimeter Institute is supported by the Government of Canada through the Department of Innovation, Science and Economic Development and by the Province of Ontario through the Ministry of Research and Innovation. The work of the third author has been funded by the  Russian Academic Excellence Project '5-100'. Theorem~\ref{th:CyclicGeneral} has been obtained under support of the Russian Science Foundation grant 16-11-10160. The third author has also been supported in part by the Simons Foundation.
	
	\section{Preliminaries on de Concini-Procesi spaces}\label{sect-prelim}
	\subsection{The de Concini-Procesi wonderful compactification}\label{subsect-dCP}
	Throughout the paper, we fix a rank $ r $ semisimple Lie algebra $ \fg $ with Cartan subalgebra $ \fh $, set of roots $\Delta \subset \fh^*$, positive roots $ \Delta_+ $, and simple roots $ \{ \alpha_i : i \in I \} $.
	
	We let $ \CG $ denote the \emph{minimal building set} associated to the set of roots.  To define $ \CG $, let $ \CG' $ denote the set of all non-zero subspaces of $ \fh^* $ which are spanned by a subset of $ \Delta $.   Let $ V \in \CG' $.  We say that $ V  = V_1 \oplus \cdots \oplus V_k $ is a \emph{decomposition} of $ V $ if  $ V_1, \dots, V_k \in \CG'$, and if whenever $ \alpha  \in \Delta $ and $ \alpha \in V $, then $ \alpha \in V_i $ for some $ i$.  From Section 2.1 of \cite{DCP}, every element of $ \CG' $ admits a unique decomposition.
	
	Then we define $ \CG $ be the set of indecomposable elements of $ \CG' $.
	
	There is an action of the Weyl group $ W $ on $ \fh $.  This action preserves $ \Delta $.  Thus, we get actions of $ W $ on $ \CG$ and $\CG' $.
	
	If $ J \subseteq I $ is a non-empty, connected subset of the Dynkin diagram $ I $ of $ \fg $, we can form $ V_J = \spn(\alpha_j : j \in J) $.  Then $ V_J \in \CG $.  In fact, every $ V \in \CG $ is of the form $ w(V_J) $ for some $ w \in W $ and $ J $ as above.  The subspaces $ V_J $ are compatible with the dominant Weyl chamber $$ \fh_+ := \{ x \in \fh(\R) : \langle \alpha,   x \rangle \ge 0 \text{ for all } \alpha \in \Delta_+ \}, $$ in the sense that $ V_J^\perp \cap \fh_+ $ is a facet of $ \fh_+ $, whose open subset we denote by $ \fh_+^J $, so that
	$$ \fh_+^J = \{ \chi \in \fh(\R) : \alpha_j(\chi) = 0 \text{ for $ j \in J $ and } \alpha_i(\chi) > 0 \text{ for $ i \in I \smallsetminus J $} \} $$

	Let $ \fh^{reg} = \{ \chi \in \fh : \alpha(\chi) \ne 0, \text{ for all } \alpha \in \Delta \} $.  For any $ V \in \CG$, we have a map $ \fh^{reg} \rightarrow \PP(\fh/V^\perp) $.
	
	The de Concini-Procesi space $ \CM_\Delta \subset \prod_{V \in \CG} \PP(\fh/V^\perp) $ is defined to be the closure of the image of the map $ \fh^{reg} \rightarrow  \prod_{V \in \CG} \PP(\fh/V^\perp)$.  So a point of $ \CM_\Delta $ consists of a collection $ (L_V)_{V \in \CG} $ where $ L_V \in \fh/V^\perp $.  We will think of $ L_V $ as a subspace of $ \fh $ containing $ V^\perp $ as a hyperplane.  Note that if $ \chi \in \fh^{reg}$, then $ \chi \notin V^\perp $ for all $ V \in \CG $ and the image of $ \chi $ in $ \CM_\Delta $ is the collection $  L_V = V^\perp + \C\chi $.
	
	\begin{rem}
		Suppose that $ \Delta $ is a reducible root system.  Then $ \fh^* $ will not lie in $ \CG $ and thus $ \CM_\Delta $ will have dimension less than $ r-1 $.  Moreover, if $ \Delta = \Delta_1 \sqcup \Delta_2$, then $ \CG = \CG_1 \sqcup \CG_2 $ and $ \CM_\Delta \cong \CM_{\Delta_1} \times \CM_{\Delta_2} $.
	\end{rem}
	
	Assume that $ \Delta $ is irreducible.  Then $ \fh^* \in \CG $ and so there is a map $ \pi : \CM_\Delta \rightarrow \PP(\fh) $ which is an isomorphism over the locus $ \fh^{reg} / \Cx $.  We will now investigate the other fibres of $ \pi $.
	
	Let $ \chi_0 \in \fh$, $\chi_0 \ne 0 $.  Let $ \Delta_1 = \{ \alpha \in \Delta : \alpha(\chi_0) = 0 \}$.   The roots $ \Delta_1 $ are related to the centralizer of $ \chi_0 $ as follows.  Consider the derived subalgebra of the centralizer of $ \chi_0$, $ \fg_1 = \fz_{\fg}(\chi_0)' $.  Let $ \fh_1 = \fg \cap \fh $ be the Cartan subalgebra of $ \fg_1 $.  Then the image of $ \Delta_1 \subset \fg^* $ maps bijectively onto the root system of $ \fg_1 $ under the projection $ \fh^* \rightarrow \fh_1^*$.
	
	\begin{lem} \label{le:FibresCM}
		There is a natural isomorphism $ \pi^{-1}([\chi_0]) \cong \CM_{\Delta_1}$.
	\end{lem}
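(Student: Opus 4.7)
The plan is to identify the fibre $\pi^{-1}([\chi_0])$ with $\CM_{\Delta_1}$ by an explicit coordinate comparison, exploiting the closure descriptions of both sides.

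First, I would identify the building set $\CG_1$ for $\Delta_1$ with a subset of $\CG$. Writing $\fz = Z(\fz_\fg(\chi_0))$, we have $\fh = \fz \oplus \fh_1$, and every $\alpha \in \Delta_1$ vanishes on $\fz$ (its root space lies inside the Levi $\fz_\fg(\chi_0)$, whose center is $\fz$). Hence the restriction $\fh^* \to \fh_1^*$ identifies $\spn(\Delta_1) \subset \fh^*$ bijectively with the root span of $\fg_1$ in $\fh_1^*$. If a subspace $V \subseteq \spn(\Delta_1)$ admits a decomposition $V = V_1 \oplus \cdots \oplus V_k$ in $\CG'$, then each summand automatically lies in $\spn(\Delta_1)$ and is spanned by roots of $\Delta_1$; thus indecomposability in $\CG'$ agrees with that in the analogous building set for $\Delta_1$. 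This identifies $\CG_1$ with $\{V \in \CG : \chi_0 \in V^\perp\}$. For such $V$ one has $V^\perp = \fz \oplus V^\perp_{\fh_1}$ (where $V^\perp_{\fh_1}$ denotes the annihilator of $V$ in $\fh_1$), inducing a canonical isomorphism $\fh/V^\perp \cong \fh_1/V^\perp_{\fh_1}$.

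The comparison map is then
$$\phi : \pi^{-1}([\chi_0]) \longrightarrow \prod_{V \in \CG_1} \PP(\fh_1/V^\perp_{\fh_1}), \qquad (L_V)_{V \in \CG} \mapsto (L_V)_{V \in \CG_1}.$$
I would then verify two claims. First, for $V \in \CG \setminus \CG_1$ we have $\chi_0 \notin V^\perp$, so the rational map $\PP(\fh) \dashrightarrow \PP(\fh/V^\perp)$ is regular at $[\chi_0]$ with value $[\chi_0 \bmod V^\perp]$; continuity in $\CM_\Delta$ forces $L_V = V^\perp + \C\chi_0$ on the whole fibre, making $\phi$ injective. Second, for $\chi_1 \in \fh_1$ regular with respect to $\Delta_1$, the path $\chi_0 + t\chi_1 \in \fh^{reg}$ (small $t > 0$) has image in $\CM_\Delta$ whose $V$-coordinate for $V \in \CG_1$ equals $V^\perp + \C\chi_1$ (independent of $t$ since $\chi_0 \in V^\perp$), and corresponds in $\PP(\fh_1/V^\perp_{\fh_1})$ to $[\chi_1]$. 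Passing to the limit $t \to 0^+$ (using properness of $\CM_\Delta$) yields a point of $\pi^{-1}([\chi_0])$ mapping under $\phi$ to the image of $[\chi_1]$ in $\CM_{\Delta_1}$. As $\chi_1$ varies in its regular locus, these $\phi$-images fill out a dense open subset of $\CM_{\Delta_1}$; since $\pi^{-1}([\chi_0])$ is projective, its image under $\phi$ is closed, so $\phi$ lands in and surjects onto $\CM_{\Delta_1}$. An injective morphism between irreducible projective varieties of the same dimension, with smooth target, is an isomorphism.

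The main technical point is to argue that the assignment $[\chi_1] \mapsto \lim_{t \to 0^+} [\chi_0 + t\chi_1]$ extends continuously from $\fh_1^{reg}/\Cx$ to all of $\CM_{\Delta_1}$ and provides a two-sided inverse to $\phi$. This reduces to the functoriality of the de Concini-Procesi closure with respect to the inclusion of the subarrangement corresponding to $\Delta_1$: the natural projection $\prod_{V \in \CG} \PP(\fh/V^\perp) \to \prod_{V \in \CG_1} \PP(\fh_1/V^\perp_{\fh_1})$ restricts on $\pi^{-1}([\chi_0]) \subset \CM_\Delta$ to a regular morphism onto $\CM_{\Delta_1}$, which by the above is a bijection, hence the desired isomorphism.
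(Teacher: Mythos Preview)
Your proof is correct and follows essentially the same approach as the paper: both define the forgetful map $(L_V)_{V\in\CG}\mapsto (L_V)_{V\in\CG_1}$ after observing that the coordinates $L_V$ for $V\notin\CG_1$ are forced to equal $V^\perp+\C\chi_0$. The paper is much terser (it simply asserts the map is an isomorphism), whereas you supply the details via injectivity, a limit argument for surjectivity, and an explicit inverse; the explicit inverse you sketch in your final paragraph is in fact the cleanest route and renders the earlier dimension/smoothness argument unnecessary.
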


\begin{eg}
	Consider $ \fg = \sl_5 $ and take $ \chi_0 = (2,2,2,-3,-3) $.  Then $$
	\Delta_1 = \{ (1,-1,0,0,0), (1,0,-1,0,0), (0,1,-1,0,0), (0,0,0,1,-1)\}, \quad \fg_1 = \sl_3 \oplus \sl_2 $$
	In this case, the Lemma shows us that $ \pi^{-1}([\chi_0]) = \CM_{\Delta_1} = \mathbb P^1 $.
\end{eg}

	\begin{proof}
		Let $ U $ be the span of roots in $ \Delta_1$.  We can identify  $ U $ with $ \fh_1^*$ using the projection $ \fh^* \rightarrow \fh_1^*$.

		Let $ (L_V)_{V \in \CG} \in \pi^{-1}([\chi_0])$.  Note that for all $ V $, we know that $ \chi_0 \in L_V $ since this property holds over $ \fh^{reg} $.
		
		Suppose that $ V $ is not contained in $ U $.  Then $ \chi_0 \notin V^\perp $ and so $ L_V = V^\perp + \C \chi_0 $.  On the other hand, if $ V \subseteq U $ then we have some freedom in the choice of $ L_V $.
		
		We define a map
		\begin{align*}
			\pi^{-1}([\chi_0]) &\rightarrow \CM_{\Delta_1} \\
			(L_V)_{V \in \CG} &\mapsto  (L_V)_{V \subseteq U}
		\end{align*}
		where we identify $ U = \fh_1^*$.
		
		It is easy to see that this map is an isomorphism.
	\end{proof}

	\subsection{Coordinates on the wonderful compactification} \label{Nested}
	For simplicity, we continue to assume that $ \Delta $ is irreducible, though the results explained here apply with small modifications in the general case.
	
	A subset $ \CS \subset \CG $ is called \emph{nested} if, whenever $ P_1, \dots, P_k \in \CS $ are pairwise incomparable (with respect to inclusion), then $ P_1 \oplus \cdots \oplus P_k $ is a decomposition.   A subset is called \emph{maximal nested} if it is not contained in any other nested subset.  It is known that every maximal nested set has size $ r $ and contains $ \fh^* $.
	
	A simple way to make a maximal nested subset $ \CS $ is to start with a maximal nested collection $ \CJ $ of connected non-empty subsets of $ I $.  This means that for each pairwise incomparable $ J_1, \dots, J_k \in \CJ $, we have that $J_1 \cup \cdots \cup J_k $ is a disjoint union.  If we have a maximal collection like this, then $ \CS = \{ V_J : J \in \CJ \} $ will be a maximal nested subset of $ \CG $.  These maximal nested subsets are said to be compatible with the dominant Weyl chamber.
	
	A maximal nested subset $ \CS $ defines a point $ (L_V)_{V \in \CG} \in \CM_\Delta$, which is defined by the condition that $ L_V \subset P^\perp $ whenever $ V \in \CG, P \in \CS, $ and $ P \subsetneq V $.  These are precisely the maximally degenerate points of the space $ \CM_\Delta $.
	
	A basis $ b \subset \Delta $ of $\fh^* $ is called \emph{adapted} to a maximal nested subset $ \CS \subset \CG $ if, for each $ P \in \CS$, $ b\cap P $ forms a basis for $P$.  The following result is Lemma 1.3 of \cite{DCP} for the case of maximal nested subsets.
	
	\begin{lem} \label{NestedToBasis}
		Let $ b $ be an adapted basis to a maximal nested subset $\CS $.  There exists a bijection $ \CS \rightarrow b $, written $ P \mapsto \alpha_P $ such that $$ P = \spn(\alpha_Q : Q \subseteq P) $$
	\end{lem}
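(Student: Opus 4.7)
\medskip

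The plan is to define the bijection recursively from the bottom of the nested subset upward, using a dimension count forced by the maximality of $\CS$.

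For each $P \in \CS$, let $Q_1,\dots,Q_k$ be the maximal elements of $\{Q \in \CS : Q \subsetneq P\}$ (possibly $k=0$ if $P$ is minimal). Since $\CS$ is nested and the $Q_i$ are pairwise incomparable, the sum $U_P := Q_1 + \cdots + Q_k$ is direct and is a decomposition of an element of $\CG'$. Because $P$ is indecomposable, we must have $U_P \subsetneq P$, so $\dim P - \dim U_P \geq 1$. The key numerical input I would establish next is that this difference is \emph{exactly} $1$ for every $P \in \CS$: summing $\dim P - \dim U_P$ over all $P \in \CS$ telescopes (each $\dim Q$ appears with coefficient $+1$ from the term $P=Q$ and coefficient $-1$ from the unique $P \in \CS$ for which $Q$ is one of the maximal proper sub-elements, except for the top $P = \fh^*$), so the sum equals $\dim \fh^* = r$. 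Since $|\CS| = r$ and all $r$ summands are positive integers, each is $1$.

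Now I would use adaptedness. By hypothesis, $|b \cap P| = \dim P$ and $|b \cap Q_i| = \dim Q_i$ for each $i$. The $Q_i$ being in direct sum, the sets $b \cap Q_i$ are pairwise disjoint and contained in $b \cap P$, so
\[
\bigl|\,(b \cap P) \setminus (Q_1 \cup \cdots \cup Q_k)\,\bigr| = \dim P - \sum_{i=1}^k \dim Q_i = \dim P - \dim U_P = 1.
\]
I then define $\alpha_P$ to be the unique element of $(b \cap P) \setminus (Q_1 \cup \cdots \cup Q_k)$.

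It remains to show $P \mapsto \alpha_P$ is a bijection and satisfies $P = \spn(\alpha_Q : Q \subseteq P)$. For injectivity: if $P, P' \in \CS$ are incomparable, nestedness gives $P \cap P' = 0$, so $\alpha_P \in P$ and $\alpha_{P'} \in P'$ are distinct; if $P \subsetneq P'$, then $P$ is contained in one of the maximal proper sub-elements of $P'$, hence $\alpha_P$ lies in that sub-element, whereas by construction $\alpha_{P'}$ does not. Since $|\CS| = r = |b|$, the injection is a bijection. The spanning statement follows by induction on $\dim P$: the minimal case is immediate since then $\dim P = 1$ and $\alpha_P \in P \setminus 0$; for the inductive step, the hypothesis yields $Q_i = \spn(\alpha_Q : Q \subseteq Q_i)$, so $U_P = \spn(\alpha_Q : Q \subsetneq P)$, and adjoining $\alpha_P \notin U_P$ enlarges dimension by one to reach $\dim P$, hence gives all of $P$.

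The main obstacle is the dimension-count step $\dim P - \dim U_P = 1$; everything else is either a direct consequence of adaptedness or a straightforward induction. Once that equality is pinned down via the telescoping argument (which relies on the known facts $|\CS| = r$ and $\fh^* \in \CS$), both the existence and uniqueness of $\alpha_P$, as well as the bijection and spanning property, follow cleanly.
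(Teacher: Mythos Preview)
Your argument is correct. The telescoping count works because for each $Q \in \CS$ with $Q \neq \fh^*$ the set $\{P \in \CS : Q \subsetneq P\}$ is totally ordered (two incomparable members of $\CS$ have zero intersection, so cannot both contain $Q$), hence $Q$ appears as a maximal proper sub-element of exactly one $P$; combined with $|\CS| = r$ and $\fh^* \in \CS$ this yields $\dim P - \dim U_P = 1$ for all $P$. The indecomposability step, the adaptedness count, injectivity, and the induction for spanning are all clean.

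As for comparison with the paper: the paper does not supply its own proof of this lemma. It simply records the statement as a specialization of Lemma~1.3 of de Concini--Procesi \cite{DCP} to the case of maximal nested subsets. So your write-up is a self-contained proof of a result the paper imports from the literature; there is nothing further to compare.
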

	
	Given a maximal nested set $ \CS $ and an adapted basis $ b $, we define $ U_{\CS}^b \subset \CM_\Delta$ by
	$$
	U_{\CS}^b = \{ (L_V)_{V \in \CG} : \alpha_P(L_P) \ne 0 \text{ for all } P \in \CS \}
	$$
	This open subset $ U_{\CS}^b $ can be identified with an affine space as follows.  Let $ \C^{r-1} = \C^{\CS \setminus \fh^*} $ be an affine space indexed by the non-maximal elements of $ \CS $.  For each $ P \in \CS \setminus \fh^*$, let $ c(P) $ be the minimal element of $ \CS $ properly containing $ P $.
	
	The following result is Proposition 1.5 of \cite{DCP}.
	\begin{lem} \label{chart}
		The map $ U_{\CS}^b \rightarrow \C^{\CS \setminus \fh^*}$ given by
		$$ (L_V)_{V \in \CG} \mapsto \left(\frac{\alpha_P(L_{c(P)})}{\alpha_{c(P)}(L_{c(P)})}\right)_{P \in \CS \setminus \fh^*}
		$$
		is an isomorphism.
	\end{lem}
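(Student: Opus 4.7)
The plan is to verify well-definedness, compute the map on the dense open locus $\fh^{\mathrm{reg}}/\Cx$, construct an explicit inverse for the $\CS$-indexed data using the basis of $\fh$ dual to $b$, and extend to $V \in \CG \setminus \CS$ via the iterated blow-up description of $\CM_\Delta$.

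For $P \in \CS \setminus \fh^*$, the inclusion $P \subseteq c(P)$ implies both $\alpha_P$ and $\alpha_{c(P)}$ lie in $c(P)$, hence descend to linear functionals on $\fh/c(P)^\perp$; the ratio of their values on any nonzero vector spanning $L_{c(P)}/c(P)^\perp$ is scale-invariant, and the denominator is nonzero by the definition of $U_\CS^b$. Let $\{h_P : P \in \CS\}$ denote the basis of $\fh$ dual to $b$. On the open locus, with $L_V = V^\perp + \C\chi$, the lemma's map becomes $[\chi] \mapsto (\alpha_P(\chi)/\alpha_{c(P)}(\chi))_P$, which is visibly an isomorphism of $\fh^{\mathrm{reg}}/\Cx \cap U_\CS^b$ onto $(\Cx)^{\CS \setminus \fh^*}$.

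For the inverse on the full affine space, given $(x_P)_{P \in \CS \setminus \fh^*} \in \C^{\CS \setminus \fh^*}$ I define $L_Q/Q^\perp \subset \fh/Q^\perp$ for each $Q \in \CS$ to be the line spanned by the class modulo $Q^\perp$ of
\[
h_Q \;+\; \sum_{P \in \CS,\, P \subsetneq Q} \Bigl( \prod_{R \in \CS,\, P \subseteq R \subsetneq Q} x_R \Bigr)\, h_P.
\]
Since $\alpha_P(h_{P'}) = \delta_{P,P'}$, this spanning vector pairs to $1$ with $\alpha_Q$ and to the displayed product with $\alpha_P$ for $P \in \CS$, $P \subsetneq Q$; specializing $Q = c(P)$ leaves only the single factor $R = P$ in the product, recovering the coordinate $x_P$.

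The main obstacle is extending this to $L_V$ for $V \in \CG \setminus \CS$ and showing that the resulting tuple $(L_V)_{V \in \CG}$ lies in $\CM_\Delta$, rather than merely in the ambient product $\prod_V \PP(\fh/V^\perp)$. For this I would appeal to the standard description of $\CM_\Delta$ as the iterated blow-up of $\PP(\fh)$ along the proper transforms of $\PP(V^\perp)$, $V \in \CG$, taken in order of increasing $\dim V^\perp$. The chart $U_\CS^b$ corresponds precisely to the affine chart of this iterated blow-up obtained by choosing, at each step, the direction determined by the $\CS$-element of the appropriate dimension together with the adapted basis element $\alpha_P$; in these coordinates the map of the lemma becomes the tautological identification with $\C^{r-1}$, and the $L_V$ for $V \notin \CS$ arise automatically from the blow-up construction. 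Equivalently, one may define $L_V$ for $V \notin \CS$ using the unique minimal element of $\CS$ containing $V$ (whose existence follows from nestedness of $\CS$ together with the indecomposability of $V$), and verify by continuity from the open locus $\fh^{\mathrm{reg}}/\Cx$ that the resulting tuple indeed lies in $\CM_\Delta$.
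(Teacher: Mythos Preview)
The paper does not give its own proof of this lemma: it simply records that ``The following result is Proposition~1.5 of \cite{DCP}.'' So there is no argument in the paper to compare against; your proposal is an attempt to reprove the De Concini--Procesi chart from scratch.

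Your treatment of well-definedness, of the map on the open locus $\fh^{reg}/\Cx$, and of the explicit inverse for the $\CS$-indexed lines $L_Q$ is correct and cleanly written: the dual-basis formula you give does recover each coordinate $x_P$. The genuine content of the De Concini--Procesi result, however, is exactly the part you flag as ``the main obstacle'' and then only sketch. You need, for every $(x_P) \in \C^{\CS \setminus \fh^*}$ (including those with some $x_P = 0$), to produce the full tuple $(L_V)_{V \in \CG}$ and to show it lies in the closure $\CM_\Delta$, and conversely that a point of $U_\CS^b$ is determined by the $x_P$. Your two suggested routes both defer the work: invoking the iterated blow-up description is essentially invoking the theorem itself, and ``verify by continuity from the open locus'' does not yet say how $L_V$ is defined when $V \notin \CS$ and some $x_P$ vanish, nor why the limit exists in $\CM_\Delta$ rather than escaping. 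The actual argument in \cite{DCP} constructs $L_V$ explicitly using the decomposition of $V$ relative to the nested set $\CS$ and the adapted basis, and checks the closure condition directly; your proposal stops short of this.
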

	
	Equivalently, the map $\C^{\CS \setminus \fh^*} \rightarrow \CM_\Delta $ is characterized by saying that the composition $\C^{\CS \setminus \fh^*} \rightarrow \CM_\Delta \rightarrow \PP(\fh) $ is given by
	$$
	(u_P)_{P \in S \setminus \fh^*} \mapsto \left[ \sum_{P \in S} (\prod_{P \subseteq Q} u_Q) \alpha_P^* \right]
	$$
	where $\{\alpha_P^*\}_{P \in \CS} $ denotes the basis of $ \fh $ dual to $ \{\alpha_P\}_{P \in \CS} $.
	
	This is deduced from \lemref{chart} by noticing that $ \frac{\alpha_P(L_{c(P)})}{\alpha_{c(P)}(L_{c(P)})} = \frac{\alpha_P(L_{\fh^*})}{\alpha_{c(P)}(L_{\fh^*})}
	$ for a generic point of $ \CM_\Delta$.
	
	\subsection{Cactus group}
	There is a Dynkin diagram automorphism $\theta: I\longrightarrow I$ defined by $\alpha_{\theta(i)} = -w_0(\alpha_i)$, where $w_0$ is the longest element in $W$.  For any subset $ J \subset I $, we have a parabolic subgroup $ W_J \subset W $ with a longest element $ w_0^J $, and a bijection $ \theta_J : J \rightarrow J $.  We also have a subrootsystem $ \Delta_J $ consisting of roots which are sums of the simple roots $ \alpha_i$ for $ i \in J $.
	
	\begin{defn} \label{def:cactus}
		We define the \emph{cactus group} $C_\Delta$, of type $\Delta$, to be the group with generators $s_J$, where $J$ is a connected subdiagram of $I$, and relations
		\begin{enumerate}
			\item $s^2_J=1 \; \; \forall \, J \subseteq I$
			\item $s_K s_J = s_{\theta_K(J)}s_K \; \; \forall \, J \subset K \subseteq I$
			\item $s_Ks_{J}=s_{J}s_K \; \; \forall \, J,K \subset I$ such that $ J \cup K $ is disconnected.
		\end{enumerate}
	\end{defn}
	
	We have a group homomorphism $ C_\Delta \rightarrow W $ taking $ s_J $ to $ w_0^J $.  	
	
	\subsection{Equivariant fundamental group}
	Let $ G $ be a finite group acting on a path-connected, locally simply-connected space $ X $.  Let $ x \in X $ be a basepoint.
	
	\begin{defn}
	The \emph{equivariant fundamental group} $ \pi_1^G(X, x) $ is defined as follows.
	$$
	\pi_1^G(X, x) = \{ (g,p) : g \in G, \text{ $ p $ is a homotopy class of paths from $ x $ to $ gx $} \}
	$$
	The multiplication in $ \pi_1^G(X, x) $ is defined as follows.  We define $$ (g_1, p_1) \cdot (g_2, p_2) =  (g_1 g_2, g_1(p_2) * p_1) $$ where $ * $ denotes concatenation of paths.
	\end{defn}
	
	A \emph{$G$-equivariant cover} of $ X $ is a covering space $ Y \rightarrow X $ along with an action of $ G $ on $ Y $ compatible with the action of $ G $ on $ X$.
	
	\begin{lem} \label{le:CoverEquiv}
		We have an equivalence of categories between the category of $ G$-equivariant covers $ Y \rightarrow X $ and the category of $ \pi_1^G(X, x)$-sets.
	\end{lem}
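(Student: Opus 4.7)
The proof will follow the template of the classical Galois correspondence between covering spaces and fundamental-group sets, treating the $G$-equivariance as an extra layer of structure built on top of the usual construction.

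In one direction, I would construct the fibre functor $F$ sending a $G$-equivariant cover $q: Y \to X$ to its fibre $Y_x := q^{-1}(x)$, endowed with the following action of $\pi_1^G(X,x)$: given $(g,\gamma) \in \pi_1^G(X,x)$ and $y \in Y_x$, note that $g \cdot y$ lies in $Y_{gx}$, and lift the reversed path $\gamma^{-1}: gx \to x$ to $Y$ starting at $g \cdot y$; its endpoint lies in $Y_x$, and I declare this endpoint to be $(g,\gamma) \cdot y$. Verifying that this respects the twisted multiplication $(g_1,\gamma_1) \cdot (g_2,\gamma_2) = (g_1 g_2, g_1(\gamma_2) * \gamma_1)$ amounts to two standard facts: path-lifting is compatible with concatenation, and lifting is natural under the $G$-action in the sense that the lift of $g_1(\delta)$ starting at $g_1 \cdot w$ equals $g_1$ applied to the lift of $\delta$ starting at $w$. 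Functoriality in $Y$ is immediate from the uniqueness of path lifting.

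For the quasi-inverse, I would exploit the short exact sequence
$$1 \longrightarrow \pi_1(X,x) \longrightarrow \pi_1^G(X,x) \longrightarrow G \longrightarrow 1,$$
where the injection sends $\gamma \mapsto (1,\gamma)$ and the surjection sends $(g,\gamma) \mapsto g$. Given a $\pi_1^G(X,x)$-set $S$, restriction along the first map produces a $\pi_1(X,x)$-set, to which the classical covering-space equivalence assigns a cover $Y_S := \widetilde X \times_{\pi_1(X,x)} S \to X$, where $\widetilde X$ is the universal cover of $X$ realized as homotopy classes of paths starting at $x$. To upgrade this to a $G$-equivariant cover, I would use the natural action of $\pi_1^G(X,x)$ on $\widetilde X$ given by $(g,\gamma) \cdot [\delta] := [\gamma * g(\delta)]$, which extends the deck transformation action of $\pi_1(X,x)$ and covers the $G$-action on $X$; together with the given action on $S$, this descends through the balanced product to a well-defined $G$-action on $Y_S$. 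The essential point is that any two lifts of a given $g \in G$ to $\pi_1^G(X,x)$ differ by an element of $\pi_1(X,x)$, and this ambiguity is precisely what is killed by the fibre product.

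The main technical burden is simply to check that $F$ and $S \mapsto Y_S$ are mutually quasi-inverse as functors of coboundary-type data, but this reduces cleanly to the non-equivariant case together with the compatibilities already discussed. I do not anticipate any genuine obstacle beyond keeping the conventions for path concatenation, $G$-translation, and the twisted multiplication in $\pi_1^G(X,x)$ consistent throughout; once these are pinned down, all verifications are routine.
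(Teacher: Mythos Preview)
Your proof is correct, but it takes a genuinely different route from the paper's own argument. The paper observes that $\pi_1^G(X,x)\cong\pi_1\bigl((X\times E_G)/G,\,x\bigr)$, where $E_G$ is a contractible space with a free $G$-action, and then chains together three equivalences: $\pi_1^G(X,x)$-sets $\leftrightarrow$ covers of $(X\times E_G)/G$ $\leftrightarrow$ $G$-equivariant covers of $X\times E_G$ $\leftrightarrow$ $G$-equivariant covers of $X$, the middle step being the standard correspondence between covers of a free quotient and equivariant covers upstairs, and the last using that $E_G$ is contractible.

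Your approach is more explicit and self-contained: you build the fibre functor and its quasi-inverse by hand, using the extension $1\to\pi_1(X,x)\to\pi_1^G(X,x)\to G\to 1$ and the associated-bundle construction $\widetilde X\times_{\pi_1(X,x)}S$. This has the advantage of not invoking the Borel construction or the identification of the equivariant fundamental group with $\pi_1$ of the homotopy quotient, at the cost of more bookkeeping with path-lifting and concatenation conventions (which, as you note, is where the care is required). The paper's argument is slicker but relies on a piece of machinery that is itself of comparable difficulty to what you are proving; your argument is the one a reader would likely reconstruct if asked to supply details.
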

	\begin{proof}
	We didn't find this result in the literature, so we include this proof for completeness.
	
	We have an isomorphism $ \pi_1^G(X,x) \cong \pi_1(X \times E_G / G, x) $ where $ E_G $ denotes a contractible space with a free $ G $ action.  So we obtain equivalences
	\begin{align*}
	\{ \text{ $ \pi_1^G(X,x)$-sets } \} & \cong \{ \text{ covers of $ X \times E_G / G $ } \} \\ & \cong \{ \text{$G$-equivariant covers of $ X \times E_G $} \} \\ &\cong \{ \text{$G$-equivariant covers of $ X $} \}
	\end{align*}
	\end{proof}

	\subsection{The real locus} \label{se:RealLocus}
	For this section, we assume that $\Delta $ is irreducible.  If $ \Delta = \Delta_1 \sqcup \Delta_2$, then $ \CM_\Delta = \CM_{\Delta_1} \times \CM_{\Delta_2}$, so the discussion generalizes immediately to the case of a reducible root system
	
	Since the root system $ \Delta $ is defined over $ \BR $, the variety $ \CM_\Delta $ is defined over $ \BR $ and so it make sense to consider the real points $ \CM_\Delta(\BR)$.
	
	Let, as in Section \ref{subsect-dCP},  $$ \fh_+ = \{ \chi \in \fh(\R) : \alpha_i(\chi) \ge 0 \text{ for all } i \in I \} \subset \fh(\R) $$ be the closed dominant Weyl chamber.  This Weyl chamber has faces
	$$\fh_+^J = \{ \chi \in \fh(\R) : \alpha_j(\chi) = 0 \text{ for $ j \in J $ and } \alpha_i(\chi) > 0 \text{ for $ i \in I \smallsetminus J $} \} $$
	for each subset $ J \subset I $.  In particular, we have the open face $ \fh_+^\emptyset  = \fh_+ \cap \fh^{reg} $.
	
	The real locus $ \CM_\Delta(\BR) $ contains a non-negative subset $ \CM_\Delta(\BR)_+ $ defined as the closure of the image of $ \fh_+^\emptyset $.  The set $ \CM_\Delta(\BR)_+ $ is contractible; in fact, it is homeomorphic to a convex polytope (see Theorem 3.2 of \cite{DCP2}).  For each $ J \subsetneq I $, we define $ \CM_\Delta(\BR)_+^J = \CM_\Delta(\BR)_+ \cap \pi^{-1}(\fh_+^J) $.

	Since $ \CM_\Delta(\BR)_+ $ is contractible, for any two points $ y, z \in \CM_\Delta(\BR)_+$, there exists a unique homotopy class of paths between them which stays inside $ \CM_\Delta(\BR)_+$.  We denote this homotopy class of paths by $ p_{y,z} $.
	
	We consider the equivariant fundamental group of $ \CM_\Delta(\BR) $ with respect to the action of $ W $.  Let $ \chi \in \fh_+^\emptyset $ be a fixed base point, regarded as a point in $ \CM_\Delta(\BR) $.  For convenience, we will require $ w_0(\chi) = -\chi $ in $ \fh $, so that $ w_0(\chi) = \chi $ in $ \CM_\Delta(\BR) $.
	
	For each $  J \subsetneq I $, we define a homotopy class of paths, $p_J, $ from $ \chi $ to $ w_0^J(\chi) $.  Namely, we define $ p_J $ to be the concatenation of two halves.  The first half is $ p_{\chi, \chi^J} $, where $ \chi^J $ is a  $w_0^J$-invariant point in $ \CM_\Delta(\BR)_+^J$.  The second half is obtained by applying the element $ w_0^J $ to the first half.  When $J = I $, we take $ p_I $ to be the constant path.
	
	The following result is due to Davis-Januszkiewicz-Scott \cite{DJS}.
	\begin{thm} \label{cor-DJS}
		The map $ s_J \mapsto (w_0^J, p_J) $ gives an isomorphism between $C_\Delta$ and the equivariant fundamental group $\pi_1^W(\CM_\Delta(\BR),\chi)$.
	\end{thm}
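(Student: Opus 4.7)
The plan is to compute $\pi_1^W(\CM_\Delta(\BR),\chi)$ from the polyhedral tile structure of the real locus, and to match the polyhedral presentation with the cactus presentation of \defnref{def:cactus}. Recall from \secref{se:RealLocus} that $\CM_\Delta(\BR)_+$ is a contractible polytope whose codimension-one facets are the closures of the sets $\CM_\Delta(\BR)_+^J$ for $J \subsetneq I$ a connected subdiagram. The involution $w_0^J$ fixes this $J$-facet setwise and interchanges $\CM_\Delta(\BR)_+$ with an adjacent chamber across it, so the translates $w \cdot \CM_\Delta(\BR)_+$ tile $\CM_\Delta(\BR)$ with face identifications governed by the elements $w_0^J$. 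This turns the computation of $\pi_1^W$ into a Van Kampen-type calculation on the polyhedral complex $\CM_\Delta(\BR)/W$.

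First I would check that the assignment $s_J \mapsto (w_0^J, p_J)$ descends to a well-defined homomorphism $C_\Delta \to \pi_1^W(\CM_\Delta(\BR),\chi)$, by verifying the three families of relations. For relation (1), $p_J$ is by construction a concatenation $w_0^J(p_{\chi,\chi^J}) * p_{\chi,\chi^J}$ inside the contractible set $\CM_\Delta(\BR)_+ \cup w_0^J \cdot \CM_\Delta(\BR)_+$, so $(w_0^J,p_J)^2$ is trivial. For relation (3), when $J \cup K$ is disconnected, the intersection $\CM_\Delta(\BR)_+^{J\cup K}$ is nonempty and is fixed pointwise by both commuting involutions $w_0^J$ and $w_0^K$, so one can choose the auxiliary points $\chi^J, \chi^K$ on a common path through a $w_0^J w_0^K$-fixed point and slide the two halves past each other in the contractible polytope. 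Relation (2) uses the identity $w_0^K w_0^J (w_0^K)^{-1} = w_0^{\theta_K(J)}$ together with the fact that the first half of $p_{\theta_K(J)}$ can be chosen as $w_0^K$ applied to the first half of $p_J$, via a fixed point in the face $\CM_\Delta(\BR)_+^K$ that also lies on the boundary of the $J$-face.

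For surjectivity, any class $(w,\gamma) \in \pi_1^W$ is represented by a path from $\chi$ to $w\chi$ which, by general position, crosses chamber walls transversally. Each crossing is recorded by the facet label $J$ of the wall and contributes a factor $s_J$ after conjugation to the basepoint; concatenating these yields a word in the $s_J$ whose image is $(w,\gamma)$. This gives a surjection, using only that $\CM_\Delta(\BR)_+$ is contractible so the path within each single chamber is uniquely determined up to homotopy.

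The main obstacle, and the heart of the argument, is injectivity: showing that every relation in $\pi_1^W$ is a consequence of (1)--(3). By the polyhedral Van Kampen theorem applied to the tiling, relations among the chamber-crossing generators are generated by the codimension-two faces of $\CM_\Delta(\BR)_+$. So I would classify the codimension-two faces of the polytope: by the nested set description of \secref{Nested}, each codimension-two face is determined by a nested pair of elements of $\CG$ meeting the dominant chamber, and these nested pairs are exactly the pairs $\{V_J, V_K\}$ with either $J \subset K$ or $J \cup K$ disconnected. Matching the local relation at such a face to the braid relations among $w_0^J, w_0^K$ acting on the link recovers precisely relations (2) and (3). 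This combinatorial identification of codimension-two incidence with the defining cactus relations is exactly the content established in \cite{DJS}, and completes the isomorphism.
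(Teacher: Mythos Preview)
The paper gives no proof of this theorem: it is stated with attribution to Davis--Januszkiewicz--Scott \cite{DJS} and then used as a black box. Your sketch is essentially the argument of \cite{DJS} itself --- tile $\CM_\Delta(\BR)$ by the $W$-translates of the contractible polytope $\CM_\Delta(\BR)_+$, read off generators from codimension-one wall crossings and relations from codimension-two faces via Van Kampen, and match the resulting presentation with \defnref{def:cactus}. So you have in fact supplied more detail than the paper does, and along the same lines as the cited source.

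One point to tighten if you flesh this out: your verification of relation (2) is the vaguest of the three. The clean way to do it is to observe that $w_0^K$ carries the $J$-facet of $\CM_\Delta(\BR)_+$ isomorphically to the $\theta_K(J)$-facet (since $w_0^K(V_J)=V_{\theta_K(J)}$), and hence carries the path $p_J$ to a path in the homotopy class of $p_{\theta_K(J)}$ after accounting for basepoints; this combined with $w_0^K w_0^J (w_0^K)^{-1}=w_0^{\theta_K(J)}$ gives the relation directly in $\pi_1^W$. Your codimension-two classification is correct, but note that in \cite{DJS} the presentation is phrased in the language of ``mock reflection groups'' rather than cacti, so a small translation step is implicit.
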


	\section{Deligne-Mumford spaces of genus 0 real curves}
	\subsection{The spaces}
	For $ n \ge 2$, let $ \CM_{n+1} = \overline{M}_{0,n+1}$ denote the Deligne-Mumford space of stable genus 0 curves with $ n+1$ marked points.  A point in this space is a projective genus 0 curve, possibly with nodes, and $n+1 $ marked points, such that each component has at least 3 distinguished (marked or nodal) points; we consider such marked curves up to automorphisms.
	
	Note that $ \CM_{n+1} $ contains a dense open subset corresponding to curves with one component.  This open subset is isomorphic to $ ((\PP^1)^{n+1} \setminus \Delta)/ PGL_2$ (here $PGL_2$ enters since it is the automorphism group of $ \PP^1 $).  Note that we have
	$$  ((\PP^1)^{n+1} \setminus \Delta)/ PGL_2 \cong (\C^n \setminus \Delta)/B \cong \fh^{reg}(\sl_n) / \Cx$$
	where $ B \subset PGL_2 $ is the Borel subgroup, $ B \cong \Cx \ltimes \C $, acting on $ \C^n $ by scaling and translation, $ \fh(\sl_n) $ denotes the Cartan subalgebra of $ \sl_n $, and $ \Delta $ denotes the thick diagonal.  From \cite[Section 4.3]{DCP} (which references an earlier work of Keel \cite{Keel}), we see that this can be extended to an identification $ \CM_{n+1} \cong \CM_{\Delta(\sl_n)} $.
	
	Here we fix the following conventions regarding the root system of $ \sl_n $,
	\begin{gather*} \fh = \C^n / \C(1, \dots, 1), \  \fh^* = \{(a_1, \dots, a_n) \in \C^n: a_1 + \dots + a_n = 0 \}, \\ \Delta(\sl_n) = \{ \eps_i - \eps_j : 1\le i \ne j \le n \}, \ I = \{1, \dots, n-1\}, \ \alpha_i = \eps_i - \eps_{i+1}.
	\end{gather*}
	
	\subsection{Operad structure}
	The varieties $ \CM_{n+1} $ carry an operad structure defined by attaching curves at marked points (see for example \cite[Section 1.4]{GK}).   We begin by reviewing the definition of an operad and a coloured operad, following \cite{Fresse}.
	\begin{defn} \label{Def:Operad}
	An \emph{operad} (in the category of varieties) is a sequence of varieties $ X_n $ (for $ n = 1, 2, \dots $) along with the following data:
	\begin{itemize}
	\item An action of $ S_n $ on $ X_n $.
	\item For all $ n \ge 1 $ and $ k_1, \dots, k_n \ge 1 $, a morphism
	$$\gamma : X_n \times X_{k_1} \cdots \times X_{k_n} \rightarrow X_{k_1 + \cdots + k_n}$$
	\end{itemize}
	satisfying the following axioms
	\begin{itemize}
		\item[(unit)] The variety $ X_1 $ is a point and if we take $n = 1 $ or all $ k_i = 1 $, then the map $ \gamma $ becomes the identity map
		\begin{gather*}
			\gamma : X_1 \times X_k = \{pt\} \times X_{k} \rightarrow X_{k} \\
			\gamma : X_n \times X_1^n = X_{n} \times \{pt\}^n \rightarrow X_{n}
		\end{gather*}
		\item[(associativity)] Given $ n, k_1, \dots, k_n, r^{(1)}_1, \dots r^{(1)}_{k_1}, \dots, r^{(n)}_1, \dots, r^{(n)}_{k_n} $, the diagram
		\begin{equation} \label{eq:OperadAssociativity}
		\begin{tikzcd}[cramped]
				X_{n} \times X_{k_1} \times \cdots \times X_{k_n} \times X_{r^{(1)}_1} \times \cdots \times X_{r^{(n)}_{k_n}}  \ar[d,"{\gamma, id}"] \ar[r,"{id,\gamma}"] & X_{n} \times X_{r^{(1)}_1 + \dots r^{(1)}_{k_1}}  \times \cdots \times X_{r^{(n)}_1 + \dots r^{(n)}_{k_n}} \ar[d,"\gamma"] \\
				X_{k_1 + \dots + k_n } \times X_{r^{(1)}_1} \times \cdots \times X_{r^{(n)}_{k_n}} \ar[r,"\gamma"] & X_{r^{(1)}_1 + \cdots + r^{(n)}_{k_n}}
		\end{tikzcd}
		\end{equation}
		commutes.
		
		\item[(equivariance)]
		Given $ n, k_1, \dots, k_n$,  $w \in S_n $ and $ w_i \in S_{k_i} $, the diagram
		\begin{equation} \label{eq:OperadEquivariance}
		\begin{tikzcd}
				X_{n} \times X_{k_1} \times \cdots \times X_{k_n} \ar[d,"\gamma"] \ar[r,"{w, w_1, \dots, w_n}"] & X_n \times X_{k_{w(1)}} \times \cdots \times X_{k_{w(n)}} \ar[d,"\gamma"] \\
				X_{k_1 + \dots + k_n} \ar[r,"{\gamma(w;w_1, \dots, w_n)}"] & X_{k_1 + \dots + k_n}
		\end{tikzcd}
		\end{equation}
		commutes, where $ \gamma(w;w_1, \dots, w_n) $ is the operad structure on symmetric groups (see \cite{Fresse} for the details).		
	\end{itemize}
	
\end{defn}
Later, we will also need the notion of a coloured operad, which is sometimes also called a multicategory.
	\begin{defn} \label{Def:ColouredOperad}
	Let $ \Xi$ be a set. A $\Xi$-\emph{coloured operad} is a sequence of varieties $ X_n $ (for $ n = 1, 2, \dots $) along with maps $ X_n \rightarrow \Xi^{n+1} $ for each $ n $, along with the following data:
	\begin{itemize}
	\item An action of $ S_n $ on $ X_n $ compatible with the action of $ S_n $ on $ \Xi^{n+1} $ permuting the first $n $ factors.
	\item For each $ \ul = (\lambda_1, \dots, \lambda_n) \in \Xi^n $ and $ \mu \in \Xi $, let $ X(\ul)^\mu $ be the fibre over $ (\lambda_1, \dots, \lambda_n, \mu) \in \Xi^{n+1} $.
	
	Then for each $ \umu = (\mu_1, \dots, \mu_n) \in \Xi^n $ and each $  \ul^{(1)} \in \Xi^{k_1}, \dots, \ul^{(n)} \in \Xi^{k_n} $, and $ \nu \in \Xi$, we have a map
		$$
		\Gamma : X(\umu)^\nu \times X(\ul^{(1)})^{\mu_1} \times \cdots \times X(\ul^{(n)})^{\mu_n} \rightarrow X(\ul^{(1)} \sqcup \cdots \sqcup \ul^{(n)})^\nu
		$$	
	\end{itemize}
	(Here $\sqcup $ denotes concatenation of lists.)
	
	We require this data to satisfy unit, associativity, and equivariance axioms similar to that of an operad.  These axioms are essentially unchanged, with the exception of the unit axiom, where we require $ X(\lambda)^\mu $ be a point if $ \lambda = \mu $ and the empty set otherwise; thus $ X_1 = \Xi $.
	\end{defn}
	
	In order to define the operad structure on $ \CM_{n+1} $, we begin by defining $ \CM_2 = \{pt\} $.
	
	Next, we have the action of $ S_n $ on $ \CM_{n+1} $ by permuting the marked points $ 1, \dots, n $.
	
	Finally, we define, for all $ n \ge 1 $ and $ k_1, \dots, k_n \ge 1 $, the morphism
	$$\gamma : \CM_{n+1} \times \CM_{k_1+1} \cdots \times \CM_{k_n +1} \rightarrow \CM_{k_1 + \cdots + k_n + 1}.$$
	 Given $ (C, C_1, \dots, C_n) \in  \CM_{n+1} \times \CM_{k_1+1} \cdots \times \CM_{k_n +1}$, we define $ \gamma(C, C_1, \dots, C_n) $ to be the result of attaching the curves $ C_1, \dots, C_n $ to the curve $ C $, by identifying the marked point $ j $ on $ C $ with the marked point $ k_j + 1 $ on the curve $ C_j $.

	 \begin{thm}
	 This defines an operad structure on $ \CM_{n+1}$.
	 \end{thm}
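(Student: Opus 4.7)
The plan is to verify the three operad axioms from \defnref{Def:Operad} directly from the geometric definition of $\gamma$. Each axiom will reduce to a transparent statement about how attachments of marked nodal curves compose, so the argument is essentially a bookkeeping exercise once the well-definedness of $\gamma$ is established.

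First I would check that $\gamma(C, C_1, \dots, C_n)$ indeed lies in $\CM_{k_1+\cdots+k_n+1}$. Attaching two trees of $\PP^1$'s at a single point produces another tree of $\PP^1$'s, so the result is a connected nodal curve of arithmetic genus $0$. The marked points of the result are the first $k_j$ marked points of each $C_j$ (relabeled consecutively as $1,\dots,k_1+\cdots+k_n$) together with the $(n+1)$-st marked point of $C$, giving the correct total. Stability persists because a marked point that is used in an attachment becomes a nodal point, which still counts as a distinguished point, so every component retains at least three distinguished points.

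Next I would treat the unit axiom. This requires the standard operadic convention that the singleton $\CM_2$ is a formal unit representing the ``trivial attachment'': under this convention, gluing $\CM_2$ onto any marked point of $C$ returns $C$, so the two maps in the unit axiom are literally the identity. For associativity, I would argue that both compositions in diagram~\eqref{eq:OperadAssociativity} produce the same nodal curve, namely the tree obtained by attaching each innermost curve $C^{(j)}_i$ to $C_j$ at its $(r^{(j)}_i+1)$-st marked point and then attaching the curves $C_j$ to $C$ at its $j$-th marked point; since all attachments happen at distinct marked points on distinct components, the order is immaterial. For equivariance, permuting the marked points of $C$ by $w \in S_n$ and each $C_j$ internally by $w_j \in S_{k_j}$ before gluing produces the same marked nodal curve as first gluing and then applying the permutation $\gamma(w;w_1,\dots,w_n)$ to the marked points of the resulting curve, essentially by the definition of $\gamma(w;w_1,\dots,w_n)$ in the symmetric-group operad.

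The main subtle point is the unit convention, since a genus-$0$ curve with only two marked points and no nodes is not stable in the usual sense, so strictly speaking $\CM_2$ is introduced by hand as a formal unit (equivalently, as a $\PP^1$ with two marked points and one nodal point, followed by contraction of the resulting unstable component). Once this convention is in place (cf.\ \cite{GK, Keel}), the remaining verifications are routine geometric bookkeeping, and the operad structure obtained coincides with the standard one on the Deligne-Mumford tower.
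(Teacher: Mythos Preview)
The paper does not actually give a proof of this theorem: it is stated immediately after the definition of $\gamma$ and the next subsection begins with no intervening argument. The result is treated as standard, with the operad structure on the Deligne--Mumford tower attributed earlier to \cite{GK}. Your proposal is therefore not in competition with any argument in the paper; it is a reasonable direct verification of the axioms, and the geometric bookkeeping you describe (stability under attachment, order-independence of gluings at distinct points, compatibility with relabelling) is exactly what one would write if asked to spell this out.

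One point you do not address: the paper's \defnref{Def:Operad} asks for an operad \emph{in the category of varieties}, so $\gamma$ must be a morphism of algebraic varieties, not just a map of sets. This is also standard---the gluing map identifies the source with a closed boundary stratum of the target, and the universal family over $\CM_{n+1} \times \prod_j \CM_{k_j+1}$ obtained by gluing the pulled-back universal curves gives the required morphism via the universal property of $\CM_{N+1}$---but strictly speaking your sketch only verifies the axioms pointwise. Your discussion of the unit is accurate: the paper simply declares $\CM_2 = \{pt\}$ by fiat, which is the same formal-unit convention you describe.
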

	
	\begin{rem}
	For $k_2=\ldots=k_n=1$ and $k_1=k$ the map $\gamma$ is just the \emph{clutching map} $\gamma_{0,0,n,k}:\CM_{n+1}\times\CM_{k+1}\to\CM_{n+k}$ from \cite{Knudsen}. In general, the above map $\gamma$ is the composition of the following clutching maps:
	\begin{multline*}
	    \CM_{n+1} \times \CM_{k_1+1} \cdots \times \CM_{k_n +1} \xrightarrow{\gamma_{0,0,n,k_1}\times\id\times\ldots\times\id} \CM_{n+k_1} \times \CM_{k_2+1} \cdots \times \CM_{k_n +1} \to \\ \xrightarrow{\gamma_{0,0,n+k_1-1,k_2}\times\id\times\ldots\times\id} \CM_{n+k_1+k_2-1} \times \CM_{k_2+1} \cdots \times \CM_{k_n +1} \to\ldots\to \\ 
	    \to \CM_{k_1+k_2+\ldots+k_{n-1}+2} \times \CM_{k_n +1} \xrightarrow{\gamma_{0,0,k_1+k_2+\ldots+k_{n-1}+1,k_n}} \CM_{k_1 + \cdots + k_n + 1}
	\end{multline*} 
	\end{rem}
	
	\subsection{Binary rooted trees}
	\begin{defn}
	A \emph{labelled binary rooted tree} with $ n $ leaves is a binary tree, along with a choice of a root vertex, and a labelling of the leaves by the set $ \{1, \dots, n\} $.  In correspondence with biological principles, we will picture the root of the tree at the bottom.
	
		A  \emph{planar labelled binary rooted tree} is a labelled binary rooted tree along with a planar embedding.  The information of this planar embedding is equivalent to the information of a left branch and a right branch for every internal vertex of the tree.
	\end{defn}
	Of course, every planar labelled binary rooted tree gives rise to a labelled binary rooted tree by forgetting the planar embedding.  We will typically use the same letter $ T $ for both a planar tree and one without a chosen planar embedding. 	
	
	The following result is well-known and is illustrated in Figure \ref{fig:1}.
	\begin{lem}
	There are bijections
	\begin{align*}
	\begin{Bmatrix}\text{planar labelled binary rooted trees} \\ \text{with $n $ leaves} \end{Bmatrix} &\leftrightarrow \{ \text{ordered bracketings of $\{1, \dots, n\} $} \} \\
	\begin{Bmatrix} \text{labelled binary rooted trees} \\ \text{with $n $ leaves} \end{Bmatrix} &\leftrightarrow \begin{Bmatrix} \text{ordered bracketings of } \{1, \dots, n\} \\ \text{modulo equivalence} \end{Bmatrix}
	\end{align*}
	where two ordered bracketings of $ \{1, \dots, n\} $ are considered equivalent if we can get from one to another by performing a  flip inside a bracket.
	\end{lem}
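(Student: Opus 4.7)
The plan is to construct the two bijections recursively on the structure of the trees (respectively bracketings), and then observe that the non-planar case follows from the planar case by quotienting both sides by a compatible equivalence relation.

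First, I would define a map $\Phi$ from planar labelled binary rooted trees with $n$ leaves to ordered bracketings of subsets of $\{1,\dots,n\}$ as follows. If $T$ has a single leaf labelled $i$, set $\Phi(T) = i$. Otherwise, $T$ has a root with a left subtree $T_L$ and a right subtree $T_R$ (this decomposition uses the planar embedding), and I define $\Phi(T) = (\Phi(T_L), \Phi(T_R))$. Since the leaves of $T$ are labelled by $\{1,\dots,n\}$, the result is an ordered bracketing of $\{1,\dots,n\}$. I would then construct the inverse map $\Psi$ recursively on the depth of the outermost bracket: given a bracketing $(B_L, B_R)$, attach $\Psi(B_L)$ and $\Psi(B_R)$ as the left and right subtrees of a new root vertex. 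A short induction on $n$ shows that $\Phi$ and $\Psi$ are mutually inverse, proving the first bijection.

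For the second bijection, I would pass to the quotients on both sides. On the tree side, forgetting the planar embedding of a planar labelled binary rooted tree amounts to quotienting by the equivalence relation generated by swapping the left and right subtrees at any single internal vertex. Under $\Phi$, swapping the two subtrees at an internal vertex corresponds precisely to flipping the two entries inside the bracket associated to that vertex. Thus $\Phi$ descends to a bijection between the two quotients, which is exactly the second bijection in the statement.

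The only step requiring minor care is verifying that $\Phi$ does indeed produce a valid ordered bracketing of the full set $\{1,\dots,n\}$, i.e.\ that the leaves split disjointly into those of $T_L$ and those of $T_R$; this is immediate from the definition of a labelled binary rooted tree. There is no real obstacle, as both sides are in straightforward recursive bijection with the free magma on $\{1,\dots,n\}$ (respectively its commutative analogue), so the result is essentially a restatement of the universal property of free (commutative) magmas.
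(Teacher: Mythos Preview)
Your proof is correct. The paper does not actually prove this lemma: it is introduced with ``The following result is well-known'' and no argument is given, so your recursive construction of $\Phi$ and $\Psi$, together with the observation that forgetting the planar embedding corresponds exactly to quotienting by flips inside brackets, supplies precisely the details the paper omits.
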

	
	\begin{figure}
		\begin{tikzpicture}[scale=0.7]
		\draw
		(0,0)--(0,1)--(2.5,4) node[above]{5}
		(0,1)--(-2.5,4) node[above]{3}
		(1.25,2.5)--(0,4) node[above]{4}
		(1.75,3.1)--(1,4) node[above]{2}
		(1.375,3.55)--(1.75,4) node[above]{7}
		(-2.125,3.55)--(-1.75,4) node[above]{1}
		(-1.75,3.1)--(-1,4) node[above]{6}
		(3.25,2.5)node[right]{$\longleftrightarrow$}
		(5.375,2.5)node[right]{(\;(\;3\;1\;)\;6\;)\;(\;4\;(\;(\;2\;7\;)\;5\;)\;)}
		;
		\end{tikzpicture}
		\caption{A planar labelled binary rooted tree with $7$ leaves and the corresponding ordered bracketing.}  \label{fig:1}
	\end{figure}
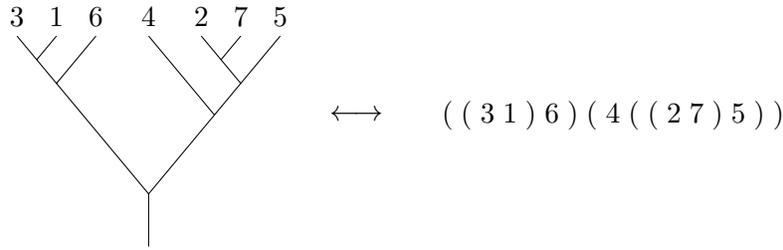

	A labelled binary rooted tree $T$ with $n $ leaves gives rise to a maximal nested subset $ \CS(T) \subset \CG $, where $\CG = \CG(\sl_n) $.  This is done as follows.  For each internal vertex $ v $ of the tree $T$, we consider the subspace
	$$P_v = \spn\bigl(\eps_i - \eps_j : i, j \text{ are labels of leaves above $ v $}\bigr) $$
	and define
	$$ \CS(T) = \{P_v : v \text{ is an internal vertex of $ T $} \}  $$
	
	\begin{lem}
	For each $ T $, $ \CS(T) $ is a maximal nested subset of $ \CG $ and this defines a bijection between  labelled binary rooted trees $T$ with $n $ leaves and maximal nested subsets of $ \CG $.
	\end{lem}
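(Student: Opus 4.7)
The plan is to translate both sides into a combinatorial dictionary based on subsets of $\{1,\dots,n\}$. First I would give an explicit description of $\CG = \CG(\sl_n)$: for each $V \in \CG'$, let $G_V$ be the graph on $\{1,\dots,n\}$ whose edges are the pairs $\{i,j\}$ with $\eps_i - \eps_j \in V$. Since $V$ is spanned by roots, one checks that $V = \bigoplus_C P_C$, where $C$ ranges over the non-singleton connected components of $G_V$ and $P_C := \spn(\eps_i - \eps_j : i,j \in C)$, and that this is a decomposition in the sense of \secref{subsect-dCP}. Consequently
\[ \CG = \{ P_S : S \subseteq \{1,\dots,n\},\ |S| \ge 2 \}, \]
ordered by inclusion of the subsets $S$.

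Next I would characterize nested subsets of $\CG$. I claim $\{P_{S_1},\dots,P_{S_k}\}$ is nested if and only if $\{S_1,\dots,S_k\}$ is a \emph{laminar family}, meaning any two of the $S_i$ are comparable or disjoint. Indeed, if $S_{i_1},\dots,S_{i_m}$ are pairwise incomparable and pairwise disjoint, then any vector in $P_{S_{i_1}} + \cdots + P_{S_{i_m}}$ is supported in $\bigsqcup_j S_{i_j}$ with coordinates summing to zero on each block, so any root $\eps_a - \eps_b$ in this subspace must have both $a,b$ in the same $S_{i_j}$ and hence lies in $P_{S_{i_j}}$; this yields a decomposition. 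Conversely, if two incomparable $S_i,S_j$ overlap, then $P_{S_i}+P_{S_j} = P_{S_i\cup S_j}$ contains a root $\eps_a-\eps_b$ with $a \in S_i\setminus S_j$, $b \in S_j\setminus S_i$, which lies in neither summand, ruling out a decomposition.

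Finally, I would identify maximal nested subsets with labelled binary rooted trees. Given any laminar family $\mathcal{L}$ of size-$\ge 2$ subsets of $\{1,\dots,n\}$, adjoining the singletons produces a rooted forest with vertex set $\mathcal{L} \sqcup \{\{1\},\dots,\{n\}\}$, where the parent of an element is its minimal strict container. Maximality of $\mathcal{L}$ is equivalent to this forest being a single binary tree: $\{1,\dots,n\}$ must itself lie in $\mathcal{L}$ (else add it) and every internal vertex has exactly two children (else merge two of them to enlarge $\mathcal{L}$). Such binary trees with leaf-labels $\{1,\dots,n\}$ are precisely the objects on the other side, and the construction $T \mapsto \CS(T) = \{P_{S_v}\}_{v\text{ internal}}$ of the statement recovers exactly the subspaces $\{P_S : S \in \mathcal{L}\}$.

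The main obstacle is really the nested-iff-laminar step, which requires careful bookkeeping of the "traceless within each block" condition that cuts down the roots inside a sum of $P_{S_i}$. Once that is settled, the size count $|\CS(T)| = (\text{number of internal vertices}) = n-1 = r$ confirms maximality automatically, and the inverse construction from a laminar family to a tree gives bijectivity.
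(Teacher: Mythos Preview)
The paper states this lemma without proof, treating it as well known. Your argument is correct and is exactly the standard combinatorial unpacking one would give: identifying $\CG(\sl_n)$ with subsets $S \subseteq \{1,\dots,n\}$ of size at least two via $S \mapsto P_S$, showing that nestedness of $\{P_{S_i}\}$ is equivalent to the $S_i$ forming a laminar family, and then recognizing maximal laminar families as precisely the internal-vertex sets of labelled binary rooted trees on $n$ leaves.
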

	
	The partial order on $ \CS(T) $, by containment, corresponds to the partial order on the vertices of $ T $, given by proximity to the root.  In particular, for any non-root internal vertex $ v $, $ c(P_v) = P_{c(v)} $, where $ c(v) $ denotes the first vertex along the unique path connecting $ v $ with the root.
	
	Given a planar labelled binary rooted tree $ T$, we define a basis $ b(T) $ adapted to $ \CS(T)$ as follows.  For an internal vertex $ v$ of the tree, we let $l(v) $ denote the maximal label in the left branch at $ v $ and let $ r(v) $ denote the minimal label in the right branch at $ v $.  Define $ b(T) $ by
	$$
	b(T) = \{ \eps_{l(v)} - \eps_{r(v)} : v \text{ is an internal vertex of $ T $} \}  $$
	
	\begin{lem}
	For each planar labelled binary rooted tree $ T $,  $ b(T) $ is a basis adapted to $ \CS(T) $ and the bijection from \lemref{NestedToBasis} is given by $ P_v \mapsto  \eps_{l(v)} - \eps_{r(v)} $.
	
	Moreover, the map $ T \mapsto (\CS(T), b(T)) $ defines a bijection between planar labelled binary rooted trees with $ n $ leaves and pairs consisting a nested set and an adapted basis for $ \CG $.
	\end{lem}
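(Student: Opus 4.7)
The plan is to prove the two assertions in sequence. The first establishes adaptedness of $b(T)$ to $\CS(T)$ and identifies the bijection of \lemref{NestedToBasis}; the second upgrades the known bijection between unordered labelled trees and maximal nested subsets to the planar level by matching the extra data.

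For the first assertion, the key computation is to identify, for each internal vertex $v$ of $T$,
\[
b(T) \cap P_v = \{\eps_{l(w)} - \eps_{r(w)} : w \text{ is an internal vertex in the subtree of $T$ rooted at } v\}.
\]
Since $P_v$ is the subspace of $\fh^*$ spanned by $\eps_i - \eps_j$ with $i, j \in L_v$ (i.e.\ vectors supported on $L_v$ with zero coordinate sum), the element $\eps_{l(w)} - \eps_{r(w)}$ lies in $P_v$ precisely when $l(w), r(w) \in L_v$. A short case check on the position of $w$ relative to $v$ handles this: if $v$ is strictly above $w$ then $v$ lies in one branch at $w$, forcing the extremal label on the other side (one of $l(w), r(w)$) to lie outside $L_v$; if $v$ and $w$ lie in disjoint subtrees then $L_w \cap L_v = \emptyset$; and if $w$ is in the subtree of $v$ then both labels are in $L_v$. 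The right-hand set above has $|L_v|-1 = \dim P_v$ elements, and linear independence follows by induction on subtree size: splitting at the children $v_1, v_2$ of $v$, the additional element $\eps_{l(v)} - \eps_{r(v)}$ has its two nonzero entries split across $L_{v_1}$ and $L_{v_2}$, so projecting to $L_{v_1}$-coordinates exhibits it as linearly independent from the inductively established basis of $P_{v_1}$ (whose elements have vanishing sum on $L_{v_1}$). Hence $b(T)$ is adapted, and the characterization $P = \spn(\alpha_Q : Q \subseteq P)$ in \lemref{NestedToBasis} forces the bijection $P_v \mapsto \eps_{l(v)} - \eps_{r(v)}$.

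For the second assertion, the preceding lemma already provides a bijection between labelled (unordered) binary rooted trees and maximal nested subsets of $\CG$, so it suffices to show that for a fixed underlying tree, planar structures on $T$ are in bijection with adapted bases of $\CS(T)$ via $T \mapsto b(T)$. In one direction, a planar structure pins down an ordering $(v_1, v_2)$ of the children at each internal vertex $v$ and hence determines $\alpha_{P_v} = \eps_{l(v)} - \eps_{r(v)}$ via the extremal rule $l(v) = \max L_{v_1}$, $r(v) = \min L_{v_2}$. In the reverse direction, given an adapted basis, the element $\alpha_{P_v}$ lies in $P_v \cap \Delta$ but not in $P_{v_1} \cup P_{v_2}$, so it is a root $\eps_i - \eps_j$ with $i$ and $j$ in the two distinct children's leaf sets; reading off which child contains the first index recovers the ordering and hence the planar structure.

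The main obstacle is verifying that the reverse map is well-defined and exhaustive, i.e.\ that $\alpha_{P_v}$ is forced to have the specific extremal max/min form $\eps_{\max L_{v_1}} - \eps_{\min L_{v_2}}$ (for some ordering of the children) rather than an arbitrary pair of labels joining the two child leaf sets. This is the delicate step that ultimately matches the count of $2^{n-1}$ planar structures on $T$ with the $2^{n-1}$ adapted bases of $\CS(T)$ in the sense of \lemref{NestedToBasis}; granted this combinatorial identification, both directions of the bijection are explicit and inverse to each other, and the proof concludes.
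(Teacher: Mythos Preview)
The paper states this lemma without proof, so there is nothing to compare against.

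Your argument for the first assertion is sound: the description of $b(T)\cap P_v$ as the set of $\eps_{l(w)}-\eps_{r(w)}$ over internal vertices $w$ of the subtree rooted at $v$, together with the inductive linear-independence check, establishes adaptedness and pins down the bijection of \lemref{NestedToBasis} as claimed.

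For the second assertion you correctly isolate the obstruction, but then assume it away with the phrase ``granted this combinatorial identification''. In fact it cannot be resolved, because the bijection claim, read literally against the definition of adapted basis given in \secref{Nested}, is false. Already for $n=3$, with $\CS$ the nested set coming from grouping $\{1,2\}$ first, the set $\{\eps_1-\eps_2,\ \eps_1-\eps_3\}$ is an adapted basis of $\CS$ (it meets $\spn(\eps_1-\eps_2)$ in a basis and spans $\fh^*$), yet it is not $b(T)$ for any planar structure on the underlying tree: the extremal rule at the root only ever produces $\eps_2-\eps_3$ or $\eps_3-\eps_1$. In general, at an internal vertex $v$ with child leaf sets of sizes $a$ and $b$ there are $2ab$ roots $\eps_i-\eps_j$ joining the two sides, and any of them can serve as $\alpha_{P_v}$ in an adapted basis, whereas the rule $\eps_{l(v)}-\eps_{r(v)}$ singles out only $2$. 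So the number of adapted bases of $\CS(T)$ is $\prod_v 2\,|L_{v_1}|\,|L_{v_2}|$, which exceeds $2^{n-1}$ as soon as $n\ge 3$; the map $T\mapsto(\CS(T),b(T))$ is injective but not surjective.

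Your injectivity argument (recovering the left/right ordering at $v$ from which child's leaf set contains the first index of $\alpha_{P_v}$) is correct, and together with the first assertion this is all the paper actually uses downstream: it only needs that each planar tree $T$ determines a chart $U_T\subset\CM_{n+1}$.
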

	
	As in \secref{Nested}, the pair $(\CS(T), b(T)) $ gives rise to a subset $ U_T \subset \CM_{n+1} $ and a chart $ \C^{n-2} \rightarrow U_T $, where $\C^{n-2} $ is a shorthand for sequences indexed by the $n-2$ non-root internal vertices of $ T $.

	The nested set $ \CS(T) $ also gives us a point of $ \CM_{n+1} $ (as in \secref{Nested}).  By an abuse of notation, we will often denote this point by $ T $ or by the bracketing corresponding to $ T $.
		
	\subsection{Cactus group} \label{se:CactusCn}
	We write $ C_n = C_{\Delta(\sl_n)} $.  In this case, it is natural to write the generators of the cactus group as $ s_{pq}$ for $ 1 \le p < q \le n $.  The pair $ p,q $ corresponds to the connected subset $ J = \{p, \dots, q-1\} $ of the Dynkin diagram $ I = \{1,\dots, n-1\}$.
	
	Thus, the relations in $C_n $ are as follows
	\begin{enumerate}
		\item $s^2_{pq}=1 \; \; \forall \, p < q $
		\item $s_{pq}s_{kl} = s_{p+q-l \, p+q-k}s_{pq} \; \; $ if $ p \le k < l \le q $.
		\item $s_{pq}s_{kl}=s_{kl}s_{pq} \; \; $ if $ l < p $ or $ q < k $.
	\end{enumerate}
	
	As above, we have a natural map $ C_n \rightarrow S_n $ which carries $ s_{pq} $ to the permutation $ w_{pq} $ which reverses the segment $[p,q] $ and acts by the identity outside this segment.
	
	As above, from the work of \cite{DJS}, we have an isomorphism
	$$
	C_n \cong \pi_1^{S_n}(\CM_{n+1}, \uz)
	$$
	where $ \uz $ is a fixed base point.

	\section{Coboundary categories and moduli spaces}
	
	\subsection{Coboundary categories and the cactus group}
	\begin{defn} \label{Def:Coboundary}
	A \emph{monoidal category} is a category $ \cC $ along with a pair $ (\otimes, \alpha) $ where $ \otimes : \cC \times \cC \rightarrow \cC $ is a bifunctor and $ \alpha_{A,B,C} : (A \otimes B) \otimes C \rightarrow A \otimes (B \otimes C) $ is a natural transformation such that the following pentagon commutes
	\begin{equation} \label{eq:pent}
	\begin{tikzcd}
			((A \otimes B) \otimes C) \otimes D \ar[r,"{\alpha_{A, B, C} \otimes id}", outer sep = 4 pt] \ar[d,"\alpha_{A\otimes B, C, D}"'] &
			(A \otimes (B \otimes C)) \otimes D \ar[r,"\alpha_{A, B \otimes C, D}"'', outer sep = 4 pt] &
			A \otimes ((B \otimes C) \otimes D) \ar[d, "{id \otimes \alpha_{B, C,D}}"]  \\
			(A \otimes B) \otimes (C \otimes D) \ar[rr, "{\alpha_{A, B, C \otimes D}}"', outer sep = 2 pt] & &
			A \otimes (B \otimes (C \otimes D))
	\end{tikzcd}
	\end{equation}
	
	A \emph{coboundary category} is a monoidal category $ \cC $ along with a natural isomorphism $$ \sigma_{A, B} : A \otimes B \rightarrow B \otimes A$$
	satisfying the following two axioms.
	\begin{enumerate}
		\item For all $A, B \in \cC$, we have $\sigma_{B, A} \circ \sigma_{A,B} = id_{A \otimes B} $
		\item For all $ A, B, C \in \cC $, the following hexagon commutes
		\begin{equation} \label{eq:hex}
		\begin{tikzcd}
		  & (A \otimes B) \otimes C \ar[r,"{\alpha_{A, B, C}}"'', outer sep = 2 pt] \ar[dl,"{\sigma_{A\otimes B, C}}"', outer sep = -2 pt] & A \otimes (B \otimes C) \ar[dr,"{\sigma_{A, B \otimes C}}", outer sep = -2 pt]& \\
				C \otimes (A \otimes B) \ar[dr,"{id \otimes \sigma_{A,B}}"', outer sep = -2 pt] & & & (B \otimes C) \otimes A \ar[dl,"{\sigma_{B, C} \otimes id}", outer sep = -2 pt] \\
				& C \otimes (B \otimes A) \ar[r,"{\alpha_{C, B, A}^{-1}}"', outer sep = 2 pt] & (C \otimes B) \otimes A &
		\end{tikzcd}
		\end{equation}
	\end{enumerate}
	\end{defn}
	
	\begin{rem}
		The notion of a coboundary category was introduced by Drinfeld \cite{D} and was studied by the second author and Henriques in \cite{HK}.
	\end{rem}
	
	The second author and Henriques \cite{HK} proved the cactus group $C_n $ acts on tensor products of $n$ objects in coboundary categories.
	\begin{thm} \label{th:CactusCoboundary}
		Let $ \cC $ be a coboundary category and let $ A_1, \dots, A_n $ be $ n $ objects.  For any $ g \in C_n $, there is a morphism
		$$
		g : A_1 \otimes \cdots \otimes A_n \rightarrow A_{g(1)} \otimes \cdots \otimes A_{g(n)}
		$$
		where we use a fixed bracketing of the objects to define this tensor product.  These morphisms compose as multiplication in the group $ C_n $ and they are all the maps obtained from iterating the commutors and associators.
	\end{thm}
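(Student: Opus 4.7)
The plan is to define explicit maps for the generators $s_{pq}$ of $C_n$ as composites of instances of $\sigma$ and $\alpha$, and then verify the three relations of Definition~\ref{def:cactus}.

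The central construction is a \emph{reversal isomorphism} $\rho_k : B_1 \otimes \cdots \otimes B_k \to B_k \otimes \cdots \otimes B_1$ for any $k \geq 2$ objects. I take $\rho_2 = \sigma_{B_1, B_2}$ and, for $k > 2$, define $\rho_k$ by bracketing as $B_1 \otimes (B_2 \otimes \cdots \otimes B_k)$, applying $\sigma_{B_1, B_2 \otimes \cdots \otimes B_k}$, and then applying $\rho_{k-1} \otimes \mathrm{id}$ on the first factor. The hexagon axiom \eqref{eq:hex} is precisely the assertion that, for $k = 3$, the alternative recursion starting from the bracketing $(B_1 \otimes B_2) \otimes B_3$ yields the same map. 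Combining this with the pentagon \eqref{eq:pent}, an induction shows that $\rho_k$ is independent of all intermediate bracketing choices and that $\rho_k^2 = \mathrm{id}$, the latter reducing to the involution property of $\sigma$ together with naturality.

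The action of $s_{pq}$ on $A_1 \otimes \cdots \otimes A_n$ (with any fixed bracketing) is then defined by using associators to re-bracket so that $A_p \otimes \cdots \otimes A_q$ appears as a distinguished subfactor, applying $\mathrm{id} \otimes \rho_{q-p+1} \otimes \mathrm{id}$ to this block, and re-bracketing back. Naturality of $\alpha$ together with coherence (pentagon) ensures that the resulting map is independent of intermediate choices. The three relations are now verified in turn: the involution $s_{pq}^2 = 1$ reduces immediately to $\rho^2 = \mathrm{id}$; the disjoint commutation relation, when $J \cup K$ is disconnected, follows from functoriality of $\otimes$ since the two reversals operate on disjoint subfactors.

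The main obstacle is the nested relation $s_K s_J = s_{\theta_K(J)} s_K$ for $J \subset K$. After isolating the $K$-block via associators, it becomes the coherence statement that reversing a sub-block and then reversing the whole block agrees with first reversing the whole block and then reversing the displaced image of the sub-block. I would handle this by induction on $|K| - |J|$, reducing the general situation to a chain of instances of the $k = 3$ case, where the statement is once again exactly the hexagon \eqref{eq:hex} together with naturality of $\sigma$. With all three relations verified, extending by composition gives a homomorphism from the free group on $\{s_{pq}\}$ into $\operatorname{Aut}(A_1 \otimes \cdots \otimes A_n)$ that descends to $C_n$; since every map produced is, by construction, a composite of $\sigma$'s and $\alpha$'s, the final clause of the theorem is automatic.
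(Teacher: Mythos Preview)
The paper does not give its own proof of this theorem: it is stated with attribution to Henriques--Kamnitzer \cite{HK} and used as a black box. So there is no ``paper's proof'' to compare against beyond the original reference.

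Your sketch is essentially the argument of \cite{HK}. There the authors define, for each interval $[p,q]\subset[p,r]$, maps $\sigma_{p,q,r}$ built from $\sigma$ and $\alpha$ exactly as you describe (first swing the whole right block past, then reverse recursively), prove they are well defined using the pentagon and hexagon, and then check the cactus relations. Two points where your outline is thinner than the actual argument: first, the claim that $\rho_k$ is independent of all bracketing choices needs an explicit induction showing that the two natural recursions (peeling from the left versus from the right) coincide at each stage, not just for $k=3$; in \cite{HK} this is the content of their Lemmas~3 and~4. Second, for the nested relation your ``reduce to chains of $k=3$ cases'' is the right shape, but the clean way to phrase it is via naturality: once you know $\rho_K$ is a morphism built from $\sigma$'s and $\alpha$'s, naturality of $\sigma$ (applied to the inner block viewed as a single object) gives $\rho_K\circ(\mathrm{id}\otimes\rho_J\otimes\mathrm{id})=(\mathrm{id}\otimes\rho_{\theta_K(J)}\otimes\mathrm{id})\circ\rho_K$ directly, without a separate induction on $|K|-|J|$. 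With those two points filled in, your proposal is correct and matches the cited source.
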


	Let $ \cC $ and $ \cD $ be two coboundary categories.
	
	\begin{defn}
	A \emph{coboundary functor} $ \Phi : \cC \rightarrow \cD $ is a functor $ \Phi $ along with a natural isomorphism $ \phi: \otimes \circ (\Phi \times \Phi)  \rightarrow \Phi \circ \otimes $ which is compatible with the associators and the commutors.
	
	This means that for any three objects $ A, B, C$ of $ \cC $, the following diagram commutes
	\begin{equation*}
		\begin{tikzcd}
			(\Phi(A) \otimes \Phi(B)) \otimes \Phi(C) \ar[r,"{\alpha_{\Phi(A), \Phi(B), \Phi(C)}}", outer sep = 4 pt] \ar[d,"{\phi_{A \otimes B, C} \circ (\phi_{A,B} \otimes id)}"', outer sep = 2 pt] & \Phi(A) \otimes (\Phi(B) \otimes \Phi(C)) \ar[d,"{\phi_{A, B \otimes C} \circ (id \otimes \phi_{B,C})}", outer sep = 2 pt]\\
			\Phi((A \otimes B) \otimes C) \ar[r,"{\Phi(\alpha_{A,B,C})}"', outer sep = 2 pt] & \Phi(A \otimes (B \otimes C))
		\end{tikzcd}
	\end{equation*}
	and for any two objects $ A, B $ of $ \cC $ the following diagram commutes
	\begin{equation} \label{eq:commutors}
		\begin{tikzcd}
			\Phi(A) \otimes \Phi(B) \ar[r,"{\sigma_{\Phi(A), \Phi(B)}}", outer sep = 4 pt] \ar[d,"{\phi_{A,B}}"'] & \Phi(B) \otimes \Phi(A) \ar[d,"{\phi_{B,A}}"] \\
			\Phi(A \otimes B) \ar[r,"{\Phi(\sigma_{A,B})}"', outer sep = 2 pt] & \Phi(B \otimes A) \\
		\end{tikzcd}
	\end{equation}
	
	\end{defn}
	
	\subsection{$\Xi$-coloured categories}
	We will define the notion of a $\Xi$-coloured category, where $ \Xi $ is a set.  To motivate this notion, suppose that $ \cC $ is a $ \C$-linear semisimple abelian category.  Let $ \{ V(\l) \}_{\l \in \Xi} $ be a complete list (up to isomorphism) of the simple objects of $ \cC$, where $\Xi $ is an indexing set.  Then every object of $ \cC $ is canonically isomorphic to a direct sum of its isotypic components $$ A = \bigoplus_{\l \in \Xi} A_\l \otimes V(\l) $$ where each $A_\l $ is a finite dimensional vector space, with only finitely many $ A_\l $ non-zero.
	
	Let $ A = \oplus_\l A_\l \otimes V(\l) $, $ B = \oplus_\l B_\l \otimes V(\l) $.  Then by the categorical version of Schur's Lemma, we have
	$$
	\Hom_{\cC}(A, B) = \bigoplus_\l \Hom_\C(A_\l, B_\l)
	$$
	In particular, the category $ \cC$ is equivalent to the category of $ \Xi$-graded vector spaces.
	
	The objects we consider in this paper, such as crystals, have no linear structure, but still admit isotypic components.  This motivates us to consider $\Xi$-graded sets.
	
	\begin{defn}
	Let $ \Xi$ denote an arbitrary set.  We define the category of $\Xi$-graded sets, $ \Xiset $, as follows.  The objects of $ \Xiset$ are sequences $ (A_\l)_{\l \in \Xi} $ where each $ A_\l $ is a finite set, with only finitely many $ A_\l $ non-empty.  The morphisms of $ \Xiset $ are given by
	$$
	\Hom_{\Xiset}(A, B) = \prod_\l \Hom_{\set}(A_\l, B_\l)
	$$
	
	A category $ \cC $ is called \emph{$\Xi$-coloured}, if we are given an equivalence between $ \cC $ and $ \Xiset $.	
	\end{defn}
	
	So a $\Xi$-coloured category is quite boring, but it might carry an interesting monoidal or coboundary structure.
	
	\subsection{Operadic coverings} \label{se:Operadic}
	Again fix a set $ \Xi $.
	
	We are going to define the notion of an ``operadic covering'' of the moduli spaces $ \CM_{n+1}(\BR) $.  This means that we have a cover of each moduli space, such that along the boundary strata, the cover is isomorphic to the product of the covers of smaller moduli spaces.  Moreover, we will actually have a sequence of coverings $ \cX_{n+1}(\l_1, \dots, \l_n)^\mu $ where the $ \l_i, \mu $ lie in $ \Xi $ and should be thought of as colours on the marked points.
	
	\begin{defn}
	A $\Xi$-\emph{coloured operadic covering of the moduli spaces of genus 0 real curves} is a sequence of covering spaces $  \cX_{n+1} \rightarrow \CM_{n+1}(\BR) \times \Xi^{n+1}$  along with the following data. 
	\begin{enumerate}
		\item For each $ n $, an action of $ S_n $ on $ \cX_{n+1} $, compatible with the actions of $ S_n $ on $\CM_{n+1}(\BR) $ and on $ \Xi^{n+1} $ (permuting the first $ n $ factors).
		\item For each $ \ul = (\l_1, \dots, \l_n) \in \Xi^n $ and $ \mu \in \Xi $, we let $ \cX_{n+1}(\ul)^\mu $ be the fibre over $ (\l_1, \dots, \l_n, \mu) \in \Xi^{n+1} $; this is a covering of $ \CM_{n+1}(\BR) $.
		
		Then, for each $ \umu = (\mu_1, \dots, \mu_n) \in \Xi^n $ and each $  \ul^{(1)} \in \Xi^{k_1}, \dots, \ul^{(n)} \in \Xi^{k_n} $, and $ \nu \in \Xi$, we have a map
		$$
		\Gamma : \cX(\umu)^\nu \times \cX(\ul^{(1)})^{\mu_1} \times \cdots \times \cX(\ul^{(n)})^{\mu_n} \rightarrow \cX(\ul^{(1)} \sqcup \cdots \sqcup \ul^{(n)})^\nu
		$$
		covering the map
		$$
		\gamma : \CM_{n+1}(\BR) \times \CM_{k_1+1}(\BR) \times \cdots \times \CM_{k_n+1}(\BR) \rightarrow \CM_{k_1 + \cdots + k_n + 1}(\BR)
		$$
	\end{enumerate}
	We require this data to satisfy the following conditions
	\begin{enumerate}
		\item The actions of $ S_n $ and the maps $ \Gamma $ give $\cX_{n+1} $ the structure of a coloured operad (see \defnref{Def:ColouredOperad}).
				
		\item For any  $  \ul^{(1)} \in \Xi^{k_1}, \dots, \ul^{(n)} \in \Xi^{k_n} $ and $ \nu \in \Xi$, the map $ \Gamma $ gives a bijection between
		$$  \bigsqcup_{\umu \in \Xi^n} \cX(\umu)^\nu \times \cX(\ul^{(1)})^{\mu_1} \times \cdots \times \cX(\ul^{(n)})^{\mu_n}
		$$
		and the restriction of $ \cX(\ul^{(1)} \sqcup \cdots \sqcup \ul^{(n)})^\nu $ to the locus $\gamma(\CM_{n+1}(\BR) \times \CM_{k_1+1}(\BR) \times \cdots \times \CM_{k_n+1}(\BR))
		$
	\end{enumerate}
	\end{defn}
	For the remainder of this section, we will say ``$\Xi$-coloured operadic covering'' to refer to this definition.
	
	\begin{rem}
		This definition should be compared to the notion of genus 0 modular functor, as defined in Bakalov-Kirillov \cite{BaK}.
	\end{rem}
	
	\subsection{From operadic coverings to coboundary categories} \label{se:CoverToCategory}
	
	We can go back and forth between $\Xi$-coloured operadic coverings and $\Xi$-coloured coboundary categories.
	
	Let $  \cX_{n+1} \rightarrow \CM_{n+1}(\BR) \times \Xi^{n+1}$ be a $\Xi$-coloured operadic covering.  From this data, we will define a coboundary category $ \cC(\cX) $ as follows.
	
	As a category, $ \cC(\cX) $ is just $ \Xiset $.  The covering will be used to define the coboundary structure.
	
	We define the tensor product $ \otimes $ on $ \cC(\cX) $ using $ \cX_3 $.  Note that $ \CM_3(\BR)$ is a point, so $ \cX_3(\l_1, \l_2)^\mu $ is a finite set for each $\l_1, \l_2, \mu \in \Xi $. We will use this finite set as the tensor product multiplicity.  More precisely, for two objects $ A =(A_\l)_{\l \in \Xi}, B = (B_\l)_{\l \in \Xi} $ we define
	$$
	(A \otimes B)_\mu = \bigsqcup_{\l_1, \l_2 \in \Xi} \cX(\l_1, \l_2)^\mu \times A_{\l_1} \times B_{\l_2}
	$$
	
	Let $ S(\l) $ denote the object of $ \cC(\cX) $ defined by
	$$
	S(\l)_{\l'} = \begin{cases} \{ * \}, \text{ if $\l = \l'$ } \\
	\emptyset, \text{ otherwise }
	\end{cases}
	$$
	So we have
	$$ (S(\l_1) \otimes S(\l_2))_\mu = \cX(\l_1, \l_2)^\mu $$
	
	For defining the associator $ \alpha $ and the commutor $ \sigma $ and checking their axioms, it will be convenient to just work with these objects $ S(\l) $.  The interested reader can extend our definitions to arbitrary objects.
	
	Before proceeding to the definition of $ \alpha$ and $\sigma$, we note that if we have any ordered complete bracketing $ T $ of $ \{1, \dots, n\} $, along with a list $ \ul =  (\l_1, \dots, \l_n) \in \Xi^n $, then we can consider a tensor product $ S(\ul)_T$.  For example, associated to $ T = (12)3$, we have the tensor product $$S(\ul)_{(12)3} = (S(\l_1) \otimes S(\l_2) ) \otimes S(\l_3)$$
	which is given by
	$$
	(S(\ul)_{(12)3})_\nu = \bigsqcup_{\mu} \cX(\l_1, \l_2)^\mu \times \cX(\mu, \l_3)^\nu
	$$
	Continuing in this example, we can consider the map
	$$
	\Gamma : \bigsqcup_{\mu_1, \mu_2} \cX(\mu_1, \mu_2)^\nu \times \cX(\l_1, \l_2)^{\mu_1} \times \cX(\l_3)^{\mu_2} \rightarrow \cX(\l_1, \l_2, \l_3)^\nu_{(12)3},
	$$
	covering the map
	$$
	\gamma:pt=\CM_3\times\CM_3\times\CM_2\to\CM_4, 
	$$
	where we denote by $ (12)3$ the point in $ \CM_4(\BR) $ which is the image of the above map $\gamma$.  By the definition of an operadic cover, the map $\Gamma$ is a bijection.  Using this bijection and the unit axiom, we can identify
	$$
	(S(\ul)_{(12)3})_\nu = \cX(\l_1, \l_2, \l_3)^\nu_{(12)3}
	$$
	
	From the associativity axiom for a coloured operad, we immediately deduce the following.
	\begin{lem} \label{BracketsPoints}
		For any ordered complete bracketing $ T $ of $ \{1, \dots, n\} $ and any $ \ul \in \Xi^n $, we can use $ \Gamma $ to canonically identify
		$$
		(S(\ul)_T)_\mu = \cX(\ul)^\mu_T
		$$
		for all $ \mu \in \Xi$.
	\end{lem}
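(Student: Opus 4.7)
The plan is to induct on $n$, the number of leaves in the bracketing $T$. For the base case $n=1$, the bracketing is trivial, $S(\l)_\mu$ is a point if $\mu=\l$ and empty otherwise, while the unit axiom of a $\Xi$-coloured operad forces $\cX_1(\l)^\mu = \cX(\l)^\mu$ to have exactly this description. So the identification holds tautologically.

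For the inductive step, fix $n\ge 2$ and let $T$ be an ordered complete bracketing of $\{1,\dots,n\}$. Its outermost bracket splits $T$ uniquely as $T = (T_1)(T_2)$, where $T_1$ is a bracketing of $\{1,\dots,k\}$ and $T_2$ is a bracketing of $\{k+1,\dots,n\}$ for some $1\le k < n$. Accordingly split $\ul = \ul^{(1)} \sqcup \ul^{(2)}$ with $\ul^{(1)} \in \Xi^k$ and $\ul^{(2)} \in \Xi^{n-k}$. By the very definition of $\otimes$ in $\cC(\cX)$,
\[
(S(\ul)_T)_\mu \;=\; \bigsqcup_{\nu_1,\nu_2 \in \Xi} \cX(\nu_1,\nu_2)^\mu \times (S(\ul^{(1)})_{T_1})_{\nu_1} \times (S(\ul^{(2)})_{T_2})_{\nu_2}.
\]
By the inductive hypothesis, we may replace the last two factors by $\cX(\ul^{(1)})^{\nu_1}_{T_1}$ and $\cX(\ul^{(2)})^{\nu_2}_{T_2}$. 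The point $T \in \CM_{n+1}(\BR)$ is obtained as $\gamma$ applied to the unique point of $\CM_3(\BR)$ together with $T_1 \in \CM_{k+1}(\BR)$ and $T_2 \in \CM_{n-k+1}(\BR)$, so condition (2) in the definition of an operadic covering gives a bijection, via iterated $\Gamma$, from the disjoint union above onto $\cX(\ul)^\mu_T$. This is the required identification.

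The only subtle point is that a bracketing $T$ of depth $\ge 3$ has many ways of being built from $\Gamma$ by grouping sub-bracketings in different orders (different internal vertices can be viewed as the ``last'' composition). The identification of $(S(\ul)_T)_\mu$ with $\cX(\ul)^\mu_T$ must not depend on that choice, and this is exactly what the associativity axiom \eqref{eq:OperadAssociativity} for the coloured operad $\cX$ guarantees: the two iterated $\Gamma$'s agree. Thus the identification is canonical, as claimed.

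I do not anticipate a serious obstacle here; the content is entirely bookkeeping, and all the real work is done by the operadic axioms (unit, associativity, and the bijectivity condition (2) in the definition of an operadic covering).
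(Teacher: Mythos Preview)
Your proposal is correct and is essentially the same approach the paper takes: the paper states the lemma as an immediate consequence of the associativity axiom (after working out the $n=3$, $T=(12)3$ case explicitly just before the lemma), and your induction on $n$ simply spells out that one-line argument in full, invoking the unit axiom, condition~(2) of the operadic covering, and associativity exactly where they are needed.
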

	
	With \lemref{BracketsPoints} in mind, we can define $ \alpha$ on the category $ \cC(\cX) $ as follows
	\begin{align*}
		\alpha_{\l_1, \l_2, \l_3} : (S(\l_1) \otimes S(\l_2)) \otimes S(\l_3) &\rightarrow S(\l_1) \otimes (S(\l_2)\otimes S(\l_3)) \\
		\text{ by } (S(\ul)_{(12)3})_\mu = \cX(\ul)^\mu_{(12)3} &\xrightarrow{p_{(12)3,1(23)}}  \cX(\ul)^\mu_{1(23)} = (S(\ul)_{1(23)})_\mu
	\end{align*}
	where, as above, $p_{(12)3,1(23)}$ denotes the unique path in $ \CM_4(\BR)_+ $ between $ (12)3 $ and $ 1(23) $, and where (by abuse of notation) we also use $ p_{(12)3,1(23)} $ to denote the monodromy of the cover $ \cX(\ul)^\mu $ along this path.
	
	Similarly, we define $\sigma $ on the category $ \cC(\cX) $ as follows
	\begin{align*}
		\sigma_{\l_1, \l_2} : S(\l_1) \otimes S(\l_2) &\rightarrow S(\l_2) \otimes S(\l_1) \\
		\text{ by } S(\l_1 , \l_2)_\mu = \cX(\l_1, \l_2)^\mu &\xrightarrow{w_{12}} \cX(\l_2, \l_1)^\mu = S(\l_2, \l_1)_\mu
	\end{align*}
	where $ w_{12} $ denotes the non-trivial element of $ S_2 $ which is acting on $ \cX_3 $ by the definition of operadic covering.

	We will now verify that we have constructed a coboundary category.
	
	\begin{thm} \label{th:CoverToCategory}
		If $ \cX $ is a $\Xi$-coloured operadic covering, then the above $ \otimes, \alpha, \sigma $ define a coboundary category $ \cC(\cX) $.
	\end{thm}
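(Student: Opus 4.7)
\emph{Overall strategy.} Every object of $\Xiset$ decomposes canonically as a disjoint union of the ``simple'' objects $S(\l)$, and all of the structures ($\otimes$, $\alpha$, $\sigma$) are visibly compatible with disjoint unions. So by naturality it suffices to verify bifunctoriality of $\otimes$, the pentagon, the involution $\sigma \circ \sigma = \id$, and the hexagon on tensor products of $S(\l)$'s. Bifunctoriality is immediate from the formula defining $\otimes$. The remaining three axioms are all statements about monodromy in the covers $\cX_n$, and \lemref{BracketsPoints} lets us identify every instance of an iterated tensor product $(S(\ul))_T$ with a fiber of $\cX_{n+1}(\ul)^\mu$ over the corresponding point $T\in\CM_{n+1}(\BR)$. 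The two main ingredients are then (a) the contractibility of the positive locus $\CM_{n+1}(\BR)_+$ (Theorem 3.2 of \cite{DCP2}), and (b) the coloured-operad axioms (unit, associativity, equivariance) for $\cX$.

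\emph{Pentagon.} The five complete bracketings $T_1,\ldots,T_5$ of $\{1,2,3,4\}$ are the vertices of the Stasheff associahedron $K_5$, and they sit as points in $\CM_5(\BR)_+$; the five associator edges of the pentagon are parallel transports along the canonical paths $p_{T_i,T_{i+1}}$ in the contractible region $\CM_5(\BR)_+$. Since a concatenation of canonical paths in a contractible space equals the canonical path between the endpoints, and since the endpoints coincide, the resulting loop is null-homotopic in $\CM_5(\BR)_+$, so its lift to the cover $\cX_5$ is trivial. This gives the pentagon axiom.

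\emph{Involution.} The map $\sigma_{\l_1,\l_2}$ is, by definition, the action on $\cX_3(\l_1,\l_2)^\mu$ of the non-trivial element $w_{12}\in S_2$. Since $w_{12}^2=e$ and $S_2$ acts on $\cX_3$, we get $\sigma_{\l_2,\l_1}\circ\sigma_{\l_1,\l_2}=\id$ immediately.

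\emph{Hexagon --- the main obstacle.} The six vertices of the hexagon translate, via \lemref{BracketsPoints}, into six fibers of $\cX_4$: three of them lie over bracketings of the form $\tau(T)$ for the three bracketings $T\in\{(12)3,\,1(23)\}$ and appropriate $\tau\in S_3$, with matching permutations of the coloring. Each associator edge is a parallel transport along a canonical path in $\CM_4(\BR)_+$, and each commutor edge is the action of a transposition in $S_3$ (lifted through the $\Gamma$-bijection of part~(2) of the definition of an operadic covering, so that the ``inner'' commutors such as $\sigma_{B,C}\otimes\id$ and $\id\otimes\sigma_{A,B}$ become honest $S_3$-actions on the $\cX_4$ fiber rather than $S_2$-actions on $\cX_3$-fibers; this uses the operadic-coloured equivariance axiom). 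After this rewriting, the hexagon becomes an equivariant loop in $\CM_4(\BR)$ based at the point $(12)3$ with coloring $(A,B,C,\nu)$; one checks that the $S_3$-word traced out by the six edges multiplies to the identity in $S_3$, so the hexagon loop projects to an honest loop in $\CM_4(\BR)$. The contractibility of $\CM_4(\BR)_+$ (which is homeomorphic to an interval) together with the equivariance of the $S_n$-action on $\cX_{n+1}$ then forces this loop to be null-homotopic in a region where parallel transport is trivial, completing the hexagon. The key technical point, and the most delicate part of the whole argument, is bookkeeping the interaction of the $S_3$-action with the operadic composition maps $\Gamma$ so as to replace the abstract ``inner'' commutors by genuine symmetries of the four-point cover; this is where the equivariance axiom of a coloured operad is essential.
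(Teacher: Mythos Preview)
Your pentagon and involution arguments are correct and match the paper's approach: the pentagon reduces to two paths in the contractible region $\CM_5(\BR)_+$ with the same endpoints, and the involution is immediate from $w_{12}^2=e$.

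The hexagon argument, however, has a genuine gap. You correctly translate the hexagon to a statement about fibers of $\cX_4$, and you observe that the total $S_3$-word around the hexagon is the identity, so the diagram projects to an honest loop in $\CM_4(\BR)$. But then you claim that ``contractibility of $\CM_4(\BR)_+$ together with equivariance'' forces this loop to be null-homotopic. This does not follow: $\CM_4(\BR)\cong\BR\BP^1$ is a circle, and nothing you have said rules out the hexagon loop having nonzero winding number. Contractibility of the single arc $\CM_4(\BR)_+$ is irrelevant once your loop leaves that arc, and the permutations in the hexagon do move you between chambers.

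The paper's argument supplies exactly the missing geometric fact. After translating via $\Gamma$ and the operadic equivariance axiom, the two vertical compositions in the hexagon are \emph{each} equal to the long element $w_{13}\in S_3$ acting on the cover. One is then reduced to checking that $w_{13}$ intertwines the top horizontal arrow $p_{(12)3,\,1(23)}$ with the bottom one. This holds because $w_{13}$ preserves the arc $\CM_4(\BR)_+$ setwise, swapping its two endpoints $(12)3$ and $1(23)$; hence $w_{13}(p_{1(23),(12)3})=p_{(12)3,1(23)}$. That specific symmetry of the $S_3$-action on $\CM_4(\BR)_+$ is the crux of the hexagon proof, and it cannot be replaced by a generic contractibility-plus-equivariance appeal.
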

	\begin{proof}
		We must check the pentagon axiom for $ \alpha$, the symmetry axiom for $ \sigma $, and the hexagon axiom involving $ \alpha, \sigma $.
		
		\noindent {\bf Pentagon axiom:} \\
		Consider $ \l_1, \l_2, \l_3, \l_4 \in \Xi $.  Then we must show that the following diagram commutes
		\begin{equation*}
			\begin{tikzcd}[ column sep = -6 em]
				& (S(\l_1) \otimes (S(\l_2) \otimes S(\l_3))) \otimes S(\l_4) \ar[dr,"{\alpha_{S(\l_1), S(\l_2) \otimes S(\l_3), S(\l_4)}}", outer sep = -2 pt] & \\
				((S(\l_1) \otimes S(\l_2)) \otimes S(\l_3)) \otimes S(\l_4) \ar[ur,"{\alpha_{S(\l_1), S(\l_2), S(\l_3)} \otimes id}", outer sep = -2 pt] \ar[d,"{\alpha_{S(\l_1)\otimes S(\l_2), S(\l_3), S(\l_4)}}"', outer sep = 2 pt] &	 &
				S(\l_1) \otimes ((S(\l_2) \otimes S(\l_3)) \otimes S(\l_4)) \ar[d,"{id \otimes \alpha_{S(\l_2), S(\l_3),S(\l_4)}}", outer sep = 2 pt]  \\
				(S(\l_1) \otimes S(\l_2)) \otimes (S(\l_3) \otimes S(\l_4)) \ar[rr,"{\alpha_{S(\l_1), S(\l_2), S(\l_3) \otimes S(\l_4)}}"', outer sep = 6 pt] & &
				S(\l_1) \otimes (S(\l_2) \otimes (S(\l_3) \otimes S(\l_4)))
			\end{tikzcd}
		\end{equation*}
		When we apply \lemref{BracketsPoints}, the definition of $\alpha$, and the fact that $ \Gamma $ commutes with parallel transport, we see that this equation turns into (for each $ \mu \in \Xi$ ) the diagram
		\begin{equation*}
			\begin{tikzcd}
				& \cX(\ul)^\mu_{(1(23))4}  \ar[dr,"{p_{(1(23))4, 1((23)4)}}", outer sep =- 2 pt] & \\
				\cX(\ul)^\mu_{((12)3)4} \ar[ur,"{p_{((12)3)4, (1(23))4}}", outer sep = -2 pt] \ar[d,"{p_{((12)3)4, (12)(34)}}"', outer sep = 2 pt] & &
			           \cX(\ul)^\mu_{1((23)4)}		\ar[d,"{p_{1((23)4), 1(2(34))}}", outer sep = 2 pt]  \\
				\cX(\ul)^\mu_{(12)(34)} \ar[rr,"{p_{(12)(34), 1(2(34))}}"', outer sep = 2 pt]& &  \cX(\ul)^\mu_{1(2(34))}
			\end{tikzcd}
		\end{equation*}
		Both sides in this diagram give us the monodromy along a path from $ ((12)3)4 $ to $ 1(2(34)) $ staying inside $ \CM_5(\BR)_+$.  Since this is a contratible space, the monodromies along these two paths are equal and so the diagram commutes.
		
		\noindent {\bf Symmetry axiom:} \\
		Since $ w_{12}^2 = 1 $ in $ S_2$, the symmetry of $ \sigma $ is immediate.
		
		\noindent {\bf Hexagon axiom:} \\
		Consider $ \l_1, \l_2, \l_3 \in \Xi $.  We must check that the following diagram commutes
		\begin{equation} \label{eq:hex2}
			\begin{tikzcd}
 \big(S(\l_1) \otimes S(\l_2)\big) \otimes S(\l_3) \ar[r,"{\alpha_{S(\l_1), S(\l_2), S(\l_3)}}", outer sep = 6 pt] \ar[d,"{\sigma_{S(\l_1)\otimes S(\l_2), S(\l_3)}}"'] & S(\l_1) \otimes \big(S(\l_2) \otimes S(\l_3)\big) \ar[d,"{\sigma_{S(\l_1), S(\l_2) \otimes S(\l_3)}}"] \\
				S(\l_3) \otimes \big(S(\l_1) \otimes S(\l_2)\big) \ar[d,"{id \otimes \sigma_{S(\l_1),S(\l_2)}}"'] &  \big(S(\l_2) \otimes S(\l_3)\big) \otimes S(\l_1) \ar[d,"{\sigma_{S(\l_2), S(\l_3)} \otimes id}"] \\
				 S(\l_3) \otimes \big(S(\l_2) \otimes S(\l_1)\big) \ar[r,"{\alpha_{S(\l_3), S(\l_2), S(\l_1)}^{-1}}"', outer sep = 6 pt] & \big(S(\l_3) \otimes S(\l_2)\big) \otimes S(\l_1)
			\end{tikzcd}
		\end{equation}
		When we apply \lemref{BracketsPoints}, the definitions of $\alpha$ and $ \sigma $, the equivariance axiom for coloured operads, and the fact that $ \Gamma $ commutes with parallel transport, we see that this equation turns into (for each $ \mu \in \Xi$ ) the diagram
		\begin{equation} \label{eq:hex3}
			\begin{tikzcd}[cramped, column sep = scriptsize]
				\cX(\l_1, \l_2, \l_3)_{(12)3}^\mu  \ar[r,"{p_{(12)3, 1(23)}}", outer sep = 6 pt] \ar[d,"{u}"'] & \cX(\l_1, \l_2, \l_3)_{1(23)}^\mu \ar[d,"{u^{-1}}"] \\
				\cX(\l_3, \l_1, \l_2)^\mu_{1(23)} \ar[d,"{w_{23}}"']  & \cX(\l_2, \l_3, \l_1)_{(12)3}^\mu  \ar[d,"{w_{12}}"] \\
				\cX(\l_3, \l_2, \l_1)_{3(21)}^\mu \ar[r,"{p_{1(23),(12)3}}"', outer sep = 8 pt] & \cX(\l_3, \l_2, \l_1)_{(12)3}^\mu
			\end{tikzcd}
		\end{equation}
		where $u $ is the permutation $231 $.  Note that the products along the right and left vertical arrows both give the long element $ w_{13} $.  Thus the commutativity of the diagram follows from noting that $ w_{13} $ takes the point $ 1(23) $ to the point $ 3(21)= (12)3 $ and so $w_{13}(p_{1(23),(12)3}) = p_{(12)3, 1(23)}$.
	\end{proof}
	
	\subsection{From $\Xi$-coloured categories to $\Xi$-coloured operadic covers}
	We will now explain how to go from a coboundary category structure on $ \Xiset$ to an operadic covering.  Since we will not use this further in this paper, we will just give a sketch of this construction.
	
	We define
	$$ X_{n+1} = \bigsqcup_{\l_1, \dots, \l_n, \mu} (( \cdots (S(\l_1) \otimes S(\l_2)) \otimes \cdots ) \otimes S(\l_n))_\mu $$
	We have an obvious projection $ X_{n+1} \rightarrow \Xi^{n+1} $.  By Theorem \ref{th:CactusCoboundary}, there is an action of $C_n $ on $X_{n+1} $ compatible with this projection.
	
	Applying \thmref{cor-DJS} and \lemref{le:CoverEquiv}, we obtain an $S_n $-equivariant covering $ \cX_{n+1}$ of $ \CM_{n+1}(\BR) $ for each $ n $.
	
	Now, we will define the maps $ \Gamma $ in order to give this the structure of an operadic covering.
	
	First, note that for any two objects $ A, B \in \Xiset $, we see that
	$$
	(A \otimes B)_\mu = \bigsqcup_{\l_1, \l_2 \in \Xi} (S(\l_1) \otimes S(\l_2))_\mu \times A_{\l_1} \times B_{\l_2}
	$$
	(because we require that the tensor product must commute with the coproduct in the category $ \Xiset $).
	
	Let  $  \ul^{(1)} \in \Xi^{k_1}, \dots, \ul^{(n)} \in \Xi^{k_n} $ and $ \nu \in \Xi$.  From the above description of the tensor product, we see that there is a canonical bijection provided by the associator
	$$
	S(\ul^{(1)} \sqcup \cdots \sqcup \ul^{(n)})_\nu \rightarrow \bigsqcup_{\umu} S(\ul^{(1)})_{\mu_1} \times \cdots \times S(\ul^{(n)})_{\mu_n} \times S(\umu)_\nu
	$$
	(here $ S(\umu) $ denotes any fixed bracketed tensor product of $ S(\mu_1), \dots, S(\mu_n) $.)  Using these canonical bijections, we get the bijection $\Gamma$.
	\begin{thm}
		This defines a $\Xi$-coloured operadic covering.
	\end{thm}
	
	The constructions of this section and the previous one lead to the following theorem.
	
	\begin{thm}
		We have an equivalence of categories between the category of $\Xi$-coloured operadic coverings and the category of $\Xi$-coloured coboundary categories.  The morphisms in the latter category are given by tensor functors $ (\Phi, \phi) $, where $ \Phi $ commutes with the equivalence with $ \Xiset$.
	\end{thm}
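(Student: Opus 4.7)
The plan is to promote the two constructions of the previous subsections into mutually quasi-inverse functors, and then verify that morphisms on the two sides also correspond.

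First, I would extend $\cX \mapsto \cC(\cX)$ to a functor. A morphism $f : \cX \to \cY$ of $\Xi$-coloured operadic coverings should be a sequence $f_{n+1} : \cX_{n+1} \to \cY_{n+1}$ of covering maps over $\CM_{n+1}(\BR) \times \Xi^{n+1}$, compatible with the $S_n$-actions and with the operadic composition maps $\Gamma$. Such an $f$ defines a coboundary functor $\cC(f) : \cC(\cX) \to \cC(\cY)$, whose underlying functor is the identity on $\Xiset$, equipped with tensor structure $\phi_{\l_1, \l_2}$ given on $(S(\l_1) \otimes S(\l_2))_\mu = \cX(\l_1, \l_2)^\mu$ by $f_3$. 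Compatibility of $\phi$ with the associator follows because covering maps commute with parallel transport in the base; compatibility with the commutor follows from the $S_n$-equivariance of $f$.

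Next, I would extend the reverse assignment $\cC \mapsto \cX(\cC)$ to a functor. A coboundary functor $(\Phi,\phi): \cC \to \cD$ that commutes with the equivalences to $\Xiset$ is the identity on objects, and $\phi_{S(\l_1),S(\l_2)}$ restricts to maps of the multiplicity sets $\cX(\cC)_3 \to \cX(\cD)_3$ over each $(\l_1,\l_2,\mu) \in \Xi^3$. For higher $n$, iterating $\phi$ across any chosen bracketing and applying \lemref{BracketsPoints} produces a map on the fibres of $\cX(\cC)_{n+1} \to \cX(\cD)_{n+1}$ over that bracketing point. By Theorem~\ref{th:CactusCoboundary}, the coherence of $\phi$ with the associator and commutor forces these fibrewise maps to intertwine the two $C_n$-actions; in view of \thmref{cor-DJS} and \lemref{le:CoverEquiv}, they then extend uniquely to $S_n$-equivariant maps of the full covers $\cX(\cC)_{n+1} \to \cX(\cD)_{n+1}$, and compatibility with $\Gamma$ follows from the coherence of $\phi$ with respect to iterated tensor products.

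The core verification is that the two composites are naturally isomorphic to the identity. Starting with an operadic covering $\cX$, the construction of $\cC(\cX)$ identifies, via \lemref{BracketsPoints}, the fibre of $\cX_{n+1}$ at each ordered bracketing $T$ with $(S(\ul)_T)_\mu$ computed in $\cC(\cX)$, and the monodromy of $\cX_{n+1}$ along paths in $\CM_{n+1}(\BR)_+$ is, by the very definitions of $\alpha$ and $\sigma$ in $\cC(\cX)$, exactly the action of the generators of $C_n$ in that coboundary category. Hence reconstructing the cover from $\cC(\cX)$ via \thmref{cor-DJS} recovers $\cX$ canonically and naturally in $\cX$. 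In the other direction, starting from a coboundary structure on $\Xiset$, the reconstructed category $\cC(\cX(\cC))$ has the same binary tensor products (by the definition of $\cX(\cC)_3$) and the same associators and commutors (since they are defined exactly by transporting along those elements of $C_n$ whose action on $\cX(\cC)_{n+1}$ was installed from the coboundary structure of $\cC$).

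The main obstacle is bookkeeping: one must check that the identifications of \lemref{BracketsPoints}, the identification of monodromy with the cactus-group action, and the compatibility of $\Gamma$ with tensor products of morphisms, all cohere globally across the tower of moduli spaces and are natural in $\cX$ (respectively $\cC$). This is a careful but essentially formal unwinding, using the associativity and equivariance axioms for coloured operads on one side and the pentagon, symmetry, and hexagon axioms for coboundary categories on the other; no new geometric input is required beyond what was already used in \thmref{th:CoverToCategory} and \thmref{cor-DJS}.
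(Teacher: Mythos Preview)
Your outline is sound and is essentially the natural elaboration of what the paper does; in fact the paper gives no proof at all of this theorem, merely stating that ``the constructions of this section and the previous one lead to the following theorem,'' having already warned that the reverse construction is only sketched since it is not used elsewhere. Your proposal supplies exactly the verifications (functoriality on both sides, the two round-trip natural isomorphisms via \lemref{BracketsPoints} and the identification of cactus-group monodromy with the coboundary structure) that the paper leaves implicit, so there is nothing to compare beyond noting that you have written out what the authors chose to omit.
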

	
	\section{Crystals}
	
	\subsection{Crystals and normal crystals}
	Recall that we fixed a semisimple Lie algebra $ \fg $ with Cartan subalgebra $ \fh $.  Let $ \Lambda \subset \fh^* $ denote the weight lattice and let $ \Lambda_+ $ denote the set of dominant weights.  We fix root vectors $ e_\alpha, f_\alpha  $ for each $ \alpha \in \Delta_+ $.  For each $ \lambda \in \Lambda_+ $, we write $ V(\l) $ for the irreducible representation of $ \fg $ of highest weight $ \l $.
	
	A crystal $ B $ is a finite set which models a weight basis for a representation of $ \fg $.  A crystal has two structures: a weight map, which tells us in which weight space our basis vector lives, and partially defined maps $ e_i, f_i $ for $ i \in I $ which indicate the leading order behaviour of the simple root vectors $ e_{\alpha_i}, f_{\alpha_i} $ on the basis.  Here is the precise definition.
	
	\begin{defn}
		A $\fg$-\emph{crystal} is a finite set $ B $ along with a map $ wt : B \rightarrow \Lambda $ and maps $ e_i, f_i : B \rightarrow B \sqcup \{0\} $ for each $ i \in I $ satisfying the following conditions
		\begin{enumerate}
			\item For each $ i \in I, b \in B $, if $ e_i(b) \ne 0 $, then $ wt(e_i(b)) = wt(b) + \alpha_i $
			\item For each $ i \in I, b \in B $, if $ f_i(b) \ne 0 $, then $ wt(f_i(b)) = wt(b) - \alpha_i $
			\item For each $ i \in I, b \in B $, if $ e_i(b) \ne 0 $, then $ f_i(e_i(b)) = b $
			\item For each $ i \in I, b \in B $, if $ f_i(b) \ne 0 $, then $ e_i(f_i(b)) = b $
		\end{enumerate}
	\end{defn}
	
	Two elements $ b, b' $ are said to lie in the same \emph{connected component} of $ B $ if they can be connected by a sequence of the crystal operators $ e_i $ and $ f_i $.
	
	If $ B_1, B_2 $ are two crystals, then their disjoint union $ B_1 \sqcup B_2 $ carries a crystal structure, where $ e_i, f_i $ are defined in the natural way.
	
	For each $ i \in I $ and $ b \in B $, let
	\begin{align*}
		\varepsilon_i(b) &= \max(k : e_i^k(b) \ne 0) \\
		\varphi_i(b) &= \max(k : f_i^k(b) \ne 0)
	\end{align*}
	
	A crystal $ B $ is called \emph{semi-normal} if for each $ i \in I $ and $ b \in B $, we have
	$$ \varphi_i(b) - \varepsilon_i(b) = \langle \alpha_i^\vee, wt(b) \rangle $$
	
	A morphism between two crystals $ B_1, B_2 $ is a map $ \psi : B_1 \rightarrow B_2 $ commuting with the structure maps $ wt, e_i, f_i $.  (This is sometimes called a strict morphism elsewhere in the literature.)
	
	Let $ J \subset I $.   We write $ \fg_J $ for the semisimple Lie algebra generated by all $ e_{\alpha_j}, f_{\alpha_j} $, for $ j \in J $ (this is the derived subalgebra of a Levi subalgebra).  We have a surjection $ \fh^* \rightarrow \fh_J^* $ which takes the weight lattice $ \Lambda $ of $ \fg $ to the weight lattice $\Lambda_J $ of $ \fg_J $.
	
	 Let $ B $ be a $ \fg$-crystal, then we define $ B_J $ to be the $ \fg_J $-crystal with the same underlying set as $ B $, but where we only consider $ e_j, f_j $ for $ j \in J $ and where we define $ wt_{B_J} $ to be the composite $ B \xrightarrow{wt_B} \Lambda \rightarrow \Lambda_J $.
	
	There is a construction which assigns a crystal to each representation of $ \fg $ (in fact, there are several such constructions, but they all have the same output).  For each dominant weight $ \lambda $, we let $ B(\lambda) $ denote the crystal of the representation $ V(\l) $.
	
	\begin{defn}
	A crystal $ B $ is called \emph{normal} if it is isomorphic to a disjoint union of the crystals $B(\l) $ (equivalently if it is the crystal of a representation of $\fg$). We let $ \gcrys$ denote the category of normal $\fg $-crystals.
	\end{defn}
	
	We record a few elementary facts about normal crystals.
	\begin{lem} \label{le:normal}
		\begin{enumerate} \item Any connected component of a normal crystal is isomorphic to $ B(\l) $ for some $ \l $.
			\item Any subcrystal of a normal crystal is normal.
			\item
			Any normal crystal admits a canonical decomposition analogous to the isotypic decomposition of a representation of $ \fg $.
			
			$$
			B \cong \sqcu_{\lambda \in \Lambda_+} \Hom(B(\l), B) \times B(\l)
			$$
			\item For any $ \l_1, \l_2 \in \Lambda_+$, we have
			\begin{equation*}
				\Hom_{\gcrys}(B(\l_1), B(\l_2)) = \begin{cases} \emptyset, \ \text{ if   $ \l_1 \ne \l_2 $} \\
				\{ id \}, \  \text{ if  $ \l_1 = \l_2 $}
				\end{cases}
			\end{equation*}
		\end{enumerate}
	\end{lem}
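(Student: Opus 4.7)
The plan is to derive all four statements from two standard facts: that each irreducible crystal $B(\l)$ is connected (it is generated from its highest weight vertex by iterated application of the $f_i$), and that $B(\l)$ has a unique element of weight $\l$, namely the highest weight vertex, which is characterized among all vertices of $B(\l)$ by the condition $e_i(b) = 0$ for all $i \in I$.

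For part (1), I would argue that connectedness of each $B(\l)$ means that in any disjoint union $B \cong \sqcu_j B(\l_j)$, the summands are exactly the connected components; so any connected component of a normal crystal is isomorphic to some $B(\l)$. For part (2), I would observe that a (strict) morphism of crystals commutes with $e_i, f_i$, so a subcrystal $B' \subseteq B$ is automatically a union of connected components of $B$. Combined with (1), this expresses $B'$ as a disjoint union of irreducible crystals $B(\l)$, i.e. as a normal crystal. Part (3) then follows by grouping the connected components of $B$ according to their isomorphism type: the set $\Hom(B(\l), B)$ indexes the components of type $B(\l)$, once one knows by part (4) that there is at most one morphism between any two irreducible crystals.

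The main content is part (4), which I would prove as follows. Given $\psi : B(\l_1) \to B(\l_2)$, let $b_{\l_1}$ be the highest weight vertex. Using that $\psi$ commutes with $e_i$ (with the convention $\psi(0)=0$) and $e_i b_{\l_1} = 0$ for all $i$, one gets $e_i \psi(b_{\l_1}) = 0$ for all $i$, so $\psi(b_{\l_1})$ is a highest weight vertex of $B(\l_2)$. Since $B(\l_2)$ has a unique such vertex $b_{\l_2}$, of weight $\l_2$, and $wt(\psi(b_{\l_1})) = \l_1$, one concludes $\l_1 = \l_2$ and $\psi(b_{\l_1}) = b_{\l_2}$. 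Finally, since $B(\l_1)$ is generated by $b_{\l_1}$ under the $f_i$'s and $\psi$ commutes with $f_i$, the morphism is uniquely determined, hence equals the identity.

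The whole argument is essentially bookkeeping once the two standard facts above are in hand; the only mild subtlety is in part (4), where one must carefully use both the commutation with $e_i$ (to force the highest weight vertex to map to a highest weight vertex) and the commutation with $f_i$ (to propagate this equality to the rest of the crystal). No new ideas are required beyond those already recorded in the definition of a morphism.
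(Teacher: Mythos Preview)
The paper does not supply a proof of this lemma; it is introduced with the phrase ``We record a few elementary facts about normal crystals'' and then used without further justification. Your argument is correct and is exactly the standard one: connectedness of each $B(\l)$ forces the summands in a normal crystal to coincide with its connected components, strictness of morphisms forces a subcrystal to be a union of components, and the highest-weight argument pins down $\Hom(B(\l_1),B(\l_2))$. There is nothing to compare against in the paper itself.
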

	
	This immediately implies the following description of the category of normal crystals.
	\begin{cor}
	The category $ \gcrys$ is equivalent to the category  $ \Lset$ via the functor
	\begin{align*}
		\Lset &\rightarrow \gcrys\\
		(A_\lambda)_{\lambda \in \Lambda_+} &\mapsto \sqcu_\lambda A_\lambda \times B(\lambda)
	\end{align*}
	\end{cor}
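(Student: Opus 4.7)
The plan is to verify that the given functor is both essentially surjective and fully faithful, with both properties following mechanically from the four parts of \lemref{le:normal}. The content of the corollary is essentially a restatement of the lemma in categorical language.

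For essential surjectivity, given any normal crystal $B$, I would apply part (3) of \lemref{le:normal} directly: setting $A_\l = \Hom_{\gcrys}(B(\l), B)$, which is a finite set (and empty for all but finitely many $\l$, since $B$ itself is finite), the canonical isomorphism in (3) exhibits $B$ as the image of the object $(A_\l)_{\l \in \Lambda_+}$ under the functor.

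For full faithfulness, I would analyze morphisms $\phi : \sqcu_\l A_\l \times B(\l) \rightarrow \sqcu_\l A'_\l \times B(\l)$ component by component. The restriction of $\phi$ to a single component $\{a\} \times B(\l) \cong B(\l)$ has image a subcrystal, which is normal by part (2), and connected (being the image of a connected crystal under a crystal morphism), so by part (1) it is isomorphic to some $B(\mu)$. The restricted morphism $B(\l) \to B(\mu)$ must be an isomorphism in $\gcrys$ (it is surjective onto its image and connected crystals are determined by their highest weight), and part (4) then forces $\mu = \l$ and the morphism to be the canonical identification with a unique summand $\{a'\} \times B(\l)$ of the target. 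Thus the datum of $\phi$ is equivalent to the datum of a map $a \mapsto a'$ from $A_\l$ to $A'_\l$ for each $\l$, which is precisely a morphism $(A_\l) \to (A'_\l)$ in $\Lset$.

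There is no real obstacle here; the only point requiring slight care is checking that a crystal morphism out of $B(\l)$ which hits a copy of $B(\l)$ must be the identity on that copy, which is exactly the content of part (4) of the lemma applied to the composition with the projection onto a connected component of the target. Since both essential surjectivity and full faithfulness have been established, the functor is an equivalence of categories.
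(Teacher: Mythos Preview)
Your argument is correct and is exactly the kind of unpacking the paper has in mind; the paper itself gives no proof beyond the sentence ``This immediately implies the following description of the category of normal crystals,'' treating the corollary as a direct repackaging of \lemref{le:normal}. Your essential surjectivity via part (3) and full faithfulness via parts (1), (2), (4) is the intended route, just made explicit.
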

	
	\subsection{Recognizing normal crystals}
	There are a number of results in the literature concerning how to show that a given crystal is normal.  The following two results are immediate.
	\begin{prop}
	Let $ B $ be a $ \fg $-crystal.
	\begin{enumerate}
	\item If $ B $ is normal, then $ B_J $ is normal for any $ J \subset I $.
	\item $ B $ is semi-normal if and only if $ B_{\{i \}} $ is normal for all $ i \in I $.
	\end{enumerate}
	\end{prop}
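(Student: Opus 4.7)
For part (1), the strategy is to reduce to the case of a single $B(\lambda)$, and then invoke compatibility of the crystal basis construction with restriction to Levi-type subalgebras. Concretely, since $B$ is assumed normal we may write $B \cong \sqcup_k B(\lambda_k)$; disjoint union commutes with the restriction functor $(\cdot)_J$, so it suffices to show $B(\lambda)_J$ is normal for each dominant $\lambda$. This is the statement that the crystal of $V(\lambda)$, with $e_i,f_i$ retained only for $i \in J$, is the crystal of the $\fg_J$-module $V(\lambda)|_{\fg_J}$, which then decomposes by Weyl's theorem into $\fg_J$-irreducibles, each contributing a $B_J(\mu)$. This compatibility of crystal bases with restriction to a Levi is standard in the Kashiwara/Lusztig theory and can be cited.

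For part (2), the backward direction is essentially immediate: if $B_{\{i\}}$ is normal as an $\fsl_2$-crystal, then in particular it is semi-normal, and by comparing the $\fsl_2$-weight of $b \in B_{\{i\}}$ (which is $\langle \alpha_i^\vee, wt_B(b)\rangle$) to the definition of semi-normality one recovers $\varphi_i(b)-\varepsilon_i(b) = \langle \alpha_i^\vee, wt_B(b)\rangle$ for every $i$ and $b$, i.e.\ $B$ is semi-normal.

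The forward direction of (2) reduces, via (1) applied in the $\fsl_2$-case and the fact that semi-normality passes to $B_{\{i\}}$ unchanged, to the following purely $\fsl_2$-claim: every connected semi-normal $\fsl_2$-crystal is isomorphic to some $B(k)$. I would prove this by picking a highest-weight element $b_0$, which exists because $B$ is finite (iterate $e$ until it yields $0$). Semi-normality at $b_0$ gives $wt(b_0) = \varphi(b_0) - \varepsilon(b_0) = \varphi(b_0) \ge 0$, so $k := wt(b_0) \in \BZ_{\ge 0}$. The elements $b_0, f(b_0), \dots, f^k(b_0)$ are then nonzero and pairwise distinct (their weights are $k, k-2,\dots,-k$), $f^{k+1}(b_0) = 0$ again by semi-normality at $f^k(b_0)$, and applying $e$ to any $f^j(b_0)$ returns $f^{j-1}(b_0)$ by the crystal axioms. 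Hence the connected component is exactly $\{b_0, f(b_0), \dots, f^k(b_0)\} \cong B(k)$, so $B_{\{i\}}$ is a disjoint union of standard $\fsl_2$-crystals, i.e.\ normal.

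The one non-formal ingredient is the compatibility of the functor $(\cdot)_J$ with the crystal-basis construction used to define $B(\lambda)$; everything else is elementary manipulation of the crystal axioms. This is why the authors can fairly call both statements ``immediate.''
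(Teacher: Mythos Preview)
Your proposal is correct. The paper itself offers no proof at all---it simply declares both statements ``immediate'' and moves on---so there is nothing to compare against beyond that one-word verdict. Your argument supplies exactly the details the paper suppresses: the reduction of (1) to the standard Levi-restriction compatibility of crystal bases, and the explicit $\fsl_2$ analysis underlying (2). The elementary classification of connected semi-normal $\fsl_2$-crystals you give is the standard one and is correct as written; your closing remark that this justifies the authors' ``immediate'' is apt.
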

	
	In particular, we have the following result of Kashiwara \cite{KKMMNN}.
	
	\begin{thm} \label{th:rank2}
		Let $ B $ be a semi-normal $ \fg$-crystal.  $ B $ is normal if and only if $B_J $ is normal for each $ J \subset I $, $ |J| = 2 $.
	\end{thm}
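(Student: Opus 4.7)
The forward direction is a direct special case of part (1) of the preceding proposition, applied with $J$ ranging over all two-element subsets of $I$. The substance lies in the reverse direction, so I would assume that $B$ is semi-normal and that $B_J$ is normal for every $J \subset I$ with $|J| = 2$, and try to conclude that $B$ itself is normal.

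The plan is to show that each connected component of $B$ is isomorphic to some $B(\lambda)$ with $\lambda \in \Lambda_+$; then $B$ is a disjoint union of such crystals and hence normal by definition. Fix a connected component $B' \subset B$. Using finiteness, produce a highest weight element $b_0 \in B'$, meaning $e_i(b_0) = 0$ for all $i \in I$; semi-normality then forces $\lambda := wt(b_0) \in \Lambda_+$ via the rank-1 identity $\varphi_i(b_0) = \langle \alpha_i^\vee, \lambda\rangle \ge 0$. The goal is to construct a crystal isomorphism $\psi : B(\lambda) \xrightarrow{\sim} B'$ sending the highest weight element of $B(\lambda)$ to $b_0$.

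Such a $\psi$ must satisfy $\psi(f_{i_k} \cdots f_{i_1} v_\lambda) = f_{i_k} \cdots f_{i_1} b_0$ and is uniquely determined by this rule. The technical heart of the argument is well-definedness: if two words $f_{i_k}\cdots f_{i_1}$ and $f_{j_l}\cdots f_{j_1}$ produce the same element of $B(\lambda)$, they must produce the same element of $B'$. A standard induction on word length reduces this to checking, for each pair $i,j \in I$ and each intermediate vertex $b$, that the subcrystal of $B$ generated by $b$ under $e_i, f_i, e_j, f_j$ agrees with the corresponding subcrystal of $B(\lambda)$. Since $B_{\{i,j\}}$ is a normal $\fg_{\{i,j\}}$-crystal by hypothesis, and the restriction $V(\lambda)\big|_{\fg_{\{i,j\}}}$ branches so that $B(\lambda)_{\{i,j\}}$ has an analogous decomposition into rank-2 irreducible crystals, the required match follows from the rigidity of connected rank-2 crystals of a given highest weight.

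The main obstacle is organizing this rank-2 matching step into a global argument. In full generality, it requires a careful case analysis over the four rank-2 Dynkin types ($A_1 \times A_1$, $A_2$, $B_2$, $G_2$), handled via Stembridge-style local axioms or the explicit combinatorial models for rank-2 crystals of Kashiwara-Nakashima. Once well-definedness of $\psi$ is established, injectivity follows because $\psi$ preserves weights and $B(\lambda)$ has a unique highest weight element, while surjectivity onto $B'$ follows from the connectedness of $B'$ and the compatibility of $\psi$ with the crystal operators; thus $B' \cong B(\lambda)$ and the theorem is proved.
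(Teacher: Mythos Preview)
The paper does not give its own proof of this theorem: it is stated with attribution to Kashiwara \cite{KKMMNN} and used as a black box. So there is no paper proof to compare against; the question is whether your sketch stands on its own.

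Your outline has the right shape, but the crucial step is not justified. You assert that well-definedness of $\psi$ ``reduces by a standard induction on word length'' to checking that, for each pair $i,j$ and each intermediate vertex $b$, the rank-2 component of $B$ through $b$ ``agrees with the corresponding subcrystal of $B(\lambda)$.'' This is circular: you are in the process of constructing $\psi$, so you have no notion of ``corresponding'' available yet. More substantively, what you need is that any two $f$-words in $B(\lambda)$ connecting $v_\lambda$ to the same vertex are related by a chain of local moves, each involving only two colours. That statement is essentially equivalent to the theorem itself; it is exactly what the Stembridge local axioms (in the simply-laced case) or the original KKMMNN argument establish, and it is not ``standard induction.'' Without it, there is no reason two different $f$-words landing on the same element of $B(\lambda)$ should land on the same element of $B'$.

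Your injectivity argument is also incomplete: ``$\psi$ preserves weights and $B(\lambda)$ has a unique highest weight element'' does not by itself force $\psi$ to be injective on lower weight spaces. One typically gets injectivity and surjectivity together, once the hard work is done, by showing that both $B'$ and $B(\lambda)$ have the same weight multiplicities (e.g.\ via a character argument using the Demazure or Weyl formula, which again relies on the rank-2 structure), or by embedding $B'$ into a tensor power of a known normal crystal. Either route is where the real content of \cite{KKMMNN} lies, and your sketch does not supply it.
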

	
	We will also need the following result.  A crystal $ B $ is called \emph{multiplicity-free} if there is at most one element of each weight.
	
	\begin{prop} \label{pr:MultFree}
		Suppose that $ B $ is a multiplicity-free normal crystal.  Let $ B' $ be another semi-normal crystal with a weight-preserving bijection $ B \rightarrow B'  $. Then this bijection is a crystal isomorphism.
	\end{prop}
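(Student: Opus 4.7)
The plan is to show that for multiplicity-free semi-normal crystals, the crystal operators $e_i, f_i$ are determined entirely by the underlying set of weights. Since $\psi$ is a weight-preserving bijection and $B$ is multiplicity-free, $B'$ is also multiplicity-free and $wt(B)=wt(B')$ as subsets of $\Lambda$. Both $B$ and $B'$ are therefore multiplicity-free semi-normal. It will then follow that the crystal operators on $B$ and $B'$ are intertwined by $\psi$, and hence $\psi$ is a crystal isomorphism.

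The key step is the following claim: in any multiplicity-free semi-normal $\fg$-crystal $C$, for any $b \in C$ with $wt(b) = \nu$ and any $i \in I$, one has $e_i(b) = 0$ if and only if $\nu + \alpha_i \notin wt(C)$. The forward direction is immediate since $e_i(b) \ne 0$ would have weight $\nu + \alpha_i$. For the reverse, suppose $e_i(b) = 0$ but $\nu + \alpha_i \in wt(C)$, witnessed by a (unique, by multiplicity-freeness) element $c$. Then $f_i(c)$ has weight $\nu$, and by multiplicity-freeness $f_i(c) \in \{b, 0\}$. If $f_i(c) = b$ then $e_i(b) = c \ne 0$, a contradiction. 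So $f_i(c) = 0$, which by semi-normality of $C$ forces $\varepsilon_i(c) = -\langle \alpha_i^\vee, \nu + \alpha_i\rangle$; but $\varepsilon_i(b) = 0$ and semi-normality give $\langle \alpha_i^\vee, \nu \rangle = \varphi_i(b) \ge 0$, so $\varepsilon_i(c) \le -2 < 0$, contradicting $\varepsilon_i(c) \ge 0$.

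With the claim in hand, fix $b \in B$ with $wt(b) = \nu$. Applying the claim to $B$ and $B'$, we get
\begin{equation*}
e_i(b) = 0 \text{ in } B \iff \nu + \alpha_i \notin wt(B) = wt(B') \iff e_i(\psi(b)) = 0 \text{ in } B'.
\end{equation*}
If both are nonzero, then $e_i(b)$ is the unique element of $B$ of weight $\nu + \alpha_i$ and $e_i(\psi(b))$ is the unique element of $B'$ of that weight, so weight-preservation of $\psi$ gives $\psi(e_i(b)) = e_i(\psi(b))$. A symmetric argument (or the identity $f_i = e_i^{-1}$ on the $i$-string) handles $f_i$. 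Hence $\psi$ intertwines all crystal operators, so it is a crystal isomorphism.

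There is no serious obstacle here; the only mildly delicate point is that the key claim uses semi-normality of the ambient crystal (not of its image) in an essential way, so one must verify it for both $B$ and $B'$ separately, and note that normality of $B$ (beyond semi-normality) plays no role in the argument.
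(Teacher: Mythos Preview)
Your argument is correct. The key claim --- that in a multiplicity-free semi-normal crystal $C$ one has $e_i(b)=0$ if and only if $wt(b)+\alpha_i\notin wt(C)$ --- is sound, and the numerical contradiction $\varepsilon_i(c)\le -2$ is exactly right.

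Your route differs from the paper's. The paper argues by contradiction via a \emph{maximal counterexample}: transport both crystal structures onto the same set, pick $b$ of maximal weight with $f_i(b)\neq f'_i(b)$, note that maximality forces $\varepsilon_i(b)=\varepsilon'_i(b)$, and then semi-normality gives $\varphi_i(b)=\varphi'_i(b)$, contradicting the assumed discrepancy. Your approach instead isolates a structural lemma: for a multiplicity-free semi-normal crystal, the crystal operators are determined by the weight set alone. This is a cleaner intrinsic statement and avoids any induction on weight; the paper's proof, by contrast, never formulates such a characterization but is slightly shorter since it compares the two structures directly without proving an auxiliary claim. Both arguments use only semi-normality of $B$ and $B'$, so your closing remark that normality of $B$ is not needed is accurate and applies equally to the paper's proof.
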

	
	\begin{proof}
		Using the given bijection, we can imagine that we have two semi-normal crystal structures $ (B, wt, e_i, f_i) $ and $ (B, wt, e'_i, f'_i) $ on the same set $ B $ (with the same weight function).  We wish to show that $ e_i = e'_i, f_i = f'_i $ for all $ i$.
		
		Fix some $ i $.  Suppose that $ f_i \ne f'_i $.  Choose $ b \in B $ of maximal weight such that $ f_i(b) \ne f'_i(b) $.  Since there is at most one element in $ B $ of weight $ wt(b) - \alpha_i $, the only way for $ f_i(b) $ and $ f'_i(b) $ to be unequal is for one of them to be 0 and the other non-zero.  In particular this means that $ \varphi_i(b) \ne \varphi'_i(b) $.
		
		Since $ b $ is chosen with maximal weight, we can see that above $ b$, the two crystals structures are the same.  Hence $ \varepsilon_i(b) = \varepsilon'_i(b) $.  Since both crystals are semi-normal this implies that $ \varphi_i(b) = \varphi'_i(b) $.  This is a contradiction, so no such $ b $ can exist.  This implies that $ f_i = f'_i $ which implies that $ e_i = e'_i $.
	\end{proof}
	
	\subsection{Tensor products}
	\begin{defn} \label{Def:Tensor}
	If $ B_1, B_2 $ are two crystals, then we define $ B_1 \otimes B_2 $ to be the crystal whose underlying set is $ B_1 \times B_2 $ and whose structure maps are defined by
	\begin{align*}
		wt(b_1, b_2) &= wt(b_1) + wt(b_2)\\
		e_i(b_1, b_2) &= \begin{cases} (e_i(b_1),b_2), & \; \text{if} \; \varepsilon_i(b_1) > \varphi_i(b_2) \\
			(b_1,e_i(b_2)), & \; \text{otherwise}
		\end{cases} \\
		f_i(b_1, b_2) &= \begin{cases} (f_i(b_1),b_2), & \; \text{if} \; \varepsilon(b_1) \geq \varphi(b_2) \\
			(b_1,f_i(b_2)), & \; \text{otherwise}
		\end{cases}
	\end{align*}
	\end{defn}
	
	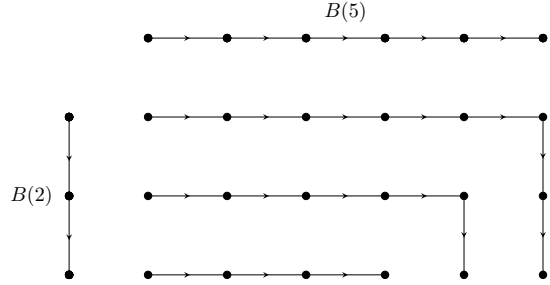
\begin{figure} 
	\begin{center}
		\scalebox{0.7}{
			\begin{tikzpicture}
			[decoration={
				markings,
				mark=between positions .02 and .1 step 15mm with {\arrowreversed{stealth}},
				mark=between positions .115 and 1 step 15mm with {\arrow{stealth}}}]
			\draw[postaction={decorate}]
			(0.5,0) -- (0.5,1.5) node[left=5pt]{$B(2)$} -- (0.5,3)
			(2,4.5) -- (5.75,4.5) node[above=5pt]{$B(5)$} -- (9.5,4.5)
			(2,3) -- (9.5,3) -- (9.5,0)
			(2,1.5) -- (8,1.5) -- (8,0)
			(2,0) -- (6.5,0);
			\foreach \x in {2,3.5,5,6.5,8,9.5}
			\foreach \y in {0,1.5,3}
			\filldraw (\x,\y) circle (2pt) (0.5,\y) circle (2pt) (\x,4.5) circle (2pt);
			\end{tikzpicture}}
	\end{center}
		\caption{The tensor product $B(2)\otimes B(5)$ of the two $\fsl_2$-crystals $B(2)$ and $B(5)$ corresponding to the irreducible $\fsl_2$-representations $V(2)$ and $V(5)$. The arrows denote the Kashiwara operator $ f $.} \label{CrystalTensor}
	\end{figure}
	
	The following results are well-known.
	\begin{prop}
	\begin{enumerate}
	\item The tensor product of normal crystals is again a normal crystal and moreover the decomposition of $ B(\l_1) \otimes B(\l_2) $ matches that of $V(\l_1) \otimes V(\l_2) $.
	\item The tensor product of crystals is ``associative'' in the sense that the obvious map
	\begin{align*}
		\alpha : (B_1 \otimes B_2) \otimes B_3 &\rightarrow  B_1 \otimes (B_2 \otimes B_3) \\
		((b_1, b_2), b_3) &\mapsto (b_1, (b_2, b_3))
	\end{align*}
	is an isomorphism of crystals.  This associator obviously satisfies the pentagon axiom.
	\end{enumerate}
	Thus $ \gcrys$ is a monoidal category.
	\end{prop}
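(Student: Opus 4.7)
The plan is to prove the two parts separately. For part (1), I would first verify directly from the formulas in \defnref{Def:Tensor} that $ B_1 \otimes B_2 $ is a crystal and is semi-normal whenever $ B_1, B_2 $ are. The semi-normality is immediate once one computes $ \varepsilon_i(b_1 \otimes b_2) = \max\bigl(\varepsilon_i(b_1),\, \varepsilon_i(b_2) - \langle \alpha_i^\vee, wt(b_1)\rangle\bigr) $ and $ \varphi_i(b_1 \otimes b_2) = \max\bigl(\varphi_i(b_2),\, \varphi_i(b_1) + \langle \alpha_i^\vee, wt(b_2)\rangle\bigr) $, so that the difference is additive in the weights. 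The non-trivial point is then showing that $ B_1 \otimes B_2 $ is normal (not merely semi-normal).

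For normality, the most conceptual approach is via Kashiwara's theory: realize $ B(\lambda_i) $ as the crystal basis of the $ U_q(\fg) $-module $ V_q(\lambda_i) $, and invoke Kashiwara's theorem that the tensor product $ V_q(\lambda_1) \otimes V_q(\lambda_2) $ admits a crystal basis whose underlying combinatorial data is exactly $ B(\lambda_1) \otimes B(\lambda_2) $ with the rule of \defnref{Def:Tensor}. Since $ V_q(\lambda_1) \otimes V_q(\lambda_2) $ decomposes with the same multiplicities as in the classical case, the statement about the decomposition of $ B(\lambda_1) \otimes B(\lambda_2) $ follows. Alternatively, since we have already reduced to semi-normality, \thmref{th:rank2} lets us reduce checking normality to the rank-$ 2 $ subalgebras $ \fg_J $ with $ |J| = 2 $; for these one can verify normality directly using the explicit description of $ B(\lambda) $ in types $ A_1 \times A_1$, $A_2$, $B_2$, and $ G_2$. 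I expect the rank-$ 2 $ verification (or equivalently the invocation of Kashiwara's theorem) to be the main technical obstacle.

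For part (2), the map $ \alpha\colon ((b_1, b_2), b_3) \mapsto (b_1, (b_2, b_3)) $ is manifestly a weight-preserving bijection. To see it commutes with each $ f_i $ (and hence $ e_i $ by symmetry), one does a case analysis on the relative sizes of $ \varepsilon_i(b_1), \varphi_i(b_2), \varepsilon_i(b_2), \varphi_i(b_3) $, using the formulas for $ \varepsilon_i $ and $ \varphi_i $ on a tensor product recalled above. In each case one verifies that the same tensor factor is acted on in $ (B_1 \otimes B_2) \otimes B_3 $ and in $ B_1 \otimes (B_2 \otimes B_3) $. This is routine bookkeeping rather than deep. The pentagon axiom then holds automatically: on underlying sets, every associator $ \alpha $ is just the identity map from $ B_1 \times B_2 \times B_3 \times B_4 $ to itself, so both compositions around the pentagon in \eqref{eq:pent} are equal to the identity on $ B_1 \times B_2 \times B_3 \times B_4 $. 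Combining parts (1) and (2) yields the monoidal structure on $ \gcrys $.
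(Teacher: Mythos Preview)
Your proposal is correct and follows the standard approaches in the literature. The paper itself does not prove this proposition at all: it is introduced with ``The following results are well-known'' and no argument is given. Your outline --- verifying semi-normality directly, obtaining normality either by invoking Kashiwara's tensor product theorem for crystal bases or by reducing to rank $2$ via \thmref{th:rank2}, and checking associativity by the routine case analysis on $\varepsilon_i,\varphi_i$ --- is exactly how one would supply the omitted details, so there is nothing further to compare.
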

	
	Because the associator for $ \gcrys $ is so simple, we will write multiple tensor products of crystals without brackets.
	
	Given a sequence $ \ul = (\l_1, \dots, \l_n) $, we define
	$$
	B(\ul) = B(\l_1) \otimes \cdots \otimes B(\l_n), \text{ and } B(\ul)^\mu = \Hom(B(\mu), B(\ul))
	$$
	Note that $ B(\ul)^\mu $ is a finite set whose cardinality agrees with the tensor product multiplicity of $ V(\mu) $ in $ V(\ul):= V(\l_1) \otimes \cdots \otimes V(\l_n) $.	
	
	\subsection{The Sch\"utzenberger involution and the crystal commutor} \label{se:commutor}
\begin{defn}
Let $ B $ be a normal crystal.  The \emph{Sch\"{u}tzenberger involution} $ \xi_B : B \rightarrow B $ is the unique map of sets which is natural with respect to crystal morphisms (i.e. it is a natural transformation of the forgetful functor $ \gcrys \rightarrow \set$) and satisfies
\begin{equation} \label{eq:Schutz}
\begin{aligned}
e_i (\xi_B(b)) &= \xi_B(f_{\theta(i)}( b)) \\
f_i (\xi_B(b)) &= \xi_B(e_{\theta(i)}( b)) \\
wt(\xi_B(b)) &= w_0 wt(b)
\end{aligned}
 \end{equation}
for any $ b \in B $.  Here, as in the rest of the paper, $ w_0 $ denotes longest element of the Weyl group and $\theta: I \rightarrow I $ the bijection coming from $ w_0 $.
\end{defn}

The property of being natural with respect to crystal morphisms is equivalent to preserving connected components.  In other words, we can compute $ \xi_B $ by writing $ B $ as a disjoint union of the $ B(\l) $ and applying $ \xi $ to each one.  The existence (and uniqueness) of the Sch\"utzenberger involution was proved in \cite{HK}.

Note that $\xi^2_B $ is a natural automorphism of crystals and since each $ B(\l) $ only admits the identity automorphism, we see that $ \xi_B^2 = 1 $.

Following \cite{HK}, we can use the Sch\"utzenberger involution to define the crystal commutor.

\begin{defn}
Given two normal crystals $ B_1, B_2 $, we define the \emph{crystal commutor}
$$
\sigma_{B_1, B_2} : B_1 \otimes B_2 \rightarrow B_2 \otimes B_1
$$
by
$$ \sigma(b_1, b_2) = \xi_{B_2 \otimes B_1} (\xi_{B_2}(b_2) \otimes \xi_{B_1} (b_1)) $$
\end{defn}

The following result was established in \cite{HK}.
\begin{thm}
$\sigma_{B_1, B_2} $ is an isomorphism of crystals and gives $ \gcrys$ the structure of a coboundary category.
\end{thm}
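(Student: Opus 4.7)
My plan is to exploit the defining properties of the Schützenberger involution $\xi$ throughout, working one connected component at a time.

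First I would verify that $\sigma_{B_1,B_2}$ is a crystal morphism. Since each normal crystal decomposes as a disjoint union of the $B(\l)$, and since $\xi$ is natural with respect to crystal morphisms, it suffices to check the weight and crystal-operator conditions in the case $B_1 = B(\l_1)$, $B_2 = B(\l_2)$. For the weight: $wt(\sigma(b_1 \otimes b_2)) = w_0 wt\bigl(\xi_{B_2}(b_2) \otimes \xi_{B_1}(b_1)\bigr) = w_0(w_0 wt(b_2) + w_0 wt(b_1)) = wt(b_1 \otimes b_2)$. For compatibility with $e_i$ (and symmetrically $f_i$), I would compute
\begin{equation*}
e_i \sigma_{B_1,B_2}(b_1 \otimes b_2) = e_i \xi_{B_2 \otimes B_1}\bigl(\xi_{B_2}(b_2) \otimes \xi_{B_1}(b_1)\bigr) = \xi_{B_2 \otimes B_1}\bigl(f_{\theta(i)}(\xi_{B_2}(b_2) \otimes \xi_{B_1}(b_1))\bigr)
\end{equation*}
using \eqref{eq:Schutz}, then unpack the tensor product rule for $f_{\theta(i)}$ and translate the inequalities between $\varepsilon_{\theta(i)}$ and $\varphi_{\theta(i)}$ on $\xi_{B_j}(b_j)$ back to inequalities between $\varphi_i$ and $\varepsilon_i$ on $b_j$ (which are interchanged by $\xi$). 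The resulting expression matches $\sigma_{B_1,B_2}(e_i(b_1 \otimes b_2))$ after one more application of \eqref{eq:Schutz}.

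Next I would verify the symmetry axiom $\sigma_{B_2,B_1} \circ \sigma_{B_1,B_2} = \id$. Applying $\sigma_{B_2,B_1}$ to $\sigma_{B_1,B_2}(b_1 \otimes b_2)$ and writing out the two nested copies of the definition, everything collapses using $\xi_B^2 = \id$ on each factor and on the tensor product, together with the fact just proved that $\sigma$ (hence $\xi$ applied twice in this pattern) is a morphism of crystals.

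Finally, and this is the main obstacle, I would verify the hexagon axiom \eqref{eq:hex}. The cleanest route is to prove first an auxiliary \emph{triple-tensor identity} for $\xi$, of the form
\begin{equation*}
\xi_{B_1 \otimes B_2 \otimes B_3}(b_1 \otimes b_2 \otimes b_3) = \sigma_{B_1, B_2 \otimes B_3}^{-1}\bigl(\xi_{B_2 \otimes B_3}(b_2 \otimes b_3) \otimes \xi_{B_1}(b_1)\bigr),
\end{equation*}
which follows by naturality and the definition of $\sigma$; iterating this in two different bracketings produces two expressions for $\xi_{B_1 \otimes B_2 \otimes B_3}$ that must agree, and their equality is exactly the hexagon after rearrangement by $\xi^2 = \id$. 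Concretely, I would chase $(b_1 \otimes b_2) \otimes b_3$ around both sides of \eqref{eq:hex}, rewrite each arrow via the definition of $\sigma$, and use the strict associativity of the crystal tensor product (so that $\alpha$ is the identity on underlying sets) to eliminate the associator. The two sides then reduce to the same alternating composition of $\xi$'s on the triple $B_1 \otimes B_2 \otimes B_3$ and on its factors, finishing the proof.

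The technical heart is in the hexagon step: verifying it requires knowing that $\xi$ on a triple tensor product has a well-defined behavior independent of bracketing, which in turn requires \lemref{le:normal} to decompose into connected components $B(\l)$, on which $\xi$ is uniquely determined and can be computed explicitly. Once this triple-tensor identity is in place, both the symmetry and the hexagon are purely formal consequences of $\xi^2 = \id$ and the twisting rules \eqref{eq:Schutz}.
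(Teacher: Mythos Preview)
The paper does not prove this theorem; it simply records it as a result established in \cite{HK}. Your sketch is essentially the argument of \cite{HK}, and it is correct in outline.

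One point worth sharpening is the symmetry step. You write that ``everything collapses using $\xi_B^2=\id$,'' but after applying $\sigma_{B_1,B_2}$ you land on some element $c_2\otimes c_1\in B_2\otimes B_1$ for which there is no closed formula in terms of $b_1,b_2$, so a direct unwinding does not obviously terminate. The clean way to finish, using precisely the ingredients you list, is: since $\sigma_{B_1,B_2}$ is a crystal morphism and $\xi$ is natural with respect to crystal morphisms, one has $\xi_{B_2\otimes B_1}\circ\sigma_{B_1,B_2}=\sigma_{B_1,B_2}\circ\xi_{B_1\otimes B_2}$. Combined with $\xi^2=\id$ this gives the alternative formula
\[
\sigma_{B_1,B_2}(b_1\otimes b_2)=\xi_{B_2}(b_2')\otimes\xi_{B_1}(b_1'),\qquad \text{where }b_1'\otimes b_2'=\xi_{B_1\otimes B_2}(b_1\otimes b_2),
\]
and then $\sigma_{B_2,B_1}\circ\sigma_{B_1,B_2}=\id$ is a two-line computation using only $\xi^2=\id$. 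This is the point at which ``$\sigma$ is a crystal morphism'' genuinely enters; without it the two candidate formulas for $\sigma$ need not agree. Your hexagon argument via the triple-tensor identity is exactly the \cite{HK} approach and goes through once symmetry is in hand.
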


This theorem and \thmref{th:CactusCoboundary} show that we get an action of the cactus group $C_n$ on a tensor product $ B_1 \otimes \cdots \otimes B_n $ of normal crystals.  We call this the external cactus group action.

\subsection{Internal cactus group action}
\label{sec:action}
Given a normal $ \fg$-crystal $ B$, we will now define an action of the cactus group $C_\Delta $ of type $ \Delta $ on the crystal $ B $.  This action is analogous to Lusztig's quantum Weyl group action on any quantum group representation.

Our construction uses partial Sch\"utzenberger involutions and was first studied in the case $ \fg = \sl_m $ by Berenstein-Kirillov \cite{BeK}.

\begin{defn}
Let $ J \subset I$.  We define the partial Sch\"utzenberger involution $ \xi_J : B \rightarrow B $ by $ \xi_J := \xi_{B_J} $.  So, we regard $ B $ as a $ \fg_J $ crystal and apply its Sch\"utzenberger involution.  In other words, we decompose $ B $ as a $ \fg_J $ crystal and then apply the Sch\"utzenberger involution to each component.
\end{defn}

Consider the case where $ J $ has only one element.  In this case, we can see that $ \xi_i = \xi_{\{i\}} $ just acts as a reflection of each $i$ root string.  More precisely,
$$
\xi_i(b) = \begin{cases} e_i^{-\langle \alpha_i^\vee, wt(b) \rangle}(b), \  \text{ if } \langle \alpha_i^\vee, wt(b) \rangle \le 0 \\
f_i^{\langle \alpha_i^\vee, wt(b) \rangle}(b), \  \text{ if } \langle \alpha_i^\vee, wt(b) \rangle \ge 0
\end{cases}
$$
These bijections $ \xi_i $ were originally studied by Kahiswara.

Now, we consider all the partial Sch\"utzenberger involutions.

\begin{thm} \label{th:InternalAction}
The map $ s_J \mapsto \xi_J $ defines an action of the cactus group $ C_\Delta $ on the set $ B $.  Moreover, for each $ g \in C_\Delta $ and $ b \in B $, we have that $ wt(g(b)) = g(wt(b)) $.
\end{thm}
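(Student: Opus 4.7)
The plan is to reduce to checking that the three defining relations of $C_\Delta$ (Definition~\ref{def:cactus}) are satisfied by the assignment $s_J \mapsto \xi_J$, and then address the weight compatibility by tracking how each $\xi_J$ shifts weights. Throughout we exploit the uniqueness of the Sch\"utzenberger involution on a normal crystal: any involution of $B$ that commutes with all $\fg_J$-crystal morphisms and satisfies \eqref{eq:Schutz} for $\fg_J$ must equal $\xi_J$.

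Relations (1) and (3) are essentially formal. For (1), $\xi_J^2 = \mathrm{id}$ follows immediately from $\xi^2 = \mathrm{id}$ applied to $B_J$. For (3), when $J \cup K$ is disconnected we have $\fg_{J \cup K} = \fg_J \oplus \fg_K$ and any connected $\fg_{J \cup K}$-component of $B$ is isomorphic to an external tensor product $B_J(\nu_J) \boxtimes B_K(\nu_K)$. On such a component $\xi_J$ only acts through the $B_J$ factor while $\xi_K$ only acts through the $B_K$ factor, so they commute.

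The main step is relation (2): for $J \subset K$, one must show $\xi_K \xi_J \xi_K = \xi_{\theta_K(J)}$. First I would verify, using the identity $\xi_K \, e_j = f_{\theta_K(j)} \, \xi_K$ on $B_K$ (which is just \eqref{eq:Schutz} for $\xi_K$) and the analogous identity for $\xi_J$ on $B_J$, that for any $j' \in \theta_K(J)$
\[ e_{j'} (\xi_K \xi_J \xi_K) \;=\; (\xi_K \xi_J \xi_K) f_{\theta_K \theta_J \theta_K(j')} . \]
Here the key algebraic input is the identity $\theta_K \theta_J \theta_K = \theta_{\theta_K(J)}$ on $\theta_K(J)$, which follows from $w_0^K w_0^J w_0^K = w_0^{\theta_K(J)}$ (conjugation by $w_0^K$ sends $W_J$ to $W_{\theta_K(J)}$). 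Next I would check that $\xi_K \xi_J \xi_K$ preserves $\fg_{\theta_K(J)}$-connected components of $B$: if $b_2 = e_{j'}(b_1)$ with $j' \in \theta_K(J)$, then $\xi_K(b_2) = f_j \xi_K(b_1)$ with $j = \theta_K(j') \in J$, so $\xi_J \xi_K(b_1)$ and $\xi_J \xi_K(b_2)$ lie in the same $\fg_J$-component; conjugating back by $\xi_K$ converts each $e_j, f_j$ for $j \in J$ into $f_{j'}, e_{j'}$ for $j' \in \theta_K(J)$, placing the results in the same $\fg_{\theta_K(J)}$-component. Now both $\xi_K \xi_J \xi_K$ and $\xi_{\theta_K(J)}$ are involutions of $B$ which preserve $\fg_{\theta_K(J)}$-components and have the same conjugation action on the $\fg_{\theta_K(J)}$-operators; their composition is therefore a component-preserving $\fg_{\theta_K(J)}$-crystal automorphism, which by Lemma~\ref{le:normal}(4) must be the identity.

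The weight claim $wt(g(b)) = g(wt(b))$ reduces to the case $g = s_J$. Within a $\fg_J$-connected component of $B_J$, weights differ by elements of the $\fg_J$-root lattice, and $w_0^J \in W_J \subset W$ acts trivially on the orthogonal complement of $\mathrm{span}(\alpha_j : j \in J)$; combining this with the fact that the Sch\"utzenberger involution on $B_J(\nu)$ sends the $\fg_J$-weight $\nu_J$ to $w_0^J(\nu_J)$, we obtain $wt(\xi_J(b)) = w_0^J \cdot wt(b)$ as elements of $\Lambda \subset \fh^*$. Iterating gives the result for arbitrary $g \in C_\Delta$. The hardest part is the tracking in relation (2), but once the identity $\theta_K \theta_J \theta_K = \theta_{\theta_K(J)}$ is in hand, the two defining properties (preservation of components and the \eqref{eq:Schutz} identities) are straightforward to verify, and uniqueness closes the argument.
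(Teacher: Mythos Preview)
Your proof is correct and follows essentially the same approach as the paper's: both verify the three cactus relations by reducing to the characterizing properties \eqref{eq:Schutz} of the Sch\"utzenberger involution (involutivity for relation~(1), external tensor factorization for relation~(3), and for relation~(2) checking that the conjugate $\xi_K\xi_J\xi_K$ satisfies the defining properties of $\xi_{\theta_K(J)}$ and then invoking uniqueness). The only cosmetic difference is that the paper simplifies relation~(2) to the case $K=I$ by passing to $B_K$, whereas you carry the general $K$ through directly and make the identity $\theta_K\theta_J\theta_K=\theta_{\theta_K(J)}$ explicit; your weight argument is also spelled out in slightly more detail than the paper's one-line remark.
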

In the last equality, we use the map $ C_\Delta \rightarrow W $ to define an action of $ C_\Delta $ on the weight lattice.

\begin{rem}
	We emphasize that the internal cactus group acts by set morphisms, not crystal morphisms.  To be more precise, $ C_\Delta $ acts as automorphisms of the forgetful functor from $\gcrys$ to $\set$.
\end{rem}

\begin{proof}
We must check the three relations in the cactus group from \defnref{def:cactus}.

The first relation $ \xi_J^2 =1 $ is clear since the Sch\"utzenberger involution is an involution.

For the second relation, suppose that $J \subset K \subseteq I $.  We must show that $ \xi_K \xi_J = \xi_{\theta_K(J)} \xi_K $.  For simplicity, let us assume that $ K = I$.

It suffices to show that $ \xi \xi_J \xi = \xi_{\theta(J)} $.  Consider $ \xi \xi_J \xi $ as a map $ B_{\theta(J)} \rightarrow B_{\theta(J)} $.  It is immediate that it satisfies the conditions from \eqref{eq:Schutz}.  Thus, it suffices to show that $ \xi \xi_J \xi  $ preserves the connected components of $ B_{\theta(J)} $.  Suppose that $ b, b' \in B $ lie in the same $ B_{\theta(J)} $ connected component.  Then since $ \xi $ commutes $ e_i $ to $ f_{\theta(i)} $, we can see that $ \xi(b), \xi(b') $ lie in the same $ B_J $ connected component.  This implies that $ \xi_J(\xi(b)), \xi_J(\xi(b')) $ lie in the same $B_J $ connected component and thus that $\xi(\xi_J(\xi(b))) $ and  $\xi(\xi_J(\xi(b'))) $ lies in the same $ B_{\theta(J)} $ connected component.  Thus, $ \xi \xi_J \xi $ preserves the connected components of $ B_{\theta(J)} $, and we conclude that $ \xi \xi_J \xi = \xi_{\theta(J)}$.

Before proving the third relation, consider the following general observation.   Suppose that we have a semisimple Lie algebra $ \fg = \fg_1 \oplus \fg_2 $ with two semisimple factors and Dynkin diagram $ I = I_1 \sqcup I_2 $, then every irreducible representation can be written as $ V(\l_1 + \l_2) = V(\l_1) \otimes V(\l_2) $ where $ V(\l_1) $ is an irreducible representation of $ \fg_1 $ and $ V(\l_2) $ is an irreducible representation of $ \fg_2 $.  This implies that $B(\l_1+ \l_2) = B(\l_1) \times B(\l_2) $ where the crystal operators act componentwise.  This implies that the Sch\"utzenberger involutions $ \xi_{I_1} $ and $ \xi_{I_2} $ act componentwise on $ B(\l_1 + \l_2) $ and hence commute. Thus, they commute on any normal $ \fg $-crystal.

Now, to establish the third relation, let $ J, K \subset I $ with $ J \cup K $ disconnected, so $ \fg_{J \sqcup K} = \fg_J \oplus \fg_K $.  Consider the crystal $ B_{J \sqcup K}$.  This will be a normal $ \fg_{J \sqcup K} $ crystal and thus the Sch\"utzenberger involutions $ \xi_J, \xi_K $ will commute by the above observation.

Finally, the statement about the behaviour of weights under the cactus group action follows from the case of the generators $ s_J $.
\end{proof}

\begin{rem}
Kashiwara \cite[Theorem 7.2.2]{K} showed that the $ \xi_i $, for $ i \in I $, satisfy the braid relations and thus the action of the internal cactus group on $ B $ actually factors through the quotient of this group by the braid relations.  We should note that this is not true for the external cactus group action --- the simplest example is the three fold tensor product of adjoint crystals of $ \sl_3$.
\end{rem}

\begin{rem}
In \cite{H}, the first author showed that in type A, the internal and external cactus group actions are related by a crystal version of skew Howe duality.
\end{rem}

\section{Generalities on families of commutative subalgebras and their eigenlines}

\subsection{Families of subalgebras}
For the remainder of the paper, we will be studying families of subalgebras.

Let $ U $ be an algebra equipped with an increasing filtration $ F^0 U \subseteq F^1 U \subseteq F^2 U \subseteq \cdots $ with finite-dimensional filtered pieces.  In our setting, $ U $ will be $ U(\fg) $ or $ U(\fg)^{\otimes n} $, equipped with the PBW filtration.

\begin{defn} \label{Def:Family}
Let $ X $ be a variety.  A \emph{family of subalgebras} of $U $ parametrized by $ X $ means that we are given a subalgebra $ \A(x) $, for each $ x \in X $, such that for each $ N \in \BN $, $ d_N :=  \dim (\A(x) \cap F^N U) $ is independent of $ x $, and such that the resulting map
\begin{align*}
X &\rightarrow \Gr(d_N, F^N U) \\
x &\mapsto \A(x) \cap F^N U
\end{align*}
is a morphism of algebraic varieties (here $ \Gr(d, V)$ denotes the Grassmannian of $ d $-dimensional subspaces of $ V $).
\end{defn}
In this circumstance, we will sometimes say that we have a map from $ X $ to subalgebras of $ U $.

Suppose that we have a family of subalgebras of $ U $ parametrized by a variety $ X$.  Then we can take the closure of this family in the following way.  For each $N$, we let $ Z_N \subset \Gr(d_N, F^N U)  $ be the closure of the image of the map $X \rightarrow \Gr(d_N, F^N U) $.  Then there are surjective restriction maps $Z_N\to Z_M$ for any $M<N$. The inverse limit $Z=\lim\limits_{\leftarrow} Z_N$ is well-defined as a pro-algebraic scheme.  The restriction of the tautological vector bundle on the Grassmannian gives a sheaf $\A$ of commutative algebras on $Z$.  For most families of subalgebras that we consider in this paper, the pro-algebraic scheme $Z$ is in fact an algebraic variety (though not always smooth) and in fact the natural map $Z \rightarrow Z_N $ is an isomorphism for some small number $N $ (often $ N = 2$).

\subsection{Eigenlines}

Let $ V $ be an $n$-dimensional vector space.  Let $ \A \subset \End V $ be a commutative algebra.
\begin{defn}
We say that $ \A $ acts with \emph{simple spectrum} on $ V $, if there exist distinct algebra maps $ \psi_1, \dots, \psi_n : \A \rightarrow \BC $ such that for each $ i $, the eigenspace
$$
E_i = \{ v \in V \ : \  a v = \psi_i(a) v, \text{ for all $ a \in \A $} \}
$$
is one-dimensional.  In this case, $ V = E_1 \oplus \cdots \oplus E_n$ and we call $ \CE_\A(V):= \{ E_1, \dots, E_n \} $ the set of eigenlines for the action of $ \A $ on $ V $.
\end{defn}
Note that if $ \A $ acts  with simple spectrum, then it is just the algebra of diagonal matrices with respect to a fixed basis.  In particular, $ \A \cong \C^n $ as an algebra.  Also note that $ V $ will be a cyclic module for $ \A $.  Conversely, if $ V $ is a cyclic $\A$-module and $\A $ acts semisimply on $ V $, then it has simple spectrum.

Let $ g : V \rightarrow W$ be an isomorphism of vector spaces.  Then we have $ g\A g^{-1} \subset \End W $, the conjugate of $ \A $ by $ g $.
\begin{lem} \label{le:conjugate}
The action of $ g $ gives a bijection $ \CE_\CA(V) \rightarrow \CE_{g\CA g^{-1}}(W)$.
\end{lem}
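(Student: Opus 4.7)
The plan is a direct verification that conjugation by $g$ carries eigenlines to eigenlines, and that the induced map is invertible with inverse obtained by conjugating by $g^{-1}$. First I would unpack the definitions: suppose $E \in \CE_\CA(V)$ has associated character $\psi : \A \to \C$, meaning $av = \psi(a)v$ for every $v \in E$ and $a \in \A$. For any $a' = gag^{-1} \in g\A g^{-1}$ and any $v \in E$, the computation
\[
a'(gv) = g a g^{-1}(gv) = g(av) = \psi(a)\, gv
\]
shows that $gv$ is an eigenvector for $a'$ with eigenvalue $\psi(a)$. Since $g$ is a linear isomorphism, $g(E)$ is a one-dimensional subspace of $W$ on which $g\A g^{-1}$ acts by the character $\psi'(gag^{-1}) := \psi(a)$; this is well defined because conjugation by $g$ is an algebra isomorphism $\A \to g\A g^{-1}$.

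Next I would argue that $g\A g^{-1}$ acts with simple spectrum on $W$. Writing $V = E_1 \oplus \cdots \oplus E_n$ and applying $g$ gives $W = g(E_1) \oplus \cdots \oplus g(E_n)$, and the characters $\psi'_i = \psi_i \circ \mathrm{Ad}(g^{-1})$ are pairwise distinct since the $\psi_i$ are. Thus each $g(E_i)$ is a genuine eigenline in $\CE_{g\A g^{-1}}(W)$, and the assignment $E \mapsto g(E)$ defines a map $\CE_\A(V) \to \CE_{g\A g^{-1}}(W)$. The same construction applied to the inverse isomorphism $g^{-1}: W \to V$, using that $g^{-1}(g\A g^{-1})g = \A$, yields a map $\CE_{g\A g^{-1}}(W) \to \CE_\A(V)$, and the two are visibly mutually inverse.

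There is no real obstacle here: the argument is a one-line computation plus bookkeeping. The only point worth emphasizing is that one uses the algebra isomorphism $\A \cong g\A g^{-1}$ given by conjugation to transport the simple-spectrum condition from $V$ to $W$, so that $\CE_{g\A g^{-1}}(W)$ is defined in the first place.
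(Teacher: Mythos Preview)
Your proof is correct. The paper itself states this lemma without proof, treating it as an immediate consequence of the definitions, and your direct verification via the conjugation computation is exactly the natural argument one would supply.
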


\subsection{Families of eigenlines}
Suppose we have a family of commutative subalgebras of $ U $ parametrized by $ X $ and we have an algebra map $ U \rightarrow \End V $ for some finite-dimensional vector space $ V $.  This gives us a family of subalgebras of $ \End V$, but typically these subalgebras are not all of the same dimension  nor do they all act semisimply.  So we will typically restrict $ X $ to some topological subspace $ Y \subset X $ (for example $Y = X(\BR) $) such that for each $ y \in Y $, $ \A(y) $ acts semisimply with simple spectrum on $ V $.  Then we get a covering space $ \CE(V) \rightarrow Y $ whose fibres are $ \CE_{\CA(y)}(V) $.  In particular, if we have a path $ p : [0,1] \rightarrow Y $, we get a monodromy map $ p: \CE_{\CA(p(0))}(V) \rightarrow \CE_{\CA(p(1))}(V)$.

Fix such a family and let $ g : V \rightarrow W $ be an isomorphism.

\begin{lem} \label{le:SquareMonodromies}
Let $ p :[0,1] \rightarrow Y $ be a path in $ Y $.  Then we get a resulting path $g(p)$ of subalgebras of $ \End(W) $, given by $ x \mapsto g \A(x) g^{-1} $.  With the above setup, we have the following commutative square of bijections
\begin{equation*}
\begin{tikzcd}
\CE_{\CA(p(0))}(V) \ar[r,"g"] \ar[d,"p"'] & \CE_{\CA(g(p(0)))}(W) \ar[d,"{g(p)}"] \\
\CE_{\CA(p(1))}(V) \ar[r,"g"] & \CE_{\CA(g(p(1)))}(W)
\end{tikzcd}
\end{equation*}
\end{lem}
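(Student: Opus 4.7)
The plan is to exhibit the conjugation-by-$g$ map as a morphism of covering spaces over $Y$, and then invoke the standard fact that morphisms of covers commute with monodromy. There are essentially only two things to check: that the map on total spaces is well-defined and continuous, and that monodromy is functorial with respect to such maps.

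First, write $\CE^V \to Y$ and $\CE^W \to Y$ for the two coverings in question, built from the families $\A(x)$ and $g\A(x)g^{-1}$ respectively (both parametrized by the same subset $Y$). Each of these sits naturally inside $Y \times \Gr(1,V)$ or $Y \times \Gr(1,W)$ as the locus of pairs $(y,L)$ where $L$ is an eigenline for the algebra at $y$. I would define
\begin{equation*}
\Phi \colon \CE^V \longrightarrow \CE^W, \qquad (y,L) \longmapsto (y, g(L)).
\end{equation*}
By \lemref{le:conjugate}, $g(L)$ is indeed an eigenline for $g\A(y)g^{-1}$, so $\Phi$ lands in $\CE^W$, and on each fibre it is exactly the bijection of \lemref{le:conjugate}. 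Continuity of $\Phi$ is immediate from the continuity of the induced map $\Gr(1,V) \to \Gr(1,W)$, so $\Phi$ is a morphism of covering spaces over $Y$.

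Second, I would apply the general fact that if $\Phi \colon \tilde X_1 \to \tilde X_2$ is a morphism of covering spaces over a base $Y$, then for any path $p \colon [0,1] \to Y$ and any $\tilde x \in \tilde X_1$ lying over $p(0)$, the unique lift of $p$ to $\tilde X_1$ starting at $\tilde x$ maps under $\Phi$ to the unique lift of $p$ to $\tilde X_2$ starting at $\Phi(\tilde x)$. Applied to $\Phi$ above, this is precisely the claim that the square in the statement commutes, since the path $g(p)$ in the parameter space is just $p$ itself viewed through the second family.

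There is really no main obstacle: once the covering-space morphism $\Phi$ is identified, the rest is the functoriality of monodromy. The only mild subtlety is making sure that both $\CE^V$ and $\CE^W$ are covers of the \emph{same} base $Y$, which is built into the formulation since the conjugation $\A(x) \mapsto g\A(x)g^{-1}$ is a fibrewise operation that does not move the parameter.
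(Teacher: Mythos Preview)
Your argument is correct: constructing the fibrewise bijection $(y,L)\mapsto(y,g(L))$ as a morphism of covers over the common base $Y$ and then invoking functoriality of path-lifting is exactly the right way to see this. The paper itself offers no proof of this lemma, treating it as immediate from \lemref{le:conjugate} and the definition of monodromy, so your write-up simply makes explicit what the paper leaves to the reader.
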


\section{Gaudin algebras} \label{se:Gaudin}

\subsection{Generalities on Poisson centers}
As above, we work with a fixed semisimple Lie algebra $\fg$. We consider the universal enveloping algebra $U(\fg)$.  In addition to the previous notation, we will also write $ h_\alpha \in \fh $ for the coroot corresponding to $ \alpha \in \Delta_+ $ and in particular we will write $ h_i = h_{\alpha_i} $.

It has a natural PBW filtration such that the associated graded algebra $\gr U(\fg)$ is the symmetric algebra $S(\fg)$. This gives a natural Poisson structure on $S(\fg)$ defined on the generators as the commutator operation. The algebra $S(\fg)$ is naturally isomorphic to $\CO(\fg^*)$, the coordinate ring of the coadjoint representation with the usual Lie-Kirillov-Kostant bracket.

Since $\fg$ is semisimple, the Killing form identifies $\fg^*$ with $\fg$, so we have $S(\fg)=\CO(\fg)$ as well. The Poisson center of $S(\fg)$ is naturally the algebra of invariants $S(\fg)^\fg=\CO(\fg \sslash G)$. The center $ZU(\fg)$ of the universal enveloping algebra $U(\fg)$ equals $U(\fg)^\fg$ and we have $\gr ZU(\fg)=S(\fg)^\fg$. The algebra $S(\fg)^\fg$ is known to be a free polynomial algebra with $ r =\rk\fg$ generators $\Phi_1,\ldots,\Phi_r$. The degrees of the generators are $\deg\Phi_l=d_l+1$ where $d_l$ are the exponents of $\fg$.

Let $e=\sum\limits_{i \in I} e_{\alpha_i}\in\fn_+ $ be the principal nilpotent element.  Consider the $\fsl_2$-tiple $(e,h,f)$. Then we have the Kostant slice in $\fg$, $$
\fg_{\can} = e+\fz_\fg(f)\subset e\oplus\fb_-.
$$
where $ \fz_\fg(f) $ denotes the centralizer of $ f$.
By \cite{Ko}, the adjoint orbit of any regular element in the Lie
algebra $\fg$ contains a unique element which belongs to $\fg_{\can}$. Thus, we have canonical isomorphisms
\begin{equation}
\fg_{\can} \tilde\to \fg \sslash G.
\end{equation}
In particular the restriction of the Poisson center $S(\fg)^\fg$ to $\fg_{can}\subset\fg=\fg^*$ is an isomorphism $S(\fg)^\fg\simeq \CO(\fg_{can})$.

Now let $\fl\subset\fg$ be a reductive subalgebra of $\fg$. We will use the following particular case of Knop's Theorem on a Harish-Chandra homomorphism for reductive group actions:

\begin{thm}\label{th:Knop} (Theorem~10.1 and Main Theorem of \cite{Kn}) The centralizer of $ZU(\fl)$ in $U(\fg)$ is the tensor product $U(\fg)^\fl\otimes_{ZU(\fl)}U(\fl)$. The same is true for the associated graded Poisson algebras: the Poisson centralizer of $S(\fl)^\fl$ in $S(\fg)$ is  the tensor product $S(\fg)^\fl\otimes_{S(\fl)^\fl}S(\fl)$.
\end{thm}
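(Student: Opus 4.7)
My plan is to deduce the quantum statement from the classical Poisson statement via a PBW filtration argument. The easy containment in both cases is immediate: $U(\fg)^\fl$ commutes with all of $U(\fl) \supset ZU(\fl)$, and $U(\fl)$ commutes with its own center $ZU(\fl)$, giving a well-defined map $U(\fg)^\fl \otimes_{ZU(\fl)} U(\fl) \to Z_{U(\fg)}(ZU(\fl))$; the analogous argument works in the Poisson setting, so the work is all in the reverse inclusion.

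For the quantum-to-classical reduction, equip both sides with induced PBW filtrations. If $a \in U(\fg)$ centralizes $ZU(\fl)$, then for any $z \in ZU(\fl)$ the identity $[a,z]=0$ forces the principal symbols to Poisson-commute, so the associated graded of the centralizer is contained in the Poisson centralizer of $\gr ZU(\fl) = S(\fl)^\fl$ (using that $\fl$ is reductive). Combined with the identification $\gr(U(\fg)^\fl \otimes_{ZU(\fl)} U(\fl)) = S(\fg)^\fl \otimes_{S(\fl)^\fl} S(\fl)$ (which relies on the freeness of $U(\fl)$ over $ZU(\fl)$, a theorem of Kostant, and the analogous Chevalley statement classically), the classical equality forces equality of associated gradeds on both sides, and hence the quantum equality.

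For the classical statement, the key is the geometry of the projection $\mu : \fg^* \to \fl^*$ dual to $\fl \hookrightarrow \fg$. This is precisely the moment map for the coadjoint $L$-action on $\fg^*$. A function $g \in \CO(\fg^*)$ Poisson-commutes with $\mu^* f$ for $f \in S(\fl)^\fl$ if and only if $g$ is invariant under the Hamiltonian flow of $\mu^* f$; at a point $x \in \fg^*$, a short calculation identifies this flow with the infinitesimal coadjoint action by $df(\mu(x)) \in \fl$, and $L$-invariance of $f$ forces $df(\mu(x))$ to lie in the stabilizer $\fl_{\mu(x)}$. On the open locus $U \subset \fl^*$ of $L$-regular elements, $\fl_{\mu(x)}$ is a Cartan subalgebra of $\fl$, and as $f$ ranges over $S(\fl)^\fl$ the differentials $df(\mu(x))$ fill out this Cartan (by the Chevalley restriction theorem applied to $\fl$).

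The main obstacle is the last step: upgrading this infinitesimal invariance under a generically-Cartan family of flows to honest $L$-invariance on $\mu^{-1}(U)$, modulo pullbacks from $\fl^*$. My plan is to exploit $L$-equivariant local slices over $U$ (Luna-type) so that the Poisson-commutant question on each slice reduces to a Chevalley-style description of functions fixed by a single Cartan acting in the coadjoint way. Once the equality holds over the dense open $\mu^{-1}(U)$, a Hartogs / codimension-two extension argument (the complement of $\mu^{-1}(U)$ has codimension at least $2$ in $\fg^*$ because $\fl$ is reductive, so its regular locus in $\fl^*$ has codimension $\geq 2$) extends the description to all of $\fg^*$, yielding the desired identification of the Poisson commutant as $S(\fg)^\fl \otimes_{S(\fl)^\fl} S(\fl)$.
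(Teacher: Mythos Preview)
The paper does not prove this statement: it is quoted verbatim as Theorem~10.1 and the Main Theorem of Knop's paper \cite{Kn} and used as a black box. So there is no in-paper argument to compare against; the relevant benchmark is Knop's proof, which occupies a substantial portion of that paper and relies on his general theory of invariant collective motion and the Harish--Chandra homomorphism for reductive group actions.

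Your outline has the right architecture---the PBW reduction to the Poisson statement is standard and your identification of the Hamiltonian flow of $\mu^*f$ with the coadjoint action of $df(\mu(x))\in\fl_{\mu(x)}$ is correct---but the part you yourself flag as the ``main obstacle'' is not actually carried out. Concretely: over a regular $\xi\in\fl^*$ you need that a $T_\xi$-invariant function on the fibre $\mu^{-1}(\xi)$ extends to an $L$-invariant function on $\fg^*$, and that these fibrewise descriptions glue to the tensor product $S(\fg)^\fl\otimes_{S(\fl)^\fl}S(\fl)$. Your proposal to ``exploit $L$-equivariant local slices'' and reduce to a ``Chevalley-style description'' is a gesture, not an argument: you have not said why the slice problem is tractable, why the local descriptions globalize to the stated tensor product (rather than, say, its completion or a sheafy version), nor why the multiplication map from the tensor product into $S(\fg)$ is injective. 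Each of these is genuinely nontrivial; Knop handles the analogous issues via his cross-section theorem and a careful analysis of the generic isotropy, and the argument does not obviously shortcut in the special case $X=\fg^*$. As written, your proposal is a plausible strategy memo, not a proof.
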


\subsection{Diagonal embeddings}
In this paper, we will work with the direct sum $\fg^{\oplus n}$ of $n$ copies of $\fg$. For $x\in\fg$ we denote by $x^{(i)}$ the image of $x$ in the $i$th summand. We denote by $\Delta$ the diagonal embedding $\Delta:\fg\to\fg^{\oplus n}$, i.e. $\Delta(x):=\sum\limits_{i=1}^nx^{(i)}$.

We will also need to work with a more complicated partial diagonal embedding.  If $ A_1, \dots, A_k $ are $ k $ disjoint subsets of $ \{1, \dots, n \}$, then we can define a partial diagonal embedding
\begin{align*}
\Delta^{A_1, \dots, A_k} : \fg^{\oplus k} &\rightarrow \fg^{\oplus n} \\
(x_1, \dots, x_k) &\mapsto \sum_{j = 1}^k \sum_{i \in A_j} x_j^{(i)}
\end{align*}
We will usually abbreviate this notation, by writing the sets $ A_i $ by listing their elements without any commas.  For example  $ \Delta^{1 \cdots n} = \Delta^{\{1, \dots, n \}} $ is the diagonal embedding defined above.

If $ |A_j | = 1 $ for all $ j$, then we will write $ x^{(a_1 \cdots a_k)} := \Delta^{a_1, \dots, a_k}(x)$.  Note that this is the result of taking $ x $ (a $k$-vector)  and putting it into the $ a_1, \dots, a_k $ summands of $ \fg^{\oplus n } $.  Note that this agrees with our notation $ x^{(i)} $ above.

Finally, each such diagonal embedding $\Delta^{A_1, \dots, A_k} $ induces maps
\begin{gather*}
\Delta^{A_1, \dots, A_k} : S(\fg^{\oplus k}) = S(\fg)^{\otimes k} \rightarrow S(\fg^{\oplus n}) = S(\fg)^{\otimes n} \\
\Delta^{A_1, \dots, A_k} : U(\fg^{\oplus k}) = U(\fg)^{\otimes k} \rightarrow U(\fg^{\oplus n}) = U(\fg)^{\otimes n}
\end{gather*}

\subsection{Classical Gaudin algebras}

The classical Gaudin system is an integrable system on $(\fg^*)^{\oplus n}$, i.e. a Poisson commutative subalgebra in $\CO((\fg^*)^{\oplus n})=S(\fg)^{\otimes n}$, defined as follows. Let $\Phi_l,\ l=1,\dots,r$ be the generators of the algebra of invariants $S(\fg)^{\fg}$. Let $\uz:=(z_1,\ldots,z_n)$ be a collection of pairwise distinct complex numbers. For any $w\in\BC$ consider the linear map $m_{\uz,w}:\fg\to\fg^{\oplus n}$ defined as $$m_{\uz,w}(x):= \Bigl(\frac{x}{w-z_1} , \ldots, \frac{x}{w-z_n}\Bigr).$$ This extends to the homomorphism $m_{\uz,w}:S(\fg)\to S(\fg)^{\otimes n}$.

\begin{defn}
The \emph{classical Gaudin subalgebra} $\ol{\A(\uz)}\subset S(\fg)^{\otimes n}$ is the subalgebra generated by $m_{\uz,w}(\Phi_l)$ for all $l=1,\ldots,r$ and $w\in\BC\backslash\{z_1,\ldots,z_n\}$.
\end{defn}

Note that this subalgebra does not change under simultaneous shifts and simultaneous dilations of the $z_i$'s. Note also that the subalgebra $\ol{\A(\uz)}$ is generated by the coefficients of the principal parts of $m_{\uz,w}(\Phi_l)$ at $w=z_i,\ i=1,\ldots, n$.

\begin{eg}
For $\fg=\fsl_2$ we have the only generator $$\Phi=C=2ef+\frac{1}{2}h^2\in S(\fsl_2).$$
We have $m_{\uz,w}(C)=\sum\limits_{i=1}^n\frac{C_i}{(w-z_i)^2}+\frac{H_i}{w-z_i}$, where
$$C_i=2e^{(i)}f^{(i)}+\frac{1}{2}(h^{(i)})^2 \ \text{ and } H_i=\sum\limits_{j\ne i}\frac{e^{(i)}f^{(j)}+e^{(j)}f^{(i)}+\frac{1}{2}h^{(i)}h^{(j)}}{z_i-z_j}$$
The elements $C_i, H_i$ generate the algebra $ \ol{\A(\uz)} $.  With the exception of the relation $ \sum H_i = 0 $, these elements are algebraically independent and they generate a polynomial ring of degree $ 2n-1 $.   Note that the elements $C_i$ generate the Poisson center of $S(\fsl_2)^{\otimes n}$ while the $H_i$'s (known as Gaudin Hamiltonians) are nontrivial Poisson commuting elements.
\end{eg}

More generally, for any semisimple $\fg$ we have the quadratic Casimir generator $C=\sum\limits_{a=1}^{\dim \fg}x_a^2$ where $x_a$ is an orthonormal basis of $\fg$ with respect to the Killing form. This gives rise to the quadratic generators $$C_i:=\sum\limits_{a=1}^{\dim \fg}(x_a^{(i)})^2 \ \text{ and } H_i:=\sum\limits_{j\ne i}\frac{\Omega^{(ij)}}{z_i-z_j}$$
 where $\Omega^{(ij)}:=\sum\limits_{a=1}^{\dim \fg}x_a^{(i)}x_a^{(j)}$. For $\fg=\fsl_2$, such elements generate the whole subalgebra $\ol{\A(\uz)}$ while for other $\fg$ there are additional generators of higher degree.

\subsection{Universal Gaudin subalgebra} The classical Gaudin subalgebra can be described universally as follows. Consider the Lie algebra $\hat{\fg}_-=t^{-1}\fg[t^{-1}]$. For any collection of nonzero complex numbers $w_1,\ldots,w_n$, we define the Lie algebra homomorphism $\varphi_{w_1,\ldots,w_n}:\hat{\fg}_-\to\fg^{\oplus n}$ given by evaluations at the points $w_1, \dots, w_n$, namely $$x(t)\mapsto (x(w_1), \ldots, x(w_n)).$$
Let us describe the universal Poisson commutative subalgebra $\ol{\A}\subset S(\hat{\fg}_-)$ such that $\ol{\A(\uz)}=\varphi_{w-z_1,\ldots,w-z_n}(\ol{\A})$ for any $w\in\BC\backslash\{z_1,\ldots,z_n\}$.

We have the following derivations of the Lie algebra $\hat{\fg}_-$:
\begin{equation}\label{der1}
\partial_t(g\otimes t^m)=mg\otimes t^{m-1}\quad\forall g\in\fg, m=-1,-2,\dots
\end{equation}
\begin{equation}\label{der2}
t\partial_t(g\otimes t^m)=mg\otimes t^{m}\quad\forall g\in\fg,
m=-1,-2,\dots
\end{equation}
The derivations (\ref{der1}), (\ref{der2}) extend to the
derivations of the algebras $S(\hat{\fg}_-)$ and $U(\hat{\fg}_-)$. The derivation (\ref{der2}) induces a grading of
these algebras.

Let $i_{-1}:S(\fg)\hookrightarrow S(\hat{\fg}_-)$ be the
embedding which maps $g\in\fg$ to $g\otimes t^{-1}$. Let
$\Phi_l,\ l=1,\dots,r$ be the generators of the algebra of
invariants $S(\fg)^{\fg}$. Define $\overline{S_l}:=i_{-1}(\Phi_l)$.

\begin{defn}
The \emph{universal classical Gaudin subalgebra} $\ol{\A}\subset S(\hat{\fg}_-)$ is the subalgebra generated by all $\partial_t^k \ol{S_l}$, $l=1,\dots,r$, $k=0,1,2,\dots$.
\end{defn}

\begin{prop}\label{pr:ClassicalGaudin} For any $w\in\BC\backslash\{z_1,\ldots,z_n\}$, we have $\ol{\A(\uz)}=\varphi_{w-z_1,\ldots,w-z_n}(\ol{\A})$.
\end{prop}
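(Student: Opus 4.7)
The plan is to unpack the definitions of the two subalgebras, compute $\varphi_{w-z_1,\ldots,w-z_n}$ on the generators $\partial_t^k \overline{S_l}$ of $\overline{\A}$ explicitly, express the results in terms of $w$-derivatives of $m_{\uz,w}(\Phi_l)$, and then apply a linear-algebra argument showing that these Taylor coefficients generate the same subalgebra as the values $m_{\uz,w}(\Phi_l)$ themselves.

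First I would verify the $k=0$ case. Since $\varphi_{w-z_1,\ldots,w-z_n}(g\otimes t^{-1})=\sum_i g^{(i)}/(w-z_i)=m_{\uz,w}(g)$ for every $g\in\fg$, and $\overline{S_l}=i_{-1}(\Phi_l)$ is obtained from a polynomial expression for $\Phi_l$ by substituting $g_a\otimes t^{-1}$ for each generator $g_a$, the homomorphism property yields $\varphi_{w-z_1,\ldots,w-z_n}(\overline{S_l})=m_{\uz,w}(\Phi_l)$.

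Next I would establish the key intertwining identity
\[
\varphi_{w-z_1,\ldots,w-z_n}\circ\partial_t \;=\; \tfrac{d}{dw}\circ\varphi_{w-z_1,\ldots,w-z_n}
\]
as maps $S(\hat\fg_-)\to S(\fg)^{\otimes n}$, where the target on the right is regarded as a module over $S(\hat\fg_-)$ via $\varphi$ itself, and $d/dw$ denotes differentiation of $S(\fg)^{\otimes n}$-valued rational functions of $w$. Both sides are derivations in this sense, and they agree on the generators $g\otimes t^{-m}$: each sends it to $-m\sum_i g^{(i)}/(w-z_i)^{m+1}$. Iterating gives $\varphi_{w-z_1,\ldots,w-z_n}(\partial_t^k\overline{S_l}) = \tfrac{d^k}{dw^k}m_{\uz,w}(\Phi_l)$, so $\varphi_{w-z_1,\ldots,w-z_n}(\overline{\A})$ is precisely the subalgebra of $S(\fg)^{\otimes n}$ generated by all these Taylor coefficients evaluated at our chosen $w$.

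To finish, I would fix any $w_0\in\BC\setminus\{z_1,\ldots,z_n\}$ and show that the subalgebra generated by $\{\tfrac{d^k}{dw^k}|_{w_0}m_{\uz,w}(\Phi_l) : k\ge 0,\ 1\le l\le r\}$ coincides with $\overline{\A(\uz)}$. For each $l$, the function $w\mapsto m_{\uz,w}(\Phi_l)$ is rational in $w$ with poles only at the $z_i$, so its image lies in the finite-dimensional subspace $U_l\subset S(\fg)^{\otimes n}$ spanned by its partial-fraction coefficients. Both the values $\{m_{\uz,w}(\Phi_l)\}_w$ and the Taylor coefficients $\{\tfrac{d^k}{dw^k}|_{w_0}m_{\uz,w}(\Phi_l)\}_k$ span $U_l$: the first because the scalar functions $(w-z_i)^{-k}$ are linearly independent, the second because a rational function is recovered from its Taylor series at any non-singular point. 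Thus the two generating sets span the same subspace $\sum_l U_l$, and so generate the same subalgebra.

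The only delicate point is the intertwining identity in the second step, where one must correctly interpret $\tfrac{d}{dw}\circ\varphi$ as a derivation into a bimodule; once that is set up, the generator-level check is a one-line computation and the remaining steps are essentially bookkeeping.
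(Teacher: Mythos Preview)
Your proof is correct and follows essentially the same approach as the paper: both hinge on the intertwining identity $\partial_w\,\varphi_{w-z_1,\ldots,w-z_n}(S)=\varphi_{w-z_1,\ldots,w-z_n}(\partial_t S)$, which the paper uses to show $w$-independence of the image and then to identify the image with the subalgebra generated by the $m_{\uz,w}(\Phi_l)$ for all $w$. Your write-up is simply more explicit about the final step, spelling out the partial-fraction/Taylor argument where the paper just says ``coefficients of the Laurent expansions at any point'' generate the same subalgebra as the values at all $w$.
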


\begin{proof} $\varphi_{w-z_1,\ldots,w-z_n}(\ol{\A})$ does not depend on $w\in\BC$ since the subalgebra $\ol{\A} \subset S(\hat{\fg}_-)$ is stable under $\partial_t$. Indeed it is sufficient to check that for any $S\in\ol{\A}$ the derivative $\partial_w\varphi_{w-z_1,\ldots,w-z_n}(S)$ belongs to $\varphi_{w-z_1,\ldots,w-z_n}(\ol{\A})$, and we have $\partial_w\varphi_{w-z_1,\ldots,w-z_n}(S)=\varphi_{w-z_1,\ldots,w-z_n}(\partial_t S)\in\varphi_{w-z_1,\ldots,w-z_n}(\ol{\A})$.

Moreover, since $\ol{\A}$ is generated by all derivatives of $\ol{S_l}$, $l=1,\ldots,r$, its image under $\varphi_{w-z_1,\ldots,w-z_n}$ is generated by the coefficients of the Laurent expansions of $\varphi_{w-z_1,\ldots,w-z_n}(\ol{S_l})$, $l=1,\ldots,r$, at any point. So it is the same as the subalgebra generated by $\varphi_{w-z_1,\ldots,w-z_n}(\ol{S_l})$, $l=1,\ldots,r$, for all $w\in\BC\backslash\{z_1,\ldots,z_n\}$, which is $\ol{A(\uz)}$.
\end{proof}

\subsection{Quantum Gaudin subalgebras} \label{se:QuantumGaudin}
The universal classical Gaudin algebra can be lifted to a commutative subalgebra $ \A \subset U(\hat{\fg}_-) $ defined with the aid of the centre of the affine Kac-Moody algebra at the critical level. We use the following properties of the subalgebra $ \CA \subset U(\hat{\fg}_-) $, which are immediate from the Feigin-Frenkel \cite{FF} description of the center at the critical level, see \cite{R} for the details.

\begin{prop}\label{pr:S_l} There exist pairwise commuting elements $S_l\in U(\hat{\fg}_-)$ such that  \begin{enumerate} \item $\gr
S_l=\overline{S_l}$. \item The elements $S_l$ are $\fg$-invariant and homogeneous with respect to
$t\partial_t$. \item $\A$ is a free commutative algebra
generated by $\partial_t^k S_l$, $l=1,\dots,r$,
$k=0,1,2,\dots$.\end{enumerate}
\end{prop}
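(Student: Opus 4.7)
The plan is to deduce this proposition directly from the Feigin-Frenkel construction of the center of the affine Kac-Moody algebra at the critical level, following the strategy of \cite{R}. The starting point is the Feigin-Frenkel theorem, which provides the vacuum module $V_{-h^\vee}(\fg)$ at critical level with a commutative vertex algebra structure on its center $\mathfrak{z}(\hat{\fg})$, along with a distinguished set of pairwise commuting, $\fg[[t]]$-invariant generating fields $S_1(z),\ldots,S_r(z)$ whose symbols recover the classical Segal-Sugawara invariants. Concretely, these fields have Fourier modes in a completion of $U(\hat{\fg})_{-h^\vee}$, and one can produce genuine elements of $U(\hat{\fg}_-)$ by extracting the constant terms $S_l$ of the appropriate normally-ordered polynomials in $x \otimes t^{-k}$'s.

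Next, I would verify the three properties in turn. Property (1) is the defining feature of the construction: the symbol of $S_l$ in $\gr U(\hat{\fg}_-) = S(\hat{\fg}_-)$ is by design the Segal-Sugawara polynomial $\overline{S_l} = i_{-1}(\Phi_l)$, because the leading term of the Feigin-Frenkel quantization agrees with the classical invariant. For property (2), $\fg$-invariance holds because the center $\mathfrak{z}(\hat{\fg})$ is in particular $\fg[[t]]$-invariant, and $\fg \subset \fg[[t]]$; homogeneity under $t\partial_t$ can be arranged by selecting $S_l$ from the graded piece of the center containing the leading symbol, since $t\partial_t$ acts semisimply on $U(\hat{\fg}_-)$ with integer eigenvalues, so projecting to the correct homogeneous component preserves invariance and the leading term.

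For property (3), I would argue on the associated graded side. The elements $\partial_t^k\overline{S_l}$ are known to be algebraically independent in $S(\hat{\fg}_-)$ (this is the classical freeness statement for the universal Gaudin subalgebra $\overline{\A}$, which follows from Kostant's theorem applied at each evaluation point). Since $\gr(\partial_t^k S_l) = \partial_t^k \overline{S_l}$ and the symbol map is injective on filtered-independent families, the lifts $\partial_t^k S_l$ are algebraically independent in $U(\hat{\fg}_-)$, so $\A$ is freely generated by them. Commutativity of $\A$ as a whole then follows from pairwise commutativity of the $S_l$ together with the fact that $\partial_t$ is a derivation of $U(\hat{\fg}_-)$ that preserves $\A$ (since the Feigin-Frenkel center is translation-invariant on the formal disk).

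The only genuinely nontrivial input is the Feigin-Frenkel theorem itself; once invoked, the verifications above are essentially bookkeeping with the PBW filtration and the $t\partial_t$-grading. The main conceptual obstacle, which I would bypass by citing \cite{FF} and \cite{R}, is producing commuting lifts of the higher invariants $\Phi_l$ for $l > 1$: at the Segal-Sugawara (quadratic) level commutativity is a direct calculation, but for the higher generators the proof requires the full apparatus of the vertex algebra at critical level and the identification of $\mathfrak{z}(\hat{\fg})$ with functions on opers for the Langlands dual group.
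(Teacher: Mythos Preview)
Your proposal is correct and follows exactly the approach the paper takes: the paper does not give its own proof of this proposition but simply states that the properties are immediate from the Feigin--Frenkel description of the center at the critical level, referring to \cite{FF} and \cite{R} for details. Your sketch spells out what those references provide, which is more than the paper itself does.
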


The evaluation homomorphism $\varphi_{w_1,\ldots,w_n}:\hat{\fg}_-\to\fg^{\oplus n}$ extends to an algebra morphism
$$\varphi_{w_1,\ldots,w_n}:U(\hat{\fg}_-)\to U(\fg^{\oplus n})=U(\fg)^{\otimes n}.$$
The image of $ \A \subset U(\hat{\fg}_-) $ under $\varphi_{w - z_1, \dots, w - z_n} $ is a commutative subalgebra $\A(\uz)\subset U(\fg)^{\otimes n}$ called the \emph{(quantum) Gaudin algebra}. Note that $\A(\uz)$ does not depend on $w\in\BC \setminus \{z_1, \dots, z_n \} $ since the subalgebra $\A \subset U(\hat{\fg}_-)$ is still stable under the derivation $\partial_t$ (analogously to Proposition~\ref{pr:ClassicalGaudin}). Since $S_l$ are $\fg$-invariant, the algebra $\A(\uz)$ is a subalgebra in the diagonal invariants $(U(\fg)^{\otimes n})^{\fg}$.

We will see (\propref{pr:Maximal}) that it is in fact a \emph{maximal} commutative subalgebra in $(U(\fg)^{\otimes n})^{\fg}$.

From the definition, it is easy to see that the Gaudin algebras $ \CA(\uz) $ are invariant under the action of $ \C^\times \ltimes \C $ acting on $ \C^n $ by simultaneous scaling and translation (see also \lemref{le:InvarianceA}).  In particular, in the case $ n =2 $, there is only one algebra $ \CA(1, 0) $ since for any distinct complex numbers $ z_1, z_2 $ we can convert them to $ (1,0)$ by the action of $ \C^\times \ltimes \C $.  This subalgebra $ \CA(1,0) \subset U(\fg)^{\otimes 2} $ is thus canonical, yet it remains somewhat mysterious and does not appear to admit any elementary definition.

\section{Gaudin algebras and coverings of moduli spaces}\label{sect-Cover}

Since the Gaudin algebras are invariant under scaling and translation of the parameters, we obtain a family of subalgebras of $ U(\fg)^{\otimes n} $ parametrized by $ \fh(\sl_n)^{reg} / \C^\times =  (\C^n \smallsetminus \Delta)  / \C^\times \ltimes \C  $ in the sense of \secref{Def:Family}.  We will now consider the closure of this family.  Recall that $ \fh(\sl_n)^{reg} / \C^\times $  is compactified by the space $ \CM_{n+1} $.  The following result is due to the third author \cite{Ryb13}.

\begin{thm}
The closure of the family of Gaudin algebras is parametrized by the moduli space $ \CM_{n+1} $.
\end{thm}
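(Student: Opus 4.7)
The plan is to show that the map from $(\C^n \smallsetminus \Delta)/(\C^\times \ltimes \C)$ into $\prod_N \Gr(d_N, F^N U(\fg)^{\otimes n})$ extends to a morphism defined on all of $\CM_{n+1}$, and that this extended map is precisely what parametrizes the closure. My approach would be local and chart-by-chart, using the explicit coordinates on $\CM_{n+1} \cong \CM_{\Delta(\sl_n)}$ from \secref{Nested}.

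First, I would fix a planar labelled binary rooted tree $T$ with $n$ leaves, obtaining a chart $\C^{n-2} \to U_T \subset \CM_{n+1}$ centered at the most degenerate point associated to $T$. In these coordinates $u_P$ (indexed by the non-root internal vertices $P$ of $T$), each difference $z_i - z_j$ becomes a monomial in the $u_P$ times a unit, with the monomial exponent determined by the depth of the most recent common ancestor of the corresponding leaves. Substituting these expressions into the rational functions $m_{\uz,w}(\partial_t^k S_l)$ that generate $\A(\uz)$, one sees that their Laurent expansions decompose according to clusters corresponding to internal vertices of $T$. By rescaling each group of generators attached to a given cluster, and using the scaling/translation invariance of $\A(\uz)$, one extracts a well-defined limit of each $\A(\uz) \cap F^N U(\fg)^{\otimes n}$ inside $\Gr(d_N, F^N U(\fg)^{\otimes n})$ as the $u_P$ tend to $0$.

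The key step, and the main obstacle, is to show that the dimension $d_N$ is preserved in the limit, so that the limit subalgebra is not accidentally smaller than $\A(\uz)$ for generic $\uz$. Here I would exploit the universal realization $\A \subset U(\hat{\fg}_-)$: the generators $\partial_t^k S_l$ are $\fg$-invariant and homogeneous with respect to $t\partial_t$, which allows for a graded renormalization adapted to each internal vertex of $T$. The expected limit at the deepest stratum of the chart is the subalgebra built operadically from a copy of a ``local'' Gaudin algebra at each internal vertex $v$, acting via the partial diagonal embedding $\Delta^{A_1,\dots,A_k}$ corresponding to the children subtrees at $v$, after gluing in the sublimits from the subtrees below. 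Counting dimensions vertex by vertex and matching with the generic count gives the required preservation of $d_N$; this factorization is the analog at the quantum level of the classical degeneration results of Shuvalov and Aguirre--Felder--Veselov mentioned in the introduction.

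Finally, the morphisms obtained from different charts $U_T$ agree on overlaps since both restrict to the original family on the open dense stratum, so they patch to a global morphism $\CM_{n+1} \to \prod_N \Gr(d_N, F^N U(\fg)^{\otimes n})$. To conclude that $\CM_{n+1}$ itself parametrizes the closure, I would show this extended morphism is a closed embedding: it factors through the image of the closure map by construction, and distinct boundary points of $\CM_{n+1}$ yield distinct subalgebras because the operadic limit at a nodal curve remembers which tensor factors are grouped on each component (visible already at the level of associated graded algebras in $S(\fg)^{\otimes n}$).
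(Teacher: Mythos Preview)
The paper does not actually prove this theorem: it is stated as a result ``due to the third author \cite{Ryb13}'' and no argument is given in the present paper. So there is no in-paper proof to compare against. That said, your outline is broadly along the lines of what is done in \cite{Ryb13}, and it parallels the paper's own proof of the analogous statement for shift of argument algebras (\thmref{th:dCPFamily} and the theorem following it): produce, on each chart $U_T$, a set of algebraically independent generators of $\A(\uz)$ that are regular in the chart coordinates, so that the map to each Grassmannian extends; then glue.

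Where your sketch is weakest is the final injectivity/closed-embedding step. Saying that ``the operadic limit at a nodal curve remembers which tensor factors are grouped on each component'' only separates points lying in \emph{different} strata; it does nothing to separate two boundary points in the \emph{same} stratum, where the grouping is identical but the moduli on the components differ. The clean way around this, and the one actually used, is to observe that the \emph{quadratic} part of $\A(\uz)$ (the span of the Gaudin Hamiltonians $H_i$) already gives an embedding $\CM_{n+1}\hookrightarrow \Gr(n-1,\ft^{(1)})$ by the Aguirre--Felder--Veselov result \cite{AFV}. One then has a map from the closure $Z$ of the family back to $\CM_{n+1}$ by taking the quadratic component; composed with your extended map $\CM_{n+1}\to Z$ this is the identity on the dense open locus, and since $\CM_{n+1}$ is smooth and both spaces are proper, the two maps are mutually inverse isomorphisms. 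This is exactly the mechanism in the paper's proof of \thmref{th:dCPFamily} (with Shuvalov's results playing the role of the dimension count for the classical limit). Your dimension-preservation step via the operadic factorization is correct in spirit and corresponds to \thmref{th:GaudinBoundary}, but you should invoke the quadratic/AFV argument rather than an ad hoc injectivity claim.
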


In particular, we have a Gaudin algebra  $\CA(\uz) \in (U (\fg)^{\otimes n })^\fg$ for any point $ \uz \in \CM_{n+1}$.  The subalgebras corresponding to boundary points of $\CM_{n+1}$ can be defined inductively using the following operad structure on commutative subalgebras.

For $N=k_1+\ldots+k_n$, we define the homomorphisms $$D_{k_1,\ldots,k_n} := \Delta^{\{1, \dots, k_1\}, \{k_1 + 1, \dots, k_1 +  k_2\}, \dots, \{k_1 + \dots + k_{n-1} + 1, \dots, N \}} :U(\fg)^{\otimes n}\hookrightarrow U(\fg)^{\otimes N}$$
and the homomorphisms
$$ I_i := \Delta^{k_1 + \cdots + k_{i-1} + 1, \dots, k_1 + \cdots + k_{i-1} + k_i} :U(\fg)^{\otimes k_i}\hookrightarrow U(\fg)^{\otimes N}$$
Clearly, all these homomorphisms are $\fg$-equivariant and every element in the image of $D_{k_1,\ldots,k_n}$ commutes with every element of $I_{i}((U(\fg)^{\otimes k_i})^\fg)$ for $i=1,\ldots,n$. This gives the following ``substitution'' homomorphism defining an operad structure on the spaces $(U(\fg)^{\otimes n})^\fg$
\begin{equation}\label{operad-Ug}
\gamma_{n,k_1,\ldots,k_n}=D_{k_1,\ldots,k_n}\otimes\bigotimes\limits_{i=1}^n I_{i}:(U(\fg)^{\otimes n})^\fg\otimes\bigotimes\limits_{i=1}^n (U(\fg)^{\otimes k_i})^\fg\to(U(\fg)^{\otimes N})^\fg.
\end{equation}
Let ${\uz}=\gamma({\uz^0},{\uz^1},\ldots,{\uz^n})$ be a boundary point of $\CM_{k_1+\ldots+k_n+1}$ where ${\uz^0}\in \CM_{n+1}$ and ${\uz^i}\in \CM_{k_i+1}$.

The following result (Theorem~3.13 of \cite{Ryb13}) describes the subalgebra $ \A(\uz) $.
\begin{thm} \label{th:GaudinBoundary}
We have $$ \A({\uz}) = \gamma_{n,k_1,\ldots,k_n}(\A({\uz^0})\otimes\bigotimes\limits_{i=1}^n\A({\uz^i}))$$
\end{thm}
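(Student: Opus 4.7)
The plan is to analyze the subalgebra $\A(\uz)$ as a limit of smooth-point Gaudin subalgebras along a one-parameter family $\uz(\epsilon)$ in $\CM_{k_1+\ldots+k_n+1}$ degenerating to $\uz$. By associativity of the operadic composition $\gamma$ and the compatibility of the ``substitution'' homomorphisms (\ref{operad-Ug}) with iterated composition, it suffices to treat the case of one degeneration step, i.e.\ where $\uz$ is a generic point of a boundary divisor corresponding to a curve with exactly two irreducible components meeting at a single node. The general statement then follows by induction on the number of nodes of the stable curve $\uz$.

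For this two-component case, I would use the $PGL_2$-action on $\PP^1$ to normalize coordinates so that a chosen group of $k_1$ marked points is scaled by $\epsilon$ while the remaining $k_2$ marked points and $\infty$ are fixed; as $\epsilon\to 0$, the first group of points collides to $0$, producing the desired nodal configuration. The Gaudin subalgebra $\A(\uz(\epsilon))$ is the image of the universal subalgebra $\A\subset U(\hat{\fg}_-)$ under the evaluation $\varphi_{\epsilon z_1,\ldots,\epsilon z_{k_1},z_{k_1+1},\ldots}$. I would expand the generators $\varphi(\partial_t^k S_l)$ in powers of $\epsilon$, using the homogeneity of $S_l$ with respect to $t\partial_t$ to cleanly separate leading and subleading contributions: after appropriate rescaling by a power of $\epsilon$, the ``fast'' (principal) part limits to $I_1$ applied to the generators of $\A(\uz^1)$ on the rescaled branch, while the ``slow'' (regular) part limits to $D_{k_1,k_2}$ applied to the generators of $\A(\uz^0)$ on the macroscopic configuration. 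Commutativity of the two limiting families is automatic, since elements of $\A(\uz^1)$ sit in the $\fg$-invariant part of $U(\fg)^{\otimes k_1}$ and hence commute with all elements in the image of $D_{k_1,k_2}$.

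This shows the containment $\A(\uz)\supseteq \gamma_{n,k_1,\ldots,k_n}(\A(\uz^0)\otimes\bigotimes_i\A(\uz^i))$. The main obstacle is the reverse containment (equivalently, showing no additional generators arise in the limit). The cleanest route is a dimension count: from the flat family structure of the closure of Gaudin subalgebras over $\CM_{n+1}$, the filtered dimensions $d_N = \dim(\A(\uz)\cap F^N(U(\fg)^{\otimes N}))$ are the same for $\uz$ a boundary point as for a generic point. One then checks that the right-hand side $\gamma_{n,k_1,\ldots,k_n}(\A(\uz^0)\otimes\bigotimes_i\A(\uz^i))$ already realizes these dimensions, using that $\A(\uz^0)$ and each $\A(\uz^i)$ are themselves Gaudin subalgebras of the expected (maximal) size in $(U(\fg)^{\otimes n})^\fg$ and $(U(\fg)^{\otimes k_i})^\fg$ respectively (cf.\ \propref{pr:Maximal}), and that the diagonal embedding $D_{k_1,\ldots,k_n}$ together with the $I_i$ produce an algebra of the predicted total dimension by a Harish-Chandra--type decomposition in the spirit of \thmref{th:Knop}. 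Combining the containment and the dimension equality gives equality of the two subalgebras.
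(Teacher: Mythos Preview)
The paper does not actually prove this theorem: it is cited verbatim as Theorem~3.13 of \cite{Ryb13}. So there is no in-paper proof to compare against; your proposal is an attempt to reconstruct the argument from \cite{Ryb13}.

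Your overall strategy---reduce to a single node by operadic associativity, realize the boundary point as a limit along a one-parameter scaling family, identify the leading and subleading parts of the generators with the images of $I_1$ and $D_{k_1,k_2}$ respectively, and finish with a size comparison---is correct and is essentially how \cite{Ryb13} proceeds. One clarification: your appeal to ``flatness of the closure'' to pin down the Hilbert series of the limit is not circular, but you should phrase it more carefully. What you actually use is that for each $N$ the map $\epsilon\mapsto \A(\uz(\epsilon))\cap F^N$ lands in a fixed Grassmannian $\Gr(d_N,F^N U(\fg)^{\otimes N})$, so the limiting subspace automatically has dimension exactly $d_N$; flatness over all of $\CM_{N+1}$ is a \emph{consequence} of the theorem, not an input.

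The genuine gap in your sketch is the sentence ``One then checks that the right-hand side already realizes these dimensions.'' This is the crux of the proof and is not a triviality. Concretely, you must show that the product $D_{k_1,\ldots,k_n}(\A(\uz^0))\cdot\prod_i I_i(\A(\uz^i))$ is a free polynomial algebra on the expected number of generators of the expected degrees. The point is that the factors are not independent: $D_{k_1,\ldots,k_n}(\A(\uz^0))$ and $I_i(\A(\uz^i))$ share the subalgebra $I_i(\Delta(ZU(\fg)))$, and one has to verify that the product is the tensor product over exactly this common subalgebra and nothing more. In \cite{Ryb13} this is done by exhibiting an explicit set of algebraically independent generators (coming from suitably renormalized expansion coefficients of the $S_l(w;\uz(\epsilon))$ at the various marked points) whose cardinality matches the generic count $\frac{1}{2}((N-1)\dim\fg+(N+1)\rk\fg)$. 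Your invocation of \propref{pr:Maximal} and \thmref{th:Knop} points in the right direction, but you should be aware that what is needed is a Poincar\'e series identity, not just a transcendence-degree count, and that this is where the actual work lies.
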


\subsection{Representations and their tensor products}
Given a dominant weight $ \l \in \Lambda_+ $, we have the irreducible representation $ V(\l) $ and given a sequence $ \ul=(\l_1, \dots, \l_n) $, we can consider the tensor product $ V(\ul) = V(\l_1) \otimes \cdots \otimes V(\l_n) $.

Finally, we can consider tensor product multiplicity spaces
$$
V(\ul)^\mu = \Hom_{\fg}(V(\mu), V({\l_1}) \otimes \cdots \otimes V({\l_n}) )
$$

The Gaudin algebras $\A(\uz)$ (as well as their limits described above) act on $ V(\ul) $. Moreover, since $\A(\uz)$ lies in the diagonal invariants in $U(\fg)^{\otimes n}$, it acts on $ V(\ul)^\mu $.

\subsection{The covering} \label{se:covering}
The following result is due to the third author \cite{Ryb16}; it is equivalent to the Bethe Ansatz conjecture in the form of \cite{FFTL}. We will give a proof of (a generalization of) this theorem in Section~\ref{sect-cyclic}, see Theorem~\ref{th:CyclicGeneral} and Corollary~\ref{co:BAC}.

\begin{thm} \label{th:cyclic}
For any $ \uz \in \CM_{n+1} $, $ V(\ul)^\mu $ is a cyclic module for the algebra $ \CA(\uz) $.
\end{thm}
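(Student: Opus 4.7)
The plan is to prove cyclicity by induction on the number of irreducible components of the stable curve represented by $\uz$. The base of the induction is the interior stratum where $\uz = (z_1,\dots,z_n)$ is a tuple of distinct points on a single $\PP^1$; the inductive step uses the operadic description of $\CA(\uz)$ at boundary strata provided by \thmref{th:GaudinBoundary}.

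For the base case, I would invoke the standard completeness of the Bethe Ansatz for the quantum Gaudin model, as established by Feigin, Frenkel, and the third author (cf.\ \cite{FFR}, \cite{FFTL}). Concretely: the Feigin--Frenkel construction identifies $\Spec \CA(\uz)$ (restricted to its image in $\End V(\ul)^\mu$) with the scheme of $\chg$-opers on $\PP^1$ with regular singularities at $z_1,\dots,z_n,\infty$ of residue classes prescribed by $\l_1,\dots,\l_n$ and $\mu$. The dimension of this scheme of opers matches $\dim V(\ul)^\mu$. Since the image of $\CA(\uz)$ in $\End V(\ul)^\mu$ is commutative, it has dimension at most $\dim V(\ul)^\mu$, with equality (i.e.\ cyclicity) if and only if this dimension is achieved. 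The oper description provides exactly that lower bound, giving cyclicity at interior points.

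For the inductive step, let $\uz = \gamma(\uz^0, \uz^1, \dots, \uz^n)$ with each $\uz^i \in \CM_{k_i+1}$ and $\uz^0 \in \CM_{n+1}$, all strictly less degenerate than $\uz$. By \thmref{th:GaudinBoundary},
$$\CA(\uz) = \gamma_{n,k_1,\dots,k_n}\Bigl(\CA(\uz^0) \otimes \bigotimes_{i=1}^n \CA(\uz^i)\Bigr),$$
meaning $\CA(\uz)$ is generated by $D_{k_1,\dots,k_n}(\CA(\uz^0))$ together with $I_i(\CA(\uz^i))$ for $i=1,\dots,n$. The diagonal block decomposition of $V(\ul)$ gives the isotypic refinement
$$V(\ul)^\mu \;\cong\; \bigsqcup_{\umu \in \Lambda_+^n}\; V(\ul^{(1)})^{\mu_1} \otimes \cdots \otimes V(\ul^{(n)})^{\mu_n} \otimes V(\umu)^\mu,$$
where $\ul^{(i)}$ is the subsequence of $\ul$ corresponding to the $i$-th block. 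On each summand, $I_i(\CA(\uz^i))$ acts on the factor $V(\ul^{(i)})^{\mu_i}$ (because $\CA(\uz^i)\subset (U(\fg)^{\otimes k_i})^\fg$ commutes with the diagonal $\fg$ defining the multiplicity space), while $D_{k_1,\dots,k_n}(\CA(\uz^0))$ acts on $V(\umu)^\mu$. These actions commute. By the inductive hypothesis applied to $\uz^0$ and each $\uz^i$, every factor is cyclic over the corresponding local Gaudin algebra, and a tensor product of cyclic modules over pairwise commuting subalgebras of the ambient algebra is cyclic over the generated algebra; hence each summand is cyclic over $\CA(\uz)$. A mild additional argument shows that the different $\umu$-isotypic summands can be combined into a single cyclic vector: one picks cyclic vectors $v_{\umu}$ on each summand and forms $\sum_{\umu} v_{\umu}$; since the central characters by which $\CA(\uz^0)$ separates different $\umu$ have different eigenvalues generically, the components of $v=\sum v_{\umu}$ under the $\CA(\uz)$-action span each summand separately, and hence all of $V(\ul)^\mu$.

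The main obstacle is the base case: the identification of $\Spec\CA(\uz)$ with the scheme of opers and the computation that its length equals $\dim V(\ul)^\mu$. This is the substantive content; it relies on the Feigin--Frenkel description of the center of $\widehat{U(\fg)}_{crit}$ and a careful comparison of oper Wronskian conditions with representation-theoretic characters. A secondary subtlety in the inductive step is the separation-of-isotypic-components argument, which requires knowing that the diagonal $\fg$-Casimirs living in $D_{k_1,\dots,k_n}(\CA(\uz^0))$ distinguish the different $\umu$, so that the intermediate decomposition $V(\ul)^\mu = \bigsqcup_{\umu} \cdots$ is genuinely detected by $\CA(\uz)$; this follows because sufficiently many symmetric combinations of the Casimirs of the diagonally acting $\fg$'s on each block lie in $\CA(\uz)$.
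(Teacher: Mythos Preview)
The paper does not prove this theorem; it is quoted from \cite{Ryb16}. Your inductive step for boundary points is correct and is exactly how one extends cyclicity from the interior of $\CM_{n+1}$ to the boundary strata using \thmref{th:GaudinBoundary}: each $\CA(\uz^i)$ contains the center $Z(U(\fg))^{\otimes k_i}$, so $\CA(\uz)$ separates the $\umu$-isotypic summands, and the tensor-product-of-cyclic-modules argument goes through.

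The gap is in the base case. What you describe --- that the image of $\CA(\uz)$ in $\End V(\ul)^\mu$ is the function ring on a scheme of monodromy-free $\chg$-opers of length exactly $\dim V(\ul)^\mu$ --- is precisely the \emph{statement} of the Bethe Ansatz conjecture from \cite{FFTL}, not a proof of it. Neither \cite{FFR} nor \cite{FFTL} establishes that the oper scheme has the correct length; that was the content of \cite{Ryb16}, and the argument there does not proceed by counting opers. Instead it uses the filtration method that the present paper reproduces (in a generalized form) in \thmref{th:CyclicGeneral}: one starts from the inhomogeneous Gaudin algebra $\A_{\chi_0+f}(\uz)$ with $\chi_0+f$ regular (for which cyclicity on $V(\ul)$ is known from \cite{FFR}), grades $U(\fg)^{\otimes n}$ by $\ad\Delta(h)$ for the principal $h$, and shows that the associated graded algebra has $\A^0_{\chi_0}(\uz)$ in degree $0$; a Jordan-decomposition argument (\lemref{lem-isotypic}) then transfers cyclicity to the top graded piece of each isotypic block, which is the desired multiplicity space. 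Setting $\chi_0=0$ gives the interior case of \thmref{th:cyclic}. So as written, your base case is circular: it assumes the oper count, which is equivalent to what you are trying to prove.
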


If $ \uz \in \CM_{n+1}(\BR) \subset \CM_{n+1}$, we know that $ \CA(\uz) $ acts semisimply, see \cite{FFR, Ryb13}.  Thus, the above theorem implies that $ \CA(\uz) $ acts with simple spectrum and so decomposes $ V(\ul)^\mu $ into eigenlines.

Let $ \CE_\uz(\ul)^\mu $ be the set of eigenlines of $ \CA(\uz) $ acting on $ V(\ul)^\mu $.

The sets $ \CE_\uz(\ul)^\mu $ form the fibres of a family $ \cE(\ul)^\mu \rightarrow \CM_{n+1}(\BR)$, where
$$
\cE(\ul)^\mu = \{ (\uz, L) : \uz \in \CM_{n+1}(\BR), L \in \PP(V(\ul)^\mu) : \text{ $L$ is an eigenline for $ \CA(\uz) $} \}
$$
It is easy to see that this is a real algebraic variety.

Because of Theorem \ref{th:cyclic}, $ \cE(\ul)^\mu \rightarrow \CM_{n+1}(\BR) $ is a covering space.

We let $ \cE_{n+1} = \cup_{\ul, \mu} \cE(\ul)^\mu $, where the union is taken over $ (\ul, \mu) \in \Lambda_+^{n+1} $.

\begin{thm} \label{th:OperadFromEigenlines}
$ \cE_{n+1} $ is a $\Lambda_+$-coloured operadic covering.
\end{thm}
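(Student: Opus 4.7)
The plan is to produce the operadic composition maps $\Gamma$ and the $S_n$-action from the corresponding operadic/symmetric structure on the Gaudin algebras themselves, and then to deduce the bijection condition on boundary strata from the description of $\A(\uz)$ at a nodal curve given by \thmref{th:GaudinBoundary}.

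First I would define the $S_n$-action. For $w \in S_n$, permuting the first $n$ marked points of a stable curve gives $w \cdot \uz \in \CM_{n+1}(\BR)$, and the tautological tensor-factor permutation $V(\ul) \to V(w \cdot \ul)$ intertwines $\A(\uz) \subset U(\fg)^{\otimes n}$ with $\A(w \cdot \uz)$. Applying \lemref{le:conjugate} to this isomorphism produces the required $S_n$-equivariant bijection of eigenline sets, compatible with the permutation action on $\Lambda_+^{n+1}$.

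Next I would construct $\Gamma$. Fix a boundary point $\uz = \gamma(\uz^0, \uz^1, \dots, \uz^n)$ with $\uz^0 \in \CM_{n+1}(\BR)$ and $\uz^i \in \CM_{k_i+1}(\BR)$, write $N = k_1 + \cdots + k_n$, and set $\ul = \ul^{(1)} \sqcup \cdots \sqcup \ul^{(n)}$. The tensor product multiplicity space decomposes as
\begin{equation*}
V(\ul)^\nu \;\cong\; \bigoplus_{\umu \in \Lambda_+^n} V(\umu)^\nu \otimes V(\ul^{(1)})^{\mu_1} \otimes \cdots \otimes V(\ul^{(n)})^{\mu_n}.
\end{equation*}
By \thmref{th:GaudinBoundary}, $\A(\uz)$ is generated by the commuting subalgebras $D_{k_1,\dots,k_n}(\A(\uz^0))$ and $I_i(\A(\uz^i))$; since $\A(\uz^i) \subset (U(\fg)^{\otimes k_i})^\fg$ acts on $V(\ul^{(i)})$ through the diagonal Levi action, it preserves each isotypic component and restricts to the usual action on $V(\ul^{(i)})^{\mu_i}$. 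Similarly $D_{k_1,\dots,k_n}(\A(\uz^0))$ acts on the summand indexed by $\umu$ through its action on $V(\umu)^\nu$. Hence the action of $\A(\uz)$ on the $\umu$-summand is precisely the external tensor product of the actions of $\A(\uz^0)$ and the $\A(\uz^i)$. Combining with \thmref{th:cyclic} and the semisimplicity of $\A(\uz)$ at real points (from \cite{FFR,Ryb13}), the eigenlines of the tensor product are products of eigenlines of the factors, giving a bijection
\begin{equation*}
\Gamma \colon \cE_{\uz^0}(\umu)^\nu \times \prod_{i=1}^n \cE_{\uz^i}(\ul^{(i)})^{\mu_i} \xrightarrow{\;\sim\;} \bigl(\text{eigenlines on the $\umu$-summand of } V(\ul)^\nu\bigr),
\end{equation*}
and after summing over $\umu$, $\Gamma$ identifies $\bigsqcup_\umu \cE_{\uz^0}(\umu)^\nu \times \prod_i \cE_{\uz^i}(\ul^{(i)})^{\mu_i}$ with $\cE_\uz(\ul)^\nu$. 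Since this construction is natural in $(\uz^0,\uz^1,\dots,\uz^n)$, it assembles into a global map $\Gamma$ of covering spaces over the boundary stratum $\gamma(\CM_{n+1}(\BR) \times \prod \CM_{k_i+1}(\BR))$, and the above fibrewise bijection is exactly condition (2) of the definition of an operadic covering.

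Finally I would verify the operad axioms. The unit axiom is immediate from $\CM_2 = \{\mathrm{pt}\}$ together with Schur's lemma, which forces $\cE(\lambda)^\mu$ to be a point when $\lambda = \mu$ and empty otherwise. The equivariance axiom follows from combining the permutation-of-factors description of the $S_n$-action above with the fact that the Gaudin algebra construction is equivariant under simultaneous relabelling of marked points. The main point is associativity: given a twofold nested boundary point $\gamma\bigl(\gamma(\uz^0, \uz^1, \dots, \uz^n), \uz^{(1)}_1, \dots, \uz^{(n)}_{k_n}\bigr)$, we need the two composites of $\Gamma$'s to coincide. By \thmref{th:GaudinBoundary} applied twice, both composites present $\A$ at this deeper boundary point as the same commuting product of subalgebras sitting inside $U(\fg)^{\otimes M}$; the corresponding iterated isotypic decomposition of $V$ is associative by the usual Schur-type argument, so the two ways of breaking up eigenlines agree. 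I expect the hard part of the write-up to be this associativity step, since it requires carefully tracking the partial-diagonal embeddings $\Delta^{A_1,\dots,A_k}$ through both stages of degeneration and matching them with the associator of the multiplicity-space decomposition; once this bookkeeping is in place the identification is forced by the operadic description (\ref{operad-Ug}) of commutative subalgebras.
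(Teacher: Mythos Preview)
Your approach is correct and essentially identical to the paper's: define the $S_n$-action via permutation of tensor factors, construct $\Gamma$ from the isotypic decomposition of $V(\ul)$ with respect to $D_{k_1,\dots,k_n}(\fg^{\oplus n})$ combined with \thmref{th:GaudinBoundary}, and deduce the bijection condition and operad axioms from those of the tensor category of representations. If anything, you are more explicit than the paper about the associativity verification---the paper dispatches all the operad axioms in a single sentence (``the coloured operad axioms follow from those for the tensor product of representations''), whereas you spell out the two-stage application of \thmref{th:GaudinBoundary} and the matching of partial diagonals.
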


\begin{proof}
We have to define the bijections $\Gamma$ and the $S_n$ action.

First, let us define the $S_n$ action. Note that for any $w\in S_n$, the subalgebra $ \CA(w(\uz))\subset U(\fg)^{\otimes n}$ is $w(\A({\uz}))\subset U(\fg)^{\otimes n}$, i.e. it is obtained from $\A({\uz})$ by the permutation of the tensor factors determined by $w$. Hence, the permutation of tensor factors of $V(\l_1)\otimes\ldots\otimes V(\l_n)$ given by $w$ takes eigenlines of $\A({\uz})$ to eigenlines of $ \CA(w(\uz))$, so we have the desired $S_n$ action.

Now let us produce the bijections $$
\Gamma : \bigsqcup_{\underline{\mu} \in \Lambda_+^n} \cE(\umu)^\nu \times \cE(\ul^{(1)})^{\mu_1} \times \cdots \times \cE(\ul^{(n)})^{\mu_n} \rightarrow \cE(\ul^{(1)} \sqcup \cdots \sqcup \ul^{(n)})^\nu
$$
covering the map
$$
\gamma : \bigsqcup_{\umu \in \Lambda_+^n} \CM_{n+1}(\BR) \times \CM_{k_1+1}(\BR) \times \cdots \times \CM_{k_n+1}(\BR) \rightarrow \CM_{k_1 + \cdots + k_n + 1}(\BR).
$$
Let ${\uz}=\gamma({\uz^0},{\uz^1},\ldots,{\uz^n})$ be a boundary point of $\CM_{k_1+\ldots+k_n+1}$ where ${\uz^0}\in \CM_{n+1}$ and ${\uz^i}\in \CM_{k_i+1}$.  Let $\ul=\ul^{(1)} \sqcup \cdots \sqcup \ul^{(n)}$, $\ul^{(i)}=(\l_1^{(i)},\ldots,\l_{k_i}^{(i)})$ be a collection of dominant weights and $\nu$ be a dominant weight.  Let
$$V(\ul)=\bigoplus\limits_{\umu}V(\ul)^{\umu}\otimes V(\umu)$$
be the decomposition of $V(\ul)$ into isotypic components, with respect to $D_{k_1,\ldots,k_n}(\fg^{\oplus n})$.  Here $V(\umu)$ is the irreducible representation of $\fg^{\oplus n}$ with highest weight ${\umu}=(\mu_1,\ldots,\mu_n)$ and
$$V(\ul)^{\umu}:=\Hom_{\fg^{\oplus n}}(V(\umu),V(\ul))=\bigotimes\limits_{i=1}^nV(\ul^{(i)})^{\mu_i}$$
is the multiplicity space.

By the above description of $\A(\uz) $ (\thmref{th:GaudinBoundary}), the eigenlines of  $\A({\uz})$ in $V(\ul)^{\nu}$ are tensor products of those of $\A({\uz^0})$ in $V(\umu)^\nu$ and $\bigotimes\limits_{i=1}^n\A({\uz^i})$ in $V(\ul)^{\umu}$. Note that the set of eigenlines of $\A({\uz^0})$ in $V(\umu)^\nu$ is $\cE(\umu)^\nu$ and the set of eigenlines of $\bigotimes\limits_{i=1}^n\A({\uz^i})$ in $V(\ul)^{\umu}$ is the product of $\cE(\ul^{(i)})^{\mu_i}$. Hence we get the desired bijection $\Gamma$. The coloured operad axioms follow from those for the tensor product of representations.

\end{proof}

By the construction from Section \ref{se:CoverToCategory}, the compatible covers $ \CE $ define a coboundary category $ \cC(\CE) $ whose objects are disjoint unions of simple objects $ S(\l)$.  We now give a precise statement of our main result.

\begin{thm} \label{th:Main}
There is an equivalence of coboundary categories $ \cC(\CE) \cong \gcrys $ taking $ S(\l) $ to $ B(\l) $ for each $ \l \in \Lambda_+ $.
\end{thm}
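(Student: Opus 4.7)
The plan is to construct an explicit coboundary tensor functor $\Phi : \cC(\CE) \to \gcrys$ sending $S(\l) \mapsto B(\l)$. Since $\cC(\CE)$ is $\Lambda_+$-coloured and agrees with $\gcrys$ as a plain category (both are equivalent to $\Lset$), the content of $\Phi$ is its coboundary data: a coherent family of natural isomorphisms $\phi_{\l_1,\l_2} : B(\l_1) \otimes B(\l_2) \to \Phi(S(\l_1) \otimes S(\l_2))$ compatible with associator and commutor. The bridge between the Gaudin side (where $\cC(\CE)$ lives) and the crystal side will be the shift of argument algebras $\A_\chi$ acting on single irreducibles $V(\l)$.

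First I would equip each set of eigenlines $\CE_\chi(\l)$, for $\chi \in \CM_\Delta(\BR)$, with a $\fg$-crystal structure defined via parallel transport within the $\A_\chi$-family to its rank-one Levi degenerations, and prove $\CE_\chi(\l) \cong B(\l)$ as crystals using \thmref{th:rank2} and \propref{pr:MultFree}. Next, analyzing the closure of $\A_\chi(z,0)$ --- which limits to $\A_\chi \otimes \A_\chi$ as $z \to \infty$ and to the subalgebra generated by $\Delta(\A_\chi)$ and $\A(1,0)$ as $z \to 0$ --- parallel transport across this interval yields an isomorphism
$$\CE_\chi(\l_1) \times \CE_\chi(\l_2) \ \xrightarrow{\sim} \ \sqcu_\mu \CE(\l_1,\l_2)^\mu \times \CE_\chi(\mu).$$
Combined with $\CE_\chi(\l) \cong B(\l)$, this defines $\phi_{\l_1,\l_2}$ and matches cardinalities of tensor product multiplicity sets with those of $B(\l_1) \otimes B(\l_2)$.

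Compatibility with the associator is a three-factor analogue. Iterating the above construction for $\A_\chi(z_1, z_2, 0)$ and applying \thmref{th:GaudinBoundary} yields parallel transport bijections respecting either bracketing of $B(\l_1) \otimes B(\l_2) \otimes B(\l_3)$. Since the associator in $\cC(\CE)$ is by definition the monodromy along a path inside the contractible region $\CM_4(\BR)_+$, and the crystal associator is trivial on underlying sets, the coherence square reduces to contractibility --- essentially the same argument used for the pentagon axiom in \thmref{th:CoverToCategory}.

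The main obstacle is compatibility with the commutor. On $\cC(\CE)$ this is simply the $S_2$-swap of marked points in $\CE_3$, whereas on $\gcrys$ the commutor is $\sigma_{B_1,B_2}(b_1, b_2) = \xi(\xi(b_2) \otimes \xi(b_1))$. The essential input is \thmref{th:Etingof2Intro}, which identifies the Sch\"utzenberger involution $\xi$ on $\CE_\chi(\l) \cong B(\l)$ with the monodromy of $\CE_\chi(\l)$ around the path in $\CM_\Delta(\BR)$ realizing $s_I \in C_\Delta$. Using this, I would build an explicit loop in the joint parameter space of inhomogeneous Gaudin algebras $\A_\chi(\uz)$ --- varying $\chi$ along an $s_I$-path in $\CM_\Delta(\BR)$ while $\uz$ traverses the $S_2$-swap in $\CM_3(\BR)$ --- whose total monodromy realizes $\phi_{\l_2,\l_1} \circ \sigma_{B(\l_1),B(\l_2)} \circ \phi_{\l_1,\l_2}^{-1}$ on one side and the $\cC(\CE)$-commutor on the other, via \lemref{le:SquareMonodromies}. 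Making this precise requires controlling the closure of the joint family along strata where $\chi$ becomes Levi-singular and $\uz$ simultaneously lies on the boundary of $\CM_3(\BR)$; this joint degeneration is the principal technical hurdle and is presumably handled by the further results about the closure of the inhomogeneous Gaudin family alluded to in the introduction.
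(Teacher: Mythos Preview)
Your overall strategy matches the paper's: build $\Phi$ via the crystal isomorphism $\CE_\chi(\l) \cong B(\l)$, define $\phi$ by parallel transport $p_{\infty,0}$ in the $\A_\chi(z,0)$ family, and check associator and commutor compatibility separately.

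For the associator you are essentially right, but the contractible region is not inside $\CM_4(\BR)_+$ alone. The paper works in the compactified parameter space $X$ of the three-point family $\A_\chi(z_1,z_2,z_3)$ with $\chi$ \emph{fixed} (\propref{pr:MultiClosure3}), and identifies a contractible real pentagon (\corref{ContractiblePentagon}) whose five vertices carry the algebras
\[
\Delta(\A_\chi)\!\cdot\!\A(1(23)),\quad \A_\chi^{(1)}\!\otimes\!\Delta^{23}(\A_\chi)\!\cdot\!\A(1,0)^{(23)},\quad \A_\chi^{\otimes 3},\quad \Delta^{12}(\A_\chi)\!\cdot\!\A(1,0)^{(12)}\!\otimes\!\A_\chi^{(3)},\quad \Delta(\A_\chi)\!\cdot\!\A((12)3).
\]
One edge is the path $\Delta(\A_\chi)\!\cdot\!\A(1,z,0)$ realizing the $\cC(\CE)$-associator; the remaining four edges are the iterated $\phi$'s. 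Contractibility of this pentagon in $X(\BR)$ gives the square.

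Your commutor argument is over-engineered, and parts of it do not make sense as stated. Recall that $\CM_3$ is a single point: there is no boundary of $\CM_3(\BR)$, no path for $\uz$ to traverse, and the $S_2$-action on $\CE_3$ is simply the flip $s:V(\l_1)\otimes V(\l_2)\to V(\l_2)\otimes V(\l_1)$ on eigenlines. Nor do you need to vary $\chi$. With the normalization $w_0(\chi)=-\chi$, the point $\chi$ is $w_0$-fixed in $\CM_\Delta(\BR)$, so the $s_I$-path is constant and \propref{pr:FullSI} says that $\xi$ on $\CE_\chi(\l)$ is literally the action of a lift of $w_0\in N_G(T)/T$ on eigenlines; the full \thmref{th:Etingof2} is not needed. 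The paper then applies \lemref{le:SquareMonodromies} twice, to $s$ and to $w_0$, and checks from the elementary identities $\A_\chi(0,z)=\A_\chi(-z,0)$ and $w_0\bigl(\A_\chi(-z,0)\bigr)=\A_{-\chi}(-z,0)=\A_\chi(z,0)$ that $w_0(s(p_{\infty,0}))=p_{\infty,0}$. Unwinding $\sigma=\xi\circ(\xi\otimes\xi)\circ s$ finishes the square. No joint $(\chi,\uz)$-degeneration analysis is required; the ``principal technical hurdle'' you anticipate does not arise.
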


This theorem implies Etingof's conjecture.
\begin{cor}
Fix a base point $ \uz \in \CM_{n+1}$.  For each $ \ul, \mu $, there is a bijection $ \CE_\uz(\ul)^\mu \cong B(\ul)^\mu$.  These bijections are compatible with the actions of $ C_n $.
\end{cor}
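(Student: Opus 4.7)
The plan is to deduce the corollary directly from \thmref{th:Main} together with the formalism set up in \secref{se:CoverToCategory} and \thmref{th:CactusCoboundary}. The main observation is that both sides of the desired bijection are realized as the $\mu$-component of an $n$-fold tensor product in a coboundary category, so we only need to unwind what the equivalence of coboundary categories does on tensor products.

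First I would fix the base point $\uz \in \CM_{n+1}(\BR)$. In order to identify $\CE_\uz(\ul)^\mu$ with something categorical, it is convenient to first take $\uz$ to be the fully right-bracketed caterpillar point $T = (\cdots((12)3)\cdots n)$. Then \lemref{BracketsPoints} (applied to the operadic cover $\CE$ from \thmref{th:OperadFromEigenlines}) gives a canonical identification
\[
\CE_\uz(\ul)^\mu \;=\; (\CE(\ul)^\mu)_T \;=\; \bigl(S(\l_1) \otimes S(\l_2) \otimes \cdots \otimes S(\l_n)\bigr)_\mu
\]
as objects in the coboundary category $\cC(\CE)$, where the bracketing matches $T$. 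Now applying the equivalence of coboundary categories $\cC(\CE) \cong \gcrys$ from \thmref{th:Main}, which sends $S(\l) \mapsto B(\l)$ and commutes with tensor products up to the coherent isomorphism $\phi$, produces a bijection
\[
\bigl(S(\l_1) \otimes \cdots \otimes S(\l_n)\bigr)_\mu \;\longleftrightarrow\; \bigl(B(\l_1) \otimes \cdots \otimes B(\l_n)\bigr)_\mu \;=\; B(\ul)^\mu.
\]
For a general base point $\uz \in \CM_{n+1}(\BR)$, I would either rerun the argument using the bracketing determined by $\uz$ (if $\uz$ is a maximally degenerate boundary point) or transport the bijection from the caterpillar point to $\uz$ along a path in $\CM_{n+1}(\BR)$, which corresponds under the equivalence to iterated associators in $\gcrys$. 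Either way one obtains the desired bijection $\CE_\uz(\ul)^\mu \cong B(\ul)^\mu$.

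It remains to check compatibility of the two $C_n$-actions. By \thmref{th:CactusCoboundary}, in any coboundary category the cactus group $C_n$ acts on an $n$-fold tensor product by iterated commutors and associators. On the $\gcrys$ side this is, by definition, the external cactus group action on $B(\ul)^\mu$ from \secref{se:commutor}. On the $\cC(\CE)$ side, the commutor $\sigma$ is defined via the $S_n$-action on the operadic cover and the associator $\alpha$ is defined via parallel transport in $\CM_{n+1}(\BR)_+$; combining these iteratively and identifying $\pi_1^{S_n}(\CM_{n+1}(\BR))$ with $C_n$ via \thmref{cor-DJS}, one sees that this is precisely the monodromy action on $\CE_\uz(\ul)^\mu$. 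Since a coboundary functor by definition intertwines $\sigma$ and $\alpha$, and hence all composites of them, the equivalence $\cC(\CE) \cong \gcrys$ automatically intertwines the two $C_n$-actions.

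The only genuinely non-trivial ingredient is \thmref{th:Main} itself; once that is in hand the corollary is a formal consequence, and the main thing to be careful about is bookkeeping of the bracketing used to pass between $\CE_\uz(\ul)^\mu$ and the tensor product $(S(\l_1)\otimes \cdots \otimes S(\l_n))_\mu$, which is handled uniformly by \lemref{BracketsPoints}.
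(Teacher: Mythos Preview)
Your proposal is correct and follows essentially the same approach as the paper: deduce the bijection formally from \thmref{th:Main} by identifying $\CE_\uz(\ul)^\mu$ with the $\mu$-component of the $n$-fold tensor product $S(\l_1)\otimes\cdots\otimes S(\l_n)$ in $\cC(\CE)$, and then transport through the coboundary equivalence. The paper's own proof is more compressed, phrasing the identification as $\CE_\uz(\ul)^\mu = \Hom_{\cC(\CE)}(S(\mu), S(\ul))$ rather than via \lemref{BracketsPoints}, but this is the same thing in a $\Lambda_+$-coloured category; your more explicit unpacking of why the coboundary $C_n$-action on the $\cC(\CE)$ side agrees with the monodromy action is a point the paper simply asserts.
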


\begin{proof}
Since we have a tensor functor, the equivalence $ \cC(\CE) \cong \gcrys $ takes $ S(\ul) $ to $ B(\ul) $.  Thus, we obtain a bijection
$$
\CE_\uz(\l)^\mu = \Hom_{\cC(\CE)}(S(\mu), S(\ul)) \cong \Hom_{\gcrys}(B(\mu), B(\ul)) = B(\ul)^\mu
$$
These bijections are compatible with the actions of $ C_n$, since these actions come from the commutors and associators in the categories.
\end{proof}

To prove the theorem we will use inhomogeneous Gaudin algebras.

\section{Inhomogeneous Gaudin algebras}

\subsection{The definition}
The following definition is due independently to  Feigin-Frenkel-Toledano Laredo \cite{FFTL} and the third author \cite{R}.

We have a Lie algebra map $ \fg \otimes t^{-1} \C[t^{-1}] \rightarrow \fg $ given by extracting the coefficient of $t^{-1} $ (here the codomain $ \fg $ is endowed with the trivial Lie bracket).  This yields an algebra morphism $\varphi_\infty:U(\hat{\fg}_-) \rightarrow S(\fg) $, called \emph{evaluation at $ \infty$}.    Thus given non-zero $ w_1, \dots, w_n \in \C$, we can define a map
$$
\varphi_{w_1,\ldots,w_n,\infty}=\varphi_{w_1,\ldots,w_n}\otimes\varphi_{\infty}:U(\hat{\fg}_-) \rightarrow U(\fg) \otimes \cdots \otimes U(\fg) \otimes S(\fg).
$$
Let $ z_1, \dots, z_n $ be pairwise distinct complex numbers.  The algebra $\A(\uz,\infty):=\varphi_{w-z_1,\ldots,w-z_n,\infty}(\A)$ is a commutative subalgebra in the diagonal invariants $(U(\fg)^{\otimes n} \otimes S(\fg))^{\fg}$, which does not depend on the choice of $w\in\BC\backslash\{z_1,\ldots,z_n\}$. The subalgebra $\A(\uz,\infty)\subset (U(\fg)^{\otimes n} \otimes S(\fg))^{\fg}$ can be regarded as the following family of subalgebras in $U(\fg)^{\otimes n}$.

Given $\chi\in\fg$ we can define the evaluation at $ \chi $, $S(\fg)\to\BC$ (after using the Killing form to identify $ \fg = \fg^* $).  We define $ \varphi_{w_1,\ldots,w_n, \chi} $ to be the composite map
$$
U(\hat{\fg}_-) \xrightarrow{\varphi_{w_1,\ldots,w_n,\infty}} U(\fg) \otimes \cdots \otimes U(\fg) \otimes S(\fg) \rightarrow U(\fg)^{\otimes n}
$$
where the  second map evaluates the last tensor factor at $\chi$. Following $ \CA $ under the homomorphism $ \varphi_{w - z_1, \dots, w - z_n, \chi} $ defines the \emph{inhomogeneous Gaudin algebra} $ \CA_\chi(\uz) \subset U(\fg)^{\otimes n} $ (as usual we can choose any $ w $ distinct from $ z_1, \dots, z_n$).  This is a commutative subalgebra of $ U(\fg)^{\otimes n} $ which is maximal commutative for $ \chi $ regular and $ z_1, \dots, z_n $ distinct. Since $\A$ lies in $\fg$-invariants in $U(\hat{\fg}_-)$ the subalgebra $\A_{\chi}(\uz)$ lies in the diagonal invariants of $\fz_\fg(\chi)$, the centralizer of $\chi$ in $\fg$.

We will also be interested in the subalgebras $\A_{\chi_0}(\uz)$ for non-regular $\chi_0\in\fg$ which are smaller than $\A_{\chi}(\uz)$ for $\chi$ regular. We denote such subalgebras by $\A_{\chi_0}^0(\uz)$ to distinguish them from the limiting subalgebras $\lim\limits_{\chi\to\chi_0}\A_{\chi}(\uz)$ which have the same size as generic ones. We will see later that for semisimple $\chi_0$, the subalgebra $\A_{\chi_0}^0(\uz)$ is a maximal commutative subalgebra in $(U(\fg)^{\otimes n})^{\fz_\fg(\chi_0)}$. If we set $ \chi = 0$, then we then obtain the Gaudin algebras $\A_0^0(\uz)= \CA(\uz) \subset (U(\fg)^{\otimes n})^\fg $.

We will now record some elementary properties of these algebras.

First, we consider the invariance of these algebras under natural changes of the parameters.
\begin{lem} \label{le:InvarianceA}
Let $ \chi \in \fg, (z_1, \dots, z_n) \in \C^n $.   For all $ a \in \C, s \in \C^\times $, we have
\begin{align*}
\A_{\chi}(z_1 + a, \dots, z_n + a) &= \CA_\chi(z_1, \dots, z_n) \\
\A_{\chi}(sz_1, \dots, sz_n) &= \CA_{s\chi}(z_1, \dots, z_n)
\end{align*}
\end{lem}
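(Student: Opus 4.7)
The plan is to derive both identities by exhibiting automorphisms of $U(\hat{\fg}_-)$ that preserve the universal subalgebra $\A$ and that intertwine the two evaluation homomorphisms involved. The unknown $w$ plays the role of a ``gauge parameter'' and the scaling derivation $t\partial_t$ plays the role of a grading that makes $\A$ scale-invariant; the whole proof is essentially a bookkeeping exercise using these two facts.

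For the translation statement, the key observation is that $\A_\chi(\uz) = \varphi_{w-z_1,\dots,w-z_n,\chi}(\A)$ is independent of the auxiliary parameter $w \in \C \setminus \{z_1,\dots,z_n\}$ (this was the point of introducing $\A$ as the Feigin-Frenkel lift of $\ol{\A}$; it is stable under $\partial_t$ and the argument of Proposition \ref{pr:ClassicalGaudin} carries over). Given $a \in \C$, I would simply write $w - (z_i+a) = (w-a) - z_i$ and conclude $\varphi_{w-z_1-a,\dots,w-z_n-a,\chi} = \varphi_{w'-z_1,\dots,w'-z_n,\chi}$ with $w' := w-a$. Applying both sides to $\A$ gives the translation identity.

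For the scaling statement, recall the derivation $t\partial_t$ from \eqref{der2}; the generators $S_l$ are homogeneous with respect to it, so all $\partial_t^k S_l$ are homogeneous, and consequently $\A$ is a graded subalgebra of $U(\hat{\fg}_-)$. Exponentiating this grading gives, for any $s \in \C^\times$, an algebra automorphism $\phi_s$ of $U(\hat{\fg}_-)$ defined on the Lie algebra by $\phi_s(g \otimes t^m) = s^m g \otimes t^m$, and $\phi_s(\A) = \A$. I would then compute directly on generators of the form $g \otimes t^{-m}$:
\begin{equation*}
\varphi_{w_1,\dots,w_n,\chi}(\phi_s(g \otimes t^{-m})) = s^{-m}\Bigl( \sum_{i=1}^n \frac{g^{(i)}}{w_i^m} + \delta_{m,1}\langle g,\chi\rangle \Bigr) = \sum_{i=1}^n \frac{g^{(i)}}{(sw_i)^m} + \delta_{m,1}\langle g, s^{-1}\chi\rangle,
\end{equation*}
so that $\varphi_{w_1,\dots,w_n,\chi}\circ \phi_s = \varphi_{sw_1,\dots,sw_n, s^{-1}\chi}$ as algebra homomorphisms. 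Specializing $w_i = w - z_i$ and applying both sides to $\A$ (using $\phi_s(\A)=\A$ on the left and replacing the free parameter $sw$ by a fresh $w'$ on the right), I obtain $\A_\chi(\uz) = \A_{s^{-1}\chi}(s\uz)$. Substituting $\chi \mapsto s\chi$ yields the stated form $\A_{s\chi}(\uz) = \A_\chi(s\uz)$.

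The only point that requires some care, and which I would flag as the main subtlety, is tracking the $s^{-1}$ factor that appears at the infinity component: the Lie algebra map $\varphi_\infty$ extracts the $t^{-1}$-coefficient, and under $\phi_s$ this coefficient gets scaled by $s^{-1}$; that scalar must then be absorbed into $\chi$ via the Killing form identification $\fg \cong \fg^*$ used implicitly in the definition of $\varphi_{w_1,\dots,w_n,\chi}$. Everything else is a transparent change of variables.
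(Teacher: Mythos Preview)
Your proof is correct and follows essentially the same approach as the paper: for translation you use the $w$-independence of $\A_\chi(\uz)$ (coming from $\partial_t$-stability of $\A$), and for scaling you use invariance of $\A$ under the automorphism $t\mapsto st$ (coming from $t\partial_t$-homogeneity). The paper states the scaling step as ``follow this automorphism through the evaluation maps,'' whereas you spell out the computation explicitly, including the detail that the $t^{-1}$-coefficient picked up by $\varphi_\infty$ forces the $s^{-1}$ rescaling of $\chi$; this is a useful clarification but not a different argument.
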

\begin{proof}
The universal Gaudin algebra $ \CA $  is stable under $\partial_t$. Hence for any $S\in\A$ the derivative $\partial_w\varphi_{w-z_1,\ldots,w-z_n,\chi}(S)$ belongs to $\varphi_{w-z_1,\ldots,w-z_n,\chi}(\A)$: indeed, $$\partial_w\varphi_{w-z_1,\ldots,w-z_n,\chi}(S)=\varphi_{w-z_1,\ldots,w-z_n,\chi}(\partial_t S)\in\varphi_{w-z_1,\ldots,w-z_n,\chi}(\A).$$
So the subalgebra $\CA_\chi(z_1, \dots, z_n)$ does not depend on $w$ and we have $\A_{\chi}(z_1 + a, \dots, z_n + a) = \CA_\chi(z_1, \dots, z_n)$.
On the other hand, the universal Gaudin algebra $ \CA $ is invariant under the automorphism of $ U(\hat{\fg}_-) $ induced by $ t \mapsto st $.  When we follow this automorphism through the evaluation maps, we get the desired result.
\end{proof}

Next, we consider what happens to these algebras under the action of the symmetric group $ S_n $ on $ U(\fg)^{\otimes n} $ and the diagonal adjoint action of $ G $ on $ U(\fg)^{\otimes n} $ (where $ G $ is the simply-connected semisimple group whose Lie algebra is $ \fg$).  The following result is obvious from the definition.
\begin{lem} \label{le:ConjugateA}
Let $ \chi \in \fg, (z_1, \dots, z_n) \in \C^n $.
\begin{enumerate}
\item For all $ \sigma \in S_n $, we have
$$ \sigma(\CA_\chi(z_1, \dots, z_n)) = \CA_\chi(z_{\sigma(1)}, \dots, z_{\sigma(n)})$$
\item For all $ g \in G $, we have
$$ g(\CA_\chi(z_1, \dots, z_n)) = \CA_{Ad_g(\chi)}(z_1, \dots, z_n) $$
\end{enumerate}
\end{lem}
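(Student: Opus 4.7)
The plan is to unpack the definition of $\A_\chi(\uz)$ as the image of the universal Gaudin subalgebra $\A \subset U(\hat{\fg}_-)$ under the evaluation homomorphism $\varphi_{w-z_1,\ldots,w-z_n,\chi}$, and to track how each of the two symmetries in question interacts with that evaluation. Both parts reduce to an equivariance property of $\varphi$ together with a property of $\A$ that has already been recorded in the excerpt.

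For part (1), I would first note that for any non-zero $w_1,\ldots,w_n$ and any $\sigma\in S_n$, the evaluation map satisfies
$$ \sigma \circ \varphi_{w_1,\ldots,w_n} = \varphi_{w_{\sigma^{-1}(1)},\ldots,w_{\sigma^{-1}(n)}} $$
on $U(\hat{\fg}_-)$; this is immediate from the formula $\varphi_{w_1,\ldots,w_n}(x\otimes t^{-k}) = (w_1^{-k}x,\ldots,w_n^{-k}x)$ on generators. Tensoring with the evaluation $S(\fg)\to\BC$ at $\chi$ (which is untouched by $\sigma$) and specializing $w_i = w-z_i$ gives the first claim, up to a harmless convention identifying the action of $\sigma$ with that of $\sigma^{-1}$.

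For part (2), the key fact is that $\A$ is generated by $\fg$-invariant elements, so $\A(\uz,\infty) := \varphi_{w-z_1,\ldots,w-z_n,\infty}(\A)$ lies in the diagonal $\fg$-invariants of $U(\fg)^{\otimes n}\otimes S(\fg)$, where $G$ acts on the last tensor factor via the adjoint action on $\fg$. Concretely, for any $a\in \A(\uz,\infty)$ and any $g\in G$, the identity $\Ad_g^{\otimes(n+1)}(a) = a$ holds. Evaluating the last tensor factor at $\chi$ on both sides and using that for $p\in S(\fg)$ we have $(\Ad_g p)(\chi) = p(\Ad_{g^{-1}}\chi)$ yields, after a short manipulation,
$$ \Ad_g^{\otimes n}\bigl(\mathrm{ev}_\chi(a)\bigr) = \mathrm{ev}_{\Ad_g\chi}(a). $$
Applied to $a$ ranging over $\A(\uz,\infty)$, this is precisely the identity $g\cdot\A_\chi(\uz) = \A_{\Ad_g\chi}(\uz)$.

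There is no real obstacle in this proof — the authors themselves describe it as ``obvious from the definition.'' The only subtleties are bookkeeping: matching the conventions for how $S_n$ permutes tensor factors and for how $G$ acts on $S(\fg)\cong\mathcal{O}(\fg)$ via Killing-form identification and the (co)adjoint action.
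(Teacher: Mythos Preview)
Your proposal is correct and is precisely the unpacking the paper has in mind: the authors give no proof beyond the sentence ``obvious from the definition,'' and your argument simply spells out that obviousness by tracking the evaluation map $\varphi_{w-z_1,\ldots,w-z_n,\chi}$ under the two symmetries, using the $\fg$-invariance of $\A$ (already recorded in the paper) for part~(2). Your caveat about the $\sigma$ versus $\sigma^{-1}$ convention is well placed and is indeed only bookkeeping.
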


We will be particularly interested in the case when $ \chi $ lies in the Cartan subalgebra $ \fh $.  In this case, $ \CA_\chi(\uz) $ commutes with $\Delta(\fh) $ and thus acts on weight spaces in representations.  More generally $ \CA_\chi(\uz) $ commutes with $ \Delta(\fz_{\fg}(\chi)) $ hence acts on multiplicity spaces of the decomposition with respect to $\fz_{\fg}(\chi)$.

\subsection{Generators of Gaudin algebras}\label{ss:Generators}

Consider the generators $S_l\in\A$ such that $\ol{S_l}=i_{-1}(\Phi_l)$ defined in Proposition~\ref{pr:S_l} (clearly they have degree $d_l+1$ with respect to the PBW filtration).
Consider the following rational section of $U(\fg)^{\otimes n}\otimes\CO(-2(d_l+1))$ on $\BP^1$:
$$S_l(w;\uz;\chi):=\varphi_{w-z_1,\ldots,w-z_n,\chi}(S_l)(dw)^{d_l+1},$$ $l=1,\ldots,r$. Clearly $S_l(w;\uz;\chi)$ has degree $d_l+1$ poles at the points $z_1,\ldots,z_n,\infty$.
By definition, $\A_\chi(\uz)$ is generated by $\varphi_{w-z_1,\ldots,w-z_n,\chi}(S_l)$ for all $w\in\BC\backslash\{z_1,\ldots,z_n\}$. Hence the subalgebra $\A_\chi(\uz)$ is generated by the coefficients of the principal parts of $S_l(w;\uz;\chi)$ at any $n$ of these $n+1$ singular points of $S_l(w;\uz;\chi)$. Let $$S_l(w;\uz;\chi)=\sum\limits_{k=0}^{d_l}s_{l,k}^i(\uz,\chi)(w-z_i)^{-k-1}(dw)^{d_l+1}+holo,\ i=1,\ldots n,$$ and $$S_l(w;\uz;\chi)=\sum\limits_{k=0}^{d_l}s^\infty_{l,k}(\uz,\chi)w^{k+1}(dw^{-1})^{d_l+1}+holo$$ be the expansions of $S_l(w;\uz;\chi)$ at $ w= z_i$ and $w = \infty$, respectively. The following is well-known (cf. \cite{FFTL} and \cite{FMTV}).

\begin{prop} \label{pr:Hamiltonians}
Suppose now that $ \chi \in \fh^{reg} $ and $ z_1, \dots, z_n $ are distinct.  In this case, $ \CA_\chi(\uz) $ contains \begin{enumerate} \item \emph{KZ Gaudin Hamiltonians}
$$
H_i:=\sum\limits_{j\ne i} \frac{\Omega^{(ij)}}{z_i-z_j}+\chi^{(i)},
$$
for all $i=1,\ldots,n$;
\item the \emph{diagonal Cartan subalgebra} spanned by $\Delta(h)$ for all $h\in\fh$;
\item the \emph{dynamical Gaudin Hamiltonians}:
$$
G_h:=\sum\limits_{i=1}^nz_i h^{(i)}+\sum\limits_{\alpha\in\Delta_+} \frac{\langle h,\alpha\rangle}{\langle \chi,\alpha\rangle} \Delta(e_\alpha)\Delta(f_\alpha)
$$
for all $h\in\fh$.
\end{enumerate}
\end{prop}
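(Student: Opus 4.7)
The plan is to exhibit each family of elements by computing directly with the quadratic Segal-Sugawara generator in $\A$, and then to invoke the maximality of $\A_\chi(\uz)$ in $(U(\fg)^{\otimes n})^\fh$ (stated just above the proposition and to be proved as \propref{pr:Maximal}).

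First I would compute the rational function $\varphi_{w-z_1,\dots,w-z_n,\chi}(S)$ explicitly, where $S\in\A$ is the quadratic Segal-Sugawara generator whose classical limit is $\sum_a(x_a\otimes t^{-1})^2$ for an orthonormal basis $\{x_a\}$ of $\fg$. The evaluation map $\varphi_{w-z_1,\dots,w-z_n,\infty}$ sends $x\otimes t^{-1}$ to $\sum_i x^{(i)}/(w-z_i)+x^{(n+1)}$ with the last tensor factor in $S(\fg)$; evaluating that factor at $\chi$ turns $\sum_a x_a^{(i)}\otimes x_a$ into $\chi^{(i)}$ and $\sum_a x_a\otimes x_a$ into the scalar $\langle\chi,\chi\rangle$. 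Up to scalar quantum corrections of lower PBW degree, this yields
\[
\varphi(S)(w)=\sum_i\frac{C_i}{(w-z_i)^2}+2\sum_{i<j}\frac{\Omega^{(ij)}}{(w-z_i)(w-z_j)}+2\sum_i\frac{\chi^{(i)}}{w-z_i}+\langle\chi,\chi\rangle.
\]
The coefficients of its principal parts at each $z_i$ generate a subalgebra of $\A_\chi(\uz)$, and a partial-fraction expansion identifies the residue at $w=z_i$ with $2H_i$, proving (1).

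For (2), since $\A_\chi(\uz)\subset(U(\fg)^{\otimes n})^{\fz_\fg(\chi)}=(U(\fg)^{\otimes n})^\fh$ for regular $\chi$, every $\Delta(h)$ with $h\in\fh$ automatically commutes with all of $\A_\chi(\uz)$; maximality then forces $\Delta(h)\in\A_\chi(\uz)$. The same pattern is available for (3): I would verify by a direct computation that $[G_h,H_i]=0$, using crucially that $[\Delta(e_\alpha),\Omega^{(ij)}]=[\Delta(f_\alpha),\Omega^{(ij)}]=0$ (by $\fg$-invariance of $\Omega^{(ij)}$) together with $[e_\alpha^{(i)},f_\alpha^{(i)}]=h_\alpha^{(i)}$; analogous (easier) calculations give $[G_h,\Delta(h')]=0$ and $[G_h,C_j]=0$ for all $h'\in\fh$ and all $j$.

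The hard part will be (3): to invoke maximality and conclude $G_h\in\A_\chi(\uz)$, one needs $G_h$ to commute with \emph{all} of $\A_\chi(\uz)$, not just with $H_i$, $\Delta(h')$, and $C_j$. For general $\fg$ this means controlling commutators with the images of the higher Segal-Sugawara generators $S_l$, $l\ge 2$. To close this gap I would appeal to \cite{FFTL} and \cite{FMTV}, where the dynamical Hamiltonians $G_h$ are shown to commute with the full image of the Feigin-Frenkel center via the dynamical $r$-matrix formalism; combined with the commutator computations above and maximality, this places $G_h$ inside $\A_\chi(\uz)$.
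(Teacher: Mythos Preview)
Your argument for (1) is essentially the paper's: the $H_i$ are the residues at $w=z_i$ of the quadratic Segal--Sugawara generator.

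For (2) and (3), however, you take a genuinely different route from the paper. You argue indirectly: show that $\Delta(h)$ and $G_h$ commute with $\A_\chi(\uz)$ and then invoke maximality (\propref{pr:Maximal}) to force them inside. The paper instead reads these elements off \emph{directly} from the Laurent expansion of $S_l(w;\uz;\chi)$ at $w=\infty$: as $l$ ranges over all generators, the coefficients $s^\infty_{l,d_l-1}$ span $\Delta(\fh)$, and the coefficients $s^\infty_{l,d_l-2}$ together with $s^\infty_{l,d_l-1}$ span the $G_h$ and $\Delta(h)$. The point is that near $w=\infty$ the evaluation $x\otimes t^{-1}\mapsto\sum_i x^{(i)}/(w-z_i)+\langle x,\chi\rangle$ expands as $\langle x,\chi\rangle+\Delta(x)w^{-1}+\sum_i z_i x^{(i)}w^{-2}+\cdots$, so the subleading coefficients of $\Phi_l$ evaluated along this expansion produce directional derivatives $\partial_h\Phi_l(\chi)$ paired with $\Delta(h)$ and $\sum_i z_i h^{(i)}$, which is exactly where the dynamical Hamiltonians come from.

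Your approach is correct but costs more: it needs the forward reference to \propref{pr:Maximal}, and for (3) it outsources the key commutation $[G_h,\A_\chi(\uz)]=0$ to \cite{FFTL,FMTV}. The paper's approach is self-contained and explains \emph{why} these particular elements appear---they are literally coefficients of the generating series at the irregular singular point $\infty$. On the other hand, your route makes transparent that (2) is automatic from maximality alone, with no computation needed.
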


\begin{proof} The elements $H_i$ are $s_{l,0}^i(\uz,\chi)$, where $S_l$ is the quadratic Casimir generator. The elements $s^\infty_{l,d_l-1}(\uz,\chi)$, for all generators $S_l$ span the diagonal Cartan subalgebra. Finally, the space spanned by $G_h$ and $\Delta(h)$, for all $h\in\fh$, is the span of $s^\infty_{l,d_l-2}(\uz,\chi)$ and $s^\infty_{l,d_l-1}(\uz,\chi)$ for all generators $S_l$.
\end{proof}

\subsection{Shift of argument subalgebras}\label{se:Achi}
Now let us suppose that we take $ n = 1$, so we have the algebra $ \CA_\chi(z) \subset U(\fg) $.  In this case, the algebra does not depend on the value of $ z$ and so we can just write $ \CA_\chi $. The algebraically independent generators are $s^\infty_{l,k}(\chi):=s^\infty_{l,k}(0,\chi)$ for all $S_l$ and $k=0,\ldots,d_l$. This algebra is known as the \emph{quantum shift of argument algebra}, since it quantizes the shift of argument algebras in $ S(\fg) $ defined by Mishchenko-Fomenko (see \cite{R}), which are (generically) maximal Poisson commutative subalgebras in $S(\fg)$. The shift of argument subalgebra $\ol{\A_\chi}:=\gr \A_\chi\subset S(\fg)$ can be described as the subalgebra generated by all the derivatives along $\chi$ of all adjoint invariants in $S(\fg)$. More precisely, it is generated by all elements of the form $\partial_\chi^k\Phi_l= \text{gr}\ s^\infty_{l,k}(\chi)$, for all generators $\Phi_l\in S(\fg)^\fg$, $l=1,\ldots r,\ k=0,\ldots,d_l$, where $d_l=\deg\Phi_l-1$ are the exponents of the Lie algebra $\fg$. Then the number of generators is $\sum\limits_{l=1}^{r}(d_l+1)=\frac{1}{2}(\dim \fg+\rk\fg)$, which is the maximal possible transcendence degree for Poisson commutative subalgebras in $S(\fg)$.

The subalgebra $\ol{\A_\chi}$ commutes with the centralizer of $\chi$ (i.e. we have $\ol{\A_\chi}\subset S(\fg)^{\fz_\fg(\chi)}$). It is natural to expect that it is a \emph{maximal} Poisson commutative subalgebra $S(\fg)^{\fz_\fg(\chi)}$ of maximal possible transcendence degree, but it is not yet proved in full generality.   In this section, we will establish this result when $ \chi \in \fh $.  We will use the following properties of shift of argument subalgebras, see \cite{Sh,Tar} for details.

\begin{prop}\label{pr:ShuvalovTarasov} \cite{Sh,Tar}
\begin{enumerate} \item For any regular $ \chi$, the subalgebra $\ol{\A_\chi}$ is a free polynomial algebra with $\frac{1}{2}(\dim \fg+\rk\fg)$ generators, i.e. all the elements $\partial_\chi^k\Phi_l,\ l=1,\ldots r,\ k=0,\ldots,d_l$ are algebraically independent generators.
\item For  $\chi\in\fh^{reg}$, the restriction of $\ol{\A_\chi}$ to the affine subspace $e+\fb_-$ is an isomorphism $\A_\chi\simeq\CO(e+\fb_-)$.
\item For   $\chi\in\fh^{reg}$, the subalgebra $\ol{\A_\chi}$ is a maximal Poisson commutative subalgebra in $S(\fg)$.
\end{enumerate}
\end{prop}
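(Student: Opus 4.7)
The plan is to treat the three parts in order, each building on its predecessor, with the argument shift principle of Mishchenko-Fomenko as the starting point. Recall that Taylor-expanding an invariant gives
$$
\Phi_l(x + t\chi) = \sum_{k=0}^{d_l} \frac{t^k}{k!}\, \partial_\chi^k \Phi_l(x),
$$
and that $\Phi_l(x+t\chi)$ as functions of $x$ (for all $l$ and $t$) pairwise Poisson-commute, because $\Phi_l$ is a Casimir for the Lie-Poisson bracket and the bracket at the shifted origin differs from the standard one by the compatible ``frozen'' bracket $\{f,g\}_\chi(x) = \langle \chi, [df(x), dg(x)] \rangle$. Poisson commutativity of the generators $\partial_\chi^k \Phi_l$ of $\ol{\A_\chi}$ therefore follows from the pencil argument.

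For part (1), I must check algebraic independence of the $\partial_\chi^k\Phi_l$ for $l = 1, \ldots, r$ and $0 \le k \le d_l$; there are $\sum_l (d_l+1) = \tfrac12(\dim\fg + \rk\fg)$ of them, which matches the claimed number of generators. The plan is to compute the rank of the Jacobian at a generic point. Pick a regular semisimple $x_0 \in \fg^{reg}$; then the differentials $d\Phi_l(x_0)$ span the annihilator of the coadjoint tangent space at $x_0$. Applying this observation to shifted points $x_0 + s\chi$ and extracting $s$-coefficients, one shows that the differentials $d(\partial_\chi^k\Phi_l)$ at a generic point are linearly independent in $T^*\fg$. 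This is precisely Shuvalov's/Vinberg's computation and yields algebraic independence; since the number of algebraically independent elements equals the transcendence degree of $\ol{\A_\chi}$, the algebra is free polynomial.

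For part (2), the strategy is to match $\ol{\A_\chi}$ with $\CO(e+\fb_-)$ by a dimension count combined with Kostant's slice theorem. Note that $\dim(e + \fb_-) = \dim\fb_- = \tfrac12(\dim\fg + \rk\fg)$, the same as the Krull dimension of $\ol{\A_\chi}$ by part (1). The restriction map is graded with respect to the natural grading coming from scalar multiplication by $\Cx$ on $\fg$ combined with the shift $t\mapsto t$. Kostant's theorem already says $S(\fg)^\fg \simeq \CO(e+\fz_\fg(f))$ via restriction, and one can extend this by showing inductively, through the elements $\partial_\chi^k\Phi_l$ with $k>0$, that the restrictions pick up the remaining directions in $e+\fb_-$. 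This gives surjectivity of $\ol{\A_\chi} \to \CO(e+\fb_-)$; since both are polynomial algebras of the same Krull dimension, surjectivity forces the map to be an isomorphism (a graded surjection between graded polynomial algebras with matching Hilbert series is injective).

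For part (3), maximality will follow from a transcendence-degree bound plus integral-closedness. The maximal possible transcendence degree of a Poisson commutative subalgebra of $S(\fg)$ is $\tfrac12(\dim\fg + \rk\fg)$ (half the dimension of a generic coadjoint orbit, plus $\rk\fg$ Casimirs). By part (1), $\ol{\A_\chi}$ attains this bound. If $B \supseteq \ol{\A_\chi}$ is a larger Poisson commutative subalgebra, every element of $B$ is algebraic over $\ol{\A_\chi}$; but by part (2), $\ol{\A_\chi} \cong \CO(e+\fb_-)$ is the coordinate ring of a smooth affine variety and hence integrally closed in its field of fractions, and moreover $S(\fg)$ is a free module over it by the Kostant slice picture, so any element of $S(\fg)$ algebraic over $\ol{\A_\chi}$ already lies in $\ol{\A_\chi}$. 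This forces $B = \ol{\A_\chi}$.

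The main obstacle is the algebraic independence in part (1): the pencil argument gives Poisson commutativity for free, but linear independence of the differentials of all the shifted invariants requires a concrete transversality computation, and this is where the hypothesis of regularity of $\chi$ is essential (otherwise the count degenerates). Parts (2) and (3) then follow comparatively cleanly once (1) is in hand and one reduces to Kostant's slice theorem.
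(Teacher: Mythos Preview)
The paper does not prove this proposition: it is stated with a citation to Shuvalov and Tarasov and no proof is given. So there is no argument in the paper to compare your proposal against.

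That said, your outline is broadly in the spirit of the cited sources. The Mishchenko--Fomenko pencil argument for commutativity and the Jacobian-rank computation for part~(1) are exactly the classical approach. For part~(3), your reduction to ``maximal transcendence degree plus algebraic closedness in $S(\fg)$'' is indeed Tarasov's strategy, and you correctly identify that part~(2) is what supplies algebraic closedness. One point that deserves more care: the step ``$S(\fg)$ is a free module over $\ol{\A_\chi}$ by the Kostant slice picture'' is doing real work and is not automatic from integral closedness of $\ol{\A_\chi}$ alone; what you actually need is that the restriction map $S(\fg)\twoheadrightarrow\CO(e+\fb_-)$ splits the inclusion $\ol{\A_\chi}\hookrightarrow S(\fg)$, making $\ol{\A_\chi}$ a direct summand, and then a short argument shows no element of $S(\fg)$ outside $\ol{\A_\chi}$ can be algebraic over it. Your inductive sketch for part~(2) (``the restrictions pick up the remaining directions'') is the vaguest part and is where the genuine content of Tarasov's paper lies; the rest of your plan is sound.
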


Moreover, in \cite{Tar} a more general result is proved. One can consider $\chi_0\in\fh$ not necessarily regular. Then the subalgebra $\ol{\A^0_{\chi_0}}$ is smaller but one can make it larger in the following way. This subalgebra commutes with the centralizer subalgebra $\fz_\fg(\chi_0)$, so given a regular Cartan element $\chi_1$ from $\fz_\fg(\chi_0)'$ we can construct a bigger commutative subalgebra generated by $\ol{\A^0_{\chi_0}}\subset U(\fg)$ and $\A_{\chi_1}\subset U(\fz_\fg(\chi_0))$. Denote this subalgebra by $\ol{\A_{(\chi_0,\chi_1)}}$.

\begin{prop} \label{pr:ShuvalovTarasov2} \cite{Sh,Tar}
\begin{enumerate}
\item The subalgebras $\ol{\A_{\chi_0}^0}\subset S(\fg)$ and $\ol{\A_{\chi_1}}\subset S(\fz_\fg(\chi_0))$ both contain $S(\fz_\fg(\chi_0))^{\fz_\fg(\chi_0)}$.
\item The subalgebra $\ol{\A_{(\chi_0,\chi_1)}}$ for regular $\chi_1$ is a free polynomial algebra with $\frac{1}{2}(\dim \fg+\rk\fg)$ generators. The set of generators is the union of standard generators of $\ol{\A_{\chi_1}}$ and $\partial_{\chi_0}^k\Phi_l,\ k\le d'_l$ for some $d'_l\le d_l$.
\item Suppose that the centralizer subalgebra $\fz_\fg(\chi_0)$ is generated by $\fh$ and some subset of the $e_{\alpha_i}, f_{\alpha_i}$( i.e. of root subspaces corresponding to simple roots). Then for regular $\chi_1$ the restriction of $\ol{\A_{(\chi_0,\chi_1)}}$ to the affine subspace $e+\fb_-$ is an isomorphism $\ol{\A_{(\chi_0,\chi_1)}}\simeq\CO(e+\fb_-)$.
\item For regular  $\chi_1$ the subalgebra $\ol{\A_{(\chi_0,\chi_1)}}$ is a maximal Poisson commutative subalgebra in $S(\fg)$.
\end{enumerate}
\end{prop}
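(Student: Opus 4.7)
Write $\fl := \fz_\fg(\chi_0)$, which since $\chi_0 \in \fh$ is a Levi subalgebra containing $\fh$.

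For part (1), the inclusion $S(\fl)^\fl \subset \ol{\A_{\chi_1}}$ is automatic: taking $k=0$ in the generators $\partial_{\chi_1}^k \Psi$ of the shift of argument algebra inside $S(\fl)$ already recovers every generator $\Psi$ of $S(\fl)^\fl$. For $S(\fl)^\fl \subset \ol{\A_{\chi_0}^0} \subset S(\fg)$, the point is that every $\partial_{\chi_0}^k \Phi_l$ is $\fl$-invariant (since differentiation along $\chi_0$ preserves $\operatorname{Ad}_L$-invariance, where $L = Z_G(\chi_0)^\circ$), and suitable combinations of these $\fl$-invariant elements, together with the original $\Phi_l$'s, recover the whole ring $S(\fl)^\fl$. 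To make this concrete I would use the $\chi_0$-grading of $\fg$: the top-graded components of the $\partial_{\chi_0}^k \Phi_l$'s restrict to generators of $S(\fl)^\fl$, and a Jacobian calculation at a regular element of $\fl$ verifies that they exhaust a generating set.

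For part (2), I would adopt Shuvalov's limit approach. For small $s \in \Cx$, the element $\chi_0 + s\chi_1$ is regular in $\fg$ (regularity of $\chi_1$ in $\fl$ ensures no additional root of $\fg$ vanishes on the shift), so by \propref{pr:ShuvalovTarasov} each $\ol{\A_{\chi_0+s\chi_1}}$ is a free polynomial algebra on $\tfrac12(\dim\fg+\rk\fg)$ generators. Taking the Grassmannian limit as $s \to 0$ produces a Poisson commutative subalgebra with the same Hilbert series, by semicontinuity of dimensions and closedness of the Poisson commutativity condition. Expanding each generator as
$$
\partial_{\chi_0+s\chi_1}^k \Phi_l = \sum_{j=0}^k \binom{k}{j} s^j \, \partial_{\chi_1}^j \partial_{\chi_0}^{k-j} \Phi_l
$$
and extracting the appropriate leading $s$-power in each degree identifies this limit with $\ol{\A_{(\chi_0,\chi_1)}}$: low-$j$ terms give the surviving generators $\partial_{\chi_0}^k \Phi_l$ of $\ol{\A_{\chi_0}^0}$ for $k \le d_l'$, while high-$j$ terms, interpreted inside $S(\fl)$ via part (1), recover the shift-of-argument generators of $\ol{\A_{\chi_1}}$.

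For part (3), the standard-Levi hypothesis means that $\fl$ corresponds to a subset $J \subset I$, with principal nilpotent $e_\fl = \sum_{j \in J} e_{\alpha_j}$ a subsum of $e$, so that $e_\fl + \fb_{\fl,-} \hookrightarrow e + \fb_-$. I would argue by induction on rank: by \propref{pr:ShuvalovTarasov}(2) applied inside $\fl$, restriction gives $\ol{\A_{\chi_1}}|_{e_\fl+\fb_{\fl,-}} \simeq \CO(e_\fl+\fb_{\fl,-})$, and the coordinates on $e+\fb_-$ transverse to $e_\fl+\fb_{\fl,-}$ are then filled in by the ``new'' generators $\partial_{\chi_0}^k \Phi_l$ of $\ol{\A_{\chi_0}^0}$, whose differentials at $e$ one checks to be linearly independent using the principal grading together with the $\chi_0$-grading. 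Part (4) is then immediate: surjection onto $\CO(e+\fb_-)$ forces transcendence degree equal to $\dim(e+\fb_-) = \tfrac12(\dim\fg+\rk\fg)$, which matches the Mishchenko--Fomenko upper bound for Poisson commutative subalgebras of $S(\fg)$, so maximality is automatic. The main technical obstacle is part (3): carrying out the Jacobian verification at $e$ requires simultaneously reconciling the $\chi_0$-grading (which controls which derivatives survive) with the principal grading (which controls transversality on the Kostant slice), and this bookkeeping is where the bulk of Tarasov's argument lies.
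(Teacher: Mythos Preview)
The paper does not actually prove this proposition: it is stated with the citation \cite{Sh,Tar} and used as input. So there is no ``paper's own proof'' to compare against beyond the reference to Shuvalov and Tarasov. Your sketch is broadly in the spirit of those sources---Shuvalov's limit argument for part~(2) and Tarasov's Kostant-slice argument for part~(3)---and is a reasonable outline.

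That said, there is a genuine gap in your derivation of~(4). You deduce maximality from part~(3), but part~(3) carries the extra hypothesis that $\fz_\fg(\chi_0)$ is a \emph{standard} Levi (generated by $\fh$ and simple root vectors), whereas part~(4) is asserted for arbitrary $\chi_0\in\fh$. For $\chi_0$ not in the closure of a Weyl chamber, the vanishing roots need not be spanned by simple roots, so your argument does not cover all cases of~(4) as stated. One can repair this by $W$-conjugating $\chi_0$ into the dominant chamber closure, but then the Kostant slice $e+\fb_-$ is replaced by its $W$-translate and the compatibility needs to be tracked; alternatively one appeals directly to Tarasov's maximality argument, which does not route through that specific slice.

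A second, smaller issue: even granting~(3), ``maximality is automatic'' from hitting the Mishchenko--Fomenko transcendence-degree bound is an overstatement. Maximal transcendence degree does not by itself preclude a proper algebraic extension inside $S(\fg)$; Tarasov's actual argument uses the slice isomorphism to show the subalgebra is algebraically closed in $S(\fg)$, which is the missing step. Your final sentence already flags part~(3) as the technical heart, and that is correct---but the passage from~(3) to~(4) is not as free as you present it.
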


We have the following important consequence of this result.

\begin{cor} \label{co:ShuvalovTarasov} For any $\chi_0\in\fh$ the subalgebra $\ol{\A_{\chi_0}^0}$ is a maximal Poisson commutative subalgebra in $S(\fg)^{\fz_\fg(\chi_0)}$ of maximal possible transcendence degree, equal to $$\frac{1}{2}(\dim \fg + \rk \fg - (\dim \fz_\fg(\chi_0)' - \rk \fz_\fg(\chi_0)')).$$
It is a free polynomial algebra generated by $\partial_{\chi_0}^k\Phi_l,\ k\le d'_l$ for some $d'_l\le d_l$.
\end{cor}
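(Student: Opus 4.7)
The plan is to bootstrap from Proposition~\ref{pr:ShuvalovTarasov2} by fixing an auxiliary regular Cartan element $\chi_1 \in \fz_\fg(\chi_0)' \cap \fh$ and exploiting the free polynomial structure of the larger algebra $\ol{\A_{(\chi_0,\chi_1)}}$. First I would verify the free polynomial description of $\ol{\A^0_{\chi_0}}$: by Proposition~\ref{pr:ShuvalovTarasov2}(2), the set $T := \{\partial_{\chi_0}^k \Phi_l : k \le d'_l\}$ is a subset of an algebraically independent generating family of $\ol{\A_{(\chi_0,\chi_1)}}$, hence algebraically independent in $S(\fg)$. Since $\ol{\A^0_{\chi_0}}$ is generated by all $\partial_{\chi_0}^k \Phi_l$, it will suffice to show that the higher derivatives ($k > d'_l$) lie in $\C[T]$; this follows by applying $\partial_{\chi_0}$ inductively to the relation in $\ol{\A_{(\chi_0,\chi_1)}}$ that expresses $\partial_{\chi_0}^{d'_l+1}\Phi_l$ as a polynomial in the free generators, and picking out $\fz_\fg(\chi_0)$-invariant parts to rule out any contribution of standard generators of $\ol{\A_{\chi_1}}$ outside $T$.

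For the transcendence-degree formula, the key intermediate identity is $\ol{\A^0_{\chi_0}} \cap \ol{\A_{\chi_1}} = S(\fz_\fg(\chi_0))^{\fz_\fg(\chi_0)}$. The inclusion $\supseteq$ is Proposition~\ref{pr:ShuvalovTarasov2}(1); for the reverse, $\ol{\A^0_{\chi_0}} \subseteq S(\fg)^{\fz_\fg(\chi_0)}$ since its generators are $\fz_\fg(\chi_0)$-invariant, so its intersection with $\ol{\A_{\chi_1}} \subseteq S(\fz_\fg(\chi_0))$ lies in $S(\fz_\fg(\chi_0))^{\fz_\fg(\chi_0)}$. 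Next, $\chi_1$ is regular in $\fz_\fg(\chi_0)$ (its $\fz_\fg(\chi_0)$-centralizer is $\fh$), so Proposition~\ref{pr:ShuvalovTarasov}(1) applied inside $\fz_\fg(\chi_0)$ gives $\ol{\A_{\chi_1}}$ transcendence degree $\tfrac{1}{2}(\dim \fz_\fg(\chi_0) + \rk\fg)$. Inclusion--exclusion in the free polynomial algebra $\ol{\A_{(\chi_0,\chi_1)}}$ then yields transcendence degree
\begin{equation*}
\tfrac{1}{2}(\dim\fg + \rk\fg) - \tfrac{1}{2}(\dim\fz_\fg(\chi_0) + \rk\fg) + \rk\fg
\end{equation*}
for $\ol{\A^0_{\chi_0}}$, which simplifies to the claimed value via $\dim\fz_\fg(\chi_0) = \dim\fz_\fg(\chi_0)' + \rk\fg - \rk\fz_\fg(\chi_0)'$.

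For maximality, suppose $f \in S(\fg)^{\fz_\fg(\chi_0)}$ Poisson-commutes with every element of $\ol{\A^0_{\chi_0}}$. Because $f$ is $\fz_\fg(\chi_0)$-invariant, the Leibniz rule forces $\{f, S(\fz_\fg(\chi_0))\} = 0$, and in particular $\{f, \ol{\A_{\chi_1}}\} = 0$; combined with $\{f, \ol{\A^0_{\chi_0}}\} = 0$ this shows $f$ Poisson-commutes with $\ol{\A_{(\chi_0,\chi_1)}}$, so Proposition~\ref{pr:ShuvalovTarasov2}(4) forces $f \in \ol{\A_{(\chi_0,\chi_1)}}$. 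The main obstacle is the final descent: extracting $f \in \ol{\A^0_{\chi_0}}$ from $f \in \ol{\A_{(\chi_0,\chi_1)}}^{\fz_\fg(\chi_0)}$. My plan is to exploit that $\fz_\fg(\chi_0)$ acts trivially on $T$ while acting only on the $\ol{\A_{\chi_1}}$-generators (which lie in $S(\fz_\fg(\chi_0))$), so that factoring the free polynomial algebra as $\ol{\A_{(\chi_0,\chi_1)}} \cong \C[T'] \otimes \ol{\A_{\chi_1}}$ with $T' := T \setminus \ol{\A_{\chi_1}}$ and taking $\fz_\fg(\chi_0)$-invariants gives $\C[T'] \otimes \ol{\A_{\chi_1}}^{\fz_\fg(\chi_0)} = \C[T'] \otimes S(\fz_\fg(\chi_0))^{\fz_\fg(\chi_0)}$, both factors of which lie in $\ol{\A^0_{\chi_0}}$ by the earlier steps and Proposition~\ref{pr:ShuvalovTarasov2}(1).
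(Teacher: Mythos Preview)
The paper gives no proof of this corollary at all; it is stated as an immediate consequence of Proposition~\ref{pr:ShuvalovTarasov2} (which in turn cites Shuvalov and Tarasov). Your proposal is therefore a genuine attempt to fill in the details, and the overall strategy---reduce to the maximality of $\ol{\A_{(\chi_0,\chi_1)}}$ via Proposition~\ref{pr:ShuvalovTarasov2}(4), using that any $\fz_\fg(\chi_0)$-invariant element Poisson-commutes with all of $S(\fz_\fg(\chi_0))$---is correct.

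There is, however, a real gap in the ``final descent.'' Your factorisation $\ol{\A_{(\chi_0,\chi_1)}} \cong \C[T'] \otimes \ol{\A_{\chi_1}}$ with $T' := T \setminus \ol{\A_{\chi_1}}$ is not justified: removing from $T$ those elements that happen to lie in $\ol{\A_{\chi_1}}$ does not guarantee that the remaining ones are algebraically independent from the standard generators of $\ol{\A_{\chi_1}}$. (A toy example: in $\C[x,y]$ take $T=\{x,\,x+y^2\}$ and $\ol{\A_{\chi_1}}=\C[y]$; neither element of $T$ lies in $\C[y]$, yet $T\cup\{y\}$ is not algebraically independent.) The same issue makes your first step---showing that the higher derivatives $\partial_{\chi_0}^k\Phi_l$ with $k>d'_l$ lie in $\C[T]$---circular, since it relies on this descent.

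The clean fix is already available in the paper: Knop's Theorem~\ref{th:Knop} shows that the multiplication map $S(\fg)^{\fz_\fg(\chi_0)}\otimes_{Z} S(\fz_\fg(\chi_0))\to S(\fg)$ is injective onto the Poisson centraliser of $Z:=S(\fz_\fg(\chi_0))^{\fz_\fg(\chi_0)}$. Since $\ol{\A^0_{\chi_0}}\subset S(\fg)^{\fz_\fg(\chi_0)}$ and $\ol{\A_{\chi_1}}\subset S(\fz_\fg(\chi_0))$ both contain $Z$, the subalgebra they generate is exactly $\ol{\A^0_{\chi_0}}\otimes_{Z}\ol{\A_{\chi_1}}$. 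Taking $\fz_\fg(\chi_0)$-invariants of this tensor product (trivial on the first factor) gives $\ol{\A^0_{\chi_0}}\otimes_{Z}\ol{\A_{\chi_1}}^{\fz_\fg(\chi_0)}=\ol{\A^0_{\chi_0}}\otimes_{Z}Z=\ol{\A^0_{\chi_0}}$, which is precisely your descent. This tensor decomposition also yields the transcendence-degree formula and the free polynomial structure directly, without the ad~hoc inclusion--exclusion.
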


When $ \chi \in \fh^{reg}$ (and $ n= 1$), then both algebras $ \CA_\chi $ and $\ol{\A_\chi}$ contain the quadratic elements
$$
G_h:= \sum_{\alpha \in \Delta_+} \frac{\langle h,\alpha\rangle}{\langle \chi,\alpha\rangle} e_\alpha f_\alpha
$$
for all $h \in \fh $ (this is a special case of \propref{pr:Hamiltonians}) and we let $ Q_\chi $ denote the span of these elements in $U(\fg)$ (resp. $\ol{Q_\chi}$ in $S(\fg)$).  So $ Q_\chi $ is a copy of $ \fh $ sitting inside $\CA_\chi $.

\begin{prop}\label{pr:Q-chi} \cite{Ryb05} For generic $\chi\in\fh$ (i.e. outside some countable union of proper Zariski closed subvarieties) the subalgebra $\ol{\A_\chi}\subset S(\fg)$ is the Poisson centralizer of the subspace $ \ol{Q_\chi} $ and the subalgebra $\A_\chi\subset U(\fg)$ is the centralizer of the subspace $ Q_\chi \subset U(\fg)$.
\end{prop}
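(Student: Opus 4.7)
The plan is to reduce the quantum statement about $\A_\chi \subset U(\fg)$ to the corresponding classical Poisson statement about $\ol{\A_\chi} \subset S(\fg)$, and then to prove the classical statement by combining the maximality result of \propref{pr:ShuvalovTarasov}(3) with an explicit analysis of the Hamiltonians $\ol{G_h}$.

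First I would carry out the reduction. Since $Q_\chi$ is spanned by the (symmetrized) quadratic elements $G_h$, we have $\gr Q_\chi = \ol{Q_\chi}$. Taking the associated graded of any element of $C_{U(\fg)}(Q_\chi)$ yields an element whose Poisson bracket with $\ol{Q_\chi}$ vanishes, so $\gr C_{U(\fg)}(Q_\chi) \subseteq C_{S(\fg)}(\ol{Q_\chi})$. Assuming the classical equality $C_{S(\fg)}(\ol{Q_\chi}) = \ol{\A_\chi}$, this combines with $\gr\A_\chi = \ol{\A_\chi}$ and the obvious inclusion $\A_\chi \subseteq C_{U(\fg)}(Q_\chi)$ to give the quantum statement.

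For the classical statement, set $Z = C_{S(\fg)}(\ol{Q_\chi})$. The inclusion $\ol{\A_\chi} \subseteq Z$ is immediate from Poisson commutativity of $\ol{\A_\chi}$ and $\ol{Q_\chi} \subset \ol{\A_\chi}$. By \propref{pr:ShuvalovTarasov}(3), for generic $\chi$ the algebra $\ol{\A_\chi}$ is a maximal Poisson commutative subalgebra of $S(\fg)$, so it suffices to prove that $Z$ is Poisson commutative for generic $\chi$. I would establish this symplectically: on a generic regular coadjoint orbit $\CO \subset \fg^*$, the algebra $\ol{\A_\chi}$ restricts to a maximal Poisson commutative subalgebra cutting out a Lagrangian foliation, and I would show that the Hamiltonian vector fields $X_{\ol{G_h}|_\CO}$ for $h \in \fh$, together with the vector fields coming from $\fh \subset S(\fg)$ and from the center $S(\fg)^\fg$, already span the tangent space to the leaves at a generic point. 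Once this rank statement holds, any $f \in Z$ is constant along the leaves of this Lagrangian foliation and therefore must lie in $\ol{\A_\chi}|_\CO$; gluing over orbits and using the isomorphism $\ol{\A_\chi} \cong \CO(e+\fb_-)$ of \propref{pr:ShuvalovTarasov}(2) to separate points gives $Z \subseteq \ol{\A_\chi}$.

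The main obstacle is precisely the Lagrangian spanning statement: showing that for generic $\chi$, the Hessian-type matrix encoding the $\fh$-family $\{X_{\ol{G_h}}\}_{h\in\fh}$ is non-degenerate on a generic orbit. This is where the genericity of $\chi$ is essential, and it reduces to a finite-dimensional linear-algebra check at a single well-chosen sample point $\chi_0 \in \fh^{reg}$ (after which genericity is automatic, since the non-degeneracy locus is Zariski open). This is the essential content of \cite{Ryb05}; once it is in hand, the remaining pieces --- the associated graded reduction, the maximality of $\ol{\A_\chi}$, and the closed-point separation on $e+\fb_-$ --- are routine.
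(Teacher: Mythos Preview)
The paper does not actually supply a proof of this proposition; it simply cites \cite{Ryb05}. So there is no ``paper's proof'' to compare against, and your task is to reconstruct the argument.

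Your reduction from the quantum statement to the classical one via the PBW filtration is correct and standard. The difficulty is entirely in the classical statement, as you recognise.

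However, your proposed symplectic argument has a genuine dimension obstruction. You claim that the Hamiltonian vector fields of the elements $\ol{G_h}$ (for $h\in\fh$), together with those of $\fh$ and of $S(\fg)^\fg$, span the tangent space to the Lagrangian leaf of $\ol{\A_\chi}$ at a generic point of a generic orbit. But the Casimirs contribute nothing (their Hamiltonian vector fields vanish), so you have at most $2r$ vector fields: $r$ from $\ol{Q_\chi}$ and $r$ from $\fh$. The Lagrangian leaf has dimension $\tfrac{1}{2}(\dim\fg - r)=|\Delta_+|$. Thus your spanning claim forces $|\Delta_+|\le 2r$, which already fails for $\fg=\mathfrak{sl}_5$ (where $|\Delta_+|=10>8=2r$) and for $\fg$ of type $G_2$ (where $|\Delta_+|=6>4=2r$). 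So the argument as written cannot work beyond small rank.

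There is also a smaller gap you glide over: elements of the Poisson centralizer of $\ol{Q_\chi}$ need not commute with $\fh$ a priori, since $\fh\not\subset\ol{Q_\chi}$. One can decompose into $\fh$-weight components because the $\ol{G_h}$ are $\fh$-invariant, but you then need a separate argument to kill the nonzero-weight pieces before invoking the $\fh$-vector fields at all.

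The upshot is that the quadratic space $\ol{Q_\chi}$ is genuinely too small for a direct Lagrangian spanning argument; the result in \cite{Ryb05} requires a finer mechanism by which the $r$ conditions $\{\ol{G_h},f\}=0$ propagate through the Poisson structure to force $f\in\ol{\A_\chi}$. Your outline does not supply this mechanism.
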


\subsection{Maximality of inhomogeneous Gaudin subalgebras}

\begin{prop}\label{pr:Maximal} For any semisimple $\chi$ and any $\uz=(z_1,\ldots,z_n)$ with $z_i\ne z_j$, the subalgebra $\A_\chi^0(\uz)$ is a maximal commutative subalgebra in $(U(\fg)^{\otimes n})^{\fz_\fg(\chi)}$. It is a free polynomial algebra in $$\frac{1}{2}(n \dim\fg+n\rk\fg - (\dim\fz_\fg(\chi)'+\rk\fz_\fg(\chi)'))$$ generators.
\end{prop}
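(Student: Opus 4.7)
The strategy is to reduce to the classical Poisson analogue by taking the associated graded, then use a degeneration argument together with \corref{co:ShuvalovTarasov} and induction on $\dim \fg$.

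First, I would pass to the associated graded. Since $\A_\chi^0(\uz)$ is commutative by construction, $\gr \A_\chi^0(\uz) \subset \ol{\A_\chi^0(\uz)} \subset S(\fg)^{\otimes n}$ is Poisson commutative. It suffices to show the classical statement: that $\ol{\A_\chi^0(\uz)}$ is a free polynomial subalgebra of $S(\fg^{\oplus n})^{\fz_\fg(\chi)}$ of the stated transcendence degree and is maximal Poisson commutative. The quantum statement then follows from the standard filtration lifting argument: a commutative subalgebra of a filtered algebra whose associated graded is a free polynomial maximal Poisson commutative subalgebra is itself a free polynomial maximal commutative subalgebra of the same size.

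Next, to analyze the classical algebra I would use a one-parameter degeneration. By \lemref{le:InvarianceA}, we have $\ol{\A_{s\chi}^0(\uz)} = \ol{\A_\chi^0(s\uz)}$, giving a family of subalgebras indexed by $s \in \Cx$. Taking the flat limit as $s\to\infty$ in the Grassmannian of subspaces of each filtered piece (which preserves transcendence degree by definition of a family), an explicit analysis of the generators $s_{l,k}^i(\uz, s\chi)$ from \secref{ss:Generators} shows that the limit is generated by two pieces: the $n$-fold tensor product $\ol{\A_\chi}^{\otimes n}$ of classical shift of argument algebras (one per tensor factor, arising from the leading-order behavior $H_i \sim s\chi^{(i)}$), together with the classical Gaudin algebra $\ol{\A_\fz(\uz)}$ for the Levi subalgebra $\fz = \fz_\fg(\chi)$ (arising from the $s$-independent ``off-diagonal'' residues such as $\Omega^{(ij)}|_\fz/(z_i-z_j)$ which persist in the limit).

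The transcendence degree can then be computed explicitly. Each copy of $\ol{\A_\chi}$ contributes $\frac{1}{2}(\dim\fg + \rk\fg - (\dim\fz' - \rk\fz'))$ generators by \corref{co:ShuvalovTarasov}, so $\ol{\A_\chi}^{\otimes n}$ has transcendence degree $\frac{n}{2}(\dim\fg+\rk\fg) - \frac{n}{2}(\dim\fz'-\rk\fz')$. The Gaudin algebra $\ol{\A_\fz(\uz)}$ for the reductive $\fz = \fz(\fz) \oplus \fz'$ has transcendence degree $n \rk\fz + \frac{n-1}{2}(\dim\fz' - \rk\fz')$, known by applying the $\chi = 0$ case of this proposition to $\fz'$ (treated inductively on $\dim\fg$, with the classical Gaudin case handled separately). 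The overlap between these two pieces consists of the $n$ diagonally-placed copies of $S(\fz)^\fz$, contained in each by \propref{pr:ShuvalovTarasov2}(1) and contributing $n\rk\fz$ generators. Summing yields the claimed transcendence degree. Maximality is then automatic from the universal upper bound $\frac{1}{2}(\dim + \text{corank})$ on Poisson commutative subalgebras applied to $\Spec S(\fg^{\oplus n})^{\fz_\fg(\chi)}$, which the computed dimension saturates.

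The main obstacle is rigorously identifying the flat limit in the second step: verifying that precisely the Gaudin algebra of the Levi $\fz$ (and nothing larger or smaller) emerges from the subleading generators as $s\to\infty$. This parallels the analysis of limits of shift of argument algebras carried out by Shuvalov \cite{Sh} and Aguirre-Felder-Veselov \cite{AFV2}, and requires a careful tracking of the $s^{-1}$-expansion of the universal Gaudin generators $S_l(w; \uz; s\chi)$ and their principal parts at each pole.
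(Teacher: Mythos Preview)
There is a genuine gap in your argument: having maximal transcendence degree does \emph{not} imply being a maximal Poisson commutative subalgebra. A Poisson commutative subalgebra $A \subset S(\fg^{\oplus n})^{\fz_\fg(\chi)}$ of maximal transcendence degree can still be properly contained in its algebraic closure inside the ambient algebra, and that algebraic closure will again be Poisson commutative. So your final sentence ``Maximality is then automatic from the universal upper bound'' is precisely the step that fails. The paper makes this issue explicit: after observing that $\ol{\A_\chi^0(\uz)}$ has the right transcendence degree, it says ``hence if it is not maximal then there is an algebraic extension \ldots\ So it is sufficient to show that $\ol{\A_\chi^0(\uz)}$ is algebraically closed in $(S(\fg)^{\otimes n})^{\fz_\fg(\chi)}$,'' and the bulk of the proof is devoted to establishing this algebraic closure.

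The paper's route to algebraic closure is also quite different from your degeneration. Rather than sending $s\to\infty$, it observes directly (no limit needed) that $\ol{\A_\chi^0(\uz)}$ contains both the full homogeneous Gaudin algebra $\ol{\A(\uz)}$ and the diagonal $\Delta(\ol{\A_\chi^0})$; both contain $\Delta(S(\fg)^\fg)$, so Knop's Harish--Chandra theorem (\thmref{th:Knop}) confines anything commuting with $\ol{\A_\chi^0(\uz)}$ to $(S(\fg)^{\otimes n})^\fg \otimes_{\Delta(S(\fg)^\fg)} \Delta(S(\fg))$. Maximality of $\Delta(\ol{\A_\chi^0})$ in the second factor is \corref{co:ShuvalovTarasov}, reducing the problem to showing $\ol{\A(\uz)}$ is maximal in $(S(\fg)^{\otimes n})^\fg$. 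For this the paper uses a further filtration (by degree in the last tensor factor), passes to $\prod_{i=1}^{n-1}\ol{\A(1,0)}^{(i,n)}$, and then invokes the Kostant slice together with Tarasov's algebraic closure result for $\ol{\A_\chi}$. Your proposal contains no analogue of the Knop step or the Kostant slice/Tarasov step, and without them the maximality claim is unsupported. (As a secondary point, your identification of the $s\to\infty$ limit also looks off: the subleading residue of $H_i$ is the full Gaudin Hamiltonian $\sum_{j\ne i}\Omega^{(ij)}/(z_i-z_j)$, not its restriction to the Levi, so one should expect $\ol{\A(\uz)}$ rather than $\ol{\A_{\fz}(\uz)}$ to appear---which is exactly the piece the paper uses.)
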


\begin{proof} We just adapt Tarasov's argument for $\A_\chi$ \cite{Tar} to our situation. First, it is sufficient to prove that the associated graded subalgebra $\ol{\A_\chi^0(\uz)}$ is a maximal Poisson commutative subalgebra in $(S(\fg)^{\otimes n})^{\fz_\fg(\chi)}$. This subalgebra has maximal possible transcendence degree for Poisson commutative subalgebras; hence if it is not maximal then there is an algebraic extension of $\ol{\A_\chi^0(\uz)}$ in $S(\fg)^{\otimes n}$. So it is sufficient to show that $\ol{\A_\chi^0(\uz)}$ is algebraically closed in $(S(\fg)^{\otimes n})^{\fz_\fg(\chi)}$.

\begin{lem} The associated graded subalgebra $\ol{\A_\chi^0(\uz)}$ contains the product of $\ol{\A(\uz)}$ and $\Delta(\ol{\A_\chi^0})$.
\end{lem}

\begin{proof} Indeed the leading terms of the generators of $\A_\chi^0(\uz)$ corresponding to the expansion at $t=z_i$ generate $\ol{\A(\uz)}$ and the leading terms of the generators of $\A_\chi^0(\uz)$ corresponding to the expansion at $t=\infty$ generate $\Delta(\ol{\A_\chi^0})$.
\end{proof}

The subalgebras $\ol{\A(\uz)}$ and $\Delta(\ol{\A_\chi^0})$ both contain $\Delta(S(\fg)^\fg)$. According to Theorem~\ref{th:Knop}, the Poisson centralizer of $\Delta(S(\fg)^\fg)$ is the tensor product of free $\Delta(S(\fg)^\fg)$-modules: $(S(\fg)^{\otimes n})^\fg\otimes_{\Delta(S(\fg)^\fg)}\Delta(S(\fg))$. Hence everything commuting with $\ol{\A_\chi^0(\uz)}$ lies in $(S(\fg)^{\otimes n})^\fg\otimes_{\Delta(S(\fg)^\fg)}\Delta(S(\fg))$.

On the other hand, $\Delta(\ol{\A_\chi^0})$ is known to be maximal Poisson commutative in $\Delta(S(\fg)^{\fz_\fg(\chi)})$ according to Proposition~\ref{pr:ShuvalovTarasov}. So it is sufficient to show that $\ol{\A(\uz)}$ is maximal Poisson commutative (equivalently, of the right transcendence degree and algebraically closed) in $(S(\fg)^{\otimes n})^\fg$. The rank of the Poisson structure on $(\fg^*)^{\oplus n} \sslash G$ is $(n-1)\dim\fg+(n+1)\rk\fg$, hence the maximal possible transcendence degree for a Poisson commutative subalgebra in $(S(\fg)^{\otimes n})^\fg$ is $\frac{1}{2}((n-1)\dim\fg+(n+1)\rk\fg))$.

Consider the filtration on $(S(\fg)^{\otimes n})^\fg$ determined by the grading on the last tensor factor. It is sufficient to show that the associated graded of $\ol{\A(\uz)}$ has the desired transcendence degree and is algebraically closed.

\begin{lem} The associated graded of $\ol{\A(\uz)}$ with respect to this filtration contains the product of $\ol{\A(1,0)}^{(i,n)}$ for $i=1,\ldots,n-1$.
\end{lem}

\begin{proof} Indeed, the leading terms of the coefficients of $\ol{S_l}(w;\uz)$ at $z_i$ are the generators of $\ol{\A(1,0)}^{(i,n)}$.
\end{proof}

According to Lemma above, it is sufficient to show that $\prod\limits_{i=1}^{n-1}\ol{\A(1,0)}^{(i,n)}$ has the maximal possible transcendence degree for Poisson commutative subalgebra in $(S(\fg)^{\otimes n})^\fg$ and is algebraically closed in $S(\fg)^{\otimes n}$. Consider the Kostant slice $\fg_{can}:=e+\fz_\fg(f)\subset\fg=\fg^*$ embedded into the last summand of $ \fg^{\oplus n}$.  We obtain a restriction map
$$ S(\fg)^{\otimes n} = \CO(\fg^{\oplus n}) \rightarrow \CO(\fg^{\oplus n-1} \times \fg_{can}) = S(\fg)^{\otimes (n-1)} \otimes \CO(\fg_{can}).
$$

\begin{lem} This restriction isomorphically takes the subalgebra $\prod\limits_{i=1}^{n-1}\ol{\A(1,0)}^{(i,n)}\subset S(\fg)^{\otimes n}$ to $B\otimes \CO(\fg_{can})\subset S(\fg)^{\otimes (n-1)}\otimes\CO(\fg_{can})$, where $B$ is a polynomial algebra in $\frac{n-1}{2}(\dim\fg+\rk\fg)$ generators. The evaluation of $\CO(\fg_{can})$ at any $\chi\in\fg_{can}$ isomorphically takes $B$ to $\ol{\A_\chi}^{\otimes (n-1)}$.
\end{lem}

\begin{proof}
The subalgebra $\prod\limits_{i=1}^{n-1}\ol{\A(1,0)}^{(i,n)}\subset S(\fg)^{\otimes n}$ is generated by the central elements of the last tensor factor and another $\frac{n-1}{2}(\dim\fg+\rk\fg)$ generators. The restrictions of the central elements generate $\CO(\fg_{can})$. On the other hand, since every $\chi\in\fg_{can}$ is regular, the restriction of the last factor to any $\chi\in\fg_{can}$ takes $\prod\limits_{i=1}^{n-1}\ol{\A(1,0)}^{(i,n)}$ to $\ol{\A_\chi}^{\otimes (n-1)}$, thus taking the rest of the generators to algebraically independent elements.
\end{proof}

Note that the transcendence degree of $B\otimes \CO(\fg_{can})$ is maximal, i.e. $\frac{n-1}{2}\dim\fg+\frac{n+1}{2}\rk\fg$. So it is sufficient to prove that $B\otimes \CO(\fg_{can})$ is algebraically closed in $S(\fg)^{\otimes (n-1)}\otimes\CO(\fg_{can})$. According to Tarasov \cite{Tar}, $\ol{\A_\chi}^{\otimes (n-1)}$ is algebraically closed for regular semisimple $\chi$, so $B\otimes \CO(\fg_{can})$ is algebraically closed in $S(\fg)^{\otimes (n-1)}\otimes\CO(\fg_{can})$ after localization by $\CO(\fg_{can})$. Hence for any element $f$ in the algebraic closure of $B\otimes \CO(\fg_{can})$ in $S(\fg)^{\otimes (n-1)}\otimes\CO(\fg_{can})$ there exists $g\in \CO(\fg_{can})$ such that $fg\in B\otimes \CO(\fg_{can})$. But this means that either $f$ is already in $B\otimes \CO(\fg_{can})$ (and we are done) or we can assume $g$ irreducible and $fg$ is not divisible by $g$ in $B\otimes \CO(\fg_{can})$. In the latter case there exists $\chi\in\fg_{can}$ such that $g(\chi)=0$ and $fg(\chi)\ne0$, hence a contradiction.
\end{proof}

\begin{cor}\label{co:Center} $\A_\chi^0(\uz)$ contains the center of $\Delta(U(\fz_\fg(\chi)))$ for all $\uz$.
\end{cor}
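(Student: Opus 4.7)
The plan is to deduce the corollary directly from the maximality statement in \propref{pr:Maximal}. First I would observe that $Z\Delta(U(\fz_\fg(\chi)))$ is automatically $\fz_\fg(\chi)$-invariant (under the diagonal adjoint action): any element of the center of $U(\fz_\fg(\chi))$ commutes with $\fz_\fg(\chi)$ inside $U(\fz_\fg(\chi))$, hence the diagonal image commutes with $\Delta(\fz_\fg(\chi))$ in $U(\fg)^{\otimes n}$. Thus
$$Z\Delta(U(\fz_\fg(\chi)))\subset (U(\fg)^{\otimes n})^{\fz_\fg(\chi)}.$$

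Next I would verify that $\A_\chi^0(\uz)$ commutes with $Z\Delta(U(\fz_\fg(\chi)))$. By construction, $\A_\chi^0(\uz)\subset (U(\fg)^{\otimes n})^{\fz_\fg(\chi)}$, so every element of $\A_\chi^0(\uz)$ commutes with $\Delta(x)$ for every $x\in\fz_\fg(\chi)$. Since $\Delta(\fz_\fg(\chi))$ generates $\Delta(U(\fz_\fg(\chi)))$ as an associative algebra, $\A_\chi^0(\uz)$ commutes with all of $\Delta(U(\fz_\fg(\chi)))$, and in particular with its center.

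Putting these two observations together, the subalgebra generated by $\A_\chi^0(\uz)$ and $Z\Delta(U(\fz_\fg(\chi)))$ is a commutative subalgebra of $(U(\fg)^{\otimes n})^{\fz_\fg(\chi)}$ which contains $\A_\chi^0(\uz)$. By \propref{pr:Maximal}, $\A_\chi^0(\uz)$ is a maximal commutative subalgebra of $(U(\fg)^{\otimes n})^{\fz_\fg(\chi)}$ (for semisimple $\chi$), so this larger subalgebra must coincide with $\A_\chi^0(\uz)$, yielding the desired inclusion $Z\Delta(U(\fz_\fg(\chi)))\subset \A_\chi^0(\uz)$.

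There isn't a significant obstacle here; the only subtle point is ensuring the hypotheses of \propref{pr:Maximal} apply (semisimplicity of $\chi$ and distinctness of the $z_i$'s — the latter being implicit in the definition of $\A_\chi^0(\uz)$). If one wanted a version for arbitrary $\chi$, an approximation argument via the flat family $\A_\chi^0(\uz)$ could be invoked, but for the stated corollary the above maximality argument suffices.
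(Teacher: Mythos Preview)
Your proof is correct and follows essentially the same route as the paper's: show the center lies in $(U(\fg)^{\otimes n})^{\fz_\fg(\chi)}$, observe it commutes with $\A_\chi^0(\uz)$ (since the latter is $\fz_\fg(\chi)$-invariant), and then invoke the maximality from \propref{pr:Maximal}. The paper's argument is identical in substance, just slightly more terse in combining your first two steps.
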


\begin{proof} Since $\A_\chi^0(\uz)$ is  $\fz_\fg(\chi)$-invariant, any element of the center of $\Delta(U(\fz_\fg(\chi)))$ commutes with any element of $\A_\chi^0(\uz)$. On the other hand the center of $\Delta(U(\fz_\fg(\chi)))$ is contained in $(U(\fg)^{\otimes n})^{\fz_\fg(\chi)}$. Hence from Proposition~\ref{pr:Maximal} we have the assertion.
\end{proof}

\section{Compactification of some families}\label{sect-MF}

\subsection{Holonomy Lie algebra}\label{sect-holonomy}
Our goal now is to describe the closure of the locus of shift of argument algebras inside $ U(\fg) $ in a similar way as we described the closure of the locus of Gaudin algebras in $ (U(\fg)^{\otimes n})^\fg $.  We begin by reviewing some results of de Concini-Procesi \cite{DCP2} and Aguirre-Felder-Veselov \cite{AFV2}.

\begin{defn}
The \emph{holonomy Lie algebra} $\ft_{\Delta}$ associated to the root system $\Delta $ is generated by $\{t_\alpha \ |\ \alpha\in\Delta_+\}$ subject to the following relations: for each subspace $S\subset \fh^*$ of dimension $2$ and each $ \alpha \in \Delta_+ \cap S $, we have
\begin{equation}\sum_{\beta \in\Delta_+ \cap S} [t_\alpha,t_\beta]=0.
\end{equation}
\end{defn}

The Lie algebra $ \ft_\Delta $ is graded and we write $ \ft_\Delta^{(1)} $ for the first graded piece (which has a basis given by the set of $ t_\alpha $).

Fix $ \chi \in \fh^{reg} $.  For $ h \in \fh $ let
$$
G_h:= \sum_{\alpha \in \Delta_+} \frac{\langle h,\alpha\rangle}{\langle \chi,\alpha\rangle} t_\alpha
$$

Let $ Q_\chi $ be the span of the elements $ G_h $ for $ h \in \fh $.  It is an abelian Lie subalgebra of $ \ft_\Delta $.

We have a morphism $ Q : \fh^{reg} \to \Gr(r,\ft^{(1)}_\Delta)$ given by $ \chi \mapsto Q_\chi $.

The following result was proven in \cite{AFV2} and can also be extracted from \cite{DCP2}.

\begin{thm}\label{dc-p} The morphism $ Q $ extends to an inclusion $ \CM_\Delta \rightarrow \Gr(r, \ft_\Delta^{(1)}) $.
\end{thm}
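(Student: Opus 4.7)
The plan is to extend $Q$ chart by chart on $\CM_\Delta$ using the coordinates from \secref{Nested}, and then glue. Fix a maximal nested set $\CS \subset \CG$ with adapted basis $b = \{\alpha_P\}_{P \in \CS}$, and work on $U_\CS^b \cong \C^{\CS \setminus \{\fh^*\}}$ with coordinates $(u_P)$; the origin $\uu = 0$ corresponds to the maximally degenerate point of $\CS$. Setting $c_P(\uu) := \prod_{Q \in \CS \setminus \{\fh^*\},\, P \subseteq Q} u_Q$ (so $c_{\fh^*} = 1$), the chart maps under $\pi$ to $[\chi(\uu)]$ with $\chi(\uu) = \sum_{P \in \CS} c_P(\uu)\,\alpha_P^*$. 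For each $\alpha \in \Delta_+$, let $P(\alpha) \in \CS$ be the minimal element containing $\alpha$; this is well-defined since the elements of $\CS$ containing $\alpha$ form a chain. Expanding $\alpha = \sum_Q \lambda_Q(\alpha)\,\alpha_Q$ in the basis $b$, we have $\lambda_Q(\alpha) = 0$ unless $Q \subseteq P(\alpha)$, and $\lambda_{P(\alpha)}(\alpha) \ne 0$: otherwise $\alpha$ would lie in $\sum_{Q \in \CS,\, Q \subsetneq P(\alpha)} Q$, giving a nontrivial decomposition of a proper subspace of the indecomposable $P(\alpha)$. A direct computation then yields $\langle \chi(\uu), \alpha\rangle = c_{P(\alpha)}(\uu)\,f_\alpha(\uu)$ for a polynomial $f_\alpha(\uu)$ with $f_\alpha(0) = \lambda_{P(\alpha)}(\alpha) \ne 0$.

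Next, define the rescaled generators $\tilde G_P(\uu) := c_P(\uu)\,G_{\alpha_P^*}(\uu)$ for each $P \in \CS$. The coefficient of $t_\alpha$ in $\tilde G_P(\uu)$ is $c_P(\uu)\,\lambda_P(\alpha)/(c_{P(\alpha)}(\uu)\,f_\alpha(\uu))$; this is zero unless $P \subseteq P(\alpha)$, in which case $c_P/c_{P(\alpha)}$ is a monomial in $\uu$ and the coefficient is regular wherever $f_\alpha \ne 0$. Hence each $\tilde G_P$ extends to a regular morphism from an open neighborhood of $\uu = 0$ in $U_\CS^b$ into $\ft^{(1)}_\Delta$. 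At the origin, $\tilde G_P(0) = \sum_{\alpha \,:\, P(\alpha) = P} t_\alpha$, which is nonzero because the indecomposable $P$ must contain a root not lying in any strictly smaller element of $\CS$. The $r$ limiting vectors $\{\tilde G_P(0) : P \in \CS\}$ have pairwise disjoint supports among the $t_\alpha$, so are linearly independent, and continue to span an $r$-dimensional subspace on a neighborhood of $\uu = 0$. On the open locus of this neighborhood, their span equals $Q_{\chi(\uu)}$, because $\{\alpha_P^*\}_{P \in \CS}$ is a basis of $\fh$ and each $\tilde G_P$ is a nonzero multiple of $G_{\alpha_P^*}$. Thus $\uu \mapsto \spn\{\tilde G_P(\uu)\}$ is a regular morphism into $\Gr(r, \ft^{(1)}_\Delta)$ on a neighborhood of the origin extending $Q$.

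As $\CS$ and $b$ vary, these neighborhoods cover $\CM_\Delta$, since every point lies in a neighborhood of the maximally degenerate point of some nested set. The local extensions agree on overlaps because they all restrict to $Q$ on the dense open locus $\fh^{reg}/\Cx$, so they glue to a morphism $\tilde Q : \CM_\Delta \to \Gr(r, \ft^{(1)}_\Delta)$. The main obstacle will be to verify that $\tilde Q$ is a closed embedding. Injectivity on the open locus is elementary: $Q_\chi = Q_{\chi'}$ forces $\chi \in \Cx\chi'$ by comparing coefficients of the $t_\alpha$ in the $G_h$ as $h$ varies, which (in the irreducible case) pins down a simultaneous eigenvector for every simple root. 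For a boundary point, the nested set $\CS$ is recovered from the combinatorial partition $\Delta_+ = \bigsqcup_{P \in \CS} \{\alpha : P(\alpha) = P\}$ visible in the supports of the limiting basis $\{\tilde G_P(0)\}_{P \in \CS}$, and the chart coordinates $(u_P)$ are then recovered from the subleading terms of the rescaled generators. This inductive reconstruction of $\CS$ from the partition, together with matching of boundary strata across distinct charts, is the essential combinatorial input; it is implicit in \cite{DCP2} and worked out explicitly in \cite{AFV2}.
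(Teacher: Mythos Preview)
Your approach is essentially the same as the paper's: both work chart by chart on $\CM_\Delta$ and produce an explicit frame for $Q_\chi$ that extends regularly, exactly the construction the paper cites from \cite{DCP2}. You have unpacked that construction in detail (the rescaling by $c_P(\uu)$, the identification of $P(\alpha)$, the disjoint-support argument for independence at the origin), whereas the paper's proof is a one-line citation. You also sketch the injectivity, which the paper does not address beyond its reference to \cite{AFV2}.

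One small point worth tightening: your rescaled frame $\{\tilde G_P\}$ is regular only on the open set $\{f_\alpha\ne 0\ \forall\alpha\}$, which is strictly smaller than $U_\CS^b$ in general (for instance, in type $A_2$ with $\CS=\{\spn(\alpha_1),\fh^*\}$ and $b=\{\alpha_1,\alpha_2\}$, the function $f_{\alpha_1+\alpha_2}=1+u$ vanishes at $u=-1$, a point which \emph{does} lie in that chart). The paper, by contrast, asserts a basis on the entire chart, which \cite{DCP2} indeed provides. Your remedy---``these neighborhoods cover $\CM_\Delta$''---is correct, but the justification you give (``every point lies in a neighborhood of the maximally degenerate point of some nested set'') is not quite an argument: a generic point of a codimension-one stratum is far from every $0$-stratum. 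The honest statement is that for every $y\in\CM_\Delta$ one can choose a maximal nested set $\CS$ refining the nested set of the stratum through $y$, and an adapted basis $b$, so that in $U_\CS^b$ the point $y$ has all $f_\alpha\ne 0$; this is part of the combinatorics in \cite{DCP2}. Since you already defer the embedding statement to \cite{DCP2,AFV2}, it is consistent to do the same here.
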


\begin{proof} The variety $\CM_\Delta$ is covered by the charts $U_{\CS}^b$. So to prove the assertion we need, for all $\CS,b$, a basis of the subspace $Q_\chi$ well-defined for all $ \chi \in U_{\CS}^b$. This was constructed by De Concini and Procesi in \cite{DCP2}, see Theorem on page~12.
\end{proof}

\subsection{Commuting quadratic elements.}
The Lie algebra $ \ft_\Delta $ is relevant for our situation because of the following observation.

\begin{prop} There is a homomorphism $\ft_{\Delta}\to U(\fg)$ which maps $t_{\alpha}$ to $e_\alpha f_\alpha$.
\end{prop}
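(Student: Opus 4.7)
The plan is to verify that the substitution $t_\alpha \mapsto e_\alpha f_\alpha$ respects the defining relations of $\ft_\Delta$, i.e.\ that $\sum_{\beta \in \Delta_+ \cap S} [e_\alpha f_\alpha, e_\beta f_\beta] = 0$ for every two-dimensional $S \subset \fh^*$ spanned by roots and every $\alpha \in \Delta_+ \cap S$.

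I would begin with the elementary observation that $e_\alpha f_\alpha$ has weight zero in $U(\fg)$, and hence commutes with the entire Cartan subalgebra: for $h \in \fh$,
$$[h,\, e_\alpha f_\alpha] = \alpha(h)\, e_\alpha f_\alpha - \alpha(h)\, e_\alpha f_\alpha = 0.$$
Thus $e_\alpha f_\alpha$ commutes with all of $U(\fh)$, and moreover the image is insensitive to rescalings $e_\alpha \mapsto c\, e_\alpha,\ f_\alpha \mapsto c^{-1} f_\alpha$, so I am free to normalize root vectors by the Killing form without ambiguity.

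Next I would reduce the relation to a rank-$2$ statement. If $|\Delta_+ \cap S| = 1$ the relation is the trivial $[t_\alpha, t_\alpha] = 0$. Otherwise $\Delta_S := \Delta \cap S$ is a closed rank-$2$ subroot system (of type $A_1 \times A_1$, $A_2$, $B_2$, or $G_2$), and the root vectors $\{e_\beta, f_\beta : \beta \in \Delta_S\}$ generate a reductive subalgebra $\fg_S \subset \fg$ with Cartan $\fh_S = \spn(h_\beta : \beta \in \Delta_S)$. The quadratic Casimir $C_S \in U(\fg_S)$ attached to the restricted Killing form is central in $U(\fg_S)$, and with $(e_\beta, f_\beta) = 1$ takes the form
$$C_S = \sum_i h_i h^i + \sum_{\beta \in \Delta_+ \cap S}\bigl(e_\beta f_\beta + f_\beta e_\beta\bigr),$$
where $\{h_i\}, \{h^i\}$ are dual bases of $\fh_S$.

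The conclusion is then immediate. Using $e_\beta f_\beta - f_\beta e_\beta = h_\beta$, I would rewrite
$$2 \sum_{\beta \in \Delta_+ \cap S} e_\beta f_\beta \;=\; C_S - \sum_i h_i h^i + \sum_{\beta \in \Delta_+ \cap S} h_\beta.$$
For $\alpha \in \Delta_+ \cap S$, the element $e_\alpha f_\alpha$ lies in $U(\fg_S)$ and hence commutes with the central $C_S$, while the remaining terms on the right are in $U(\fh)$ and so commute with $e_\alpha f_\alpha$ by the weight-zero observation. Therefore $\bigl[e_\alpha f_\alpha,\ \sum_{\beta \in \Delta_+ \cap S} e_\beta f_\beta\bigr] = 0$, which is precisely the needed relation. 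I do not expect any genuine obstacle; the main pitfall is bookkeeping around root-vector normalizations, which is disposed of once via the scale-invariance of $e_\beta f_\beta$.
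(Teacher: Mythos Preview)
Your proof is correct and takes essentially the same approach as the paper's: both observe that $\sum_{\beta \in \Delta_+ \cap S} e_\beta f_\beta$ agrees, modulo a quadratic element of $U(\fh)$, with the Casimir of the Levi subalgebra generated by $\Delta \cap S$, which is central there and hence commutes with each $e_\alpha f_\alpha \in U(\fg_S)$. You have simply written out the same argument in greater detail (making explicit the weight-zero observation and the identity $2e_\beta f_\beta = (e_\beta f_\beta + f_\beta e_\beta) + h_\beta$).
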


\begin{proof}
Note that for any $2$-dimensional subspace $S\subset\fh^*$ the element $\sum\limits_{\alpha\in S} e_\alpha f_\alpha$ is (up to a quadratic term from $U(\fh)$) the Casimir element of the Levi subalgebra generated by the root subsystem $\Delta\cap S$. Hence it commutes with any $e_\alpha f_\alpha$ with $\alpha\in S$ and we are done.
\end{proof}

\subsection{Maximal commutative subalgebras.}

As described above, for each $ \chi \in \fh^{reg} $, we have a maximal commutative subalgebra $ \CA_\chi \subset U(\fg) $ which contains $ Q_\chi $.  We write $ \overline{\CA_\chi} \subset S(\fg) $ for the associated graded of $ \CA_\chi $.  It is a Poisson commutative subalgebra of $ S(\fg) $.

\begin{thm} \label{th:dCPFamily} The map $ \chi \mapsto \ol{\CA_\chi} $ extends to a family of subalgebras of $ S(\fg) $ parametrized by the de Concini-Procesi space $ \CM_\Delta $.
\end{thm}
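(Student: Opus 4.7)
The plan is to construct the extended family chart-by-chart using the atlas $\{U_\CS^b\}$ of $\CM_\Delta$ from Section~\ref{Nested}, using Shuvalov's iterated limits (Proposition~\ref{pr:ShuvalovTarasov2}) as the boundary fibres and then gluing across overlaps via density of the regular open.

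Fix a maximal nested set $\CS$ with adapted basis $b$, and use the coordinates $(u_P)_{P\in\CS\setminus\fh^*}$ on $U_\CS^b\cong\C^{r-1}$ from Lemma~\ref{chart}. A point with coordinates $u$ maps to $[\chi(u)]\in\PP(\fh)$, where $\chi(u)=\sum_{P\in\CS}\bigl(\prod_{Q\supseteq P}u_Q\bigr)\alpha_P^*$ with $u_{\fh^*}:=1$. On the dense open locus where $\chi(u)$ is regular, we take $\ol{\CA_{\chi(u)}}$. To extend across the boundary, I would renormalize the generators $\partial_{\chi(u)}^k\Phi_l$ of $\ol{\CA_{\chi(u)}}$ by explicit monomials in the $u_P$, chosen according to the iterated expansion of $\chi(u)$ in the coordinates $u_P$: as a subset $\{u_{P_{j_1}}=\cdots=u_{P_{j_s}}=0\}$ of coordinates vanishes, one sees $\chi(u)$ degenerate to a chain of Cartan elements nested inside successive Levi subalgebras, with each $\alpha_P^*$ playing the role of the next ``shift'' in a derived centralizer. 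This is exactly the setup of Shuvalov's iterated construction, and the nested set structure of $\CS$ (in particular its tree structure via \lemref{NestedToBasis}) guarantees that the genericity hypotheses in Proposition~\ref{pr:ShuvalovTarasov2} are satisfied at each stage, so the limit algebra on every boundary stratum is a well-defined maximal Poisson commutative subalgebra. The rescaled elements form an algebraic family of subspaces of each $F^N S(\fg)$, with fibrewise dimension independent of $u\in U_\CS^b$ by \corref{co:ShuvalovTarasov}, so we obtain a family in the sense of \defnref{Def:Family}.

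To glue these local families, observe that on any overlap $U_\CS^b\cap U_{\CS'}^{b'}$ the two constructions agree on the dense regular open $\fh^{reg}/\Cx$, where each equals $\ol{\CA_\chi}$. Since each is an algebraic morphism from the chart into the Grassmannian $\Gr(d_N, F^N S(\fg))$ with the same fibre dimension, equality on a dense subset forces equality throughout the overlap. The local families thus assemble into a single family over all of $\CM_\Delta$.

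The main obstacle I anticipate is the explicit renormalization and limit-identification step: one must verify that rescaling $\partial_{\chi(u)}^k\Phi_l$ by the correct monomial in the $u_P$ produces a nonzero algebraic limit across each boundary stratum, and that the collection of these limits generates exactly the iterated Shuvalov-Tarasov algebra described in \propref{pr:ShuvalovTarasov2}(2). The two-term case is essentially \cite{Sh}, and the analogous extension for the quadratic subspace $Q_\chi$ across the full moduli space is \cite{AFV2} (cf.~\thmref{dc-p}); the role of the proof is to organize these analyses into an algebraic family over each chart $U_\CS^b$ and to invoke the dimension count from \corref{co:ShuvalovTarasov} to verify the constancy required by \defnref{Def:Family}.
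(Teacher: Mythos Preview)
Your chart-by-chart strategy is reasonable and could in principle be made to work, but the paper takes a much cleaner indirect route that avoids precisely the ``main obstacle'' you flag.

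The paper argues as follows. Let $\CM_{Poisson}$ denote the closure of the family $\chi\mapsto\ol{\CA_\chi}$ in the sense of \secref{sect-MF} (so an inverse limit of closures of images in Grassmannians). Taking the quadratic component $\ol{Q_\chi}$ of each subalgebra gives a morphism $\CM_{Poisson}\to\CM_\Delta$, because by \thmref{dc-p} the quadratic part already embeds into $\Gr(r,\ft_\Delta^{(1)})$ with image $\CM_\Delta$. Shuvalov's classification of all limits of $\ol{\CA_\chi}$ (this is the full content of \cite{Sh}, not just the two-step case of \propref{pr:ShuvalovTarasov2}) shows that distinct limits have distinct quadratic parts, and that every boundary point of $\CM_\Delta$ arises; hence the map is a bijection on $\C$-points. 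Finally, since $\CM_\Delta$ is smooth, a bijective morphism of varieties onto it is an isomorphism.

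Compared with your approach, the paper never has to renormalize generators, never has to identify the limits chart-by-chart, and never has to glue. All of that bookkeeping is absorbed into the single citation of Shuvalov for bijectivity plus the already-proved \thmref{dc-p} for the quadratic part. Your acknowledged obstacle (producing explicit rescaled generators whose limits span exactly the iterated algebra over every stratum of every chart) is real: note that on a boundary stratum the limit algebra is generated partly by derivatives of invariants of the \emph{centralizer} Lie algebra, not of $\fg$, so simply rescaling $\partial_{\chi(u)}^k\Phi_l$ by monomials in the $u_P$ does not obviously yield a spanning set, and one would need to extract the centralizer generators as specific polynomial combinations. This is doable (it is essentially what the paper carries out in the quantum case in \lemref{le:AchiCodim1} and the surrounding argument, but only in codimension one and with additional tools), yet it is exactly the work the paper's proof of \thmref{th:dCPFamily} sidesteps.
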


\begin{proof}
Let $\CM_{Poisson}$ be the closure in question. There is an obvious map $\CM_{Poisson}\to\CM_{\Delta}$ which takes the quadratic component of the corresponding subalgebra. Due to the results of Shuvalov \cite{Sh}, this map is bijective on $\BC$-points. Since $\CM_{\Delta}$ is smooth, this is an isomorphism.
\end{proof}

\begin{thm} The closure of the space of commutative subalgebras $\A_\chi\subset U(\fg)$ is isomorphic to $\CM_{\Delta}$.
\end{thm}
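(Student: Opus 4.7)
The plan is to mirror the proof of the classical version, Theorem \ref{th:dCPFamily}, using the associated graded as a bridge between the quantum and classical closures. Let $\CM_{quant}$ denote the closure in question. Taking the associated graded of each subalgebra defines a natural morphism of pro-schemes
\[
\gr \colon \CM_{quant} \longrightarrow \CM_{Poisson},
\]
and by Theorem \ref{th:dCPFamily} the target is canonically identified with $\CM_\Delta$. On the dense open locus $\fh^{reg}/\Cx$, this map is the identity (sending $\A_\chi$ to $\ol{\A_\chi}$), and in particular it is dominant. Since $\CM_\Delta$ is smooth (being the de Concini--Procesi wonderful compactification), to conclude that $\gr$ is an isomorphism it suffices to verify that it is bijective on $\BC$-points.

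Surjectivity follows because the image is closed and contains the dense subset $\fh^{reg}/\Cx$. For injectivity, I would exploit Proposition \ref{pr:Q-chi}: for generic $\chi \in \fh^{reg}$, the subalgebra $\A_\chi$ is precisely the centralizer of its quadratic subspace $Q_\chi \subset U(\fg)$. The assignment $\chi \mapsto Q_\chi$ extends to a morphism $\CM_\Delta \to \Gr(r, U(\fg))$ by composing the inclusion $\CM_\Delta \hookrightarrow \Gr(r, \ft_\Delta^{(1)})$ from Theorem \ref{dc-p} with the homomorphism $\ft_\Delta \to U(\fg)$, $t_\alpha \mapsto e_\alpha f_\alpha$. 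Thus for every $x \in \CM_\Delta$ we have a distinguished subspace $Q_x \subset U(\fg)$.

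Given any $\tilde x \in \CM_{quant}$ lying over $x \in \CM_\Delta$, commuting with $Q_x$ is a closed condition on subalgebras, satisfied generically, so $\A_{\tilde x} \subseteq Z_{U(\fg)}(Q_x)$. Combining this containment with the fact that the filtered dimensions of $\A_{\tilde x}$ are constant along the family (equal to those of $\A_\chi$ for generic $\chi$), I would argue that $\A_{\tilde x}$ is uniquely determined as the unique maximal commutative subalgebra of $Z_{U(\fg)}(Q_x)$ of the correct filtered dimension arising as a limit from the generic locus. This forces $\gr^{-1}(x)$ to be a single point and completes the bijectivity argument.

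The main obstacle is controlling the centralizer $Z_{U(\fg)}(Q_x)$ at boundary points of $\CM_\Delta$, where it could a priori be strictly larger than at generic $\chi$, so that the candidate subalgebra $\A_{\tilde x}$ is not immediately forced to equal the full centralizer. To handle this I would reduce to the classical side: by the inclusion $\gr Z_{U(\fg)}(Q_x) \subseteq Z_{S(\fg)}(\ol{Q_x})$ and the maximality results of Shuvalov and Tarasov (Propositions \ref{pr:ShuvalovTarasov}, \ref{pr:ShuvalovTarasov2}, and Corollary \ref{co:ShuvalovTarasov}), the classical limit $\ol{\A_x}$ is already a maximal Poisson commutative subalgebra of the appropriate invariant subalgebra, with explicitly known transcendence degree. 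Matching these classical dimensions against the filtered dimensions of $\A_{\tilde x}$ should pin down the quantum subalgebra uniquely at each boundary point of $\CM_\Delta$.
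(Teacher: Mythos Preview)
Your strategy has a genuine gap at the injectivity step. Knowing that $\gr \A_{\tilde x} = \ol{\A_x}$ does not determine the filtered subalgebra $\A_{\tilde x}$: two distinct filtered subalgebras of $U(\fg)$ can perfectly well have the same associated graded. Your proposed remedy via Shuvalov--Tarasov only constrains the \emph{classical} side: it tells you $\ol{\A_x}$ is maximal Poisson-commutative of known transcendence degree, but provides no mechanism to lift this uniqueness back to the quantum side. The sentence ``uniquely determined as the unique \ldots subalgebra \ldots arising as a limit from the generic locus'' is precisely the statement you are trying to prove, so invoking it is circular. And Proposition~\ref{pr:Q-chi} (that $\A_\chi$ is the full centralizer of $Q_\chi$) holds only for \emph{generic} $\chi$, which is exactly where you do not need it; at boundary points $Q_x$ can have a centralizer strictly larger than any candidate limit algebra, so the containment $\A_{\tilde x}\subseteq Z_{U(\fg)}(Q_x)$ does not distinguish competing limits.

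The paper takes a different, constructive route and avoids this issue entirely. It works chart by chart on $\CM_\Delta$. First (Lemma~\ref{le:AchiCodim1}) it identifies the limit subalgebra along each codimension-$1$ stratum explicitly as $\A^0_{\chi_0}\cdot\A_{\chi_1}$, using Knop's theorem (Theorem~\ref{th:Knop}) together with the generic centralizer characterization of Proposition~\ref{pr:Q-chi}. Then, rather than arguing abstractly about uniqueness of lifts from the associated graded, it introduces a Whittaker-type projection $\Wh:U(\fg)\to U(\fb)$ (quotient by the left ideal generated by $x-\xi(x)$, $x\in\fn_-$) and shows that $\Wh|_{\A_\chi}:\A_\chi\to U(\fb)$ is a vector-space isomorphism for all $\chi$ in the chart outside a locus of codimension $\ge 2$. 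The preimages $\Wh^{-1}(h_i)$ and $\Wh^{-1}(e_\alpha)$ then furnish algebraically independent generators of $\A_\chi$ that are regular on the entire chart $U_S$, directly exhibiting the extended family over $\CM_\Delta$. This replaces your missing rigidity argument with an explicit construction of the limit subalgebras.
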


\begin{proof}
Let $U_S$ be the chart on $\CM_{\Delta}$ corresponding to the nested set $S$. We can assume without loss of generality that $S$ is compatible with the dominant Weyl chamber. The idea is to construct a set of generators $P_{l,i}(\chi)\in\A_\chi$ which is well-defined on $U_S$ outside some codimension $2$ subvariety. The generators $P_{l,i}(\chi)$ are thus well-defined and algebraically independent for all $\chi\in U_S$.
Let $U_S^{(k)}$ be the union of codimension $\ge k$ strata in $U_S$.

\begin{lem}\label{le:AchiCodim1} For each $N \in \BN $, the map $\fh_{reg}\to \Gr(d_N, F^N U(\fg))$ extends to $U_S\backslash \Theta$ where $\Theta$ is a proper closed subvariety of $U_S^{(1)}$. The subalgebra corresponding to a point $(\chi_0,\chi_1)\in U_S^{(1)}\backslash \Theta$ is generated by $\A_{\chi_0}^0\subset U(\fg)$ and $\A_{\chi_1}\subset U(\fz_\fg(\chi_0))$. In particular, $\gr\A_{(\chi_0,\chi_1)}=\ol{\A_{(\chi_0,\chi_1)}}$.
\end{lem}

\begin{proof}
Let $\chi(t)$ be such that $\chi(0)=\chi_0$. Clearly the subalgebra $\lim\limits_{t\to0}\A_{\chi(t)}$ contains $\A_{\chi_0}^0$. Since $\A_{\chi_0}^0$ is maximal commutative subalgebra in $U(\fg)^{\fz_\fg(\chi_0)}$ it contains the center $Z$ of $U(\fz_\fg(\chi_0))$. By the Theorem~\ref{th:Knop}, the centralizer of $Z$ in $U(\fg)$ is the tensor product of $U(\fg)^{\fz_\fg(\chi_0)}$ and $U(\fz_\fg(\chi_0))$ over $Z$. This means that the centralizer of $\A_{\chi_0}^0$ in $U(\fg)$ is $\A_{\chi_0}^0\cdot U(\fz_\fg(\chi_0))$, and hence $\lim\limits_{t\to0}\A_{\chi(t)}\subset \A_{\chi_0}^0\cdot U(\fz_\fg(\chi_0))$. On the other hand, the subalgebra $\lim\limits_{t\to0}\A_{\chi(t)}$ contains the quadratic component $Q_{\chi_1}$ of $\A_{\chi_1}\subset U(\fz_\fg(\chi_0))$ since it is so for the associated graded algebra and since the quadratic component of $\A_\chi$ is the symmetrization of its associated graded. For generic $\chi_1$ (i.e. outside some closed proper $\Theta\subset U_S^{(1)}$) the centralizer of $Q_{\chi_1}$ in $U(\fz_\fg(\chi_0))$ is $\A_{\chi_1}$ hence $\lim\limits_{t\to0}\A_{\chi(t)}\subset \A_{\chi_0}^0\cdot \A_{\chi_1}$ and we are done.
\end{proof}

Let $e=\sum e_{\alpha_i}\in\fn$ be the principal nilpotent element. Let $\xi:\fn_-\to\BC$ be the character given by the scalar product with $e$, i.e. taking $e_{-\alpha_i}$ to $1$. Consider the left ideal $J_\xi$ in $U(\fg)$ generated by $x-\xi(x),\ x\in\fn_-$. By PBW theorem, the quotient $U(\fg)/J_\xi$ is isomorphic to $U(\fb)$ as a vector space, so we have a ``Whittaker'' projection $\Wh:U(\fg)\to U(\fb)$.

\begin{lem} For any $\chi\in U_S\backslash \Theta$, the map $\Wh:\A_\chi\to U(\fb)$ is an isomorphism of vectors spaces. Moreover the preimages of $h_i$ and $e_\alpha$ form a set of algebraically independent generators of $\A_\chi$.
\end{lem}

\begin{proof}
Let $e,h,f$ be the $\fsl_2$-triple containing $e$ such that $h\in\fh$. Consider the grading on the algebras $U(\fg)$ and $S(\fg)$ determined by $\frac{1}{2}\ad h$. We denote by $p(a)$ the degree of $a\in U(\fg)$ with respect to this grading. Define a filtration on $U(\fg)$ as $\deg+p$ where $\deg$ is the PBW filtration. Clearly the associated graded with respect to this filtration is still $S(\fg)$ with the grading determined by $\deg+p$. Next, on $\A_\chi$ the filtration is the same as PBW since $\A_\chi\subset U(\fg)^{\fh}$. The advantage of this new filtration on $U(\fg)$ and grading on $S(\fg)$ is that the associated graded of the map $\Wh$ is the homomorphism $S(\fg)\to S(\fb)$ of restriction to $e+\fb_-$ (which now becomes homogeneous).

Now it is sufficient to check the assertion of the Lemma for associated graded algebras with respect to the filtration. But this is true according to Proposition~\ref{pr:ShuvalovTarasov2} part (3).
\end{proof}

From the last Lemma, it follows that the elements $\Wh^{-1}(h_i)$ and $\Wh^{-1}(e_\alpha)$ are well-defined on $U_S$ and we have $\gr \Wh^{-1}(h_i)=\ol{\Wh}^{-1}(h_i)$ and $\gr \Wh^{-1}(e_\alpha)=\ol{\Wh}^{-1}(e_\alpha)$ for all $\chi\in U_S$. Indeed, the subset of $U_S$ where it does not hold is a complement to a divisor.

\end{proof}

\subsection{Operadic nature of shift of argument algebras}
Assume that $ \fg $ is simple.

Let $ \chi_0 \in \fh, \chi_0 \ne 0 $ and let $ \fg_1 = \fz_\fg(\chi_0)' $ be the derived subalgebra of the centralizer.  If $ \chi_0 $ is not regular, then $\CA_{\chi_0}^0 $ will not be maximal commutative.  Let $ \chi \in \pi^{-1}(\chi_0) $, where $ \pi : \CM_\Delta \rightarrow \PP(\fh) $.  Then by \lemref{le:FibresCM}, we can identify $ \pi^{-1}(\chi_0) = \CM_{\Delta_1} $, where $ \Delta_1 = \{ \alpha \in \Delta : \langle \alpha, \chi_0 \rangle = 0 \} $ is the root system of $ \fg_1 $.  Choose a point $ \chi_1 \in \CM_{\Delta_1} $ and regard the pair $ \chi = (\chi_0, \chi_1)$ as a point in $ \CM_\Delta$. Consider the subalgebras $ \CA_{\chi_0}^0 \subset U(\fg) $ and $ \CA_{\chi_1} \subset U(\fg_1) $ (here we make use of the inclusion $ \fg_1 \hookrightarrow \fg $). From Proposition~\ref{pr:ShuvalovTarasov2} we have the following.

\begin{cor}
 With the above notation, the algebra $ \CA_\chi $ is generated by $ \CA_{\chi_0}^0 $ and $ \CA_{\chi_1} $. Moreover we have  $ \CA_\chi = \CA_{\chi_0}^0 \otimes_{ZU(\fg_1)} \CA_{\chi_1} $.
\end{cor}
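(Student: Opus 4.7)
The plan is to reduce the corollary to Lemma~\ref{le:AchiCodim1} via a separatedness argument. I will show that the two $\CM_{\Delta_1}$-families of commutative subalgebras of $U(\fg)$ given by $\chi_1 \mapsto \CA_{(\chi_0,\chi_1)}$ and $\chi_1 \mapsto \CA_{\chi_0}^0 \otimes_{ZU(\fg_1)} \CA_{\chi_1}$ agree on a dense open subset and therefore everywhere.

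First I would establish that $\CA_{\chi_0}^0\cdot\CA_{\chi_1}$ is a commutative subalgebra of $U(\fg)$ for every $\chi_1\in\CM_{\Delta_1}$. Since $\CA_{\chi_0}^0\subset U(\fg)^{\fz_\fg(\chi_0)}$, its elements commute with $\fz_\fg(\chi_0)\supset\fg_1$, hence with $U(\fg_1)\supset\CA_{\chi_1}$. Both algebras contain $ZU(\fg_1)$: for $\CA_{\chi_0}^0$, this follows from the argument in the proof of Lemma~\ref{le:AchiCodim1} (which gives $\CA_{\chi_0}^0\supset ZU(\fz_\fg(\chi_0))$) combined with the decomposition $ZU(\fz_\fg(\chi_0))\cong U(\fz(\fz_\fg(\chi_0)))\otimes ZU(\fg_1)$; for $\CA_{\chi_1}$, this is the quantum version of Proposition~\ref{pr:ShuvalovTarasov2}(1). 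Theorem~\ref{th:Knop} applied to $\fl=\fg_1$ identifies the centralizer of $ZU(\fg_1)$ in $U(\fg)$ with $U(\fg)^{\fg_1}\otimes_{ZU(\fg_1)}U(\fg_1)$. By Proposition~\ref{pr:ShuvalovTarasov} the algebra $\CA_{\chi_1}$ is a polynomial ring over $ZU(\fg_1)$ (the derivative generators $\partial_{\chi_1}^k\Phi_l$ for $k\geq 1$ are free over the $k=0$ central ones), so it is free as a $ZU(\fg_1)$-module. Hence the natural map
$$\CA_{\chi_0}^0\otimes_{ZU(\fg_1)}\CA_{\chi_1}\longrightarrow U(\fg)^{\fg_1}\otimes_{ZU(\fg_1)}U(\fg_1)\hookrightarrow U(\fg)$$
is injective with image the product $\CA_{\chi_0}^0\cdot\CA_{\chi_1}$.

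Next I would verify that $\chi_1\mapsto\CA_{\chi_0}^0\otimes_{ZU(\fg_1)}\CA_{\chi_1}$ defines an algebraic family of subalgebras over $\CM_{\Delta_1}$ in the sense of Definition~\ref{Def:Family}: the family $\chi_1\mapsto\CA_{\chi_1}$ is algebraic by Theorem~\ref{th:dCPFamily} applied to $\fg_1$, and tensoring with the fixed algebra $\CA_{\chi_0}^0$ over $ZU(\fg_1)$ preserves algebraicity, with Hilbert series constant in $\chi_1$ by the freeness established above. On the other hand, by Lemma~\ref{le:FibresCM} the fiber $\pi^{-1}(\chi_0)\subset\CM_\Delta$ is canonically identified with $\CM_{\Delta_1}$, so the restriction of the family from Theorem~\ref{th:dCPFamily} defines an algebraic family $\chi_1\mapsto\CA_{(\chi_0,\chi_1)}$ on the same parameter space.

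By Lemma~\ref{le:AchiCodim1}, these two families agree on the dense open subset of $\CM_{\Delta_1}$ corresponding to regular $\chi_1$ outside the codimension-two locus $\Theta$. Both families are morphisms from the irreducible variety $\CM_{\Delta_1}$ into a Grassmannian, which is separated, so the locus of agreement is closed; as it contains a dense open it is all of $\CM_{\Delta_1}$. This gives the corollary.

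The main obstacle I anticipate is the injectivity/Hilbert-series step: rigorously checking that $\CA_{\chi_1}$ is free over $ZU(\fg_1)$ for \emph{all} $\chi_1\in\CM_{\Delta_1}$, and that the Hilbert series of $\CA_{\chi_0}^0\otimes_{ZU(\fg_1)}\CA_{\chi_1}$ matches that dictated by Theorem~\ref{th:dCPFamily}. I would handle this by passing to the associated graded and invoking Proposition~\ref{pr:ShuvalovTarasov2}, since the PBW-induced filtration propagates classical freeness and dimension equalities to the quantum setting.
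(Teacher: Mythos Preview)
Your proposal is correct and supplies considerably more detail than the paper, which simply writes ``From Proposition~\ref{pr:ShuvalovTarasov2} we have the following'' and states the corollary without further argument. The paper's implicit reasoning is presumably an induction on rank: Lemma~\ref{le:AchiCodim1} (the quantum analogue of Proposition~\ref{pr:ShuvalovTarasov2}) gives the statement for generic regular $\chi_1\in\fh_1^{reg}$, and for deeper $\chi_1=(\chi_{10},\chi_{11})\in\CM_{\Delta_1}$ one writes $\CA_{\chi_1}=\CA_{\chi_{10}}^0\cdot\CA_{\chi_{11}}$ by the inductive hypothesis for $\fg_1$ and iterates. Your separatedness argument replaces this induction by a single global step: both $\chi_1\mapsto\CA_{(\chi_0,\chi_1)}$ and $\chi_1\mapsto\CA_{\chi_0}^0\otimes_{ZU(\fg_1)}\CA_{\chi_1}$ are morphisms from the irreducible variety $\CM_{\Delta_1}$ to a product of Grassmannians, agreeing on a dense subset, hence everywhere. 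This is cleaner and avoids tracking the combinatorics of nested degenerations.

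Two small points to tighten. First, Lemma~\ref{le:AchiCodim1} as \emph{stated} is about points of the codimension-$\geq 1$ stratum of a chart $U_S$; you should remark that its \emph{proof} applies verbatim for arbitrary $\chi_0\in\fh$ and generic regular $\chi_1\in\fh_1^{reg}$, since nothing in that argument uses the codimension of $(\chi_0,\chi_1)$ in $\CM_\Delta$ beyond the existence of a one-parameter approach $\chi(t)\to\chi_0$. Second, your phrase ``codimension-two locus $\Theta$'' is imprecise: the lemma only asserts $\Theta$ is a proper closed subvariety of the boundary, which is all you need for density. The freeness of $\CA_{\chi_1}$ over $ZU(\fg_1)$ for all $\chi_1\in\CM_{\Delta_1}$ that you flag as the main obstacle is indeed the technical crux; your proposed reduction to the associated graded (where Proposition~\ref{pr:ShuvalovTarasov2} and the constancy of Hilbert series established in Theorem~\ref{th:dCPFamily} for $\fg_1$ apply) is the right way to handle it.
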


We can exploit this operadic nature as follows.

Now, let $ \chi_0, \chi_1 $ be as above, with the additional assumption that they are both real.  Also, recall that the algebra $ \A_{\chi_0}^0 $ acts on $ V(\l)^\nu := \Hom_{\fg_1}(V(\nu), V(\l)) $ where $ V(\nu) $ is the irreducible representation of $ \fg_1 $ of highest weight $\nu $ and $ V(\l)$ is the irreducible representation of $\fg$ of highest weight $\l$.

So consider a irreducible representation $ V(\lambda) $ of $\fg $ and decompose it under the action of $ \fg_1 $.  We have
$$
V(\l) = \bigoplus_{\nu \in \Lambda_+(\fg_1)} V(\l)^\nu \otimes V(\nu)
$$

\begin{cor} \label{co:LinesProduct}
The action of $ \CA_\chi $ on $ V(\l) $ is compatible with this decomposition and we have a natural bijection
$$
\CE_{\A_\chi}(V(\l)) = \sqcu_{\nu \in \Lambda_+(\fg_1)} \CE_{\A_{\chi_0}^0}(V(\l)^\nu) \times \CE_{\A_{\chi_1}}(V(\nu))
$$
\end{cor}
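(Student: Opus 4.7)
The plan is to reduce to the tensor factorization of the action on each $\fg_1$-isotypic summand of $V(\l)$ and then use the standard fact that eigenlines for a tensor product of commuting algebras with simple spectrum are products of eigenlines.

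First I would observe that $\A_{\chi_0}^0 \subset (U(\fg))^{\fz_\fg(\chi_0)}$, so in particular it commutes with $\fg_1$. Consequently $\A_{\chi_0}^0$ preserves the $\fg_1$-isotypic decomposition $V(\l)=\bigoplus_\nu V(\l)^\nu\otimes V(\nu)$ and acts on the ``first factor'' $V(\l)^\nu$ while acting as the identity on $V(\nu)$. Dually, since $\A_{\chi_1}\subset U(\fg_1)$, it acts on the ``second factor'' $V(\nu)$ while acting as the identity on the multiplicity space $V(\l)^\nu$. By the preceding corollary, $\A_\chi$ is generated by $\A_{\chi_0}^0$ and $\A_{\chi_1}$, so the action of $\A_\chi$ preserves each summand $V(\l)^\nu\otimes V(\nu)$ and on it factors through the obvious action of $\A_{\chi_0}^0 \otimes \A_{\chi_1}$ on $V(\l)^\nu\otimes V(\nu)$. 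This establishes the compatibility claim.

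Next, I would invoke the simple-spectrum statement: for real $\chi\in\CM_\Delta(\BR)$, $\A_\chi$ acts with simple spectrum on $V(\l)$ (this is the extension of the Feigin-Frenkel-Rybnikov result to points of $\CM_\Delta$, which the paper announces and must be available by this point). Since the action is compatible with the isotypic decomposition, the restriction of $\A_\chi$ to each summand $V(\l)^\nu\otimes V(\nu)$ also acts with simple spectrum, and the set of eigenlines of $\A_\chi$ on $V(\l)$ is the disjoint union, over $\nu$, of the eigenlines on each summand.

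Now on the summand $V(\l)^\nu\otimes V(\nu)$ the action factors through $\A_{\chi_0}^0\otimes\A_{\chi_1}$. A standard linear algebra fact gives that if a tensor product of two commutative algebras acts with simple spectrum on a tensor product $W_1\otimes W_2$, then each factor must act with simple spectrum on $W_i$, and the eigenlines of the product are precisely the products $L_1\otimes L_2$ of eigenlines. Applying this with $W_1=V(\l)^\nu$ and $W_2=V(\nu)$ produces the bijection
\[
\CE_{\A_\chi}\bigl(V(\l)^\nu\otimes V(\nu)\bigr) \;\cong\; \CE_{\A_{\chi_0}^0}\bigl(V(\l)^\nu\bigr)\times \CE_{\A_{\chi_1}}\bigl(V(\nu)\bigr),
\]
and taking the disjoint union over $\nu$ yields the claim.

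The main subtle point is justifying that the restricted actions of $\A_{\chi_0}^0$ on $V(\l)^\nu$ and of $\A_{\chi_1}$ on $V(\nu)$ each have simple spectrum; this is not automatic from the maximality results alone but is forced by the simple-spectrum statement for $\A_\chi$ on $V(\l)$ combined with the tensor factorization of the action on each summand. Everything else is bookkeeping using the previous corollary and the identification $\fz_\fg(\chi_0)=\fh\oplus\fg_1$ (so that $V(\nu)$ is really acted on only by $\fg_1$ and the commuting Cartan part is absorbed into the decomposition).
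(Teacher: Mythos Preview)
Your argument is correct and is precisely the intended unpacking of this corollary, which the paper states without proof as an immediate consequence of the preceding description $\A_\chi = \A_{\chi_0}^0 \otimes_{ZU(\fg_1)} \A_{\chi_1}$. The only point worth flagging is the logical ordering you yourself note: the simple spectrum of $\A_\chi$ on $V(\l)$ for arbitrary $\chi\in\CM_\Delta(\BR)$ is established in the paper only in the following section (\corref{co:SimpleSpec}), so strictly speaking this corollary should be read as a statement about eigenlines conditional on simple spectrum, or its use deferred until after \S\ref{sect-cyclic}; there is no circularity, since the cyclicity proof uses only the generation statement, not this bijection.
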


\subsection{Partial description of the compactification}\label{subsect-PieceClosure}

The closure of the parameter space for the family of subalgebras ${\A}_\chi(z_1,\ldots,z_n)\subset U(\fg)^{\otimes n}$ is unknown in general. We will need the following partial results which describe small pieces of this closure.

Let $\Phi_l$, $l=1,\ldots,r$ be the generators of $S(\fg)^\fg$ and $\chi\in\fh$ be any Cartan element. According to Shuvalov \cite{Sh}, there exist $d'_l\le d_l$ such that $\partial_\chi^k\Phi_j$ for $k\le d'_l$ are algebraically independent and generate the classical shift of argument subalgebra $\ol{\A_\chi}$.
Consider the subalgebra $\A^0_\chi(z,0)$ for $ z \in \Cx$. Recall the generators of the inhomogeneous Gaudin algebra from subsection~\ref{ss:Generators} in this case. We have the rational section
$$S_l(w;z,0;\chi):= \varphi_{w-z, w, \chi} (S_l)(dw)^{d_l+1}$$
of $U(\fg)\otimes U(\fg)\otimes\CO(-2(d_l+1))$ on $\BP^1$ for $l=1,\ldots,r$.

Let $$S_l(w;z,0;\chi)=\sum\limits_{k=0}^{d_l}s_{l,k}^0(z,\chi)w^{-k-1}(dw)^{d_l+1}+holo,$$ $$S_l(w;z,0;\chi)=\sum\limits_{k=0}^{d_l}s_{l,k}^z(z,\chi)(w-z)^{-k-1}(dw)^{d_l+1}+holo$$ and $$S_l(w;z,0;\chi)=\sum\limits_{k=0}^{d_l}s^\infty_{l,k}(z,\chi)w^{k+1}(dw^{-1})^{d_l+1}+holo$$ be the expansions of $S_l(w;z,0;\chi)$ at $0$, $z$ and $\infty$, respectively.

\begin{lem}\label{lem:2PointsLimits} Then \begin{enumerate}
\item $z^{d_l-k}s_{l,k}^0(z,\chi)$ with $0\le k\le d_l$ and $s^\infty_{l,k}(z,\chi)$ with $0\le k\le d'_l$ are regular in $z$ and algebraically independent for all $z\in\BC$;
\item $s_{l,k}^0(z,\chi)$ with $d_l-d'_l\le k\le d_l$ and $s^z_{l,k}(z,\chi)$ with $d_l-d'_l\le k\le d_l$ are regular in $z^{-1}$ and algebraically independent for all $z$ in some neighborhood of $\infty$.
\end{enumerate}
\end{lem}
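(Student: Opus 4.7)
The plan is to establish both parts by a direct Laurent expansion of $\varphi_{w-z,w,\chi}(S_l)(dw)^{d_l+1}$ around each of the singular points $w = 0, z, \infty$, and then to identify the leading-in-$z$ values with generators of the expected limit subalgebras. Since $S_l$ is $t\partial_t$-homogeneous of degree $d_l+1$, every monomial in $S_l$ has the form $\prod_i(x_{j_i}\otimes t^{-m_i})$ with $\sum_i m_i = d_l+1$, and under $\varphi_{w-z,w,\chi}$ each factor becomes $x_{j_i}^{(1)}/(w-z)^{m_i} + x_{j_i}^{(2)}/w^{m_i}$, plus $\chi(x_{j_i})$ when $m_i = 1$. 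Grouping terms by the triples $(a,b,c)$ of factors contributing from slot~$1$, slot~$2$, and the shift $\chi$ respectively (so $a+b+c = d_l+1$), a routine power-count shows that $s_{l,k}^0(z,\chi)$ has a pole of order at most $d_l - k$ at $z = 0$ and is polynomial in $z^{-1}$ near $z=\infty$, that $s_{l,k}^z$ has the same shape, and that $s_{l,k}^\infty(z,\chi)$ is a polynomial in $z$ of degree at most $d_l-k$. This yields the regularity asserted in both (1) and (2).

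The same power-count identifies the leading values at the singular points. For (1) at $z = 0$, the surviving $c = 0$ contributions give $(z^{d_l-k}s_{l,k}^0)(0,\chi)$ as a specific Laurent coefficient of $\Phi_l(x^{(1)}/(w-1) + x^{(2)}/w)$ at $w = 0$, so it lies in the homogeneous Gaudin algebra $\ol{\A(1,0)}$; whereas the $c = d_l-k$ contributions give $s_{l,k}^\infty(0,\chi)$ proportional to $\Delta(\partial_\chi^{d_l-k}\Phi_l) \in \Delta(\ol{\A_\chi})$, with nonzero multinomial scalar. For (2) at $z=\infty$, the leading terms of $s_{l,k}^0$ and $s_{l,k}^z$ for $k\ge d_l-d_l'$ become $\partial_\chi^{d_l-k}\Phi_l$ placed in the first and second tensor slot respectively, lying in $\ol{\A_\chi}\otimes 1$ and $1\otimes\ol{\A_\chi}$.

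Algebraic independence at generic $z\in\Cx$ follows from \propref{pr:Maximal}: the listed elements live in $\A_\chi^0(z,0)$, a free polynomial algebra of transcendence degree matching the number of elements named in each part. Independence at the degenerate values $z=0$ and $z=\infty$ reduces, via the standard classical-quantum principle (independence of top symbols in $S(\fg)^{\otimes 2}$ implies independence of quantum lifts), to independence of the classical limit generators in the respective limit subalgebras. For (2) this is immediate from \propref{pr:ShuvalovTarasov2} applied in each tensor factor separately. For (1) one uses that $\ol{\A(1,0)}$ is $\fg$-invariant while the generators $\Delta(\partial_\chi^j\Phi_l)$ are only $\Delta(\fz_\fg(\chi))$-invariant, so the intersection of the two families is captured by Knop's theorem (\thmref{th:Knop}) and reduces to $\Delta(S(\fg)^\fg)$ up to the required dimension count.

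The main obstacle is this last step in part (1): showing that the two families of limit generators combine into an algebraically independent set of the claimed cardinality. The plan is to compute the transcendence degree of the subalgebra of $S(\fg)^{\otimes 2}$ generated by $\ol{\A(1,0)}\cup\Delta(\ol{\A_\chi})$ using Knop's theorem to identify the centralizer of $\Delta(S(\fg)^\fg)$ in $S(\fg)^{\otimes 2}$, combined with Shuvalov's description of $\ol{\A_\chi}$ (\propref{pr:ShuvalovTarasov2}), and then to verify that this transcendence degree agrees with $\sum_l(d_l + d_l' + 2)$, the total number of elements listed in~(1). All the remaining parts of the argument are routine Laurent-coefficient bookkeeping.
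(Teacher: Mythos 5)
Your proposal follows essentially the same route as the paper's proof: regularity is read off from the expansions of $\varphi_{w-z,w,\chi}(S_l)$ at the three singular points (the paper gets the regularity in $z$ of $z^{d_l-k}s^0_{l,k}$ slightly more quickly by substituting $w\mapsto zw$, which identifies these with coefficients of $\varphi_{w-1,w,z\chi}(S_l)$), the limiting values at $z=0$ and $z=\infty$ are identified with generators of $\ol{\A(1,0)}\cdot\Delta(\ol{\A_\chi})$ and of $\ol{\A_\chi}\otimes\ol{\A_\chi}$, and independence at the degenerate points is obtained from Shuvalov--Tarasov together with the tensor-product-over-$\Delta(S(\fg)^\fg)$ structure via Knop's theorem --- which is exactly how the paper handles the $z=0$ case (the Hilbert-series count is carried out in the proof of \propref{pr:MultiClosure1}). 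One small repair: for $z\in\Cx$, the observation that the listed elements \emph{live in} the free polynomial algebra $\A_\chi(z,0)$ of matching transcendence degree does not by itself give independence; you also need that they \emph{generate} it, which holds because they are the principal-part coefficients at two of the three singular points (\secref{ss:Generators}), and a generating set of a polynomial ring whose cardinality equals its Krull dimension is automatically algebraically independent.
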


\begin{proof}
The generators $z^{d_l-k}s_{l,k}^0(z,\chi)$ are the coefficients of $S_l(zw;z,0;\chi)$. We have $S_l(zw;z,0;\chi)=\varphi_{w-1,w,z \chi}(S_l))(dw)^{d_l+1}$, hence the assertion on regularity in $z$. On the other hand, $s_{l,k}^0(z,\chi)$ and $s^z_{l,k}(z,\chi)$ are the coefficients of the expansions of $S_l(w;z,0;\chi)= \varphi_{w-z,w,z\chi}(S_l)(dw)^{d_l+1}$ at $0$ and $z$, hence regular in $z^{-1}$.

The images of $z^{d_l-k}s_{l,k}^0(z,\chi)$ with $0\le k\le d_l$ are the generators of $\ol{\A(1,0)}$. Indeed the leading term of these coefficients w.r.t. PBW grading is the same as that of $z^{d_l-k}s_{l,k}^0(z,0)$ since the terms containing $\chi$ have lower PBW degree. Similarly, the associated graded of the elements $s^\infty_{l,k}(z,\chi)$ with $0\le k\le d'_l$ in $\gr U(\fg)\otimes U(\fg)=S(\fg)\otimes S(\fg)$ are $\Delta(\partial_\chi^k\Phi_l)$, i.e. the generators of $\Delta(\ol{A_\chi})$.

Consider the values of $s_{l,k}^0(z,\chi)$, with $0\le k\le d'_l$, and $s^z_{l,k}(z,\chi)$, with $0\le k\le d'_l$ at $z=\infty$. The leading term of $s_{l,k}^z(z,\chi)$ at $z=\infty$ is $\partial_\chi^k\Phi_l^{(1)}$ in $S(\fg)\otimes S(\fg)$. Similarly, the leading term of $s_{l,k}^0(z,\chi)$ at $z=\infty$ is $\partial_\chi^k\Phi_l^{(2)}$. Hence these elements are the algebraically independent generators of $\ol{\A_\chi}\otimes \ol{\A_\chi}$. On the other hand the condition of being algebraically independent is Zariski open in $z$, hence the $s_{l,k}^0(z,\chi)$ with $d_l-d'_l\le k\le d_l$ and $s^z_{l,k}(z,\chi)$, with $d_l-d'_l\le k\le d_l$, are regular in $z^{-1}$ and algebraically independent for all $z$ in some neighborhood of $\infty$.
\end{proof}

\begin{prop} \label{pr:MultiClosure1}
Let $ \chi \in \fh^{reg} $.  The map from $ \BC^\times $ to commutative subalgebras of $ U(\fg) \otimes U(\fg) $ given by $ z \mapsto \A_\chi(z,0) $ extends to a family of subalgebras parametrized by $ \BP^1 $.  The extended map is defined on the boundary as follows:
\begin{enumerate}
\item $ 0 $ goes to the subalgebra generated by $ \Delta(\CA_\chi)$ and $ \CA(1,0) $.
\item $ \infty $ goes to the subalgebra $ \A_\chi \otimes \A_\chi $.
\end{enumerate}
\end{prop}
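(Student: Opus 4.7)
The strategy is to use the explicit generators from \lemref{lem:2PointsLimits} to exhibit the extension of the family to $\BP^1$, and then identify the limit subalgebras by matching generators.

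First, I would use the rescaled generators to produce two compatible collections of regular generating sections. On a neighborhood of $z=0$ (including $z=0$), part (1) of \lemref{lem:2PointsLimits} gives us that the elements $z^{d_l-k}s_{l,k}^0(z,\chi)$ for $0 \le k \le d_l$ together with $s^\infty_{l,k}(z,\chi)$ for $0 \le k \le d'_l$ are regular in $z$ and algebraically independent; for $z\ne 0$ they generate $\A_\chi(z,0)$ (since up to nonzero rescaling by powers of $z$ they span the same space as $s_{l,k}^0(z,\chi), s^\infty_{l,k}(z,\chi)$, which together with the coefficients at $w=z$ generate $\A_\chi(z,0)$; the coefficients at $w=z$ are redundant because any two of the three principal parts determine the third modulo holomorphic terms). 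Thus on $\BP^1 \setminus \{\infty\}$ we have a regular map to the appropriate Grassmannian in the sense of \defnref{Def:Family}. Symmetrically, on a neighborhood of $z=\infty$, part (2) provides algebraically independent regular generators $s_{l,k}^0(z,\chi)$ and $s^z_{l,k}(z,\chi)$ for $d_l-d'_l \le k \le d_l$, giving a regular extension to $\BP^1 \setminus \{0\}$. The two extensions agree on the overlap because they generate the same subalgebra $\A_\chi(z,0)$ for $z\in\Cx$, so they glue to a family over $\BP^1$.

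Next I would identify the two boundary fibres by inspecting which algebras are generated by the limiting generators.

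For $z=0$: substituting $z=0$ into $S_l(zw;z,0;\chi) = \varphi_{w-1,w,z\chi}(S_l)(dw)^{d_l+1}$ gives $\varphi_{w-1,w,0}(S_l)(dw)^{d_l+1}$, whose principal parts at $w=0$ are (by definition) the standard generators of the homogeneous Gaudin algebra $\A(1,0) \subset (U(\fg)^{\otimes 2})^\fg$. Thus the limits of $z^{d_l-k}s_{l,k}^0(z,\chi)$ at $z=0$ generate $\A(1,0)$. At the same time, the limit of $s^\infty_{l,k}(z,\chi)$ at $z=0$ is the coefficient of the expansion at $\infty$ of $\varphi_{w,w,\chi}(S_l)(dw)^{d_l+1}$; evaluating gives $\Delta(\partial_\chi^k \Phi_l)$ up to lower-order terms, and these elements generate $\Delta(\A_\chi)$. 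So the limit subalgebra at $z=0$ contains the subalgebra $B_0$ generated by $\A(1,0)$ and $\Delta(\A_\chi)$. Equality follows by a dimension/transcendence degree count: for generic $\chi$ both algebras have the same number of algebraically independent generators (the total is $\tfrac12(\dim\fg+\rk\fg)$ plus the number of generators of $\A(1,0)$ mod $\Delta(S(\fg)^\fg)$), which matches the count in \propref{pr:Maximal} for $n=2$.

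For $z=\infty$: the leading term of $s^z_{l,k}(z,\chi)$ as $z\to\infty$ is $\partial_\chi^k\Phi_l^{(1)}$ (as computed in the proof of \lemref{lem:2PointsLimits}), while the leading term of $s_{l,k}^0(z,\chi)$ is $\partial_\chi^k\Phi_l^{(2)}$. For $0\le k\le d'_l$ these are exactly the standard generators of $\A_\chi$ acting on the first and second tensor factors respectively, so the limit subalgebra contains $\A_\chi\otimes\A_\chi$. Since $\A_\chi\otimes\A_\chi$ already has the correct transcendence degree $\dim\fg+\rk\fg$, we have equality.

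The main obstacle, which I expect to be routine but requires care, is the passage from ``generators at the limit'' to ``the entire limit subalgebra equals the prescribed one.'' Concretely, one needs to check that no additional independent elements appear in the limit of the Grassmannian picture; this follows because each Grassmannian $\Gr(d_N,F^N U(\fg)^{\otimes 2})$ is separated, so the extension determined by explicit regular generators is unique, and the generator counts above exactly match the generic rank $d_N$ of $\A_\chi(z,0)\cap F^NU(\fg)^{\otimes 2}$. This matching of ranks is guaranteed by the algebraic independence statements of \lemref{lem:2PointsLimits} together with the maximality statement \propref{pr:Maximal}.
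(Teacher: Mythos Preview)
Your argument is correct and close in spirit to the paper's, but the emphasis differs in a useful way.

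For the extension to $\BP^1$, the paper bypasses explicit generators entirely: it observes that each map $\Cx \to \Gr(d_N, F^N U(\fg)^{\otimes 2})$ is a rational map from $\BP^1$ to a projective variety, hence automatically extends to a regular map on all of $\BP^1$. This is quicker than your two-chart gluing via \lemref{lem:2PointsLimits}, though your version has the advantage of producing the limit generators directly.

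For the identification at $z=0$, the paper argues containment more abstractly: $\lim_{z\to 0}\A_\chi(z,0)=\lim_{z\to 0}\A_{z\chi}(1,0)\supset\A(1,0)$, and the image of $\varphi_{w,w,\chi}$ gives $\Delta(\A_\chi)$. Equality is then deduced by matching \emph{Hilbert series} (the product $\A(1,0)\cdot\Delta(\A_\chi)$ is a tensor product over $\Delta(U(\fg)^\fg)$, so one counts degrees of generators). Your transcendence-degree phrasing is a little loose---what you actually need, and what your final paragraph correctly supplies, is that the limit generators remain algebraically independent of the same degrees, which forces equality of each filtered piece $d_N$. For $z=\infty$ the paper simply cites \cite{R}, Theorem~2, whereas you give the argument explicitly from the leading terms in \lemref{lem:2PointsLimits}; your version is more self-contained here.

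One small point to tighten: your parenthetical ``any two of the three principal parts determine the third'' is exactly the statement in \S\ref{ss:Generators} that $\A_\chi(\uz)$ is generated by the principal parts at any $n$ of the $n+1$ singular points; you may as well cite that directly rather than rederive it.
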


\begin{proof} Indeed, any rational map from $\BP^1$ to a projective variety $X$ is a regular map $\BP^1\to X$. In particular, each of the maps $\Cx \rightarrow \Gr(d_N, F^N U(\fg)^{\otimes 2})$ extends to $\BP^1$. Hence there is a family parametrized by $\BP^1$ extending  $\A_\chi(z,0)$. So it remains to compute the limits $\lim\limits_{z\to0}\A_\chi(z,0)$ and $\lim\limits_{z\to\infty}\A_\chi(z,0)$.

The first one is the same as $\lim\limits_{z\to0}\A_{z\chi}(1,0)$ hence contains $\A(1,0)$. On the other hand, $\A_\chi(z,0)$ is the image of $\A \subset U(\hat{\fg}_-)$ under the map
$$
\varphi_{w-z,w,\chi}:U(\hat{\fg}_-)\to U(\fg)\otimes U(\fg)\otimes S(\fg) \to U(\fg) \otimes U(\fg)
$$
Hence the limit of $\A_\chi(z,0)$ as $z\to0$ contains the image of $\varphi_{w,w,\chi}$ which is $ \Delta(\CA_\chi)$.

The subalgebra in $U(\fg)\otimes U(\fg)$ generated by $\A(1,0)$ and $ \Delta(\CA_\chi)$ has the same Hilbert series as $\A_\chi(z,0)$: indeed, the product of $\A(1,0)$ and $\Delta(\A_\chi)$ is in fact the tensor product over the $\Delta(U(\fg)^\fg)$, the expansions of $S_l(w;z,0;\chi)$ at $w=\infty$ have the same degrees as the generators of $\A_\chi$  and  the expansions of $S_l(w;z,0;\chi)$ at $w=0$ have the same degrees as the generators of A(1,0), so the product of $\A(1,0)$ and $ \Delta(\CA_\chi)$ has the same number of algebraically independent generators of the same degrees as $\A_\chi(z,0)$. Hence  the product of $\A(1,0)$ and $ \Delta(\CA_\chi)$ coincides with the limit $\lim\limits_{z\to0}\A_\chi(z,0)$.

The limit $\lim\limits_{z\to\infty}\A_\chi(z,0)$ is dealt with similarly, see \cite{R}, Theorem~2.
\end{proof}

\begin{prop} \label{pr:MultiClosure2}
Let $ \alpha \in \Delta_+ $ and let $ \chi_0 \in \fh $ be a generic element of the hyperplane $ \{ \alpha = 0 \}$.
Let $D\subset \BC^\times$ be the (finite) set of such $t\in\BC^\times$ that $\chi_0+th_\alpha\not\in\fh^{reg}$.

The map from $Y:=(\BC^\times\backslash D)\times\BC^\times$ to commutative subalgebras of $U(\fg)\otimes U(\fg)$ which sends $(t,z)\in\BC^\times\times\BC^\times$ to $\A_{\chi_0+th_\alpha}(z,0)$ extends to a map from $X\supset Y$ where $X$ is the blow-up of the point $(0,\infty)\in(\BC\backslash D)\times\BP^1$. The extended map is defined on the boundary as follows:
\begin{enumerate}
\item $(t,0)$ with $t\ne0$ goes to the subalgebra generated by $\A(1,0)$ and $\Delta(\A_{\chi_0+th_\alpha})$.
\item $(t,\infty)$ with $t\ne0$ goes to $\A_{\chi_0+th_\alpha}\otimes\A_{\chi_0+th_\alpha}$.
\item $(0,z)$ with $z\ne0,\infty$ goes to the subalgebra generated by $\A_{\chi_0}^0(z,0)$ and $\Delta(h_\alpha)$. The subalgebra $\A_{\chi_0}^0(z,0)$ contains $\Delta(C_\alpha)$.
\item $(0,0)$ goes to the subalgebra generated by $\A(1,0)$, $\Delta(\A_{\chi_0}^0)$ and $\Delta(h_\alpha)$.
\item The image of a point $t=0, z = \infty, [t:z^{-1}]=[a:b]$ in the special fiber of the blow-up is the subalgebra generated by $\A_{\chi_0}^0\otimes\A_{\chi_0}^0$, $\Delta(h_\alpha)$ and $ah_\alpha^{(1)}+b(\Delta(C_\alpha))$.
\end{enumerate}
Here $ C_\alpha $ denotes the quadratic Casimir element in the root $ \sl_2 $ subalgebra corresponding to $ \alpha $.
\end{prop}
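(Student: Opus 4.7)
The plan is to parallel the proof of \propref{pr:MultiClosure1}, with the added complication that $\chi_0$ is non-regular so that the base algebra $\A^0_{\chi_0}$ is strictly smaller than a generic shift-of-argument algebra. For existence, each coordinate map $(\BC\setminus D) \times \BP^1 \dashrightarrow \Gr(d_N, F^N U(\fg)^{\otimes 2})$ is rational into a projective variety, hence has $0$-dimensional indeterminacy; as in \propref{pr:MultiClosure1}, the map extends across divisorial boundaries automatically. By the direct computations below, the only remaining indeterminate point is $(0,\infty)$, and a single blow-up resolves it since the limit along each direction in the exceptional divisor turns out to be well-defined.

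For cases (1) and (2), the argument is an immediate application of \propref{pr:MultiClosure1} with $\chi = \chi_0 + th_\alpha$, which is regular for $t \notin D$. For cases (3) and (4), I will track the $t \to 0$ limit of the generators from Subsection~\ref{ss:Generators}. By \propref{pr:ShuvalovTarasov2}, the generators $\partial^k_\chi\Phi_l$ with $k \leq d'_l$ have regular limits producing $\A^0_{\chi_0}(z,0)$; the remaining generators correspond to the ``missing'' $h_\alpha$-direction along the hyperplane $\alpha = 0$ and blow up like $t^{-1}$. Rescaling the dynamical Hamiltonian $G_{h_\alpha}$ from \propref{pr:Hamiltonians} by $t$ and extracting its leading term at $w = \infty$ yields $\Delta(h_\alpha)$ in the limit. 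A Hilbert series comparison with the generic $\A_\chi(z,0)$ confirms that $\A^0_{\chi_0}(z,0)$ together with $\Delta(h_\alpha)$ fill out the full limit, proving (3); combining with the $z \to 0$ limit from \propref{pr:MultiClosure1} (which contributes $\A(1,0)$ on the other side) gives (4).

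For case (5), the critical scaling $t \sim z^{-1}$ is captured by blow-up coordinates $t = a\epsilon$, $z^{-1} = b\epsilon$. Using \lemref{lem:2PointsLimits}, the factor-wise limits of the generators at $w = z$ and $w = 0$ yield $\A^0_{\chi_0} \otimes \A^0_{\chi_0}$, while the $w = \infty$ expansion contributes $\Delta(h_\alpha)$ as in case (3). The new direction-dependent generator arises at order $\epsilon$ from combining two effects: the $t \to 0$ rescaling of $G_{h_\alpha}$ contributes the first-point asymmetric piece $a \cdot h_\alpha^{(1)}$ (the diagonal $\Delta(h_\alpha)$ having already been captured), while the $z \to \infty$ limit applied to the off-diagonal quadratic piece $z^{-1}\Omega^{(12)}$ contributes $b \cdot \Delta(C_\alpha)$ (the diagonal Casimir of $\sl_2(\alpha)$, since only the $\alpha$-root components of $\Omega^{(12)}$ survive the $t \to 0$ limit up to terms already in $\A^0_{\chi_0}\otimes\A^0_{\chi_0}$). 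Their linear combination $a h_\alpha^{(1)} + b \Delta(C_\alpha)$ is the unique element of the limit well-defined only after the blow-up, and all higher-order corrections fall into $\A^0_{\chi_0} \otimes \A^0_{\chi_0}$. The main obstacle will be this explicit identification in case (5), which requires a careful bookkeeping of the orders of vanishing in both $t$ and $z^{-1}$ of each principal-part coefficient, and a verification that no other direction-dependent element survives at order $\epsilon$.
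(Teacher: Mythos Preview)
Your overall plan---compute the limiting subalgebras along each boundary stratum and verify that only $(0,\infty)$ requires a blow-up---matches the paper's in spirit, and your treatment of cases (1), (2), (4) is fine. The paper organizes the argument a bit differently: it covers the blow-up $X$ by three affine charts $U_0=\{z\in\BC\}$ and $U_\infty^\pm$ (the two standard charts on the exceptional divisor), and on each chart exhibits an explicit set of generators of $\A_{\chi_0+th_\alpha}(z,0)$ that are regular and algebraically independent everywhere on that chart. This directly proves regularity of the map $X\to\Gr(d_N,F^N)$ rather than relying on abstract indeterminacy-locus arguments plus limit computations.

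There is, however, a genuine error in your mechanism for cases (3) and (5). In (3) you claim that rescaling $G_{h_\alpha}$ by $t$ and letting $t\to 0$ produces $\Delta(h_\alpha)$. It does not: from \propref{pr:Hamiltonians},
\[
tG_{h_\alpha}=tz\,h_\alpha^{(1)}+\sum_{\beta\in\Delta_+}\frac{t\langle h_\alpha,\beta\rangle}{\langle\chi_0+th_\alpha,\beta\rangle}\,\Delta(e_\beta)\Delta(f_\beta)
\xrightarrow{\,t\to 0\,}\Delta(e_\alpha)\Delta(f_\alpha),
\]
since only the $\beta=\alpha$ summand survives. This is $\Delta(e_\alpha f_\alpha)$, which modulo $\Delta(C_\alpha)$ is a quadratic polynomial in $\Delta(h_\alpha)$, not $\Delta(h_\alpha)$ itself. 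The correct source of $\Delta(h_\alpha)$ is much simpler: by \propref{pr:Hamiltonians}(2) we have $\Delta(\fh)\subset\A_{\chi_0+th_\alpha}(z,0)$ for every regular $t$, so $\Delta(h_\alpha)$ is a constant element of the family and lies in every limit. The paper makes this explicit on $U_0$ by observing that the top coefficients $s^\infty_{l,d_l}(z,\chi)$ span $\Delta(\fh)$ for regular $\chi$, and simply replaces them by a fixed basis of $\Delta(\fh)$ as part of the regular generating set.

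The same confusion propagates to (5). You attribute $a\,h_\alpha^{(1)}$ to $G_{h_\alpha}$ and $b\,\Delta(C_\alpha)$ to $z^{-1}\Omega^{(12)}$, asserting that ``only the $\alpha$-root components of $\Omega^{(12)}$ survive the $t\to 0$ limit''. But $\Omega^{(12)}$ is independent of $t$, so this claim is vacuous. In fact both pieces come from the single element $G_{h_\alpha}$: in blow-up coordinates $t=a\epsilon$, $z^{-1}=b\epsilon$ one has $tz=a/b$, and the computation above (now keeping the $tz\,h_\alpha^{(1)}$ term) gives
\[
ab\epsilon\cdot G_{h_\alpha}\xrightarrow{\,\epsilon\to 0\,} a\,h_\alpha^{(1)}+b\,\Delta(e_\alpha f_\alpha),
\]
which equals $a\,h_\alpha^{(1)}+b'\Delta(C_\alpha)$ modulo $\Delta(h_\alpha)$. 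This is exactly the extra generator the paper uses on the charts $U_\infty^\pm$: it takes $s^0_{l,k},s^z_{l,k}$ (from \lemref{lem:2PointsLimits}) as the regular generators producing $\A^0_{\chi_0}\otimes\A^0_{\chi_0}$ at the exceptional fibre, and completes them with $\Delta(h_\alpha)$ and $G_{h_\alpha}(z,\chi)$, checking algebraic independence at the limit. Once you correct the source of the two terms, your bookkeeping program will go through, and it then coincides with the paper's argument.
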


\begin{proof}
The variety $X$ is covered by three charts,
\begin{gather*}
U_0:=\{(t,z)\ : \ t\in\BC\backslash D, z\in\BC\} \\
U_\infty^+:=\{(t,z,(a:b))\ :\ t\in\BC\backslash D, z\in U, az^{-1}=bt, [a:b]\ne[1:0]\} \\
U_\infty^-:=\{(t,z,(a:b))\ :\ t\in\BC\backslash D, z\in U, az^{-1}=bt, [a:b]\ne[0:1]\}
\end{gather*}
where $U$ is a neighborhood of $\infty\in\BP^1$. For each chart we define a set of generators of $\A_{\chi_0+th_\alpha}(z,0)$ which is regular on it and algebraically independent at any point of the corresponding chart.

For $U_0$: Consider the elements $z^{d_l-k}s_{l,k}^0(z,\chi),\ s_{l,k}^\infty(z,\chi)\in\A_\chi(z,0)$ from Lemma~\ref{lem:2PointsLimits} for $\chi=\chi_0+th_\alpha$. These elements are regular in $z$ and $\chi$ hence regular on $U_0$. Note that $d'_l=d_l$ for all $l=1,\ldots,r$ if $\chi$ is regular, and we have $d'_l=d_l$ or $d_l-1$ (in fact for only one $l$) if $\chi=\chi_0$. Hence the elements $z^{d_l-k}s_{l,k}^0(z,\chi)$ with $0\le k\le d_l$ and $s_{l,k}^\infty(z,\chi)$ with $0\le k\le d_l-1$ are algebraically independent for all $\chi=\chi_0+th_\alpha$. On the other hand, the elements $s_{l,d_l}^\infty(z,\chi)$ span $\Delta(\fh)$ for any regular $\chi$. Hence one can take $z^{d_l-k}s_{l,k}^0(z,\chi)$ with $0\le k\le d_l$, $s_{l,k}^\infty(z,\chi)$ with $0\le k\le d_l-1$ and some basis of $\Delta(\fh)$ as the desired set of generators on $U_0$.

For $U_\infty^\pm$: From Lemma~\ref{lem:2PointsLimits}, we have algebraically independent elements $s_{l,k}^0(z,\chi)$ with $d_l-d'_l\le k\le d'_l$ and $s^z_{l,k}(z,\chi)$ with $d_l-d'_l\le k\le d'_l$ which are regular on $U_\infty^+\cup U_\infty^-$. For $t=0,\ z=\infty$ these elements generate $\A_{\chi_0}^0\otimes\A_{\chi_0}^0$ hence one can complete this set of generators with $\Delta(h_\alpha)$ and $G_{h_\alpha}(z, \chi)$ to get an algebraically independent set in a punctured neighborhood of $z=\infty$: indeed, the limits of $\Delta(h_\alpha)$ and $G_{h_\alpha}(z, \chi)$ as $t\to0$ and $z=at^{-1}\to \infty$, $a\ne0$ are algebraically independent with the generators of  $\A_{\chi_0}^0\otimes\A_{\chi_0}^0$.

Let us show that we get the desired subalgebras at the special points in $X$. Note that all of them are maximal commutative subalgebras in $U(\fg)\otimes U(\fg)$ so we we have to check that the specializations at these point contain the corresponding subalgebras. 
\begin{enumerate}
    \item For $(t,0)\in U_0$ with $t\ne0$ the corresponding subalgebra contains $\A(1,0)$ (generated by $\lim\limits_{z\to0}z^{d_l-k}s_{l,k}^0(z,\chi)=\lim\limits_{z\to0}s_{l,k}^0(z,z\chi)=s_{l,k}^0(z,0)$ with $0\le k\le d_l$) and $\Delta(\A_{\chi_0+th_\alpha})$. The latter is generated by $s_{l,k}^\infty(0,\chi)=\Delta(s_{l,k}^\infty(\chi))$ (where $s_{l,k}^\infty(\chi)$ are the generators of $\A_\chi$ introduced in section~\ref{se:Achi}), with $0\le k\le d_l-1$ and by $\Delta(\fh)$).
    \item For $(t,\infty)\in U_\infty^\pm$ with $t\ne0$ the corresponding subalgebra contains $\A_{\chi_0+th_\alpha}^{(1)}$ and $\A_{\chi_0+th_\alpha}^{(2)}$ generated by the $z\to\infty$ limits of $s_{l,k}^z(z,\chi)$ and $s^0_{l,k}(z,\chi)$ respectively (for all $k$).
    \item For $(0,z)\in U_0$ with $z\ne0,\infty$ the corresponding subalgebra contains by $\A_{\chi_0}^0(z,0)$ (generated by $z^{d_l-k}s_{l,k}^0(z,\chi_0)$ with $0\le k\le d_l$, $s_{l,k}^\infty(z,\chi_0)$ with $0\le k\le d_l-1$) and $\Delta(h_\alpha)$. According to Corollary~\ref{co:Center} this subalgebra contains $\Delta(C_\alpha)$.
    \item For $(0,0)\in U_0$ we have $\A(1,0)$ (generated by $\lim\limits_{z\to0}z^{d_l-k}s_{l,k}^0(z,\chi)=\lim\limits_{z\to0}s_{l,k}^0(z,z\chi)=s_{l,k}^0(z,0)$ with $0\le k\le d_l$), $\Delta(\A_{\chi_0}^0)$ (generated by $s_{l,k}^\infty(0,\chi_0)=\Delta(s_{l,k}^\infty(\chi_0))$ with $0\le k\le d_l-1$) and $\Delta(h_\alpha)$.
    \item For the point $t=0, z = \infty, [t:z^{-1}]=[a:b]$ in the special fiber of the blow-up, the corresponding subalgebra contains $\A_{\chi_0}^{(1)}$ and $\A_{\chi_0}^{(2)}$ generated by the $z\to\infty$ limits of $s_{l,k}^z(z,\chi_0)$ and $s^0_{l,k}(z,\chi)$ respectively (with $d_l-d'_l\le k\le d'_l$). Clearly it also contains $\Delta(h_\alpha)$. The last generator is $\lim\limits_{t\to0}G_{h_\alpha}(a, bt^{-1}\chi_0+h_\alpha)=ah_\alpha^{(1)}+b\Delta(e_\alpha)\Delta(f_\alpha)$ which is  $ah_\alpha^{(1)}+b\Delta(e_\alpha)\Delta(f_\alpha)$ up to an expression of $\Delta(h_\alpha)$, so we are done.
\end{enumerate}

\end{proof}

We will also need the closure of the parameter space for the family of subalgebras $ \A_\chi(z_1,z_2,z_3) \subset U(\fg) \otimes U(\fg)  \otimes U(\fg)$ where $\chi \in \fh^{reg}$  is fixed. Note that the subalgebra $ \A_\chi(z_1,z_2,z_3) $ does not change under simultaneous additive shift of the $z_i$'s, so the parameter space for this family of subalgebras is $ \BC^3\smallsetminus \Delta / \BC$. The limit $\lim\limits_{\varepsilon\to0}\A_\chi(\varepsilon z_1,\varepsilon z_2,\varepsilon z_3)=\lim\limits_{\varepsilon\to0}\A_\eps\chi(z_1,z_2,z_3)$ is the subalgebra generated by $ \Delta(\A_\chi) $ and $ \A(z_1,z_2,z_3) $: indeed, it is generated by $\lim\limits_{\eps\to0}s_{l,k}^{z_i}(\uz,\eps\chi)=s_{l,k}^{z_i}(\uz,0)$ (i.e. generators of $\A(z_1,z_2,z_3)$) and by $\lim\limits_{\eps\to0}s_{l,k}^{\infty}(\eps\uz,\chi)=\Delta(s_{l,k}^{\infty}(\chi))$ (i.e. generators of $ \Delta(\A_\chi) $). So the closure of the parameter space contains the parameter space for the subalgebras $ \A(z_1,z_2,z_3) $, i.e. the Deligne-Mumford space $\CM_4=\BP^1$.

Let $Y$ be the closure of the quadratic cone $\{u_1u_2+u_2u_3+u_3u_1=0\}\subset\BC^3$ in the projective space $\BP^3\supset\BC^3$. Note that $Y$ contains the parameter space $ \BC^3 \smallsetminus \Delta / \BC$ as an open subset due to the embedding $u_1=(z_2-z_3)^{-1}, u_2=(z_3-z_1)^{-1}, u_3=(z_1-z_2)^{-1}$. The complement of this open subset consists of $4$ projective lines $ l_0, \dots, l_3 $, where $l_i=\{u_j=0\ |\ j\ne i\}$, $i=1,2,3$, and $l_0$ is the intersection of $Y$ with the hyperplane at infinity. Note that $[z_1-z_3:z_2-z_3]$ is a homogeneous coordinate on $l_0$. Let $X$ be the blow-up of $Y$ at the $3$ points $l_i\cap l_0$. Denote by $m_i$ the corresponding exceptional projective lines. Note that the open subset of $X$ determined by $u_1u_2u_3\ne0$ is the blow-up of $\BC^2$ at the origin. In the coordinates $(z_1,z_2,z_3)$ the lines $m_i$ are just $m_1=\{z_2=z_3\}$, $m_2=\{z_3=z_1\}$, $m_3=\{z_1=z_2\}$, and $l_0$ is the exceptional fiber of the blow-up.

\begin{prop} \label{pr:MultiClosure3}
Let $ \chi \in \fh^{reg} $.  The map from $ (\BC^3 \smallsetminus \Delta) / \BC $ to commutative subalgebras of $ U(\fg) \otimes U(\fg)  \otimes U(\fg)$ given by $ (z_1,z_2,z_3) \mapsto \A_\chi(z_1,z_2,z_3) $ extends to a family of subalgebras parametrized by $X$ where $u_1=(z_2-z_3)^{-1}, u_2=(z_3-z_1)^{-1}, u_3=(z_1-z_2)^{-1}$.  The extended map is defined on the boundary as follows:
\begin{enumerate}
\item $(u_1,u_2,u_3)=0$ goes to the subalgebra $ \A_\chi \otimes \A_\chi  \otimes \A_\chi$.
\item For $\{i,j,k\}=\{1,2,3\}$, any point $u_i\in l_i\backslash\{0,\infty\}$ goes to the subalgebra $ \Delta^{i, jk}(\A_\chi \otimes \A_\chi (u_i^{-1},0))$.
\item $\infty\in l_i$ goes to the subalgebra generated by $ \Delta^{i, jk}(\A_\chi \otimes \CA_\chi)$ and $ \CA(1,0)^{(jk)}$.
\item For $\{i,j,k\}=\{1,2,3\}$, any point $z\in m_i\backslash\{0,\infty\}$ goes to the subalgebra generated by $ \Delta^{i, jk}(\CA_\chi(z,0))$ and $ \CA(1,0)^{(jk)} $.
 \item $ [z_1-z_3 : z_2-z_3]\in l_0 $ goes to the subalgebra generated by $ \Delta(\CA_\chi)$ and $ \CA(z_1,z_2,z_3) $.
\end{enumerate}
\end{prop}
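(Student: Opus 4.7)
The proof proceeds along the lines of Propositions~\ref{pr:MultiClosure1} and~\ref{pr:MultiClosure2}. For each filtration level $N$, the assignment $\uz \mapsto \A_\chi(\uz) \cap F^N U(\fg)^{\otimes 3}$ gives a regular morphism from $(\BC^3 \smallsetminus \Delta)/\BC$ to the Grassmannian $\Gr(d_N, F^N U(\fg)^{\otimes 3})$. The task is to check that this morphism extends regularly to $X$ and to compute its value on each boundary stratum. Since $X$ is normal (as a blow-up of an irreducible surface) and the target is projective, extension in codimension one suffices to define a morphism away from the vertex of the cone $Y$; extension at the vertex will be checked directly by producing regular generators in a neighborhood.

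For each boundary stratum I would produce an explicit set of algebraically independent generators of $\A_\chi(\uz)$ that remain regular as $\uz$ approaches that stratum, by suitably renormalizing the Laurent coefficients $s_{l,k}^i(\uz,\chi)$ and $s_{l,k}^\infty(\uz,\chi)$ of the rational sections $S_l(w;\uz;\chi)$ from Section~\ref{ss:Generators}, mimicking the normalization of Lemma~\ref{lem:2PointsLimits}. The limits themselves can then be read off by identifying which Laurent coefficients decouple from which others. Two elementary limits suffice: the limit of $\A_\chi(z,0)$ as $z \to \infty$, which is $\A_\chi \otimes \A_\chi$, and the limit of $\A_{\varepsilon \chi}(\uz)$ as $\varepsilon \to 0$, which is the subalgebra generated by $\Delta(\A_\chi)$ and the Gaudin algebra $\A(\uz)$; both are established in the course of Proposition~\ref{pr:MultiClosure1}. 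Using Lemma~\ref{le:InvarianceA}, and in particular the identity $\A_\chi(sz_1, \ldots, sz_n) = \A_{s\chi}(z_1, \ldots, z_n)$, every limit in the proposition reduces to a combination of these two: case~(1) arises as all three $z_i$ separate from one another, case~(5) is an instance of the second limit, cases~(2) and~(3) correspond to a single colliding pair on $l_i$, and case~(4) corresponds to a combination along $m_i$ in the region where the third point has escaped to infinity.

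To upgrade each collection of limit elements to the full limit subalgebra, I would compare Hilbert series with respect to the PBW filtration, using Proposition~\ref{pr:Maximal} to determine the Hilbert series of the generic $\A_\chi(\uz)$ and of each proposed limit (a tensor product over $\Delta(U(\fg)^\fg)$ of smaller inhomogeneous Gaudin and shift-of-argument algebras). The argument that the claimed tensor product has the expected number of algebraically independent generators of the expected PBW degrees is identical in form to the last step of the proof of Proposition~\ref{pr:MultiClosure1}, and guarantees that the limit is exactly the subalgebra described, not something larger.

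The principal obstacle is case~(4), where one approaches a point of $l_i \cap l_0$ along the exceptional divisor introduced by the blow-up. Two simultaneous degenerations occur at such a point---a pair of marked points collides while the third marked point escapes together with an appropriate scaling of the shift-of-argument parameter---and the map to the Grassmannian fails to extend without blowing up; the blow-up coordinate $[a:b]$ records the ratio between the rates of the two degenerations. I expect the analysis here to parallel closely that of Proposition~\ref{pr:MultiClosure2}: one constructs a two-parameter family of generators along a general ray in the exceptional divisor and checks algebraic independence in a punctured neighborhood. A secondary subtlety is the extension across the vertex of $Y$ in case~(1), where $Y$ has an ordinary double point; the extension should work uniformly because the decoupled limit $\A_\chi^{\otimes 3}$ depends on no further direction of approach, once one verifies that in any affine chart containing the vertex the generators constructed above extend regularly and give the desired tensor product on the central fiber.
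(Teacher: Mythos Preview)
Your overall strategy---produce explicit regular generators near each boundary stratum and identify the limit subalgebras---is the paper's strategy too, but the paper organizes it more efficiently. Rather than working stratum-by-stratum and invoking normality to fill in codimension-two loci, the paper covers $X$ by just two open sets: $U_\infty = Y \setminus l_0$ (the affine cone in $u$-coordinates), on which the coefficients of the principal parts of $S_l(w;\uz;\chi)$ at $w=z_1,z_2,z_3$ are manifestly regular in $u_1,u_2,u_3$ and handle cases (1)--(3) by the same argument as Proposition~\ref{pr:MultiClosure1}; and $U_0 = \{u_1u_2u_3 \neq 0\}$, which is a line bundle over $\CM_4$ and on which an adaptation of \cite[Theorem~3.13]{Ryb13} handles cases (4)--(5) together. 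No case is singled out as a principal obstacle.

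Your geometric reading of the strata needs correction and would lead you to compute the wrong limits. On $l_i$ (cases (2)--(3)) there is no collision: $z_i$ separates to infinite distance from $z_j,z_k$, which remain at finite separation $u_i^{-1}$; case (3) is the further limit $u_i\to\infty$ where $z_j,z_k$ then collide. On $m_i$ (case (4)) the pair $z_j,z_k$ has collided while $z_i$ sits at \emph{finite} distance $z$ from the collision point---no point has escaped, and there is no scaling of $\chi$ anywhere in this proposition. Your analogy with Proposition~\ref{pr:MultiClosure2}, where the blow-up separates a degeneration of $\chi$ from a degeneration of $z$, is therefore misleading: here the blow-up separates two marked-point degenerations (collision of $z_j,z_k$ versus escape of $z_i$), and the chart containing $m_i$ is handled by the $\CM_4$-style rescaling of \cite{Ryb13}, not by the mixed $(t,z)$ analysis you propose.
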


\begin{proof}
The variety $X$ is covered by the following 2 open subsets:
$$U_\infty:=\Spec \BC[u_1,u_2,u_3]/(u_1u_2+u_2u_3+u_3u_1)=Y\backslash l_0 \text{ and } U_0=\{u_1u_2u_3\ne0\}.$$
We have to prove that the family of subalgebras $ \A_\chi(z_1,z_2,z_3) $ extends to both $U_0$ and $U_{\infty}$. Note that $U_0$ is a line bundle over $ \CM_4$ and we can repeat the proof of Theorem~3.13 of \cite{Ryb13}. Namely, this variety is covered by $3$ affine open subsets $U_1:=\Spec\BC[z_1-z_2,\frac{z_2-z_3}{z_1-z_2},\frac{z_2-z_3}{z_1-z_3}]$ and $U_2, U_3$ defined analogously. Let us provide a set of algebraically independent generators of $\A_\chi(z_1,z_2,0)$ which is regular on $U_1$ (for $U_2, U_3$ the construction will be the same). Consider the sections $S_l(w;z_1,z_2,z_3;\chi)$. The coefficients of the principal parts of their expansions at $z_1,z_2$ and $\infty$ freely generate $ \A_\chi(z_1,z_2,z_3) $. Hence the coefficients of the principal part of
$$S_l(w;\frac{z_1}{z_1-z_2},\frac{z_2}{z_1-z_2},\frac{z_3}{z_1-z_2};(z_1-z_2)\chi) \text{ at } w=\frac{z_2}{z_1-z_2}$$
and those of
$$S_l(w;z_1,z_2,z_3;\chi) \text{ at $w=z_1$ and $w=\infty$ } $$
freely generate $ \A_\chi(z_1,z_2,z_3) $ as well. On the other hand, these coefficients are regular on $U_1$ and their values at the points on the exceptional fiber freely generate $ \Delta_{123}(\CA_\chi)\cdot \CA(z_1,z_2,z_3) $.

Now let us show that there is a set of algebraically independent generators of $\A_\chi(z_1,z_2,z_3)$ which is regular on $U_\infty$. We can take just the coefficients of the principal part of $S_l(w;z_1,z_2,z_3;\chi)$ at $w=z_1,z_2,z_3$, which are clearly regular on $U_\infty$. The same argument as in Proposition~\ref{pr:MultiClosure1} shows that at the boundary points we get the desired subalgebras.
\end{proof}

\begin{cor} \label{ContractiblePentagon}
We have a contractible pentagon in the real locus of the parameter space of algebras $\A_\chi(z_1,z_2,z_3)$ such that the subalgebras attached to the vertices are
\begin{gather*}
 \Delta(\CA_\chi)\cdot \CA(1(23)), \quad  \A_\chi^{(1)}\otimes\Delta^{23}(\CA_\chi)\cdot \CA(1,0)^{(23)}, \quad \A_\chi^{\otimes3}, \\
 \Delta^{12}(\CA_\chi)\cdot \CA(1,0)^{(12)} \otimes \A_\chi^{(3)}, \  \Delta(\CA_\chi)\cdot \CA((12)3).
 \end{gather*}
 The first one is connected with the last one by the path $ \Delta(\CA_\chi)\cdot \CA(1,z,0) $, $0\le z \le1$.
\end{cor}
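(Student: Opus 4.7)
The plan is to realize the pentagon as the closure $\ol{W} \subset X(\BR)$ of the real chamber
\[
W := \{(z_1, z_2, z_3) \in \BR^3 : z_1 > z_2 > z_3\}/\BR
\]
in the parameter space $(\BC^3\smallsetminus\Delta)/\BC$ of the algebras $\CA_\chi(z_1,z_2,z_3)$. After fixing $z_3 = 0$, the chamber $W$ becomes the convex open cone $\{(z_1,z_2)\in\BR^2 : z_1 > z_2 > 0\}$, and is therefore contractible.

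The first step is to read off, from Proposition~\ref{pr:MultiClosure3}, the five codimension-one faces of $\ol{W}$: the diagonal faces $z_2 = z_3$ and $z_1 = z_2$ lie on the divisors $m_1$ and $m_3$; the two ``partial infinity'' faces where $z_2=z_3$ with $z_1\to\infty$ and where $z_1=z_2\to\infty$ lie on $l_1$ and $l_3$; and the central arc lies on the exceptional $l_0$ coming from the blow-up of the origin. The five pairwise intersections of adjacent faces then give the five listed subalgebras: $m_1\cap l_0$ is vertex~1, $m_1\cap l_1$ is vertex~2 (combining item~(4) at $z=\infty$ with $\lim_{z\to\infty}\CA_\chi(z,0)=\CA_\chi\otimes\CA_\chi$ from Proposition~\ref{pr:MultiClosure1}), $l_1\cap l_3 = \{(u_1,u_2,u_3)=0\}$ is vertex~3, $l_3\cap m_3$ is vertex~4, and $m_3\cap l_0$ is vertex~5. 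Under the identification $l_0 = \CM_4 \cong \PP^1$ via $[z_1-z_3 : z_2-z_3]$, the arc $\ol{W}\cap l_0$ from $(12)3 = [1:1]$ (vertex~5) to $1(23) = [1:0]$ (vertex~1) is parameterized exactly by $\Delta(\CA_\chi)\cdot\CA(1, z, 0)$ for $z\in[0,1]$, as required.

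The remaining point is to verify that $\ol{W}$ is a closed topological $2$-disk bounded by these five arcs. This is a local check at each vertex: the chart construction in the proof of Proposition~\ref{pr:MultiClosure3} shows that the two divisors meeting at a given vertex serve as local coordinates on $X$ near that vertex, so $\ol{W}$ looks like a closed quadrant of $\BR^2$ there. Consequently the five arcs assemble into a simple closed curve bounding a topological disk, and contractibility is then immediate from contractibility of the open cone $W$. The main point requiring care is to check that the five vertices are genuinely distinct and that the five arcs do not self-intersect; this is immediate from the strict ordering $z_1>z_2>z_3$ that singles out a unique chamber of the real configuration space and keeps the relevant strata separated.
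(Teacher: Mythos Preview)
Your approach is correct and is precisely what the paper has in mind: the corollary is stated without proof as an immediate reading-off from Proposition~\ref{pr:MultiClosure3}, and the pentagon is indeed the closure of the dominant real chamber $\{z_1>z_2>z_3\}/\BR$ inside $X(\BR)$. Two small points deserve tightening.

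First, your description of the $l_1$ and $l_3$ edges is misworded. The phrase ``$z_2=z_3$ with $z_1\to\infty$'' describes the corner $m_1\cap l_1$ (your vertex~2), not a face. The edge on $l_1$ is the locus where $z_1\to\infty$ with $z_2-z_3$ finite and positive, parameterized by $u_1=(z_2-z_3)^{-1}\in(0,\infty)$; likewise the $l_3$-edge is where $z_1-z_2$ stays finite and positive while both go to infinity, parameterized by $u_3\in(0,\infty)$. Your vertex identifications are unaffected and correct; for vertex~2 you can also read it off directly from item~(3) of Proposition~\ref{pr:MultiClosure3} rather than passing through Proposition~\ref{pr:MultiClosure1}.

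Second, at vertex~3 the variety $X$ is singular (the three lines $l_1,l_2,l_3$ all pass through the cone point $(u_1,u_2,u_3)=0$), so $l_1$ and $l_3$ are not literally local coordinates on $X$ there. Your conclusion survives anyway: on the chamber $u_1>0$, $u_2<0$, $u_3>0$ the relation $u_1u_2+u_2u_3+u_3u_1=0$ solves uniquely as $u_2=-u_1u_3/(u_1+u_3)$, so $(u_1,u_3)\in[0,\infty)^2$ parameterize $\ol{W}$ near the cone point and $\ol{W}$ is still locally a closed quadrant.
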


\begin{rem} The $X$ arising in Proposition~\ref{pr:MultiClosure3} can be regarded as a moduli space of stable \emph{framed} rational curves.  A generic point of this moduli space consists of a smooth rational curve with a fixed non-zero tangent vector at one framed point and then $3$ more marked points.  This framed curve can be regarded as a degeneration of a $\BP^1$ with $1$ more marked point which goes to $\infty$.

We identify a point $(z_1, z_2, z_3) \in \BC^3 \smallsetminus \Delta / \C $ with  $\BP^1$, together with a marking of the points $ z_1,z_2,z_3 $, along with the framed point $\infty$ and a non-zero tangent vector at $\infty$:
\begin{equation*}
\begin{tikzpicture}[scale=0.8]
\draw (0:0) circle (1);
\node at (60:1) {$\bullet$};
\node[right] at (60:1) {$z_1$};
\node at (0:1) {$\bullet$};
\node[right] at (0:1) {$z_2$};
\node at (-60:1) {$\bullet$};
\node[right] at (-60:1) {$z_3$};
\draw [->] (-1,0) -- (-1,0.5);
\node at (-1,0) {$\bullet$};
\node[left] at (-1,0) {$\infty$};
\end{tikzpicture}
\end{equation*}
The automorphism group of $ \BP^1 $ preserving the framing at $ \infty $ is the additive group $ \C $.  In this way, we get an isomorphism between $ \BC^3 \smallsetminus \Delta / \C $ and the moduli space of framed rational curves with three marked points.

The variety $ X $ can be regarded as a compactification of stable framed rational curves with three marked points.  The geometry of this space is determined by the following rule describing two possible degenerations:
\begin{enumerate}
\item when some of distinguished points (either marked or nodes) $z_{i_1},\ldots,z_{i_k}$ on some component of the curve collide at some point $z$ they make a new smooth rational component which intersects the old one normally at $z$, as for the usual Deligne-Mumford compactification;
\item when some of the distinguished points $z_{i_1},\ldots,z_{i_k}$ go to the framed point keeping the differences $z_{i_r}-z_{i_s}$ constant, then they make a new framed component which has the common tangent vector at the framed point with the old one.
\end{enumerate}
So the curves arising as the degenerations consist of the components of two types: first, (unframed) rational curves with at least $3$ distinguished points, and second, rational curves with a framing at $\infty$ with at least $1$ more distinguished point. All the framed points are glued together. In particular, the singular point of the variety $X$ corresponds to the degenerate curve with $3$ framed components having $1$ marked point on each of them:
\begin{equation*}
\begin{tikzpicture}[scale=0.5]
\draw (0:0) circle (1);
\draw [->] (-1,0) -- (-1,0.5);
\node at (-1,0) {$\bullet$};
\node[left] at (-1,0) {$\infty$};
\node at (0:1) {$\bullet$};
\node[right] at (0:1) {$z_1$};

\draw (-2,0)  circle (1);
\node at (-3,0) {$\bullet$};
\node[left] at (-3,0) {$z_2$};

\draw (-3,0)  circle (2);
\node at (-5,0) {$\bullet$};
\node[left] at (-5,0) {$z_3$};
\end{tikzpicture}
\end{equation*}

The lines $l_i$ correspond to $2$-component curves with $2$ framed components with $1$ and $2$ marked points on them:
\begin{equation*}
\begin{tikzpicture}[scale=0.8]
\draw (0:0) circle (1);
\draw [->] (-1,0) -- (-1,0.5);
\node at (-1,0) {$\bullet$};
\node[left] at (-1,0) {$\infty$};
\node at (60:1) {$\bullet$};
\node[right] at (60:1) {$z_1$};
\node at (0:1) {$\bullet$};
\node[right] at (0:1) {$z_3$};

\draw (-2,0)  circle (1);
\node at (-3,0) {$\bullet$};
\node[left] at (-3,0) {$z_2$};
\end{tikzpicture}
\end{equation*}
the lines $m_i$ corresponds to $2$-component curves with $1$ framed component with $1$ marked point and $1$ ordinary component with $2$ marked points:
\begin{equation*}
\begin{tikzpicture}[scale=0.8]
\draw (0:0) circle (1);
\node at (60:1) {$\bullet$};
\node[right] at (60:1) {$z_1$};
\draw [->] (-1,0) -- (-1,0.5);
\node at (-1,0) {$\bullet$};
\node[left] at (-1,0) {$\infty$};

\draw (0:2) circle (1);
\node at (0:3) {$\bullet$};
\node[right] at (0:3) {$z_2$};
\node at (2,-1) {$\bullet$};
\node[below right] at (2,-1) {$z_3$};
\end{tikzpicture}
\end{equation*}
and the line $l_0$ corresponds to $2$-component curves with $1$ framed component without marked points and $1$ ordinary component with $3$ marked points:
\begin{equation*}
\begin{tikzpicture}[scale=0.8]
\draw (0:0) circle (1);
\draw [->] (-1,0) -- (-1,0.5);
\node at (-1,0) {$\bullet$};
\node[left] at (-1,0) {$\infty$};

\draw (0:2) circle (1);
\node at (2,1) {$\bullet$};
\node[above right] at (2,1) {$z_1$};
\node at (0:3) {$\bullet$};
\node[right] at (0:3) {$z_2$};
\node at (2,-1) {$\bullet$};
\node[below right] at (2,-1) {$z_3$};
\end{tikzpicture}
\end{equation*}

Finally the $3$ distinguished points on $l_0$, namely, $[1:0],[0:1]$ and $[1:1]$ correspond to $3$-component curves with $1$ framed component and $2$ ordinary ones:
\begin{equation*}
\begin{tikzpicture}[scale=0.8]
\draw (0:0) circle (1);
\draw [->] (-1,0) -- (-1,0.5);
\node at (-1,0) {$\bullet$};
\node[left] at (-1,0) {$\infty$};

\draw (0:2) circle (1);
\node at (2,1) {$\bullet$};
\node[above right] at (2,1) {$z_1$};

\draw (0:4) circle (1);
\node at (0:5) {$\bullet$};
\node[right] at (0:5) {$z_2$};
\node at (4,-1) {$\bullet$};
\node[below right] at (4,-1) {$z_3$};
\end{tikzpicture}
\end{equation*}

The subalgebra corresponding to a curve is roughly the product of $\A_\chi(\uz)$'s corresponding to the framed components and $\A(\uz)$'s corresponding to the ordinary ones.

\end{rem}

We expect that if $\chi\in\fh^{reg}$ is fixed, then the closure of the parameter space for $\A_\chi(z_1,\ldots,z_n)$ is always isomorphic to the above compactification of the moduli space of framed rational curves. In the particular case $\fg=\fsl_2$, this was proved by Pakharev in his BSc. thesis set-theoretically (i.e. the closure was determined as a stratified topological space, not as algebraic variety).

\section{Cyclicity}\label{sect-cyclic}

According to \cite{FFR}, the algebra $\A_{\chi}(\uz)$ has simple spectrum on any finite-dimensional irreducible representation of $\fg^{\oplus n}$, whenever $\chi$ is \emph{real} regular semisimple and $\uz$ is a collection of pairwise distinct real numbers. In this section we generalize this to some limit subalgebras. In particular, we show that for arbitrary $\chi\in\CM_\Delta(\BR)$, the algebra $\A_{\chi}$ acts on $V(\lambda)$ with simple spectrum. We use the same argument as in \cite{Ryb16}.

\subsection{Cyclicity for degenerate and limit subalgebras}

Let $V(\l)$ be irreducible finite-dimensional $\fg$-module with highest weight $\l$, and $\pi_\l: U(\fg)\to \End(V(\l))$ be the corresponding algebra homomorphism. Let $\chi_0\in\fh$ be arbitrary Cartan element, $\uz=(z_1,\ldots,z_n)$ be a collection of pairwise distinct complex numbers. We consider the Gaudin subalgebra $\A^0_{\chi_0}(\uz)\subset U(\fg)^{\otimes n}$. Note that for singular $\chi_0$ it is smaller than $\A_{\chi}(\uz)$ for regular $\chi$. We set $\fg_1:=\fz_\fg(\chi_0)'$, the commutator subalgebra of the centralizer of $\chi_0$, and $\fn_1=\fg_1\cap\fn$. The subalgebra $\A^0_{\chi_0}(\uz)$ lies in the diagonal invariants of $\fg_1$ in $U(\fg)^{\otimes n}$ hence acts on the space of diagonal $\fn_1$-invariants (i.e. highest vectors with respect to $\fg_1$) in any tensor product $V(\l_1)\otimes\ldots\otimes V(\l_n)$.

\begin{thm}\label{th:CyclicGeneral} For any collection $\ul:=(\l_1,\ldots,\l_n)$ of dominant weights the subalgebra $\A^0_{\chi_0}(\uz)$ has a cyclic vector in the space $V(\ul)^{\fn_1}$ of highest vectors with respect to $\Delta(\fg_1)$.
\end{thm}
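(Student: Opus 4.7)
The plan is to prove this by a degeneration argument, following \cite{Ryb16}. Choose a regular semisimple element $\chi_1 \in \fh_1 \subset \fg_1$, and for small real $t$ consider $\chi(t) := \chi_0 + t\chi_1$. For generic such $t$, $\chi(t)$ is regular in $\fh$, so $\A_{\chi(t)}(\uz)$ is a maximal commutative subalgebra of $U(\fg)^{\otimes n}$ which, by the results of \cite{FFR} applied to the inhomogeneous Gaudin algebra with real regular parameters, acts on $V(\ul)$ with simple spectrum -- in particular, cyclically.

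Using the explicit generators $S_l(w;\uz;\chi)$ from \secref{ss:Generators} together with \propref{pr:ShuvalovTarasov2}, one identifies the limit subalgebra
\begin{equation*}
\widetilde\A \;:=\; \lim_{t \to 0} \A_{\chi(t)}(\uz) \;\subset\; U(\fg)^{\otimes n}
\end{equation*}
as the subalgebra generated by $\A^0_{\chi_0}(\uz)$ together with the diagonally embedded shift of argument algebra $\Delta(\A_{\chi_1}) \subset \Delta(U(\fg_1))$ for the smaller Lie algebra $\fg_1$. The inclusion $\A^0_{\chi_0}(\uz) \cdot \Delta(\A_{\chi_1}) \subseteq \widetilde\A$ is immediate from inspecting the expansions of $S_l(w;\uz;\chi(t))$ at the singular points $w = \infty$ and $w = z_i$; equality follows from a count of algebraically independent generators using \propref{pr:ShuvalovTarasov2}, in parallel with the limit computation in \propref{pr:MultiClosure1}.

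The crux is to show that $\widetilde\A$ still acts cyclically on $V(\ul)$. Cyclicity is not in general a closed condition on families of commutative subalgebras, so this requires more than formal semi-continuity; one adapts the Wakimoto / Feigin--Frenkel arguments of \cite{Ryb16} (originally developed for the homogeneous algebra $\A(\uz)$) to the present $\chi$-degeneration, exploiting the fact that the Wakimoto cyclic vector depends on $\uz$ but not on $\chi$, hence specializes well as $t \to 0$. Granted this, the theorem follows by isotypic decomposition: writing $\fl := \fz_\fg(\chi_0) = \fg_1 \oplus \fz(\fl)$,
\begin{equation*}
V(\ul) \;=\; \bigoplus_{(\nu,\zeta)} V(\ul)^{(\nu,\zeta)} \otimes V_\fl(\nu,\zeta),
\end{equation*}
where $\nu$ runs over $\fg_1$-dominant weights and $\zeta$ over characters of $\fz(\fl)$. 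The algebra $\widetilde\A$ preserves this decomposition: $\A^0_{\chi_0}(\uz)$ acts on the multiplicity spaces $V(\ul)^{(\nu,\zeta)}$, while $\Delta(\A_{\chi_1})$ acts on $V_\fl(\nu,\zeta) \cong V_{\fg_1}(\nu)$ with simple spectrum (by \cite{FFR} applied to $\fg_1$). Distinct isotypic blocks are separated by the central characters of $\Delta(Z(U(\fl))) \subseteq \widetilde\A$, whose containment follows from \corref{co:Center} together with $Z(U(\fg_1)) \subset \A_{\chi_1}$. Therefore cyclicity of $\widetilde\A$ on $V(\ul)$ forces $\A^0_{\chi_0}(\uz)$ to act cyclically on each $V(\ul)^{(\nu,\zeta)}$, and assembling the highest weight vectors $v_{\nu,\zeta} \in V_\fl(\nu,\zeta)$ identifies $V(\ul)^{\fn_1} = \bigoplus_{(\nu,\zeta)} V(\ul)^{(\nu,\zeta)} \otimes v_{\nu,\zeta}$, yielding the required cyclic vector. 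The main obstacle, and the step demanding the deepest input from \cite{Ryb16}, is this cyclicity of the limit algebra $\widetilde\A$.
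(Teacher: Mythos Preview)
Your outline has a genuine gap at exactly the point you flag as ``the main obstacle'': you degenerate along a \emph{semisimple} direction $\chi(t)=\chi_0+t\chi_1$ with $\chi_1\in\fh_1$, identify the limit algebra $\widetilde\A$, and then need cyclicity of $\widetilde\A$ on $V(\ul)$. As you say, cyclicity is not closed under limits, and your appeal to ``adapting the Wakimoto / Feigin--Frenkel arguments'' is not a proof --- the argument in \cite{Ryb16} is delicate and there is no evident reason the cyclic vector specializes well along your family. Without that, the isotypic-decomposition step (which is fine) never gets off the ground.

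The paper avoids this problem by degenerating along a \emph{nilpotent} direction instead. Take a principal $\fsl_2$-triple $(e,h,f)$ in $\fg_1$; then $\chi_0+f$ is regular in $\fg$, so $\A_{\chi_0+f}(\uz)$ is already known to act cyclically on $V(\ul)$ by \cite{FFR}. The point is that $\ad\Delta(h)$ defines a grading, hence a bounded filtration, on $U(\fg)^{\otimes n}$ under which $\A_{\chi_0+f}(\uz)$ sits in nonpositive degrees and has $\A^0_{\chi_0}(\uz)$ as its degree-zero piece; moreover $\gr\A_{\chi_0+f}(\uz)=\lim_{t\to0}\A_{\chi_0+tf}(\uz)$. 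One then lifts the $\Delta(\fg_1)$-isotypic decomposition $V(\ul)=\bigoplus_\nu I_\nu$ to a decomposition $\bigoplus_\nu J_\nu$ stable under $\A_{\chi_0+f}(\uz)$ with $\gr J_\nu=I_\nu$ (using that the center of $\Delta(U(\fg_1))$ lies in $\A^0_{\chi_0}(\uz)$ and hence lifts modulo lower filtration). Cyclicity of each $J_\nu$ then passes to the top filtered quotient $J_\nu^{(top)}/J_\nu^{(top-1)}\cong I_\nu^{top}$, which is cyclic over $\A_{\chi_0+f}(\uz)/\A_{\chi_0+f}(\uz)^{(-1)}=\A^0_{\chi_0}(\uz)$. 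This filtered passage is what replaces your missing limit argument; it works precisely because the perturbation is nilpotent and graded by the $\fsl_2$ weight, a structure your semisimple degeneration does not provide.
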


\begin{proof}
Let $e,f,h$ a principal $\fsl_2$-triple in $\fg_1$ such that $h\in\fh$ and $e\in\fn_1$. Then $\chi_0+f$ is a regular element of $\fg$. Consider the subalgebra $\A_{\chi_0+f}\subset U(\fg)$. The operator $\ad \Delta(h)$ defines a grading on $U(\fg)^{\otimes n}=\bigoplus\limits_{k\in\BZ} (U(\fg)^{\otimes n})^k$, where
$$(U(\fg)^{\otimes n})^k:=\{x\in U(\fg)^{\otimes n}\ : \ \Delta(h)x-x\Delta(h) = kx\}.$$
This grading induces an increasing filtration on $U(\fg)^{\otimes n}$ defined by $(U(\fg)^{\otimes n})^{(k)}:=\bigoplus\limits_{i\le k} (U(\fg)^{\otimes n})^k$, for $k\in\BZ$.

\begin{lem} We have $\A_{\chi_0+f}(\uz)\subset (U(\fg)^{\otimes n})^{(0)}$, hence we get a \emph{bounded} increasing filtration on $\A_{\chi_0+f}(\uz)$. The associated graded $\gr \A_{\chi_0+f}(\uz)$ is naturally a commutative subalgebra in $\gr (U(\fg)^{\otimes n}) = U(\fg)^{\otimes n}$ and is the same as the limit $\lim\limits_{t\to0}\A_{\chi_0+tf}(\uz)$.
The $0$th graded component of $\gr \A_{\chi_0+f}(\uz)$ is $\A^0_{\chi_0}(\uz)\subset U(\fg)^{\otimes n}$.
\end{lem}

\begin{proof} Consider the subalgebra $\A_{\chi_0+tf}(\uz)\subset U(\fg)^{\otimes n}\otimes\BC[t]$. This subalgebra is the image of the universal one $\A(\uz,\infty)\subset U(\fg)^{\otimes n}\otimes S(\fg)$ under the evaluation of the last tensor factor at $\chi_0+tf$. All elements of the subalgebra $\A(\uz,\infty)$ are invariant with respect to the diagonal $\fg$ action so in particular are annihilated by $\ad \Delta(h)$. Hence all elements of the subalgebra $\A_{\chi_0+tf}(\uz)\subset U(\fg)^{\otimes n}\otimes\BC[t]$ are annihilated by $2\frac{\partial}{\partial t}+\ad \Delta(h)$. This means that $\A_{\chi_0+tf}(\uz)$ lies in the $0$-th graded component of $U(\fg)^{\otimes n}\otimes\BC[t]$ with respect to the grading determined by $\ad \Delta(h)$ on $U(\fg)^{\otimes n}$ and by $\deg t=2$. Hence the evaluation of $\A_{\chi_0+tf}(\uz)$ at $t\ne0$ (e.g. $t=1$) is in the $0$-th component of $U(\fg)^{\otimes n}$ with respect to the filtration defined above. Moreover for any element of $\A_{\chi_0+tf}(\uz)$ its leading term as $t\to0$ (i.e. the coefficient at the smallest degree of $t$) is the projection to the subspace with the biggest eigenvalue of $\ad \Delta(h)$. Hence $\gr \A_{\chi_0+f}(\uz)=\lim\limits_{t\to0}\A_{\chi_0+tf}(\uz)$. The $0$-degree part of $\gr \A_{\chi_0+f}(\uz)$ is then just the evaluation of $\A_{\chi_0+tf}(\uz)$ at $t=0$ which is $\A^0_{\chi_0}(\uz)$.
\end{proof}

Consider the action of $\A_{\chi_0+f}(\uz)$ on $V(\ul)$. By \cite{FFR}, $V(\ul)$ is cyclic as a $\A_{\chi_0+f}(\uz)$-module. The operator $\Delta(h)$ defines a grading on $V(\ul)$ which agrees with the grading of $U(\fg)^{\otimes n}$ defined above and with the decomposition into $\Delta(\fg_1)$-isotypic components $V(\ul)=\bigoplus_{\nu\in X(\fg_1)}I_\nu$ (here $I_\nu=m_\nu V_\nu$ are isotypic components with respect to $\Delta(\fg_1$)). Clearly we have $V(\ul)^{\fn_1}=\bigoplus\limits_{\nu}I_\nu^{top}$ (here the superscript $top$ means the top degree part with respect to the $h$-grading). Now we want to deduce the cyclicity of $I_\nu^{top}$ with respect to $\A^0_{\chi_0}(\uz)$ from the cyclicity of $V(\ul)$ with respect to $\A_{\chi_0+f}(\uz)$. The main difficulty is that the decomposition $V(\ul)=\bigoplus_{\nu\in X(\fg_1)}I_\nu$ does not agree with the action of $\A_{\chi_0+f}(\uz)$. However, the following holds (and it is sufficient for our purposes):

\begin{lem}\label{lem-isotypic} There is a decomposition $V(\ul)=\bigoplus_{\nu\in X(\fg_1)} J_\nu$ such that it is preserved by $\A_{\chi_0+f}(\uz)$ and $\gr J_\nu=I_\nu$.
\end{lem}

\begin{proof} The isotypic components $I_\nu$ are the eigenspaces for the center $Z$ of $\Delta(U(\fg_1))$. According to Corollary~\ref{co:Center}, $Z\subset \A^0_{\chi_0}(\uz)\subset\gr\A_{\chi_0+f}(\uz)$. Hence for any $C\in Z$ there exists $\tilde{C}\in\A_{\chi_0+f}(\uz)$ such that $\tilde{C}=C+N$ where $\deg N <0$. Consider the image of $\tilde{C}$ in $\End(V(\ul))$. We have the Jordan decomposition $\pi_\ul(\tilde{C})=\pi_\ul(\tilde{C})_s+\pi_\ul(\tilde{C})_n$ where both $\pi_\ul(\tilde{C})_s$ and $\pi_\ul(\tilde{C})_n$ are polynomials of $\pi_\ul(\tilde{C})$, hence lie in the image of $\A_{\chi_0+f}(\uz)$. Since the image of $\pi_\ul(C)$ is semisimple and $\pi_\ul(\tilde{C})_n$ is a nilpotent operator expressed as a polynomial of $\pi_\ul(C+N)$, we have $\deg \pi_\ul(\tilde{C})_n<0$. Hence $\pi_\ul(\tilde{C})_s = \pi_\ul(C)$ modulo lower degree. Hence the projectors to the eigenspaces of $\pi_\ul(\tilde{C})_s$ are the the projectors to the eigenspaces of $C$ modulo lower degree. Thus the desired decomposition of $V_\ul$ is the decomposition into the eigenspaces with respect to $\pi_\ul(\tilde{C})_s$ for some generic $C\in Z$.
\end{proof}

By \cite{FFR}, $V(\ul)$ is cyclic as a $\A_{\chi_0+f}(\uz)$-module. Hence each $J_\nu$ is cyclic with respect to $\A_{\chi_0+f}(\uz)$. In particular, the quotient $J_\nu^{(top)}/J_\nu^{(top-1)}$ is cyclic with respect to $\A_{\chi_0+f}(\uz)/\A_{\chi_0+f}(\uz)^{(-1)}=\A^0_{\chi_0}(\uz)$. On the other hand, according to Lemma~\ref{lem-isotypic}, $J_\nu^{(top)}/J_\nu^{(top-1)}$ is isomorphic to $I_\nu^{top}=m_\nu V_\nu^{\fn_1}$ as $\A^0_{\chi_0}(\uz)$-module. Hence the space of highest vectors of each $\Delta(\fg_1)$-isotypic component in $V(\ul)$ is cyclic as $\A^0_{\chi_0}(\uz)$-module, and hence the whole space $V(\ul)^{\fn_1}$ is a cyclic $\A^0_{\chi_0}(\uz)$-module.
\end{proof}

As a corollary we get the Theorem~\ref{th:cyclic} (proved earlier in \cite{Ryb16}). We reproduce the proof here for completeness. 

\begin{cor}\label{co:BAC} 
For any $\uz\in\CM_{n+1}$ and any collection of dominant integral weights $\ul=(\l_1,\ldots,\l_n)$ and a dominant integral weight $\mu$, the space $V(\ul)^\mu$ is a cyclic module for the algebra $\A(\uz)$.
\end{cor}

\begin{proof}
According to the above Theorem for $\chi_0=0$, any Gaudin subalgebra $\A(\uz)$ corresponding to a point $\uz$ in the open stratum of $\CM_{n+1}$ acts cyclically on the space of singular vectors $V(\ul)^{\fn}$. Since the subalgebra $\A(\uz)$ contains the center of $\Delta(U(\fg))$, its image in $\End(V(\ul)^\fn)$ contains the projector to any $V(\ul)^\mu$, so $V(\ul)^\mu$ is cyclic with respect to $\A(\uz)$ as well. 

For $\uz$ in smaller stata we proceed by induction on $n$. Suppose that $\uz=\gamma_{k,m_1,\ldots,m_k}(\underline{w},\underline{u_1},\ldots,\underline{u_k})$. Then by Theorem~\ref{th:GaudinBoundary} the corresponding subalgebra $\A(\uz)$ is generated by $I_{i}(\A(\underline{u_i}))$ and $D_{m_1,\ldots,m_k}(\A(\underline{w}))$, where  
$I_{i}$ and $D_{m_1,\ldots,m_k}$ are given by 
$$ I_i := \Delta^{m_1 + \cdots + m_{i-1} + 1, \dots, m_1 + \cdots + m_{i-1} + m_i} :U(\fg)^{\otimes m_i}\hookrightarrow U(\fg)^{\otimes n};$$
$$D_{m_1,\ldots,m_n} := \Delta^{\{1, \dots, m_1\}, \{m_1 + 1, \dots, m_1 +  m_2\}, \dots, \{m_1 + \dots + m_{n-1} + 1, \dots, n \}} :U(\fg)^{\otimes k}\hookrightarrow U(\fg)^{\otimes n}.$$

Let $V(\ul)=\bigoplus  V(\ul)^{\underline{\nu}}\otimes V(\underline{\nu})$ be the decomposition of $V(\ul)$ into the sum of isotypic component with respect to $D_{m_1,\ldots,m_k}(\fg^{\oplus k})$ with $V(\underline{\nu})$ being the irreducible representation of $\fg^{\oplus k}$ with the highest weight $\underline{\nu}=(\nu_1,\ldots,\nu_k)$ and $ V(\ul)^{\underline{\nu}}:=\Hom_{\fg^{\oplus k}}(V(\underline{\nu}),V(\underline{\l}))$ being the multiplicity space. Restricting to singular vectors of the weight $\mu$ in $V(\ul)$ we have $V(\ul)^\mu=\bigoplus  V(\ul)^{\underline{\nu}}\otimes V(\underline{\nu})^\mu$
By induction hypothesis, the multiplicity spaces $ V(\ul)^{\underline{\nu}}=\bigotimes\limits_{i=1}^k V(\l_{m_1+\ldots+m_i+1},\ldots,\l_{m_1+\ldots+m_{i+1}})^{\nu_i}$ are cyclic $\bigotimes\limits_{i=1}^k I_{i}(\A(\underline{u_i}))$-modules. On the other hand, each $V(\underline{\nu})^\mu$ is a cyclic $D_{m_1,\ldots,m_k}(\A(\underline{w}))$-module. Hence the entire module $V(\underline{\l})^\mu$ is cyclic with respect to $\A(\uz)$.
\end{proof}

\begin{cor}\label{cyclic} The $\fg$-module $V(\l)$ is cyclic as
an $\A_\chi$-module for any $\chi\in \CM_{\Delta}$.
\end{cor}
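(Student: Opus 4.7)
The plan is to induct on the semisimple rank $r$ of $\fg$. The base case $r=1$ is immediate: then $\fg = \fsl_2$, the space $\CM_\Delta$ is a single point, $\A_\chi$ is the shift of argument subalgebra at a regular Cartan element, and cyclicity of $V(\l)$ is established in \cite{FFR}. More generally, whenever $\chi \in \CM_\Delta$ lies in the open dense locus $\fh^{reg}/\Cx$, cyclicity of $V(\l)$ over $\A_\chi$ is already given by \cite{FFR}, so the whole point is to handle the boundary of $\CM_\Delta$.

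For the inductive step, suppose $\chi$ is a boundary point, so that $\pi(\chi) = [\chi_0]$ for a non-zero, non-regular $\chi_0 \in \fh$. By \lemref{le:FibresCM}, $\chi$ corresponds to a pair $(\chi_0, \chi_1)$ with $\chi_1 \in \CM_{\Delta_1}$, where $\Delta_1 = \{\alpha \in \Delta : \alpha(\chi_0) = 0\}$ is the root system of $\fg_1 := \fz_\fg(\chi_0)'$. Since $\Delta_1$ is contained in the hyperplane $\chi_0^\perp$, $\fg_1$ has rank at most $r-1$, so the inductive hypothesis applies to the pair $(\fg_1, \chi_1)$. By the corollary following \propref{pr:ShuvalovTarasov2}, $\A_\chi$ is generated by $\A_{\chi_0}^0 \subset U(\fg)$, which commutes with $\fg_1$, together with $\A_{\chi_1} \subset U(\fg_1) \subset U(\fg)$. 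I then decompose $V(\l)$ as a $\fg_1$-module, $V(\l) = \bigoplus_\nu V(\l)^\nu \otimes V(\nu)$, indexed by dominant $\fg_1$-weights $\nu$: the multiplicity space $V(\l)^\nu$ is the factor on which $\A_{\chi_0}^0$ acts, while $\A_{\chi_1}$ acts on the irreducible factor $V(\nu)$.

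Applying Theorem~\ref{th:CyclicGeneral} in the case $n=1$ produces a cyclic vector $v_0 = \sum_\nu v_\nu \otimes h_\nu \in V(\l)^{\fn_1}$ for the action of $\A_{\chi_0}^0$, where $h_\nu$ denotes the $\fg_1$-highest weight vector of $V(\nu)$. By Corollary~\ref{co:Center}, $\A_{\chi_0}^0$ contains $ZU(\fg_1)$, and by Harish-Chandra the distinct dominant $\fg_1$-weights $\nu$ appearing in the decomposition of $V(\l)$ yield distinct central characters; I can therefore extract central idempotents in $\A_{\chi_0}^0$ projecting onto individual isotypic components. This shows that each $v_\nu$ is, individually, a cyclic vector for the $\A_{\chi_0}^0$-action on $V(\l)^\nu$. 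Invoking the inductive hypothesis supplies a cyclic vector $u_\nu \in V(\nu)$ for $\A_{\chi_1}$ for every $\nu$, and I would set $w := \sum_\nu v_\nu \otimes u_\nu \in V(\l)$.

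The final step is to check that $w$ is cyclic for $\A_\chi$ on $V(\l)$. The same central idempotents coming from $ZU(\fg_1) \subset \A_\chi$ let me isolate each summand $v_\nu \otimes u_\nu$ inside $\A_\chi \cdot w$; then acting by $\A_{\chi_0}^0$ sweeps out $V(\l)^\nu \otimes u_\nu$ (using cyclicity of $v_\nu$), and acting by $\A_{\chi_1}$ further sweeps out $V(\l)^\nu \otimes V(\nu)$ (using cyclicity of $u_\nu$). Summing over $\nu$ recovers $V(\l)$ and closes the induction. The main obstacle in this plan is precisely the passage from cyclicity of the full $\fn_1$-invariant subspace $V(\l)^{\fn_1}$ (supplied by Theorem~\ref{th:CyclicGeneral}) to cyclicity of each individual multiplicity space $V(\l)^\nu$; this hinges on the containment $ZU(\fg_1) \subset \A_{\chi_0}^0$ from Corollary~\ref{co:Center} together with the Harish-Chandra separation of central characters.
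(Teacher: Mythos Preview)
Your argument is correct and follows the same inductive strategy as the paper: you use the operadic description $\A_\chi = \A_{\chi_0}^0 \cdot \A_{\chi_1}$, invoke Theorem~\ref{th:CyclicGeneral} for the $\A_{\chi_0}^0$-piece, and apply the inductive hypothesis to the smaller-rank algebra $\fg_1$ for the $\A_{\chi_1}$-piece. Your write-up spells out explicitly the role of $ZU(\fg_1) \subset \A_{\chi_0}^0$ in separating isotypic components, which the paper leaves implicit; the only minor omission is the reduction from semisimple to simple $\fg$ (needed both at the outset and because $\fg_1$ may itself be reducible), but this is immediate from the product decomposition $\CM_\Delta \cong \CM_{\Delta_1} \times \CM_{\Delta_2}$ and $\A_\chi \cong \A_{\chi_1} \otimes \A_{\chi_2}$.
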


\begin{proof}

Let $\chi_0$ be the projection of $\chi$ to $\fh$. Denote by $\Delta_1\subset\Delta$ the root system of the commutant $\fg_1:=\fz_\fg(\chi_0)'$. Then the subalgebra $\A_\chi\subset U(\fg)$ is generated by the subalgebras $\A_{\chi_0}^0\subset U(\fg)$ and $\A_{\chi_1}\subset U(\fg_1)\subset U(\fg)$ for some $\chi_1\in\CM_{\Delta_1}$. Hence we can proceed by induction on $\rk\fg$ using Theorem~\ref{th:CyclicGeneral}.
\end{proof}

\begin{cor}\label{co:SimpleSpec} For $\chi\in \CM_{\Delta}(\BR)$ and
any dominant  $\l$, the quantum shift of argument subalgebra
$\A_\chi \subset U(\fg)$ is diagonalizable and has simple spectrum on
the $\fg$-module $V(\l)$.
\end{cor}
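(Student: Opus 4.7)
The cyclicity of $V(\l)$ as an $\A_\chi$-module is already provided by \corref{cyclic}, so it suffices to prove that $\A_\chi$ acts diagonalizably on $V(\l)$. Indeed, once diagonalizability is known, cyclicity forces the common eigenspace decomposition to consist of one-dimensional eigenlines, since a cyclic vector must have non-zero projection to each common eigenspace.

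For the regular base case $\chi\in\fh^{reg}(\BR)$, the diagonalizability is a consequence of \cite{FFR}: there $\A_\chi$ is shown to be closed under the Shapovalov anti-involution $\ast$ associated to a positive-definite Hermitian form on $V(\l)$, and a commutative $\ast$-invariant subalgebra of $\End V(\l)$ acts by simultaneously diagonalizable normal operators.

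For arbitrary $\chi\in\CM_\Delta(\BR)$, the plan is a limit argument. By the $U(\fg)$-analogue of \thmref{th:dCPFamily}, the assignment $\chi'\mapsto \A_{\chi'}$ is a morphism from $\CM_\Delta$ into Grassmannians of subspaces of the filtered pieces of $U(\fg)$. Choose a path $\chi(t)\in\fh^{reg}(\BR)$ with $\lim_{t\to 0}\chi(t)=\chi$ inside $\CM_\Delta(\BR)$; such a path exists because $\fh^{reg}(\BR)/\Cx$ is dense in $\CM_\Delta(\BR)$ and the family of subalgebras is continuous. Then $\A_\chi=\lim_{t\to 0}\A_{\chi(t)}$ in each Grassmannian. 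The property that a subspace of $\End V(\l)$ be $\ast$-invariant is a closed condition (since $W\mapsto W^\ast$ is a continuous, dimension-preserving involution on the Grassmannian), so the $\ast$-stability of each $\A_{\chi(t)}$ passes to the limit $\A_\chi$. Being commutative and $\ast$-invariant, $\A_\chi$ is diagonalizable, completing the argument.

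The main obstacle is pinning down the precise $\ast$-invariance statement for $\A_\chi$ at regular real parameters in \cite{FFR}; once that is extracted, the limit argument is purely formal. An alternative inductive strategy, more in the spirit of the proof of \corref{cyclic}, would write $\chi=(\chi_0,\chi_1)$ via \lemref{le:FibresCM}, decompose $V(\l)$ under $\fg_1=\fz_\fg(\chi_0)'$ into isotypic components $V(\l)^\nu\otimes V(\nu)$, and invoke induction on $\rk\fg$ to diagonalize $\A_{\chi_1}$ on each $V(\nu)$. One would then still need separately to diagonalize $\A_{\chi_0}^0$ on each multiplicity space $V(\l)^\nu$, which appears to require a self-adjointness argument of the same flavour; this is why the uniform limiting approach seems preferable.
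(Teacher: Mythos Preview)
Your proposal is correct and follows essentially the same strategy as the paper: diagonalizability comes from the Hermitian/anti-involution property inherited from \cite{FFR}, and simple spectrum then follows by combining this with the cyclicity of \corref{cyclic}. The paper's proof is terser in that it asserts directly that $\A_\chi$ acts by Hermitian operators on $V(\l)$ for all real $\chi\in\CM_\Delta(\BR)$ (with respect to the Hermitian form preserved by the compact real form of $\fg$), without isolating the regular case first; your limit argument is a perfectly valid way of supplying the detail behind that assertion, since $\ast$-invariance is indeed a closed condition on the Grassmannian and the family $\chi'\mapsto\A_{\chi'}$ is continuous over $\CM_\Delta(\BR)$.
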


\begin{proof}
Following the same strategy as in \cite{FFR}, we see that the algebra $\A_\chi$ with real $\chi$ (with respect to the compact real form of $\fg$) acts by Hermitian operators on $V(\l)$ (with respect to the Hermitian form preserved by the compact form of $\fg$), and hence is diagonalizable. By Corollary~\ref{cyclic}, for
any $\chi\in \CM_{\Delta}$ it has a cyclic vector. The two properties may only be realized if $\A_\chi$ has simple spectrum.
\end{proof}

From these results, we obtain the following two further corollaries.

\begin{cor}\label{co:SimpleSpecGeneral} Suppose that $\chi_0\in\fh_\BR$ is a real Cartan element and $z_1,\ldots,z_n\in\BR$ pairwise distinct real numbers. For any collection $\ul:=(\l_1,\ldots,\l_n)$ of dominant  weights the subalgebra $\A^0_{\chi_0}(z_1,\ldots,z_n)$ has simple spectrum in the space $V(\ul)^{\fn_1}$ of highest vectors with respect to $\Delta(\fg_1)$.
\end{cor}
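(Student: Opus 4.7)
The plan is to combine Theorem~\ref{th:CyclicGeneral} with a diagonalizability statement, following the strategy already used for the $n=1$ case in Corollary~\ref{co:SimpleSpec}. Recall that a commutative algebra of operators on a finite-dimensional space has simple spectrum precisely when it is simultaneously diagonalizable and admits a cyclic vector; since Theorem~\ref{th:CyclicGeneral} already gives cyclicity of $V(\ul)^{\fn_1}$ as a module over $\A^0_{\chi_0}(\uz)$, the task reduces to showing that this algebra acts by commuting diagonalizable operators on $V(\ul)^{\fn_1}$.

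For diagonalizability I would use Hermiticity, exactly as in \cite{FFR}. For any real regular $\chi\in\fh^{reg}_\BR$ and real pairwise distinct $\uz$, the subalgebra $\A_\chi(\uz)$ acts on $V(\ul)$ by operators that are Hermitian with respect to the positive-definite form preserved by the compact real form of $\fg$; this is the input already invoked in the proof of Corollary~\ref{co:SimpleSpec}. Now $\A^0_{\chi_0}(\uz)$ is by construction the image of the universal Gaudin algebra $\A\subset U(\hat{\fg}_-)$ under the evaluation homomorphism $\varphi_{w-z_1,\ldots,w-z_n,\chi_0}$, which depends algebraically on $(\uz,\chi_0)$. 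Choosing a sequence of real regular elements $\chi\to\chi_0$ in $\fh_\BR$ and evaluating the same universal generators, we exhibit each generator of $\A^0_{\chi_0}(\uz)$ as a limit of Hermitian operators, hence as Hermitian itself. Consequently every element of $\A^0_{\chi_0}(\uz)$ acts by a Hermitian, and in particular diagonalizable, operator on $V(\ul)$.

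Next I would restrict to $V(\ul)^{\fn_1}$. Because $\A^0_{\chi_0}(\uz)\subset (U(\fg)^{\otimes n})^{\fz_\fg(\chi_0)}$, it commutes with $\Delta(\fg_1)$ and therefore preserves the subspace $V(\ul)^{\fn_1}$ of $\fn_1$-highest vectors. The compact Hermitian form on $V(\ul)$ restricts to a positive-definite form on any subspace, so Hermitian operators on $V(\ul)$ that preserve $V(\ul)^{\fn_1}$ restrict to Hermitian operators on $V(\ul)^{\fn_1}$; hence $\A^0_{\chi_0}(\uz)$ acts on $V(\ul)^{\fn_1}$ by commuting Hermitian operators, which are simultaneously diagonalizable. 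Together with the cyclicity of Theorem~\ref{th:CyclicGeneral}, this forces all joint eigenspaces to be one-dimensional, giving simple spectrum.

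The only step I expect to require real care is the limiting Hermiticity argument: one needs to know that the filtered pieces $\A_\chi(\uz)\cap F^N\bigl(U(\fg)^{\otimes n}\bigr)$ vary algebraically in $\chi\in\fh_\BR$ and that their limit as $\chi\to\chi_0$ in fact equals $\A^0_{\chi_0}(\uz)\cap F^N$, so that Hermiticity is really inherited by a spanning set of generators. This is however built into the construction of $\A^0_{\chi_0}(\uz)$ as the image of $\A$ under $\varphi_{w-z_1,\ldots,w-z_n,\chi_0}$ and is of the same nature as the continuity statements used in Lemma~\ref{le:AchiCodim1} and in the proof of Theorem~\ref{th:CyclicGeneral}; beyond that, the argument is a direct parallel of Corollary~\ref{co:SimpleSpec}.
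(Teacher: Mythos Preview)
Your argument is correct and is precisely the one the paper has in mind: the corollary is stated without proof as an immediate consequence of the cyclicity in Theorem~\ref{th:CyclicGeneral} together with the Hermiticity argument from \cite{FFR} already invoked for Corollary~\ref{co:SimpleSpec}. Your closing concern is unnecessary (and slightly misstated, since $\lim_{\chi\to\chi_0}\A_\chi(\uz)$ strictly contains $\A^0_{\chi_0}(\uz)$ rather than equaling it): the generators of $\A^0_{\chi_0}(\uz)$ are by definition the elements $\varphi_{w-z_1,\dots,w-z_n,\chi_0}(S_l)$, which depend polynomially on $\chi_0$, so their Hermiticity for real $\chi_0$ is immediate without any appeal to the family of subalgebras.
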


\begin{cor}\label{co:SimpleSpecLimits} All subalgebras arising in Propositions~\ref{pr:MultiClosure1},~\ref{pr:MultiClosure2}~and~\ref{pr:MultiClosure3} have a cyclic vector in any tensor product of the appropriate number of irreducible representations of $\fg$. For the real values of the parameters all these subalgebras have simple spectrum.
\end{cor}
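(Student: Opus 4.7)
The plan is to establish cyclicity at arbitrary complex parameters first, then deduce simple spectrum at real parameters. Each subalgebra $\A_0$ in the three propositions arises as a boundary limit of a family $\A(x)$ parameterized by a (real or complex) variety, and the propositions themselves show that the Hilbert series of $\A(x)$ is constant along the family, so no filtered dimension jumps at $\A_0$.

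For the pure tensor products --- $\A_\chi \otimes \A_\chi$ in Proposition~\ref{pr:MultiClosure1}(2), $\A_\chi^{\otimes 3}$ in Proposition~\ref{pr:MultiClosure3}(1), and the $\A^0_{\chi_0}\otimes\A^0_{\chi_0}$ piece of Proposition~\ref{pr:MultiClosure2}(5) --- cyclicity is immediate: by Corollary~\ref{cyclic} each factor $\A_\chi$ admits a cyclic vector $v_i \in V(\l_i)$, and $v_1\otimes\cdots\otimes v_n$ is then cyclic for the tensor product algebra. The same holds for the subalgebras of the form $\Delta^{i,jk}(\A_\chi\otimes\A_\chi(u_i^{-1},0))$ in Proposition~\ref{pr:MultiClosure3}(2), which combine a cyclic action on $V(\l_i)$ with cyclicity of $\A_\chi(u_i^{-1},0)$ on $V(\l_j)\otimes V(\l_k)$ provided by Theorem~\ref{th:CyclicGeneral} applied with $\chi$ regular.

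For the genuine degenerations --- the subalgebras generated by $\Delta(\CA_\chi)$ and Gaudin pieces in Propositions~\ref{pr:MultiClosure1}(1), \ref{pr:MultiClosure3}(3)--(5), and the mixed boundary subalgebras in Proposition~\ref{pr:MultiClosure2}(3)--(5) --- I would invoke the following limit principle: for a family $\A(x)\subset U(\fg)^{\otimes n}$ of commutative subalgebras of constant Hilbert series, if $\A(x)$ acts cyclically on $V(\ul)$ for a dense open set of $x$, then $\A(x_0)$ acts cyclically on $V(\ul)$ at every limit point. The generic cyclicity input is provided by \cite{FFR} (for $\A_\chi(\uz)$ with regular $\chi$) together with Theorem~\ref{th:CyclicGeneral} and Corollary~\ref{cyclic}. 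The principle itself is proved by choosing a one-parameter arc $x(t)\to x_0$ along which cyclic vectors $v(t)$ exist, normalizing and extracting a limit $v_0=\lim_{t\to 0}v(t)$, and using the Hilbert-series equality together with lower semicontinuity of $\dim \A(x(t))v(t)$ to force $\dim \A(x_0)v_0 = \dim V(\ul)$.

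The simple-spectrum statement for real parameters then follows exactly as in the proof of Corollary~\ref{co:SimpleSpec}. The Feigin-Frenkel generators $S_l$ live in the compact real form, so the Laurent coefficients $s^a_{l,k}(\uz,\chi)$ act by Hermitian operators on $V(\ul)$ with respect to the compact Hermitian form whenever $\uz$ and $\chi$ are real; the auxiliary generators $\Delta(h_\alpha)$, $\Delta(C_\alpha)$ and $ah_\alpha^{(1)}+b\Delta(C_\alpha)$ are manifestly Hermitian. A commutative algebra of Hermitian operators is simultaneously diagonalizable, and a diagonalizable commutative algebra acting cyclically on a finite-dimensional space has simple spectrum. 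The main obstacle I foresee is applying the limit principle cleanly at the most degenerate strata --- the exceptional blowup fiber in Proposition~\ref{pr:MultiClosure2}(5), the singular point of $X$ and the three distinguished points on $l_0$ in Proposition~\ref{pr:MultiClosure3} --- where the parameter variety is no longer smooth; the arc $x(t)$ must be chosen to approach $x_0$ through the smooth locus, and the explicit generator counts recorded inside the proofs of the respective propositions must be used to verify that the dimension identity controlling the semicontinuity is sharp.
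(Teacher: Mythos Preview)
Your ``limit principle'' is the gap. Cyclicity is an \emph{open} condition, not a closed one: the assertion that $\A(x)v=V(\ul)$ for some $v$ says that a certain matrix of evaluation has full rank, and rank can only drop under specialization. So knowing $\dim\A(x(t))v(t)=\dim V(\ul)$ for $t\ne0$ yields only $\dim\A(x_0)v_0\le\dim V(\ul)$; your semicontinuity runs the wrong direction, and taking $v_0$ as a limit of cyclic vectors does not rescue it. Constancy of the Hilbert series of $\A(x)$ inside $U(\fg)^{\otimes n}$ does not prevent the image in $\End V(\ul)$ from shrinking at $x_0$. (There is also a smaller slip: $\A^0_{\chi_0}$ with $\chi_0$ singular is not covered by \corref{cyclic}, so the ``$\A^0_{\chi_0}\otimes\A^0_{\chi_0}$ piece'' of \propref{pr:MultiClosure2}(5) is not cyclic on $V(\l_1)\otimes V(\l_2)$ by itself --- only on the $\fg_1$-highest vectors.)

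The paper leaves the corollary without proof because it follows from \thmref{th:CyclicGeneral}, \thmref{th:cyclic} and \corref{cyclic} by the same product mechanism you already used for the pure tensor cases --- you simply stopped applying it too soon. Each boundary subalgebra in the three propositions is \emph{explicitly} described as a product of commuting pieces, and one checks directly that each piece acts cyclically on the appropriate factor of the relevant isotypic decomposition. For example $\Delta(\CA_\chi)\cdot\CA(1,0)$ in \propref{pr:MultiClosure1}(1) respects $V(\l_1)\otimes V(\l_2)=\bigoplus_\mu V(\ul)^\mu\otimes V(\mu)$; then $\CA(1,0)$ is cyclic on $V(\ul)^\mu$ by \thmref{th:cyclic} and $\Delta(\CA_\chi)$ on $V(\mu)$ by \corref{cyclic}. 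For \propref{pr:MultiClosure2}(3)--(5) one decomposes $V(\ul)$ under the diagonal $\fsl_2$ attached to $\alpha$: \thmref{th:CyclicGeneral} gives cyclicity of $\A^0_{\chi_0}(z,0)$ (or of each $\A^0_{\chi_0}$ factor) on the multiplicity spaces, and the remaining generators $\Delta(h_\alpha)$, $ah_\alpha^{(1)}+b\Delta(C_\alpha)$ form an $\fsl_2$ inhomogeneous Gaudin algebra on $V(\nu_1)\otimes V(\nu_2)$, cyclic again by \thmref{th:CyclicGeneral}. The cases of \propref{pr:MultiClosure3} are handled identically, and no special treatment of the singular strata is needed. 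Your Hermitian argument for simple spectrum at real parameters is correct.
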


\subsection{Cover of the de Concini-Procesi space}
We define

\begin{equation}
 \CE(\l) = \{(\chi, L) : \chi \in \CM_\Delta(\BR) : L \text{ is an eigenline for } \A_\chi\}
 \end{equation}
We have obvious maps $ \CE(\l) \rightarrow \BP(V(\l)) $ and $ \CE(\l) \rightarrow \CM_\Delta(\BR) $.

Corollary~\ref{co:SimpleSpec} immediately implies the following.

\begin{cor}\label{cor-covering} The map $ \CE(\l) \rightarrow \CM_\Delta(\BR) $ is an unramified covering.
\end{cor}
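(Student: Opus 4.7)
The plan is to combine the algebraic family structure on shift of argument subalgebras (Theorem~\ref{th:dCPFamily}) with the cyclicity/simple spectrum result (Corollary~\ref{co:SimpleSpec}) to show that $\CE(\l)\to\CM_\Delta(\BR)$ is a local homeomorphism with fibers of constant finite cardinality, hence a covering.

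First I would observe that $\CE(\l)$ is a closed real subvariety of $\CM_\Delta(\BR)\times\PP(V(\l))$. This is because ``$L$ is an eigenline for $\A_\chi$'' is an algebraic condition: fixing any finite set of generators $a_1,\dots,a_m$ of the fiber of the family of subalgebras over a neighborhood of $\chi_0$ (which exist by \defnref{Def:Family} and \thmref{th:dCPFamily}, with each $a_k$ varying regularly with $\chi$), the condition on $(\chi,[v])$ that $\pi_\l(a_k(\chi))v$ be proportional to $v$ for $k=1,\dots,m$ cuts out $\CE(\l)$.

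Next I would verify that the map $\CE(\l)\to\CM_\Delta(\BR)$ has constant fiber cardinality $\dim V(\l)$. By \corref{co:SimpleSpec}, for every $\chi\in\CM_\Delta(\BR)$ the algebra $\pi_\l(\A_\chi)\subset\End V(\l)$ is diagonalizable with simple spectrum, so $V(\l)$ decomposes into exactly $\dim V(\l)$ eigenlines; this is precisely the fiber over $\chi$.

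To establish local triviality, I would use the following standard perturbation argument. Fix $\chi_0\in\CM_\Delta(\BR)$ and choose finitely many elements $a_1,\dots,a_m\in\A_{\chi_0}$ whose joint spectrum on $V(\l)$ separates the $\dim V(\l)$ eigenlines (possible since $\A_{\chi_0}$ has simple spectrum and is finitely generated). Since $\A_\chi$ forms a family, we may lift to elements $a_k(\chi)$ depending continuously (in fact algebraically) on $\chi$ in a neighborhood $U$ of $\chi_0$, with $a_k(\chi_0)=a_k$. By \corref{co:SimpleSpec}, for every $\chi\in U\cap\CM_\Delta(\BR)$ the whole algebra $\A_\chi$ is semisimple with simple spectrum on $V(\l)$; in particular the $a_k(\chi)$ are simultaneously diagonalizable. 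The eigenprojectors of a family of commuting Hermitian operators with simple joint spectrum depend continuously (even real-analytically) on the parameters on which the operators depend continuously, as one sees via contour integrals of the resolvent. Shrinking $U$ if necessary, the joint spectral decomposition labeled at $\chi_0$ extends continuously to all of $U\cap\CM_\Delta(\BR)$, giving a homeomorphism $(U\cap\CM_\Delta(\BR))\times\{1,\dots,\dim V(\l)\}\to \CE(\l)|_{U\cap\CM_\Delta(\BR)}$ over $U\cap\CM_\Delta(\BR)$.

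The only subtlety is making the Hermiticity argument precise at boundary strata of $\CM_\Delta(\BR)$, where $\A_\chi$ is a limit subalgebra (with $\chi\in\CM_\Delta\setminus\fh^{reg}/\Cx$) rather than a classical shift of argument subalgebra. However, \corref{co:SimpleSpec} is proved uniformly for all $\chi\in\CM_\Delta(\BR)$ using the compact real form, and the family of subalgebras is real-algebraic throughout $\CM_\Delta(\BR)$, so the same spectral perturbation argument applies uniformly. This gives the required local triviality, and together with the constant fiber count completes the proof that $\CE(\l)\to\CM_\Delta(\BR)$ is an unramified covering.
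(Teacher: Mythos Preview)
Your argument is correct and follows the same route as the paper: both deduce the covering property from \corref{co:SimpleSpec}. The paper's proof is simply the one-line assertion that \corref{co:SimpleSpec} immediately implies the result, whereas you have unpacked the standard perturbation argument (algebraic family of subalgebras, constant fiber size $\dim V(\l)$, continuous dependence of eigenprojectors for semisimple commuting operators with simple joint spectrum) that makes this implication precise.
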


The fibres of this cover will be denoted $ \CE_\chi(\l) $ for any $ \chi \in \CM_\Delta(\BR)$.

\section{Crystal structure on the fibre} \label{se:CrystalFibre}
Fix $ \l \in \Lambda_+ $ and $ \chi \in \fh_+^\emptyset$ with $ w_0(\chi) = -\chi$ (this condition is not important, but is used to simplify the discussion later).  Our goal is to define a crystal structure on the fibre $ \CE_\chi(\l)$ and prove that it is isomorphic to $ B(\l) $.

If $ \fg = \fg_1 \oplus \fg_2 $, then we can write $ V(\l) = V(\l_1) \otimes V(\l_2) $ where $ V(\l_i)$ is an irrep of $ \fg_i $.  Moreover $ \A_\chi = \A_{\chi_1} \otimes \A_{\chi_2} $ where $ \A_{\chi_i} $ is a shift of argument algebra in $ U(\fg_i) $.  So we get a decomposition $ \CE_\chi(\l) = \CE_{\chi_1}(\l_1) \times \CE_{\chi_2}(\l_2) $.  Thus, it suffices to give a $ \fg_i$-crystal structure on $ \CE_{\chi_i}(\l_i)$, for $ i = 1,2 $.

In light of this observation, we will assume that $ \fg $ is simple for the remainder of this section.

\subsection{The definition}

To define the crystal structure, we must define a weight map $ wt : \CE_\chi(\l) \rightarrow \Lambda $ and crystal operators $ e_i, f_i : \CE_\chi(\l) \rightarrow \CE_\chi(\l) \sqcup \{0\} $ for each $ i \in I $.

First, as noted above, every eigenline for $ \CA_\chi $ lives inside a weight space of $ V(\l) $.  Thus, for $ L \in \CE_\chi(\l)$,  we define $ wt(L) = \mu $ if $ L $ lives inside the $ \mu $ weight space.

Now fix $ i \in I $ and fix a point $ \chi^i \in \CM_\Delta(\BR)_+^{\{i\}} $.  As usual, we will write $ \chi^i = (\chi^i_0, \chi^i_1) $ where $ \chi^i_0 \in \fh $ and $ \chi^i_1 \in \CM_{\Delta_1}(\BR)$.  (In this case, $\chi^i$ determines $ \chi^i_1 $, since $\CM_{\Delta_1}(\BR)$ is a point.)

Let $ \fg^i = \fz_\fg(\chi^i_0)' $, so that $ \fg^i $ is the $ \sl_2 $ root subalgebra corresponding to $ \alpha_i $.

By \corref{co:LinesProduct}, we have
\begin{equation} \label{eq:ithDecomp}
\CE_{\chi^i}(\l) = \CE_{\A_{\chi^i}}(V(\l)) = \sqcu_{\nu \in \Lambda_+(\fg^i)} \CE_{\A_{\chi^i_0}}(V(\l)^\nu) \times \CE_{\A_{\chi^i_1}}(V(\nu))
\end{equation}
Since $ \fg^i $ is isomorphic to $ \sl_2$, we see that $\CE_{\A_{\chi^i_1}}(V(\nu)) $ is just the set of weight spaces of the representation $ V(\nu)$, all of which are 1-dimensional.  We write $ \CE(\nu) $ for the set of the these weight spaces.

Note that if $ L \in \CE(\nu) $ is a weight line in an irrep of $ \sl_2 $, so then $ e_{\alpha_i}(L) $ is either another weight line or it is 0.  Moreover if $ L_1 \otimes L_2 \in \CE_{\chi^i}(\l) $ (given with respect to \eqref{eq:ithDecomp}), then $ e_{\alpha_i}(L_1 \otimes L_2) = L_1 \otimes e_{\alpha_i}(L_2) $.

Now, as in \secref{se:RealLocus}, there exists a unique homotopy class of path $ p_{\chi, \chi^i} $ connecting $ \chi $ and $ \chi^i $ inside $ \CM_\Delta(\R)_+ $.  Thus, we get a parallel transport map $ p_{\chi, \chi^i} : \CE_\chi(\l) \rightarrow \CE_{\chi^i}(\l) $ and we define
$$
e_i : \CE_\chi(\l) \rightarrow \CE_\chi(\l) \sqcup \{0\} \quad \text{ by } e_i = p_{\chi, \chi^i}^{-1} \circ e_{\alpha_i} \circ p_{\chi, \chi^i}
$$

In a similar way, we define $ f_i $.

\begin{prop} \label{pr:seminormal}
This defines a semi-normal $\fg$-crystal structure on $ \CE_\chi(\l) $.
\end{prop}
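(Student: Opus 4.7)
The strategy is to reduce every crystal axiom at the base point $\chi$ to the corresponding statement at $\chi^i$, where by \corref{co:LinesProduct} we have the decomposition
\[
 \CE_{\chi^i}(\l) \;=\; \sqcu_{\nu \in \Lambda_+(\fg^i)} \CE_{\A_{\chi^i_0}}(V(\l)^\nu) \times \CE(\nu),
\]
and the operator $e_{\alpha_i}$ (resp.\ $f_{\alpha_i}$) acts by $\id \otimes e_{\alpha_i}$ (resp.\ $\id \otimes f_{\alpha_i}$) on the second factor. Since $\fg^i \cong \sl_2$ and $\CE(\nu)$ is exactly the weight basis of the irreducible $\fg^i$-module $V(\nu)$, the axioms at $\chi^i$ are the standard $\sl_2$ crystal axioms, and conjugating by the parallel transport $p_{\chi,\chi^i}$ will transfer them verbatim to $\chi$, provided we check that parallel transport is compatible with weights.

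First I would establish weight-preservation of parallel transport inside $\CM_\Delta(\BR)_+$. For any $\chi' \in \CM_\Delta$ with underlying Cartan $\chi'_0 \in \fh$, the algebra $\A_{\chi'}$ is contained in $U(\fg)^{\fz_\fg(\chi'_0)}$ and in particular commutes with $\Delta(\fh)$; hence every eigenline of $\A_{\chi'}$ on $V(\l)$ lies in a single weight space. Since the covering $\CE(\l) \to \CM_\Delta(\BR)$ is unramified (\corref{cor-covering}) and the weight-grading of $V(\l)$ is discrete, the weight of an eigenline is locally constant along any path in $\CM_\Delta(\BR)_+$. This shows $wt$ is well-defined on $\CE_\chi(\l)$ and is preserved by $p_{\chi, \chi^i}$, so that $wt(e_i(L)) = wt(L) + \alpha_i$ and $wt(f_i(L)) = wt(L) - \alpha_i$ follow at once from the corresponding facts for $e_{\alpha_i}, f_{\alpha_i}$ on $\CE(\nu)$.

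Next, the inversion axioms $f_i e_i = \id$ on $\{L : e_i(L) \ne 0\}$ and $e_i f_i = \id$ on $\{L : f_i(L) \ne 0\}$ become, after conjugating by $p_{\chi, \chi^i}$, the statements $f_{\alpha_i} e_{\alpha_i} = \id$ and $e_{\alpha_i} f_{\alpha_i} = \id$ on non-vanishing weight lines of $V(\nu)$, which are standard. For semi-normality, note that the $\{i\}$-subcrystal of $\CE_\chi(\l)$ is identified, via $p_{\chi, \chi^i}$ and the above decomposition, with the disjoint union $\sqcu_\nu \CE_{\A_{\chi^i_0}}(V(\l)^\nu) \times \CE(\nu)$, and each $\CE(\nu)$ is the $\sl_2$-weight basis of $V(\nu)$. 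Thus $\varphi_i(L) - \varepsilon_i(L)$ is computed in a single $\sl_2$ string and equals $\langle \alpha_i^\vee, wt(L)\rangle$, which is semi-normality.

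One minor well-posedness check is that the construction does not depend on the chosen point $\chi^i$ in the open face $\CM_\Delta(\BR)_+^{\{i\}}$: this face is contractible, the decomposition \eqref{eq:ithDecomp} varies naturally with $\chi^i_0$, and the path classes $p_{\chi, \chi^i}$ in $\CM_\Delta(\BR)_+$ are canonical, so any two choices yield identical operators. No step of the proof looks like a serious obstacle; the real content sits in \corref{co:LinesProduct} and in \corref{co:SimpleSpec}, both of which are already available, and the remaining work is a formal transport of $\sl_2$ structure.
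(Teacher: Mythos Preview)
Your proposal is correct and follows essentially the same approach as the paper. The paper's own proof is a two-sentence remark that the semi-normal crystal axioms each involve only one $i$, and that $e_i, f_i$ are obtained by transporting the natural $\sl_2$ crystal structure on $\CE(\nu)$; you have simply unpacked this into explicit checks of weight-compatibility under parallel transport, the inversion axioms, and semi-normality, together with the harmless observation that the choice of $\chi^i$ in the face is immaterial.
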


\begin{proof}
All the conditions in the definition of a semi-normal crystal only involve one $ i $ at a time.  Since the definition of $ e_i, f_i $ come from transporting the natural crystal structure on the set $\CE(\nu) $, these conditions are immediate.
\end{proof}

Our goal is now to prove the following result.

\begin{thm} \label{th:CEBlambda}
There is an isomorphism of crystals $ \CE_\chi(\l) \cong B(\l) $.
\end{thm}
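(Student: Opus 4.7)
The plan is to proceed by induction on the rank $r$ of $\fg$. For the base case $r=1$ (so $\fg \cong \sl_2$), the space $\CM_\Delta$ is a single point and parallel transport is trivial. Since $\A_\chi \supset \fh$ acts with simple spectrum via weights on $V(\l)$, the eigenlines $\CE_\chi(\l)$ are just the weight lines of $V(\l)$, and the operators $e_1, f_1$ defined via the trivial path coincide with the usual Kashiwara operators on the weight basis. Hence $\CE_\chi(\l) \cong B(\l)$ canonically.

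For the inductive step, assume the theorem for all simple Lie algebras of rank strictly less than $r$. Fix a proper subset $J \subsetneq I$ and a boundary point $\chi^J = (\chi^J_0, \chi^J_1) \in \CM_\Delta(\BR)_+^J$. Parallel transport along the unique homotopy class of paths $p_{\chi, \chi^J}$ inside the contractible set $\CM_\Delta(\BR)_+$ yields a weight-preserving bijection $\CE_\chi(\l) \xrightarrow{\sim} \CE_{\chi^J}(\l)$. For each $i \in J$, the defining path $p_{\chi, \chi^i}$ is homotopic in $\CM_\Delta(\BR)_+$ to the concatenation of $p_{\chi, \chi^J}$ with $p_{\chi^J, \chi^i}$, so this bijection intertwines the $\fg_J$-crystal structure on $\CE_\chi(\l)$ with the analogously-defined structure on $\CE_{\chi^J}(\l)$. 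At the boundary, \corref{co:LinesProduct} provides the factorization
\begin{equation*}
\CE_{\chi^J}(\l) \;\cong\; \bigsqcup_{\nu \in \Lambda_+(\fg_J)} \CE_{\A_{\chi^J_0}^0}(V(\l)^\nu) \;\times\; \CE_{\A_{\chi^J_1}}(V(\nu)),
\end{equation*}
where $V(\nu)$ is the irreducible $\fg_J$-module of highest weight $\nu$ and the $\fg_J$-operators act only on the second factor. Applying the inductive hypothesis to the pair $(\fg_J, \chi^J_1)$ (noting $\rk \fg_J < r$) identifies $\CE_{\A_{\chi^J_1}}(V(\nu)) \cong B_{\fg_J}(\nu)$ as $\fg_J$-crystals, so $\CE_\chi(\l)|_{\fg_J}$ is a normal $\fg_J$-crystal whose isotypic decomposition matches that of $V(\l)|_{\fg_J}$.

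Having established normality of every proper Levi restriction, for $r \geq 3$ we invoke Kashiwara's recognition criterion (\thmref{th:rank2}, taking $|J|=2$) to conclude that $\CE_\chi(\l)$ is a normal $\fg$-crystal. Since $\fh \subset \A_\chi$ governs the weights, its character equals $\chr V(\l)$, and linear independence of irreducible characters forces $\CE_\chi(\l) \cong B(\l)$. The main obstacle is the case $r=2$ (where $\fg$ is $\sl_3$, $\mathfrak{so}_5$, or $G_2$), since Kashiwara's criterion is then vacuous: the only two-element subset of $I$ is $I$ itself. In this case normality must be verified directly — for example by combining the semi-normality from \propref{pr:seminormal} with the two $\sl_2$-isotypic decompositions arising from the boundary strata $\CM_\Delta(\BR)_+^{\{1\}}$ and $\CM_\Delta(\BR)_+^{\{2\}}$, using \propref{pr:MultFree} in the multiplicity-free weight spaces and checking the appropriate rank 2 (Stembridge-type) compatibilities between the two resulting families of operators to pin down the remaining crystal data.
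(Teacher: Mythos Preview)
Your reduction to the rank~2 case is essentially the same as the paper's: you use parallel transport to the boundary stratum $\CM_\Delta(\BR)_+^J$, invoke \corref{co:LinesProduct} to factorize, and apply the inductive hypothesis to $\fg_J$ to see that every proper Levi restriction is normal; then Kashiwara's criterion (\thmref{th:rank2}) finishes the case $r\ge 3$. This part is fine, and is exactly what the paper packages as \thmref{th:RestrictCrystal}.

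The genuine gap is in your handling of the rank~2 base case. What you write there is not a proof but a hope: ``checking the appropriate rank~2 (Stembridge-type) compatibilities'' from the bare definition of $e_i,f_i$ via parallel transport is not something one can do by inspection, and for $B_2$ and $G_2$ the Stembridge axioms do not even apply without further work. Using \propref{pr:MultFree} only helps in weight spaces of dimension~1, which for a general $\l$ leaves most of the crystal undetermined. You have not supplied a mechanism that constrains the operators on higher-multiplicity weight spaces.

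The paper's key idea, which you are missing, is to leverage the \emph{tensor product} compatibility (\thmref{th:TensorCE}): the parallel transport $p_{\infty,0}$ in the one-parameter family $\A_\chi(z,0)$ gives a crystal isomorphism
\[
\CE_\chi(\l_1)\otimes\CE_\chi(\l_2)\;\xrightarrow{\ \sim\ }\;\bigsqcup_\mu \CE(\l_1,\l_2)^\mu\times\CE_\chi(\mu).
\]
Granting this, the rank~2 case is handled as follows. Choose a minuscule or quasi-minuscule fundamental weight $\omega$ so that $V(\omega)$ is multiplicity-free and tensor-generates $\fg$-rep. Then \propref{pr:MultFree} together with \propref{pr:seminormal} forces $\CE_\chi(\omega)\cong B(\omega)$. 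For arbitrary $\l$, embed $V(\l)\hookrightarrow V(\omega)^{\otimes N}$; \thmref{th:TensorCE} yields a crystal embedding $\CE_\chi(\l)\hookrightarrow \CE_\chi(\omega)^{\otimes N}$, and a subcrystal of a normal crystal is normal. This bootstrapping via tensor powers is the missing ingredient in your argument, and it rests on the substantial analysis of the family $\A_\chi(z,0)$ and its degenerations carried out in \propref{pr:MultiClosure1} and \propref{pr:MultiClosure2}.
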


\subsection{Tensor products}
In order to establish the above isomorphism of crystals, we will study the tensor product of these crystals.

By \propref{pr:MultiClosure1}, we have a family of subalgebras of $ U(\fg) \otimes U(\fg) $ parametrized by $ \BP^1 $ whose general fibre (for $ z \in \C^\times$) is $ \CA_\chi(z,0) $, whose fibre at $ 0 $ is generated by $ \Delta(\CA_\chi), \CA(1,0) $ and whose fibre at $ \infty $ is $ \CA_\chi \otimes \CA_\chi $.   We choose a path $ p_{\infty, 0} $ connecting $ \infty, 0 $ inside $ \R \BP^1 $, staying inside the positive real numbers $\R \BP^1_+$.

  Now consider the tensor product $ V(\l_1) \otimes V(\l_2) $ for $ \l_1, \l_2 \in \Lambda_+ $.  Since every algebra $ \CA_{\chi}(z,0) $ acts semisimply with simple spectrum on $ V(\l_1) \otimes V(\l_2) $ by \corref{co:SimpleSpecLimits}, we get a cover of $ \R \BP^1 $ given by the eigenlines of these algebras acting on $ V(\l_1) \otimes V(\l_2) $.   The eigenlines for the algebra at $ \infty $ are of the form $ L_1 \otimes L_2 $ where $ L_i \subset V(\l_i) $ is an eigenline for $ \A_\chi$.  On the other hand, the eigenlines for the algebra at $ 0 $ are compatible with the decomposition
  $$
  V(\l_1) \otimes V(\l_2) = \bigoplus_\mu  (V(\l_1) \otimes V(\l_2))^\mu \otimes V(\mu)
  $$
  and eigenlines are all of the form $ L_1 \otimes L_2 $ where $ L_1 $ is an eigenline for $ \CA(1,0) $ acting on $  (V(\l_1) \otimes V(\l_2))^\mu $  and $ L_2 $ is an eigenline for $ \CA_\chi $ acting on $ V(\mu) $.

  Thus, we get a parallel transport map bijection
$$
p_{\infty, 0} : \CE_{\chi}(\l_1) \times \CE_\chi(\l_2) \rightarrow \sqcu_\mu \CE(\l_1, \l_2)^\mu \times \CE_\chi(\mu)
$$

We give $ \CE_{\chi}(\l_1) \times \CE_\chi(\l_2)$ a crystal structure using the tensor product of crystals and we give $  \sqcup_\mu \CE(\l_1, \l_2)^\mu \times \CE_\chi(\mu) $ a crystal structure by just using the crystal structure on each $\CE_\chi(\mu) $.

\begin{thm} \label{th:TensorCE}
The map $p_{\infty, 0} $ is an isomorphism of crystals.
\end{thm}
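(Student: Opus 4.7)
Proof plan for \thmref{th:TensorCE}:

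My approach is to check that $p_{\infty,0}$ is (a) weight-preserving and (b) commutes with the Kashiwara operators $e_i, f_i$ for each $i \in I$. For (a), the key observation is that every subalgebra in the one-parameter family $\{\A_\chi(z,0)\}_{z \in \BP^1}$ contains the diagonal Cartan $\Delta(\fh)$: the generic fibers contain $\Delta(\fh)$ by \propref{pr:Hamiltonians}, and the limit subalgebras at $z=0,\infty$ described in \propref{pr:MultiClosure1} still contain $\Delta(\fh)$ (since both $\A_\chi$ and $\A(1,0)$ do). Hence parallel transport $p_{\infty,0}$ is compatible with the weight decomposition on both sides, and weight-preservation is immediate.

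For (b), fix $i \in I$ and work with the two-parameter family from \propref{pr:MultiClosure2}: take $\chi_0 \in \fh$ a generic element of $\{\alpha_i = 0\}$, $t \in \C$, and let $\chi = \chi_0 + t h_{\alpha_i}$, yielding algebras $\A_\chi(z,0)$ parametrized by the blow-up $X$ of $(\BC \setminus D) \times \BP^1$ at $(0,\infty)$. By \corref{co:SimpleSpecLimits}, all fibers over the real locus act with simple spectrum on $V(\l_1) \otimes V(\l_2)$, so we obtain a covering of $X(\BR)$. The relevant region of $X(\BR)$ containing our four corners $(\chi, \infty), (\chi, 0), (\chi^i, \infty), (\chi^i, 0)$ is contractible, so any two paths between corners give the same monodromy. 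Comparing the direct path $(\chi,\infty) \to (\chi,0)$ against the three-step path $(\chi,\infty) \to (\chi^i,\infty) \to (\chi^i,0) \to (\chi,0)$ reduces the problem to understanding the monodromy along the edge $(\chi^i, \infty) \to (\chi^i, 0)$.

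Along this edge, for $z \in \Cx$ real, \propref{pr:MultiClosure2}(3) tells us the algebra is generated by $\A_{\chi^i_0}^0(z,0)$ together with $\Delta(h_{\alpha_i})$, and it contains $\Delta(C_{\alpha_i})$. In particular, the algebra commutes with the diagonal $\Delta(\fg^i)$-action (where $\fg^i \cong \sl_2$ is the root $\sl_2$), so eigenlines decompose according to isotypic components for $\Delta(\fg^i)$ on $V(\l_1) \otimes V(\l_2)$. This decomposition is preserved throughout the edge, and the residual action within each isotypic component of form $M \otimes V(\nu)$ is by $\A_{\chi^i_0}^0(z,0)$ on $M$ alone. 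Combining this with the decomposition \eqref{eq:ithDecomp} applied at both endpoints $(\chi^i,\infty)$ and $(\chi^i,0)$ shows that the monodromy along this edge is of the form $\id_{\sl_2\text{-structure}} \otimes (\text{monodromy of Gaudin eigenlines})$, hence commutes with the $\Delta(\fg^i)$-action and in particular with $e_{\alpha_i}, f_{\alpha_i}$. Unwinding the definitions of $e_i, f_i$ on both sides of the theorem via the parallel transports to $\chi^i$, and using the tensor product rule of \defnref{Def:Tensor} on the left (which follows at $(\chi^i,\infty)$ because the tensor factorization of the algebra there makes both eigenlines simultaneous $\Delta(\fg^i)$-isotypic), we conclude that $p_{\infty,0}$ commutes with $e_i$ and $f_i$.

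The main obstacle is the last step: verifying that, after transporting to $(\chi^i, \infty)$, the tensor product crystal rule of \defnref{Def:Tensor} on $\CE_{\chi^i}(\l_1) \times \CE_{\chi^i}(\l_2)$ matches the $\Delta(\fg^i)$-weight analysis of eigenlines for $\A_{\chi^i} \otimes \A_{\chi^i}$. This is essentially an $\sl_2$ statement: given two semi-normal $\sl_2$-crystals obtained as products $\CE_{\A_{\chi^i_0}}(V(\l_s)^{\nu_s}) \times \CE(\nu_s)$, the $\Delta(\fg^i)$-isotypic decomposition of their product must reproduce Kashiwara's signature rule. Here I expect to invoke \propref{pr:MultFree} (each $\sl_2$-weight space of $V(\nu_1) \otimes V(\nu_2)$ is multiplicity-free after fixing the $\A_{\chi^i_0}^0$ eigenlines on the $M$ factors) to match the two semi-normal structures via the weight-preserving bijection, thereby identifying the $e_i, f_i$ on both sides.
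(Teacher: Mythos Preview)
Your overall architecture is the same as the paper's: reduce to the two-parameter family of \propref{pr:MultiClosure2}, use contractibility of the real locus, and track the crystal operator $e_i$ around the boundary. But there is a genuine gap in how you treat the corner at $(\chi^i,\infty)=(0,\infty)$.

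That point is blown up in $X$, so your ``square'' is really a pentagon: the two approaches to $(0,\infty)$ (along $z=\infty$ with $t\to 0$, versus along $t=0$ with $z\to\infty$) land at distinct points $[1:0]$ and $[0:1]$ of the exceptional fibre, with algebras $\A_{\chi^i}\otimes\A_{\chi^i}$ and $(\A_{\chi_0^i}^0\otimes\A_{\chi_0^i}^0)\cdot\C[\Delta(h_i),\Delta(C_i)]$ respectively. Your ``edge $(\chi^i,\infty)\to(\chi^i,0)$'' therefore secretly contains the traversal of the exceptional fibre, along which (by \propref{pr:MultiClosure2}(5)) the varying generator is $a\,h_i^{(1)}+b\,\Delta(C_i)$. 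Restricted to each $V(\nu_1)\otimes V(\nu_2)$ (for fixed eigenlines in the multiplicity spaces), this is precisely the $\fsl_2$ Gaudin Hamiltonian $H_1(z)=z^{-1}\Delta(C)+h^{(1)}$. So the compatibility of $e_i$ across this edge is exactly the theorem for $\fg=\fsl_2$, which must be established independently.

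Your proposed workaround via \propref{pr:MultFree} does not apply: the $\fsl_2$-crystal $\CE(\nu_1)\otimes\CE(\nu_2)\cong B(\nu_1)\otimes B(\nu_2)$ is not multiplicity-free (its weight spaces have dimension $\min(\nu_1,\nu_2)+1$ in the middle), and fixing the $\A_{\chi_0^i}^0$-eigenlines on the $M$-factors does not help, since those factors carry no $\fsl_2$-weight. The paper instead proves the $\fsl_2$ case by hand: on each $\Delta(h)$-weight space the eigenvalues of $H_1(z)$ are distinct reals for all $z\in\BR_{>0}$, so their order is preserved under transport; one then checks that the eigenvalue order of $h^{(1)}$ at $z=\infty$ matches the eigenvalue order of $\Delta(C)$ at $z=0$ exactly according to Kashiwara's tensor rule.

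A smaller point: the algebra along the $t=0$ edge does \emph{not} commute with $\Delta(\fg^i)$, since it contains $\Delta(h_i)$. What you need (and what the paper uses) is only that it contains $\Delta(C_i)$, so that the isotypic label $\gamma$ is preserved under transport; within each $\gamma$-block the weight line in $V(\gamma)$ is also fixed (by $\Delta(h_i)$), and that suffices for $e_{\alpha_i}$-compatibility on that edge.
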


\begin{proof}
It is immediate that the map $p_{\infty, 0} $ commutes with the weight maps.  So must check that it commutes with the crystal operators.

We begin by proving the theorem in the case that $ \fg = \sl_2 $.
The subalgebra $\A_h(z,0)$ is generated by the central elements $C^{(1)}, C^{(2)}$, the diagonal Cartan element $\Delta(h)=h^{(1)}+h^{(2)}$ and the single element which actually depends on the parameter $z$, namely $H_1(z)=\frac{\Delta(C)}{z}+h^{(1)}$. On the tensor product $V(\l_1)\otimes V(\l_2)$, the central elements act by scalars and the decomposition into the direct sum of eigenspaces for $\Delta(h)$ is just the weight decomposition.

Since $\A_h(z,0)$ acts with simple spectrum on $V(\l_1)\otimes V(\l_2)$ for all real $z$, the eigenvalues of $H_1(z)$ on each weight space (with respect to $\Delta(h)$) are pairwise distinct real numbers. This means that the parallel transport preserves the weight decomposition and does not change the \emph{increasing order of the eigenvalues} of $H_1(z)$ on each weight space. In particular, the parallel transport $p_{\infty, 0} $ on each weight space moves the eigenvectors of $h^{(1)}$ to the eigenvectors of $\Delta(C)$ in the same order with respect to increasing of the eigenvalues. The isomorphism of crystals is the only map $\CE_{\chi}(\l_1) \times \CE_\chi(\l_2) \rightarrow \sqcu_\mu \CE(\l_1, \l_2)^\mu \times \CE_\chi(\mu)$ which satisfies this property.  This can be seen from Figure \ref{CrystalTensor}; if we cut along this diagram along a diagonal line, the order of the points $(b_1,b_2) $ according to $ wt(b_1) $ is the same as the order according to the length of the string they lie in.

Now, we consider the general case.  Fix $ i \in I $.  We will show that the map $ p_{\infty, 0} $ is compatible with the actions of $ e_i $ on both sides.

As above, let $ \chi^i_0 \in \fh_+ $ be a general point on the hyperplane $ \alpha_i = 0 $.  Choose some $ T > 0 $ such that $ \chi^i_0 + t h_i \in \fh_+^{reg} $ for all $ t \in (0,T] $.  We can identify $ \chi^i_0 + T h_i = \chi $, since $ \fh_+^\emptyset $ is contractible.

We consider the family of subalgebras from \propref{pr:MultiClosure2} restricted to a locus $X(\BR)_+ \subset X $, which we define to be the preimage of $ [0,T] \times \R \PP^1_+ $ under the blow-down map, except that we only include positive part of the exceptional fibre.  The locus $X(\BR)_+$ can be identified with a pentagon with vertices $(T, \infty), (T, 0), (0,0), (0,\infty)_0, (0, \infty)_\infty $ where the last two points both sit inside the exceptional fibre.  From \propref{pr:MultiClosure2}, we see that the algebras at these points are respectively
\begin{gather*}
\CA_\chi \otimes \CA_\chi, \ \CA(1,0) \Delta(\CA_\chi), \ \CA(1,0)\Delta(\CA_{\chi^i}),\\
(\CA^0_{\chi^i_0} \otimes \CA^0_{\chi^i_0})\Delta(\C[h_i, C_i]),  \ \CA_{\chi^i} \otimes \CA_{\chi^i}
\end{gather*}
where $ C_i \in U(\fg^i) $ is the Casimir element and where $ \CA_{\chi^i} = \CA_{\chi^i_0}^0 \otimes \C[h_i] $.

Over every point in the locus $ X(\BR)_+ $, the correponding subalgebra of $ U(\fg) \otimes U(\fg) $ acts semisimply and with simple spectrum on $ V(\l_1) \otimes V(\l_2) $ by \corref{co:SimpleSpecLimits} and so we get a parallel transport for each edge of this pentagon.  Moreover because the parallel transport extends over the interior of the pentagon, we get a commutative pentagon of parallel transport as follows.

\begin{equation}
\begin{tikzcd}[column sep = tiny]
\CE_{\CA_\chi \otimes \CA_\chi}(V(\l_1) \otimes V(\l_2)) \ar[r] \ar[d]  & \CE_{\CA(1,0)\Delta(\CA_\chi)}(V(\l_1) \otimes V(\l_2)) \ar[d]\\
 \CE_{\CA_{\chi^i} \otimes \CA_{\chi^i}} (V(\l_1) \otimes V(\l_2) \ar[dr]  &  \CE_{\CA(1,0)\Delta(\CA_{\chi^i})}(V(\l_1) \otimes V(\l_2)) \ar[d]
 \\
 & \CE_{(\CA^0_{\chi^i_0} \otimes \CA^0_{\chi^i_0})\Delta(\C[h_i,C_i])}(V(\l_1) \otimes V(\l_2))
 \end{tikzcd}
 \end{equation}

Analyzing these eigenlines of these algebras, we see that this gives a commutative diagram
\begin{equation}
\begin{tikzcd}[cramped, column sep = -11em]
\CE_\chi(\l_1) \times \CE_\chi(\l_2) \ar[r] \ar[dd] &  \sqcu_\mu \CE(\l_1, \l_2)^\mu \times \CE_\chi(\mu) \ar[d]\\
 & \bigsqcup\limits_{\mu, \gamma} \CE(\l_1,\l_2)^\mu \times \CE_{\CA^0_{\chi_0^i}}(V(\mu)^\gamma) \times \CE(\gamma) \ar[dd]
 \\
 \bigsqcup\limits_{\nu_1, \nu_2} \CE_{\CA^0_{\chi^i_0}}(V(\l_1)^{\nu_1}) \times \CE_{\CA^0_{\chi^i_0}}(V(\l_2)^{\nu_2}) \times \CE(\nu_1) \times \CE(\nu_2) \ar[dr]  & \\
  & \bigsqcup\limits_{\nu_1, \nu_2, \gamma} \CE_{\CA^0_{\chi^i_0}}(V(\l_1)^{\nu_1})\times \CE_{\CA^0_{\chi^i_0}}(V(\l_2)^{\nu_2}) \times \CE(\nu_1,\nu_2)^\gamma \times \CE(\gamma)
\end{tikzcd}
\end{equation}
In this diagram, $ \mu $ varies over dominant weights of $ \fg $ and $ \nu_1, \nu_2, \gamma $ vary over dominant weights for $ \fg^i $.  Also $ \CE(\gamma) $ denotes the set of weight spaces (all 1-dimensional) in the representation $V(\gamma) $.

Now, the upper two vertical arrows are the parallel transport used to define the crystal operator $ e_i $ on $ \CE_\chi(\l) $.  Also, the diagonal arrow commutes with the crystal operator $ e_i $ because the theorem holds for $\sl_2 $.  Thus, in order to prove that the top horizontal arrow commutes with the crystal operator $ e_i$, it suffices to check this for the lower vertical arrow.  But the lower vertical arrow can be written as
\begin{equation*}
\begin{tikzcd}
 \bigsqcup_\gamma \bigsqcup\limits_{\mu} \CE(\l_1,\l_2)^\mu \times \CE_{\CA^0_{\chi^i_0}}(V(\mu)^\gamma) \times \CE(\gamma) \ar[d] \\ \bigsqcup\limits_{\gamma} \bigsqcup\limits_{\nu_1, \nu_2} \CE_{\CA^0_{\chi^i_0}}(V(\l_1)^{\nu_1}) \times \CE_{\CA^0_{\chi^i_0}}(V(\l_2)^{\nu_2}) \times \CE(\nu_1,\nu_2)^\gamma \times \CE(\gamma)
\end{tikzcd}
\end{equation*}
and is compatible with this decomposition into the disjoint union over $ \gamma$, since at every point $ (0,z) \in X $, the corresponding algebra contains the Casimir $C_i $ of $ \fg^i $ embedded by the coproduct.
This shows that this lower vertical arrow is compatible with the crystal operator $ e_i $.
\end{proof}

\subsection{Restriction}

Now, we prove that this crystal structure is compatible with restriction to Levi subalgebras.

Fix $ J \subset I $.   Pick $ \chi^J \in \CM_\Delta(\BR)_+^J $ and as usual write $ \chi^J = (\chi^J_0, \chi^J_1) $ where $ \chi_0^J \in \fh_+^J $ and $ \chi_1^J \in \CM_{\Delta^J}(\BR)_+$, where $ \Delta_J = \Delta \cap \spn(\alpha_j : j \in J) $ denotes those roots spanned by simple roots from $ J $.  Let $ \fg_J $ be the Lie algebra with root system $ \Delta_J $; it is the derived subalgebra of the centralizer of $ \chi^J_0 $.

As in \corref{co:LinesProduct}, we have a decomposition
$$
\CE_{\chi^J}(\l) = \sqcu_\nu \CE_{\A^0_{\chi^J_0}(V(\l)^\nu)} \times \CE_{\chi^J_1}(\nu)
$$
where $ \nu $ ranges over the dominant weights of $ \fg_J $ and $\CE_{\chi^J_1}(\nu)$ denotes (as usual) the eigenlines for $ \CA_{\chi^J_1} \subset U(\fg_J) $ acting on $ V(\nu)$ .

We endow $ \CE_{\chi^J}(\l)$ with a $\fg_J$-crystal structure by using the  $ \fg_J$-crystal structure on $ \CE_{\chi^J_1}(\nu)$.  On the other hand, $ \CE_\chi(\l) $ carries a $ \fg$-crystal structure which we can restrict to a $ \fg_J $-crystal structure.

\begin{thm} \label{th:RestrictCrystal}
The parallel transport map $ p_{\chi, \chi^J} : \CE_\chi(\l) \rightarrow \CE_{\chi^J}(\l) $ is a $\fg^J$-crystal isomorphism.
\end{thm}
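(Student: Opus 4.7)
The plan is to verify that $p := p_{\chi, \chi^J}$ respects the weight map and the Kashiwara operators $e_j, f_j$ for each $j \in J$; since both crystals are semi-normal by \propref{pr:seminormal} applied to $\fg$ and to $\fg_J$, this suffices for a crystal isomorphism. Weight compatibility is immediate: every algebra $\CA_y$, $y \in \CM_\Delta(\BR)$, contains the Cartan subalgebra $\fh$, so $p$ preserves $\fh$-weights and hence the induced $\fh_J^*$-weights.

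For the crystal operators, fix $j \in J$ and introduce the auxiliary point $\tilde\chi^j \in \CM_\Delta(\BR)_+$ defined by coordinates $(\chi^J_0, \chi^{J,j}_1)$ via the fibre identification of \lemref{le:FibresCM}, where $\chi^{J,j}_1 \in \CM_{\Delta_J}(\BR)_+^{\{j\}}$ lies in the $\{j\}$-face of $\CM_{\Delta_J}$. Since $\chi, \chi^j, \chi^J, \tilde\chi^j$ all lie in the contractible set $\CM_\Delta(\BR)_+$, parallel transports compose as
\[
p_{\chi^j, \tilde\chi^j} \circ p_{\chi, \chi^j} = p_{\chi^J, \tilde\chi^j} \circ p_{\chi, \chi^J}.
\]
Unwinding the two definitions of $e_j$, the equality $p \circ e_j^{\fg} = e_j^{\fg_J} \circ p$ reduces to two assertions: (i) $p_{\chi^J, \tilde\chi^j}$ intertwines $e_j^{\fg_J}$ with the $\sl_2$-raising operator $e_{\alpha_j}^{\tilde\chi^j}$ at $\tilde\chi^j$; and (ii) $p_{\chi^j, \tilde\chi^j}$ intertwines $e_{\alpha_j}^{\chi^j}$ with $e_{\alpha_j}^{\tilde\chi^j}$.

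Assertion (i) is essentially formal. The map $p_{\chi^J, \tilde\chi^j}$ is parallel transport within the fibre over $[\chi^J_0]$, and by \corref{co:LinesProduct} applied at both endpoints, under the decomposition $\CE_{\chi^J}(\l) = \sqcu_\nu \CE_{\A^0_{\chi^J_0}}(V(\l)^\nu) \times \CE_{\chi^J_1}(\nu)$ it acts trivially on the first factor and as the $\fg_J$-intrinsic parallel transport on the second. A further application of \corref{co:LinesProduct} at $\tilde\chi^j$ refines $\CE_{\chi^{J,j}_1}(\nu)$ as $\sqcu_{\nu'} \CE_{\A^0_{(\chi^{J,j}_1)_0}}(V(\nu)^{\nu'}) \times \CE(\nu')$; since $\fg^{\{j\}} \subset \fg_J$ acts only on the $V(\nu)$-factor of $V(\l) = \bigoplus_\nu V(\l)^\nu \otimes V(\nu)$, the operator $e_{\alpha_j}^{\tilde\chi^j}$ acts only on the innermost $\CE(\nu')$-factor, matching the inductive definition of $e_j^{\fg_J}$.

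Assertion (ii) is the technical heart. We select the path from $\chi^j$ to $\tilde\chi^j$ inside the closed substratum of $\CM_\Delta(\BR)_+$ corresponding to nested sets containing the one-dimensional subspace $V_{\{j\}} = \spn(\alpha_j) \in \CG$; this substratum is a closed face of the polytope $\CM_\Delta(\BR)_+$, hence contractible and path-connected, and it contains both endpoints. Iterating the operadic decomposition $\CA_\chi = \A^0_{\chi_0} \otimes_{ZU(\fg_1)} \A_{\chi_1}$ from \secref{sect-MF}, the bottom-level tensor factor along this substratum is the $\sl_2$-shift-of-argument algebra for $\fg^{\{j\}}$, which contains the Casimir $C_{\alpha_j}$. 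Hence $C_{\alpha_j} \in \CA_y$ everywhere along the path, so the eigenlines of $\CA_y$ respect the $\fg^{\{j\}}$-isotypic decomposition $V(\l) = \sqcu_{\nu''} V(\l)^{\nu''} \otimes V(\nu'')$; within each such component, preservation of $\fh$-weights pins down the canonical identification of eigenlines with the weight lines of the $\sl_2$-irrep $V(\nu'')$, and so $p_{\chi^j, \tilde\chi^j}$ commutes with $e_{\alpha_j}$. The same argument with $f_{\alpha_j}$ in place of $e_{\alpha_j}$ handles $f_j$. The main obstacle is the verification of the iterated operadic structure of $\CA_y$ throughout the chosen substratum, which requires combining the de Concini--Procesi stratification with the closure results for shift-of-argument families developed in \secref{sect-MF}.
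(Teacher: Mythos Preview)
Your proof is correct and follows essentially the same route as the paper's own argument: both introduce the same auxiliary point (your $\tilde\chi^j$ is the paper's $\chi^{J,i}$), use contractibility of $\CM_\Delta(\BR)_+$ to obtain a commutative square of parallel transports, and then reduce everything to showing that the bottom edge $p_{\chi^j,\tilde\chi^j}$ commutes with $e_{\alpha_j}$. The paper handles this last step with a single clause---``since along the path $p_{\chi^i,\chi^{J,i}}$ all algebras contain the Casimir element $C_i$''---whereas you spell out why this holds by choosing the path inside the boundary face containing $V_{\{j\}}$ and invoking the operadic description of $\CA_y$ there; your closing remark about the ``main obstacle'' correctly flags that this is exactly where the closure results of \secref{sect-MF} enter.
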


\begin{proof}
The compatibility with the weight maps is immediate, since all algebras contain the Cartan subalgebra of $ \fg^J $.

So it remains to check the compatibility with all crystal operators $ e_i $ for $ i \in J $. Fix $ i \in J $.

We will consider the following points: $ \chi $ (generic point of $ \fh(\BR)_+ $), $\chi^J$ (generic point of $ \CM_\chi(\BR)^J_+ $), $\chi^i $ (generic point of $ \CM_\chi(\BR)^{i}_+ $), and we define $ \chi^{J,i} := (\chi^J_0, \chi_J^i) $ where $ \chi_J^i $ denotes a generic point in $ \CM_{\Delta_J}(\BR)^{i}_+ $.

Since all these points live in the contractible set $ \CM_\Delta(\BR)_+ $, we obtain a commutative square of parallel transports
\begin{equation*}
\begin{tikzcd}
\CE_\chi(\l) \ar[r,"{p_{\chi, \chi^J}}"] \ar[d,"{p_{\chi, \chi^i}}"'] & \CE_{\chi^J}(\l) = \sqcu_\nu \CE_{\A^0_{\chi^J_0}(V(\l)^\nu)} \times \CE_{\chi^J_1}(\nu) \ar[d,"{(id, p_{\chi^J, \chi^{J,i}})}"] \\
\begin{matrix} \CE_{\chi^i}(\l)  \\ \rotatebox{90}{=} \\ \sqcu_{\gamma \in \Lambda_+(\fg^i)} \CE_{\A^0_{\chi^i_0}}(V(\l)^\gamma) \times \CE(\gamma) \end{matrix} \ar[r,"{p_{\chi^i, \chi^{J,i}}}"] & \begin{matrix}\CE_{\chi^{J,i}}(\l)  \\ \rotatebox{90}{=} \\ \sqcu_{\mu, \nu} \CE_{\A^0_{\chi^J_0}(V(\l)^\nu)} \times  \CE_{\chi_J^i}(V(\nu)^\gamma) \times \CE(\gamma) \end{matrix}
\end{tikzcd}
\end{equation*}
Now the crystal structure on the top row are each defined by following the vertical arrows and applying $ e_{\alpha_i} \in \fg $.  The bottom horizontal arrow is commutes with $ e_{\alpha_i} $, since along the path $ p_{\chi^i, \chi^{J, i}} $ all algebras contain the Casimir element $ C_i $.   Thus, we conclude that $ p_{\chi, \chi^J} $ is commutes with $ e_i$.
\end{proof}

\subsection{Normality of $ \CE(\l)$}

Now, we are finally in a position to complete the proof of the main theorem of this section.

\begin{proof}[Proof of \thmref{th:CEBlambda}]
First, note that it suffices to show that $ \CE_\chi(\l) $ is normal, since once we show that it is normal, then it must contain a copy of $ B(\l) $ (since it has a highest weight element of weight $ \l $) and since its size is the same as $ B(\l) $ this gives us an isomorphism.

By \thmref{th:rank2}, it suffices to check that $ \CE_\chi(\l)_J $ is normal for any subset $J \subset I $ of size 2.  By \thmref{th:RestrictCrystal}, $ \CE_\chi(\l)_J $ is a disjoint union of copies of the $\fg_J$-crystals $ \CE_{\chi_J}(\nu) $.

Thus, it suffices to prove $ \CE_\chi(\l) $ is normal under the assumption that $ \fg $ is of rank 2.  Choose a dominant weight $ \omega $ for $ \fg $ such that $ V(\omega) $ is multiplicity-free and is a tensor generator for the category of $ \fg$-representations (for example, we can take $ \omega $ to be the first fundamental weight with the usual Bourbaki labelling).  As all the weight spaces of $ V(\omega) $ are one-dimensional, the set $ \CE_\chi(\omega) $ is in bijection with the set of weights of the representation $ V(\omega) $. Thus we obtain a bijection between $ \CE_\chi(\omega) $ and $ B(\omega)$. Hence \propref{pr:MultFree} and \propref{pr:seminormal} imply that $ \CE_\chi(\omega) $ is isomorphic to $ B(\omega) $.

For any dominant weight $ \l $, there exists some $ N$ for which there is an embedding $ V(\l) \subset V(\omega)^{\otimes N} $.  From \thmref{th:TensorCE}, we conclude that there is a crystal embedding $ \CE_\chi(\l) \subset \CE_\chi(\omega)^{\otimes N}$.  Since $ \CE_\chi(\omega) $ is normal, we see that $ \CE_\chi(\omega)^{\otimes N} $ is normal and thus $ \CE_\chi(\l) $ is normal as desired.
\end{proof}

\section{Monodromy and Sch\"utzenberger involutions}
\subsection{Cactus group action on cover}

Let $ \l $ be a dominant  weight.  The Weyl group also acts on the set of one-dimensional subspaces of weight spaces in $ V(\l)$ because $ W = N_G(T)/ T $.  Every eigenline for a shift of argument algebra $ \A_\chi $ is a subspace of a weight space because $ \fh \subset \A_\chi $.

\begin{prop} \label{cor-Wactioncover}
If $ L $ is an eigenline for $ \A_\chi $ and $ w \in W $, then $ wL $ is an eigenline for $ \A_{w \chi} $.  Thus there is an action of $ W $ on the cover $ \CE(\l) $ compatible with its action on the base $ \CM_\Delta(\BR)$.
\end{prop}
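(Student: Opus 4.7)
The plan is to lift each $w \in W$ to a representative $\tilde{w} \in N_G(T)$ and set $wL := \tilde{w}L$. The starting point is Lemma~\ref{le:ConjugateA}(2) in the case $n=1$: it gives $\tilde{w}\A_\chi\tilde{w}^{-1} = \A_{w\chi}$ for every $\chi \in \fh^{reg}$, and then Lemma~\ref{le:conjugate} shows $\tilde{w}$ sends eigenlines of $\A_\chi$ bijectively to eigenlines of $\A_{w\chi}$. This handles the generic locus.

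To extend the conjugation identity to all of $\CM_\Delta(\BR)$, I would observe that conjugation by $\tilde{w}$ is an algebraic automorphism of each Grassmannian $\Gr(d_N, F^N U(\fg))$, and that $\chi \mapsto \A_\chi$ is an algebraic morphism from $\CM_\Delta$ into these Grassmannians by \thmref{th:dCPFamily}. Since $\fh^{reg}/\Cx$ is Zariski dense in $\CM_\Delta$, the equality $\tilde{w}\A_\chi\tilde{w}^{-1} = \A_{w\chi}$ of morphisms into the Grassmannian propagates to all of $\CM_\Delta$ by continuity, so $\tilde{w}L \in \CE_{w\chi}(\l)$ for every $\chi \in \CM_\Delta(\BR)$.

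Finally, I need to verify independence of the choice of lift. Here I use that $\fh \subset \A_\chi$ for every $\chi \in \CM_\Delta$: this holds on $\fh^{reg}$ by \propref{pr:Hamiltonians} (the $n=1$ case of the ``diagonal Cartan subalgebra'' statement), and the condition ``contains $\fh$'' is closed in the Grassmannian, so it persists throughout $\CM_\Delta$. Consequently each $\A_\chi$-eigenline $L$ lies in a single $T$-weight space of $V(\l)$; any two lifts of $w$ differ by a torus element, which acts on that weight space by a scalar and hence fixes $L$ as a line. The resulting map $(\chi, L) \mapsto (w\chi, \tilde{w}L)$ is then continuous, respects composition in $W$, and covers the $W$-action on $\CM_\Delta(\BR)$, giving the desired action on $\CE(\l)$. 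The only substantive step is the density/continuity argument used to extend the conjugation identity off $\fh^{reg}$; everything else is routine bookkeeping.
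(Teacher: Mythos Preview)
Your argument is correct and follows exactly the reasoning the paper intends: the paper states this proposition without a proof block, treating it as immediate from the two sentences preceding it (that $W=N_G(T)/T$ acts on lines in weight spaces, and that every eigenline lies in a weight space since $\fh\subset\A_\chi$). You have simply spelled out the details the paper leaves implicit, including the density/continuity extension from $\fh^{reg}$ to all of $\CM_\Delta$; one minor point is that \thmref{th:dCPFamily} concerns the classical algebras $\ol{\A_\chi}\subset S(\fg)$, so the precise reference for the quantum family $\A_\chi\subset U(\fg)$ is the theorem immediately following it.
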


Thus, from \lemref{le:CoverEquiv} and \thmref{cor-DJS} , we get an action of the cactus group $C_\Delta $ on the set $ \CE_\chi(\l) $ where $ \chi \in \fh_+^\emptyset $ is a fixed base point.   We also have an action of $ C_\Delta $ on the crystal $ B(\l) $ by \thmref{th:InternalAction}.

\begin{thm} \label{th:Etingof2}
The isomorphism of crystals $ \CE_\chi(\l) \cong B(\l) $ from \thmref{th:CEBlambda} is compatible with the action of $ C_\Delta $.
\end{thm}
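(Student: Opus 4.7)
The plan is to proceed by induction on the rank $r$ of $\fg$, using that the cactus group $C_\Delta$ is generated by the involutions $s_J$ for $J\subseteq I$ connected.

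\emph{Base case $r=1$.} Here $\fg=\fsl_2$, $\CM_\Delta$ is a point and $C_\Delta\cong\BZ/2$ is generated by $s_I=w_0$. The monodromy of $s_I$ is the Weyl-group action on eigenlines in $V(\lambda)$ (\corref{cor-Wactioncover}), which reverses weights. Since $B(\lambda)$ is multiplicity-free in rank one, both this action and the Schützenberger involution $\xi$ are the unique weight-reversing bijections, so they coincide under the isomorphism of \thmref{th:CEBlambda}.

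\emph{Inductive step, $J\subsetneq I$.} The DJS representative $p_J=p_{\chi,\chi^J}*w_0^J(p_{\chi,\chi^J})$ passes through a $w_0^J$-fixed point $\chi^J=(\chi_0^J,\chi_1^J)\in\CM_\Delta(\BR)_+^J$. A direct computation using $W$-equivariance of the cover shows that the monodromy of $s_J$ on $\CE_\chi(\lambda)$ equals $T^{-1}\circ w_0^J\circ T$, where $T$ is parallel transport along $p_{\chi,\chi^J}$ and $w_0^J$ is the Weyl group action on $\CE_{\chi^J}(\lambda)$ (well defined since $w_0^J\chi^J=\chi^J$). By \thmref{th:RestrictCrystal}, $T$ is a $\fg_J$-crystal isomorphism, so it remains to show that $w_0^J$ acts on $\CE_{\chi^J}(\lambda)$ as the $\fg_J$-Schützenberger involution. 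From \corref{co:LinesProduct},
\[
\CE_{\chi^J}(\lambda)=\bigsqcup_{\nu\in\Lambda_+(\fg_J)}\CE_{\CA^0_{\chi_0^J}}\bigl(V(\lambda)^\nu\bigr)\times\CE_{\chi_1^J}(\nu),
\]
and by \lemref{le:ConjugateA} the element $w_0^J$ commutes with $\CA^0_{\chi_0^J}$, so preserves this decomposition. It acts trivially on each $\fg_J$-multiplicity factor $V(\lambda)^\nu$ and as the Weyl group element $w_0^J\in W_{\fg_J}$ on each $\fg_J$-irreducible $V(\nu)$. Applying \thmref{th:CEBlambda} to $\fg_J$ gives $\CE_{\chi_1^J}(\nu)\cong B_J(\nu)$ as $\fg_J$-crystals, and the inductive hypothesis for $\fg_J$ (of rank $|J|<r$) identifies the $w_0^J$-action with the Schützenberger involution on $B_J(\nu)$. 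This matches the definition of $\xi_J$ on $B(\lambda)$, completing this case.

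\emph{Inductive step, $J=I$.} Since we have arranged $w_0(\chi)=\chi$ in $\CM_\Delta(\BR)$, the path $p_I$ is constant and $s_I$ acts on $\CE_\chi(\lambda)$ simply as the Weyl group element $w_0$ acting on eigenlines in $V(\lambda)$. The Schützenberger involution $\xi$ on the irreducible crystal $B(\lambda)$ is uniquely characterized by $wt(\xi b)=w_0\cdot wt(b)$ together with $e_i\circ\xi=\xi\circ f_{\theta(i)}$ for all $i$, so we need only verify these for the $w_0$-action. The weight identity is immediate. For the commutation we combine: (a) the conjugation identity $w_0\,e_{\alpha_i}\,w_0^{-1}=\pm f_{\alpha_{\theta(i)}}$ on $V(\lambda)$, coming from $w_0\alpha_i=-\alpha_{\theta(i)}$ (signs are irrelevant for the action on lines); (b) the $W$-equivariance of parallel transport, $w_0\circ T_i=(w_0T_i)\circ w_0$ for the transport $T_i:\CE_\chi(\lambda)\to\CE_{\chi^i}(\lambda)$ (a consequence of \corref{cor-Wactioncover} and \lemref{le:SquareMonodromies}); and (c) the fact that along the $\{\theta(i)\}$-stratum of $\CM_\Delta(\BR)$ the $\fg^{\theta(i)}$-isotypic decomposition of $V(\lambda)$ and the operator $f_{\alpha_{\theta(i)}}$ do not vary, so parallel transport within this stratum intertwines $f_{\alpha_{\theta(i)}}$ with itself.

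\emph{Main obstacle.} The hardest ingredient is (c): reconciling the two resulting parallel-transport expressions for the crystal operator $f_{\theta(i)}$ at $\chi$, namely $(w_0T_i)^{-1}\,f_{\alpha_{\theta(i)}}\,(w_0T_i)$ (via the reflected path to $w_0(\chi^i)$ in the negative Weyl chamber) and $S_{\theta(i)}^{-1}\,f_{\alpha_{\theta(i)}}\,S_{\theta(i)}$ (via the positive-chamber path to $\chi^{\theta(i)}$). Their equality is a homotopy statement inside $\CM_\Delta(\BR)$, exploiting that $\CM_\Delta(\BR)_+$ and $w_0\CM_\Delta(\BR)_+$ meet along a common face that intersects the $\{\theta(i)\}$-stratum and that $f_{\alpha_{\theta(i)}}$ is canonically transported along that stratum. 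Once this path-independence is established, assembling (a), (b), and (c) yields $e_i\circ w_0=w_0\circ f_{\theta(i)}$ on $\CE_\chi(\lambda)$, finishing the case $J=I$ and with it the induction.
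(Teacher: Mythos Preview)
Your approach is essentially the paper's: handle the generators $s_J$ separately, rewrite $s_J$ as $p_{\chi,\chi^J}^{-1}\circ w_0^J\circ p_{\chi,\chi^J}$, use \thmref{th:RestrictCrystal} to reduce $J\subsetneq I$ to the full Sch\"utzenberger case for $\fg_J$, and verify $J=I$ by checking the characterizing properties of $\xi$. The paper does not induct on rank; it proves the $J=I$ case first (\propref{pr:FullSI}) and then applies that proposition to $\fg_J$ for $J\subsetneq I$ (\propref{pr:PartialSI}). Your induction is logically sound but unnecessary.

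Your ``main obstacle'' (c) is illusory and rests on a misconception. You say $w_0(\chi^i)$ lands ``in the negative Weyl chamber'', but in $\CM_\Delta(\BR)$ there is no separate negative chamber: the map $\fh^{reg}\to\CM_\Delta$ factors through $\fh^{reg}/\Cx$, and since $w_0(\fh_+^\emptyset)=-\fh_+^\emptyset$, the element $w_0$ preserves $\CM_\Delta(\BR)_+$ setwise, carrying the face $\CM_\Delta(\BR)_+^{\{i\}}$ to $\CM_\Delta(\BR)_+^{\{\theta(i)\}}$. So one may simply \emph{choose} $\chi^{\theta(i)}:=w_0(\chi^i)$; then $w_0(p_{\chi,\chi^i})$ is a path inside $\CM_\Delta(\BR)_+$ from $\chi$ to $\chi^{\theta(i)}$, hence by contractibility it equals $p_{\chi,\chi^{\theta(i)}}$. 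In your notation this gives $w_0\circ T_i = S_{\theta(i)}\circ w_0$ directly, and the verification of $w_0(e_i L)=f_{\theta(i)}(w_0 L)$ becomes the four-line computation of \propref{pr:FullSI}. No stratum-transport or face-gluing argument is needed.
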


Since $ \CE_\chi(\l) $ is isomorphic to $ B(\l) $, for each connected subset $ J \subseteq I$, we have the Sch\"utzenberger involution $ \xi_J : \CE_\chi(\l) \rightarrow  \CE_\chi(\l) $.  On the other hand, we have the elements $ s_J \in C_\Delta $ acting by monodromy.  So to prove the theorem, it suffices to show that for each $ J$, $ \xi_J $ and $ s_J $ are equal as permutations of $  \CE_\chi(\l)$.

\subsection{Full Sch\"utzenberger involution}
We begin with the case of the full Sch\"utzenberger involution, $\xi$, i.e. $ J = I $.
\begin{prop} \label{pr:FullSI}
We have $ \xi = s_I $ as permutations of $ \CE_\chi(\l) $.
\end{prop}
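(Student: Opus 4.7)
The plan is to identify the action of $s_I$ on $\CE_\chi(\l)$ concretely and then show it satisfies the defining properties of the Sch\"utzenberger involution $\xi$ on the crystal $\CE_\chi(\l) \cong B(\l)$. By the Davis-Januszkiewicz-Scott isomorphism (\thmref{cor-DJS}), the generator $s_I \in C_\Delta$ corresponds to $(w_0, p_I) \in \pi_1^W(\CM_\Delta(\BR), \chi)$, where $p_I$ is the constant path; this relies on our choice $w_0(\chi) = -\chi$ in $\fh$, which gives $w_0(\chi) = \chi$ in $\CM_\Delta(\BR)$. Combined with the $W$-equivariance of the cover $\CE(\l) \to \CM_\Delta(\BR)$ from \propref{cor-Wactioncover} (via \lemref{le:CoverEquiv}), this shows $s_I$ acts on $\CE_\chi(\l)$ by the permutation $L \mapsto w_0 \cdot L$, where $w_0 \in W \cong N_G(T)/T$ acts on weight lines of $V(\l)$.

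Since $\CE_\chi(\l) \cong B(\l)$ is a connected normal crystal and $B(\l)$ admits only the identity as a crystal automorphism (\lemref{le:normal}(4)), the involution $\xi$ is uniquely characterized by the two properties $wt \circ \xi = w_0 \circ wt$ and $\xi \circ e_i = f_{\theta(i)} \circ \xi$ for all $i \in I$: given any other involution $\psi$ with these properties, the composite $\psi^{-1}\xi$ would be a crystal automorphism, hence the identity. The weight condition is immediate for $L \mapsto w_0 \cdot L$, so the whole content of the proof is the intertwining $w_0 \circ e_i = f_{\theta(i)} \circ w_0$.

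To establish the intertwining, I apply \lemref{le:SquareMonodromies} with $g = w_0 \colon V(\l) \to V(\l)$, using $w_0\CA_\chi w_0^{-1} = \CA_{-\chi} = \CA_\chi$ and $w_0\CA_{\chi^i}w_0^{-1} = \CA_{w_0\chi^i}$. Since $w_0$ conjugates $\sl_2^{(i)}$ to $\sl_2^{(\theta(i))}$, sending $e_{\alpha_i}$ to a nonzero multiple of $f_{\alpha_{\theta(i)}}$ (the sign is irrelevant on weight lines), conjugating the definition $e_i = p_{\chi,\chi^i}^{-1} \circ e_{\alpha_i}^{\rm cr} \circ p_{\chi,\chi^i}$ by $w_0$ yields
\[
w_0 \circ e_i \circ w_0^{-1} \;=\; (w_0 \cdot p_{\chi,\chi^i})^{-1} \circ f_{\alpha_{\theta(i)}}^{\rm cr} \circ (w_0 \cdot p_{\chi,\chi^i}),
\]
where $w_0 \cdot p_{\chi,\chi^i}$ is the translated path in the antidominant chamber $w_0\CM_\Delta(\BR)_+$ from $\chi$ to $w_0\chi^i$. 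Comparing with $f_{\theta(i)} = p_{\chi,\chi^{\theta(i)}}^{-1} \circ f_{\alpha_{\theta(i)}}^{\rm cr} \circ p_{\chi,\chi^{\theta(i)}}$ reduces the claim to showing that the composite path $p_{\chi,\chi^{\theta(i)}} \cdot (w_0 \cdot p_{\chi,\chi^i})^{-1}$ from $w_0\chi^i$ to $\chi^{\theta(i)}$ induces a parallel transport on $\CE(\l)$ that commutes with $f_{\alpha_{\theta(i)}}^{\rm cr}$. Both endpoints lie on the codimension-one stratum $\Sigma = \pi^{-1}(\{\alpha_{\theta(i)} = 0\})$ of $\CM_\Delta(\BR)$, and along $\Sigma$ the cover decomposes operadically (\corref{co:LinesProduct}) into pieces on which $f_{\alpha_{\theta(i)}}^{\rm cr}$ acts by the $\sl_2^{(\theta(i))}$-weight structure---a structure preserved by any parallel transport within $\Sigma$. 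Deforming the composite path (with endpoints fixed) into $\Sigma$ therefore finishes the argument.

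The main obstacle is this last homotopy step: the composite path crosses from the antidominant to the dominant Weyl chamber through the interior basepoint $\chi$, and we must deform it, relative to its endpoints in $\Sigma$, into a path confined to $\Sigma$. The input is the contractibility of $\CM_\Delta(\BR)_+ \cup w_0 \CM_\Delta(\BR)_+$ together with the fact that $\CM_\Delta(\BR)_+^{\{\theta(i)\}} \cup w_0 \CM_\Delta(\BR)_+^{\{i\}}$ is connected inside $\Sigma$, but making this rigorous requires care with the local topology of the wonderful compactification near $\Sigma$.
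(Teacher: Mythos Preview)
Your overall strategy matches the paper's exactly: identify $s_I$ with the action of $w_0$ on eigenlines, then verify the two defining properties of $\xi$, reducing everything to the intertwining relation $w_0 \circ e_i = f_{\theta(i)} \circ w_0$ via \lemref{le:SquareMonodromies}. The gap is in the last two paragraphs, where you misidentify where the translated path $w_0 \cdot p_{\chi,\chi^i}$ lives.

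You assert that $w_0 \cdot p_{\chi,\chi^i}$ lies in the ``antidominant chamber $w_0\CM_\Delta(\BR)_+$'', distinct from $\CM_\Delta(\BR)_+$, and then try to homotope a path crossing between the two chambers into the wall $\Sigma$. But $\CM_\Delta$ is a compactification of $\fh^{reg}/\C^\times$, so the scalar $-1$ acts trivially: the action of $w_0$ on $\CM_\Delta(\BR)$ coincides with that of $-w_0$, which is the Dynkin diagram automorphism $\theta$ permuting the simple roots. In particular $w_0$ \emph{preserves} $\CM_\Delta(\BR)_+$, sends the facet $\CM_\Delta(\BR)_+^{\{i\}}$ to $\CM_\Delta(\BR)_+^{\{\theta(i)\}}$, and (up to the harmless choice of boundary point) sends $\chi^i$ to $\chi^{\theta(i)}$. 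Hence $w_0 \cdot p_{\chi,\chi^i}$ is already a path inside the contractible set $\CM_\Delta(\BR)_+$ from $\chi$ to $\chi^{\theta(i)}$, so it is homotopic to $p_{\chi,\chi^{\theta(i)}}$. This is exactly the observation the paper records as ``$w_0(p_{\chi,\chi^i}) = p_{\chi,\chi^{\theta(i)}}$'', and with it your displayed equation becomes $w_0 \circ e_i \circ w_0^{-1} = f_{\theta(i)}$ on the nose. The homotopy argument you flag as the ``main obstacle'' is therefore unnecessary; the set $\CM_\Delta(\BR)_+ \cup w_0\CM_\Delta(\BR)_+$ you invoke is just $\CM_\Delta(\BR)_+$ itself.
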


\begin{proof}
Recall that $ \xi $ is characterized by the properties that it acts by $ w_0 $ on weights and that $ \xi (e_i(b)) = f_{\theta(i)} (\xi(b))$ (where $ \theta: I \rightarrow I $ is the canonical Dynkin diagram automorphism).

Recall also that $ s_I : \CE_\chi(\l) \to \CE_\chi(\l) $ is just defined by the action of $ w_0 : V(\l) \rightarrow V(\l) $ which takes eigenlines of $ \A_\chi $ to eigenlines of $ \A_{w_0(\chi)} = \A_{-\chi} = \A_\chi$ (here we use that $ w_0(\chi) = -\chi$).
So the desired property on weights is clear.  Thus it suffices to show that for any $ L \in \CE_\chi(\l) $ and any $ i \in I$,  we have $ w_0 (e_i(L)) = f_{\theta(i)} ( w_0(L)) $.

Recall the point $ \chi^i \in \CM_\Delta(\BR)^{i} $.  We have $ w_0(\chi^i) = \chi^{\theta(i)} $ and moreover $ w_0(p_{\chi, \chi^i}) = p_{\chi, \chi^{\theta(i)}} $.  Thus applying \lemref{le:SquareMonodromies}, we see that
$$
p_{\chi, \chi^{\theta(i)}}(w_0(L)) = w_0(p_{\chi, \chi^i} (L))
$$
So, applying the definition of the crystal structure, we see that
\begin{align*}
 f_{\theta(i)} ( w_0(L)) &= p_{\chi, \chi^{\theta(i)}}^{-1}( f_{\alpha_i}(p_{\chi, \chi^{\theta(i)}}(w_0(L)))) \\
 &= p_{\chi, \chi^{\theta(i)}}^{-1}( f_{\alpha_i} (w_0( p_{\chi, \chi^i}(L)))) \\
 &= p_{\chi, \chi^{\theta(i)}}^{-1} (w_0 ( e_{\alpha_i}(p_{\chi, \chi^i}(L)))) \\
 &= w_0( p_{\chi, \chi^i}( e_{\alpha_i}(p_{\chi, \chi^i}(L)))) = w_0 (e_i(L))
 \end{align*}
 as desired.
 \end{proof}

\subsection{Partial Sch\"utzenberger involutions}
Now, let $ J \subsetneq I $.

\begin{prop} \label{pr:PartialSI}
 $ \xi_J $ and $ s_J $ agree as permutations of $ \CE_\chi(\l) $.
 \end{prop}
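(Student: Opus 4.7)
The plan is to transport the claimed identity from the fibre at $\chi$ to the fibre at the face point $\chi^J$ via parallel transport in the contractible region $\CM_\Delta(\BR)_+$, and then compare at $\chi^J$, where the algebra $\CA_{\chi^J}$ factors according to \corref{co:LinesProduct}. We may assume $J$ is connected (since for disconnected $J$ both sides factor as commuting products over the connected components), so that $\fg_J$ is simple.

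Set $T := p_{\chi,\chi^J} \colon \CE_\chi(\lambda) \to \CE_{\chi^J}(\lambda)$. By \thmref{th:RestrictCrystal}, $T$ is an isomorphism of $\fg_J$-crystals, so $\xi_J$ on $\CE_\chi(\lambda)$ is the conjugate by $T$ of the $\fg_J$-Schützenberger involution on the target fibre. For $s_J$, I would unwind the definition of the action of $(w_0^J, p_J) \in \pi_1^W(\CM_\Delta(\BR),\chi)$ on the $W$-equivariant cover $\CE(\lambda)$. The path $p_J$ is the concatenation of $p_{\chi,\chi^J}$ with $w_0^J(p_{\chi^J, \chi})$; a monodromy analogue of \lemref{le:SquareMonodromies} identifies the transport along $w_0^J(p_{\chi^J, \chi})$ with $w_0^J \circ T^{-1} \circ (w_0^J)^{-1}$. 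Composing with $T$, applying the equivariant trivialization $(w_0^J)^{-1} \colon \CE_{w_0^J\chi}(\lambda) \to \CE_\chi(\lambda)$, and using $(w_0^J)^2 = 1$, yields the clean identity
\[
s_J = T^{-1} \circ w_0^J|_{\CE_{\chi^J}(\lambda)} \circ T,
\]
where the middle factor is the fibrewise $W$-action.

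It therefore suffices to show that $w_0^J$ and the $\fg_J$-Schützenberger involution agree as permutations of $\CE_{\chi^J}(\lambda)$. Writing $\chi^J = (\chi^J_0, \chi^J_1)$, \corref{co:LinesProduct} gives
\[
\CE_{\chi^J}(\lambda) \;=\; \sqcu_{\nu \in \Lambda_+(\fg_J)} \CE_{\CA^0_{\chi^J_0}}(V(\lambda)^\nu) \times \CE_{\chi^J_1}(\nu),
\]
and the $\fg_J$-crystal structure is concentrated on the second factor, where $\CE_{\chi^J_1}(\nu) \cong B(\nu)$ as $\fg_J$-crystal by \thmref{th:CEBlambda} applied to $\fg_J$. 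Hence $\xi_J$ acts trivially on the first factor and as the full Schützenberger involution of $\fg_J$ on the second. Similarly, the $W_J$-action on the $\fg_J$-module $V(\lambda) = \bigoplus V(\lambda)^\nu \otimes V(\nu)$ is trivial on the multiplicity spaces $V(\lambda)^\nu$, so $w_0^J$ acts trivially on the first factor of $\CE_{\chi^J}(\lambda)$ and as the Weyl group element on the second; on $\CE_{\chi^J_1}(\nu) \cong B(\nu)$, \propref{pr:FullSI} applied to $\fg_J$ (with basepoint $\chi^J_1$, arranged so $w_0^J(\chi^J_1) = \chi^J_1$) identifies these two actions, completing the argument.

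The main obstacle is the careful bookkeeping in the monodromy step: correctly relating the transport along $p_J$, whose second half is the $w_0^J$-image of the reverse of its first half, to the bare fibrewise $W$-action on $\CE_{\chi^J}(\lambda)$. Once the conventions for the action of the equivariant fundamental group on a $W$-equivariant cover are pinned down, the remaining reductions are essentially definitional given the previously-proved \thmref{th:RestrictCrystal}, \corref{co:LinesProduct}, \thmref{th:CEBlambda}, and \propref{pr:FullSI}.
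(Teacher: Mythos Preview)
Your proposal is correct and follows essentially the same route as the paper's proof: transport to the face point $\chi^J$ via $T=p_{\chi,\chi^J}$, use \thmref{th:RestrictCrystal} to identify $\xi_J$ with the conjugate of the $\fg_J$-Sch\"utzenberger involution, rewrite $s_J$ as $T^{-1}\circ w_0^J\circ T$, and then compare on the factored fibre $\CE_{\chi^J}(\lambda)$ using \propref{pr:FullSI} for $\fg_J$. Your monodromy bookkeeping is a bit more explicit than the paper's, but the argument is the same.
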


 \begin{proof}
Recall that $ \xi_J $ is determined by the restriction of $ \CE_\chi(\l) $ as a $ \fg_J $ crystal.  And recall that $ s_J : \CE_\chi(\l) \rightarrow \CE_\chi(\l) $ is defined as the composition $ w_0^J \circ p_{\chi, w_0^J(\chi)} \circ p_{\chi, \chi^J} $, which we can rewrite as $ p_{\chi^J, \chi} \circ w_0^J \circ p_{\chi, \chi^J} $.  Here, as before $ \chi^J $ is a $w_0^J$-invariant generic point in $ \CM_\Delta(\BR)_+^J $.

Now by \thmref{th:RestrictCrystal}, $ p_{\chi, \chi^J} $ is a $ \fg_J $-crystal isomorphism.  Thus it suffices to check that $ \xi_J = w_0^J $ as permutations of $ \CE_{\chi^J}(\l) $.  But we have that
$$
\CE_{\chi^J}(\l) = \sqcu_\nu \CE_{\A^0_{\chi^J_0}(V(\l)^\nu)} \times \CE_{\chi^J_1}(\nu)
$$
and both $ \xi_J, w_0^J $ act only on the $\CE_{\chi^J_1}(\nu) $ factors in this decomposition.  So the result immediately follows from \propref{pr:FullSI} applied to the Lie algebra $ \fg_J $.
\end{proof}

\section{The equivalence}
Our goal in this section is to prove \thmref{th:Main} on the equivalence between $ \cC(\CE) $ and $ \gcrys $ as coboundary categories.

\subsection{Construction of the tensor functor}
Recall that the underlying category of $ \cC(\CE) $ is just $ \Lset $.  We define a functor $ \Phi : \cC(\CE) \rightarrow \gcrys $ by
$$
\Phi((A_\lambda)_{\lambda \in \Lambda_+}) = \sqcu_{\lambda \in \Lambda_+} A_\lambda \times \CE_\chi(\l)
$$
where we regard $ \CE_\chi(\l) $ as a crystal using the results of \secref{se:CrystalFibre}. Since $ \CE_\chi(\l) $ is isomorphic to $ B(\l) $, we immediately see that $ \Phi $ is an equivalence of categories.

Recall the simple objects $ S(\lambda) $ of $ \cC(\CE) $.  From now on, we will mostly work with these objects.  It is straightforward (but notationally messy) to extend everything we write to general objects.

We upgrade $ \Phi $ to a monoidal functor by defining
$$
\phi : \Phi(S(\l_1)) \otimes \Phi(S(\l_2)) \rightarrow \Phi( S(\l_1) \otimes S(\l_2) )
$$
as follows.  The left hand side is given by the tensor product of crystals
$$
\CE_\chi(\l_1) \otimes \CE_\chi(\l_2)
$$
while following the construction from \secref{se:CoverToCategory}, we see that
$$
(S(\l_1) \otimes S(\l_2))_\mu = \CE(\l_1, \l_2)^{\mu}
$$
and thus we see that
$$
\Phi(S(\l_1) \otimes S(\l_2)) = \sqcu_\mu \CE(\l_1, \l_2)^{\mu} \times \CE_\chi(\mu)
$$
Hence $ \phi $ is required to be an isomorphism of crystals
$$
\CE_\chi(\l_1) \otimes \CE_\chi(\l_2)  \rightarrow  \sqcu_\mu \CE(\l_1, \l_2)^{\mu} \times \CE_\chi(\mu)
$$
Fortunately, we have already constructed such an isomorphism of crystals in \thmref{th:TensorCE}, and we define $ \phi $ to be the isomorphism $ p_{\infty,0} $ appearing in that statement.

Now, we need to show compatibility with the associators and the commutors.

\subsection{Compatibility with associators}
We must show the commutativity of the following diagram
\begin{equation}
\begin{tikzcd}
(\Phi(S(\l_1)) \otimes \Phi(S(\l_2))) \otimes \Phi(S(\l_3)) \ar[r,"\alpha"] \ar[d,"{\phi \circ (\phi \otimes id)}"] & \Phi(S(\l_1)) \otimes (\Phi(S(\l_2)) \otimes \Phi(S(\l_3))) \ar[d,"{\phi \circ (id \otimes \phi)}"]\\
\Phi((S(\l_1) \otimes S(\l_2)) \otimes S(\l_3)) \ar[r,"{\Phi(\alpha)}"] & \Phi(S(\l_1) \otimes (S(\l_2) \otimes S(\l_3)))
\end{tikzcd}
\end{equation}

Applying the definitions of the objects, we reach the diagram
\begin{equation} \label{eq:AssociatorsWant}
\begin{tikzcd}
\CE_\chi(\l_1) \times \CE_\chi(\l_2) \times \CE_\chi(\lambda_3) \ar[r,"{id}"] \ar[d,"{p}"] & \CE_\chi(\l_1) \times \CE_\chi(\l_2) \times \CE_\chi(\lambda_3)  \ar[d,"{p}"] \\
\sqcu_\mu \CE(\ul)^\mu_{(12)3} \times \CE_\chi(\mu) \ar[r,"{p_{(12)3), 1(23)}}"] & \sqcu_\mu \CE(\ul)^\mu_{1(23)} \times \CE_\chi(\mu)
\end{tikzcd}
\end{equation}
where we use Lemma \ref{BracketsPoints} to identify $ (S(\ul)_{(12)3)})_\mu = \CE(\ul)^\mu_{(12)3} $.  Here the top horizontal arrow is the identity map, the bottom horizontal arrow is given by parallel transport in $ \CM_4(\BR)_+$, and the two vertical arrows are more complicated.  The left vertical arrow (the right one is similar) is given by the composition
\begin{align*}
\CE_\chi(\l_1) &\times \CE_\chi(\l_2) \times \CE_\chi(\lambda_3) \xrightarrow{p_{\infty, 0} \times {id}} \sqcup_\nu \CE(\l_1, \l_2)^\nu \times \CE_\chi(\nu) \times \CE_\chi(\l_3) \\
&\xrightarrow{p_{\infty,0}} \CE(\l_1, \l_2)^\mu  \times \CE(\nu,\l_3)^\mu \times \CE_\chi(\mu) \xrightarrow{\Gamma} \sqcu_\mu \CE(\ul)^\mu_{(12)3} \times \CE_\chi(\mu)
\end{align*}
where in the last step we use the operadic structure, as in section \ref{se:CoverToCategory}.

Thus, if we remove the top horizontal edge and expand the two vertical edges, then (\ref{eq:AssociatorsWant}) turns into the following pentagon
\begin{equation} \label{eq:PentagonWant}
\begin{tikzcd}[column sep = -2em]
&  \CE_\chi(\l_1) \times \CE_\chi(\l_2) \times \CE_\chi(\l_3) \ar[dl] \ar[dr]  & \\
\sqcup_\nu \CE(\l_1, \l_2)^\nu \times \CE_\chi(\nu) \times \CE_\chi(\l_3) \ar[d] & & \CE_\chi(\l_1) \times \sqcup_\nu \CE(\l_2, \l_3)^\nu \times \CE_\chi(\nu) \ar[d] \\
 \sqcu_\mu \CE(\ul)^\mu_{(12)3} \times \CE_\chi(\mu) \ar[rr] & & \sqcu_\mu \CE(\ul)^\mu_{1(23)} \times \CE_\chi(\mu)
\end{tikzcd}
\end{equation}
These are the eigenlines for the following commutative algebras acting on $ V(\ul) $:
\begin{gather*}
 \Delta_{123}(\CA_\chi)\cdot \CA(1(23)), \quad  \A_\chi^{(1)}\otimes\Delta^{23}(\CA_\chi)\cdot \CA(1,0)^{(23)}, \quad \A_\chi^{\otimes3}, \\
 \Delta^{12}(\CA_\chi)\cdot \CA(1,0)^{(12)} \otimes \A_\chi^{(3)}\otimes, \  \Delta(\CA_\chi)\cdot \CA((12)3)
 \end{gather*}
 (read counter-clockwise from the bottom right).

 In Corollary \ref{ContractiblePentagon}, we showed that these algebras are the vertices of a contractible pentagon within the closure of the space of inhomogeneous Gaudin algebras, all of which act semisimply with simple spectrum on $ V(\ul) $ by \corref{co:SimpleSpecLimits}.  Thus we conclude that (\ref{eq:PentagonWant}) is a commutative pentagon and thus (\ref{eq:AssociatorsWant}) is a commutative square and so we deduce that our functor is compatible with the associators, as desired.

\subsection{Compatibility with commutors}
We must show commutativity of the diagram
\begin{equation}
\begin{tikzcd}
\Phi(S(\l_1)) \otimes \Phi(S(\l_2)) \ar[r, "\sigma"] \ar[d,"\phi"] & \Phi(S(\l_2)) \otimes \Phi(S(\l_1)) \ar[d,"\phi"] \\
\Phi(S(\l_1) \otimes S(\l_2)) \ar[r,"\Phi(\sigma)"] & \Phi(S(\l_2) \otimes S(\l_1))
\end{tikzcd}
\end{equation}
Applying the definitions of the objects, we reach the diagram
\begin{equation} \label{eq:CommutorsWant}
\begin{tikzcd}
\CE_\chi(\l_1) \otimes \CE_\chi(\l_2) \ar[r,"\sigma"] \ar[d,"{p_{\infty, 0}}"] & \CE_\chi(\l_2) \otimes \CE_\chi(\l_1) \ar[d,"{p_{\infty,0}}"]
 \\
\sqcu_\mu \CE(\l_1, \l_2)^\mu \times \CE_\chi(\mu) \ar[r,"s"] & \sqcu_\mu \CE(\l_2, \l_1)^\mu \times \CE_\chi(\mu)
\end{tikzcd}
\end{equation}
Here the top horizonal arrow is the commutor in the category of crystals, the two vertical arrows are given by monodromy, and the bottom horizontal arrow is given by the flip map $ s : V(\l_1) \otimes V(\l_2) \rightarrow V(\l_2) \otimes V(\l_1) $ which takes eigenlines for the algebra $\CA(1,0) \Delta(\CA_\chi) $ to eigenlines for $\CA(1,0) \Delta(\CA_\chi) $.

To prove the commutativity of \eqref{eq:CommutorsWant}, we start by applying \lemref{le:SquareMonodromies} to the map $ s : V(\l_1) \otimes V(\l_2) \rightarrow V(\l_2) \otimes V(\l_1) $ and we obtain a commutative square:
\begin{equation*}
\begin{tikzcd}
\CE_\chi(\l_1) \otimes \CE_\chi(\l_2) \ar[r,"s"] \ar[d,"{p_{\infty, 0}}"] & \CE_{\chi}(\l_2) \otimes \CE_\chi(\l_1) \ar[d,"{s(p_{\infty,0})}"] \\
\sqcu_\mu \CE(\l_1, \l_2)^\mu \times \CE_\chi(\mu) \ar[r,"s"
] & \sqcu_\mu \CE(\l_2, \l_1)^\mu \times \CE_\chi(\mu)
\end{tikzcd}
\end{equation*}

Then we apply \lemref{le:SquareMonodromies} to the map $ w_0 : V(\l_2) \otimes V(\l_1) \rightarrow V(\l_2) \otimes V(\l_1) $ and we obtain the commutative square:
\begin{equation*}
\begin{tikzcd}
\CE_\chi(\l_2) \otimes \CE_\chi(\l_1) \ar[r,"w_0"] \ar[d,"s(p_{\infty, 0})"] & \CE_{w_0(\chi)}(\l_2) \otimes \CE_{w_0(\chi)}(\l_1) \ar[d,"w_0(s(p_{\infty,0}))"] \\
\sqcu_\mu \CE(\l_2, \l_1)^\mu \times \CE_\chi(\mu) \ar[r,"w_0"] & \sqcu_\mu \CE(\l_2, \l_1)^\mu \times \CE_{w_0(\chi)}(\mu)
\end{tikzcd}
\end{equation*}

Combining these two squares, we obtain the commutative rectangle
\begin{equation*}
\begin{tikzcd}
\CE_\chi(\l_1) \otimes \CE_\chi(\l_2) \ar[r,"s"] \ar[d,"{p_{\infty, 0}}"] & \CE_\chi(\l_2) \otimes \CE_\chi(\l_1) \ar[r,"{w_0}"] & \CE_{w_0(\chi)}(\l_2) \otimes \CE_{w_0(\chi)}(\l_1) \ar[d,"{w_0(s(p_{\infty,0}))}"] \\
\sqcu_\mu \CE(\l_1, \l_2)^\mu \times \CE_\chi(\mu) \ar[r,"s"] & \sqcu_\mu \CE(\l_2, \l_1)^\mu \times \CE_\chi(\mu)\ar[r,"{w_0}"] & \sqcu_\mu \CE(\l_2, \l_1)^\mu \times \CE_{w_0(\chi)}(\mu)
\end{tikzcd}
\end{equation*}

We will now analyze the arrows in this diagram.  The top right and bottom right horizontal arrows (given acting by $ w_0 $ on the eigenlines) are the same as the Sch\"utzenberger involutions $ \xi \otimes \xi $ and $ \xi $, respectively, on the crystals by \propref{pr:FullSI}.

We now consider the path $ w_0(s(p_{\infty,0})) $.  The flip map $ s $ conjugates $ \CA_\chi(z_1, z_2) $ to $ \CA_\chi(z_2, z_1)  $ (this is a special case of \lemref{le:ConjugateA}). Recall that the path $ p_{\infty,0} $ is given by  $\CA_\chi(z,0) $ where  $ z $ varies from $\infty $ to $0 $ through the positive reals in $ \CA_{\chi}(z, 0) $.  Thus $ s(p_{\infty, 0}) $ is given by  $\CA_\chi(0,z) $ where  $ z $ varies from $\infty $ to $0 $ through the positive reals in $ \CA_{\chi}(0, z) $.  By \lemref{le:InvarianceA}, we know that $ \CA_\chi(0,z) = \CA_\chi(-z,0) $.

By \lemref{le:ConjugateA}, we know that $ w_0 $ conjugates $ \CA_\chi(-z,0) $ to $ \CA_{w_0(\chi)}(-z, 0) $.  We assume that $ w_0(\chi) = -\chi $ and so we get
$$
\CA_{w_0(\chi)}(-z, 0) = \CA_{-\chi}(-z,0) = \CA_\chi(z,0)
$$
where in the second step we use \lemref{le:InvarianceA}.  Thus we conclude that $ w_0(s(p_{\infty,0})) = p_{\infty, 0} $.

Thus, the previous commutative rectangle is
\begin{equation*}
\begin{tikzcd}
\CE_\chi(\l_1) \otimes \CE_\chi(\l_2) \ar[r,"s"] \ar[d,"{p_{\infty, 0}}"] & \CE_\chi(\l_2) \otimes \CE_\chi(\l_1) \ar[r,"{\xi \otimes \xi}"] & \CE_\chi(\l_2) \otimes \CE_\chi(\l_1) \ar[d,"{p_{\infty,0}}"] \\
\sqcu_\mu \CE(\l_1, \l_2)^\mu \times \CE_\chi(\mu) \ar[r,"s"] & \sqcu_\mu \CE(\l_2, \l_1)^\mu \times \CE_\chi(\mu)\ar[r,"{\xi}"] & \sqcu_\mu \CE(\l_2, \l_1)^\mu \times \CE_\chi(\mu)
\end{tikzcd}
\end{equation*}
Now, since $ p_{\infty, 0} $ is a crystal isomorphism and $ \xi $ is natural for crystal isomorphisms (and is an involution), we can exchange their positions on the right side of the diagram.  After recalling that the crystal commutor is given by $ \sigma = \xi \circ (\xi \otimes \xi) \circ s $, we see that this rectangle reduces to the square \eqref{eq:CommutorsWant}.

This completes the proof of \thmref{th:Main}.

\begin{appendix} 
	
	\section{Notation}\label{append}
	
	Here is a list of the notation used in the paper.
	
	\subsection{Lie algebras, roots, weights}
	\begin{itemize}
		\item $ \fg $, a semisimple Lie algebra of rank $ r $.
		\item $ \fh $, the Cartan subalgebra
		\item $\Delta $, the set of roots of $ \fg $, $ \Delta_+ $ the set of positive roots
		\item $ \{\alpha_i \}_{i \in I} $, the set of simple roots, so $ |I| = r $
		\item for $ \alpha \in \Delta_+ $, an $ \mathfrak{sl}_2 $ triple $ e_\alpha, h_\alpha, f_\alpha $
		\item for $J \subseteq I$, the Lie subalgebra generated by all $ e_{\alpha_j}, f_{\alpha_j} $ for $ j \in J$; its root system is $ \Delta_J $, those roots which are linear combinations of the $ \alpha_j $
		\item $ \fh_+ \subset \fh(\R) $, the closed dominant Weyl chamber
		\item $\fh_+^J \subset \fh_+ $, the face of the closed dominant Weyl chamber corresponding to $ J \subseteq I $, so
		$$ \fh_+^J = \{ \chi \in \fh(\R) : \alpha_j(\chi) = 0 \text{ for $ j \in J $ and } \alpha_i(\chi) > 0 \text{ for $ i \in I \smallsetminus J $} \} $$
		\item $ \Lambda_+ $, the set of dominant weights of $ \fg $
		\item for $ \l \in \Lambda_+$,  the irreducible representation  $V(\l) $ of $ \fg $ with highest weight $ \l $
		\item $\ul = (\l_1, \dots, \l_n) $, a sequence of dominant weights and $ V(\ul) = V(\l_1) \otimes \cdots \otimes V(\l_n) $ the corresponding tensor product
		\item for $ V$ a representation of $ \fg $ and $ \mu $ a dominant weight, $ V^\mu = \Hom_\fg(V(\mu), V) $ is the multiplicity space, in particular $ V(\ul)^\mu = \Hom_\fg(V(\mu), V(\ul)) $
	\end{itemize}
	
	\subsection{Weyl and cactus groups and moduli spaces}
	\begin{itemize}
		\item $W $ the Weyl group of $ \fg $ with generators $ s_i $, $i \in I $
		\item $w_0$ the longest element of $ W $
		\item $ \theta : I \rightarrow I $ the bijection corresponding to $ w_0 $.
		\item $ W_J $ the standard parabolic subgroup of $ W $, generated by $ s_i $, $ i \in J $, for $ J \subseteq I $
		\item $ w_0^J $ the longest element of the subgroup $ W_J $ (so $ w_0 $ is the longest element of the group $ W $)
		\item $ \theta_J : J \rightarrow J $ the bijection corresponding to $ w_0^J $
		\item $ C_\Delta $ the cactus group associated to $ W $, generated by $ s_J $, such that $ J \subseteq I $ connected
		\item $ \CM_\Delta $ the De Concini-Procesi wonderful compactification associated to the root system $ \Delta $.
		\item $\CM_\Delta(\BR) $, the real locus of $ \CM_\Delta $ and its non-negative part $ \CM_\Delta(\BR)_+ $
		\item $ \CM_\Delta(\BR)_+^J $, the face of $ \CM_\Delta(\BR)_+ $ corresponding to a subset $ J \subsetneq I $
		\item $ \CM_{n+1} = \overline{M}_{0,n+1}$ the Deligne-Mumford space of stable genus 0 curves with $ n+1 $ marked points, which coincides with $ \CM_{\Delta(\sl_n)} $
		\item  $ C_n $, the cactus group on $ n$-strands, which coincides with $ C_{\Delta(\sl_n)} $
		\item the generators $ s_{pq} $ of $ C_n $, for $ 1 \le p < q \le n $, so under the identification $ C_n = C_{\Delta(\sl_n)} $ we have $ s_{pq} = s_{\{p, p+1, \dots, q-1\}}$
	\end{itemize}
	
	\subsection{Crystals}
	\begin{itemize}
		\item for a crystal $ B $ of type $ \fg $, the Kashiwara operators $ e_i, f_i : B \rightarrow B \sqcup \{0 \}$
		\item for a normal crystal $ B$, the Sch\"utzenberger involution $ \xi : B \rightarrow B $
		\item for $ J \subseteq I $, and $ B$ a crystal of type $ \fg $, the restriction $ B_J $ of $B$ as a $ \fg_J $ crystal
		\item for a normal crystal $ B $ and $J \subseteq I $,  the partial Sch\"utzenberger involutions $ \xi_J : B \rightarrow B $
		\item $B(\l) $ the crystal of the representation $ V(\l) $
		\item $\ul = (\l_1, \dots, \l_n) $ a sequence of dominant weights and $ B(\ul) = B(\l_1) \otimes \cdots \otimes B(\l_n) $ the corresponding tensor product
		\item for $ \mu \in \Lambda_+ $, $ B(\ul)^\mu = \Hom(B(\mu), B(\ul)) $
	\end{itemize}
	
	\subsection{Commutative subalgebras}
	\begin{itemize}
		\item for $ \chi \in \CM_\Delta $, the shift of argument subalgebra $ \CA_\chi \subset U (\fg) $
		\item for $ \chi_0 \in \fh \smallsetminus \fg^{reg} $, the degenerate shift of argument subalgebra $ \CA^0_{\chi_0} \subset U (\fg) $
		\item for $ \uz \in \CM_{n+1} $, the Gaudin algebra $ \CA(\uz) \subset (U (\fg)^{\otimes n})^{\fg} $
		\item for $ \chi \in \fh^{reg}$ and $ z_1, \dots, z_n \in \C^n \smallsetminus \Delta $, the inhomogeneous Gaudin algebra $ \A_\chi(z_1, \dots, z_n) \subset (U (\fg))^{\otimes n} $
		\item for a commutative algebra $ A$ acting semisimply and cyclically on a vector space $ V$, $\cE_A(V) $ the eigenlines for $ A $ acting on $ V $
		\item $ \cE_\chi(\l) = \cE_{\CA_\chi}(V(\l)) $ for any point $ \chi \in \CM_\Delta(\BR) $
	\end{itemize}
	
	\subsection{Direct sums and diagonals}
	\begin{itemize}
	\item for $x\in\fg$, the image $x^{(i)}$ of $x$ in the $i$-th summand of $ \fg^{\oplus n}$.
	\item  $\Delta$ is the diagonal embedding $\Delta:\fg\to\fg^{\oplus n}$, so  $\Delta(x):=\sum\limits_{i=1}^nx^{(i)}$
	\item for a collection of $ k $ disjoint subsets of $ \{1, \dots, n \}$, a partial diagonal embedding $ \Delta^{A_1, \dots, A_k} $ defined by
	$$
	\Delta^{A_1, \dots, A_k}(x_1, \dots, x_k) = \sum_{j=1}^k \sum_{i \in A_j} x_j^{(i)}
	$$
	\end{itemize}
	
	\subsection{Monodromy}
	\begin{itemize}
		\item for a covering space $ \cX \rightarrow M $, $\cX_z$ the fibre over a point $ z \in M$
		\item for a homotopy class of path $ p $ in $M $ from $ y $ to $ z $, the monodromy map $ p : \cX_y : \rightarrow \cX_z $
		\item for $ y, z \in \CM_\Delta(\BR)_+ $, $p_{y,z}$ denotes the unique homotopy class of path staying within $ \CM_\Delta(\BR)_+ $
	\end{itemize}

\end{appendix}

\footnotesize{
{\bf I.H.}:
Northeastern University, Department of Mathematics;\\
360 Huntington Ave, Boston, MA, USA 02115;\\
{\tt i.halacheva@northeastern.edu}}

\footnotesize{
{\bf J.K.}:
University of Toronto, Department of Mathematics;\\
Room 6290, 40 St. George Street, Toronto, ON, Canada M5S 2E4;\\
{\tt jkamnitz@gmail.com}}

\footnotesize{
{\bf L.R.}: National Research University
Higher School of Economics, Russian Federation,\\
Department of Mathematics, 20 Myasnitskaya st, Moscow 101000;\\
Institute for Information Transmission Problems of RAS;\\
{\tt leo.rybnikov@gmail.com}}

\footnotesize{
{\bf A.W.}: 
Department of Mathematics, University of British Columbia; \\
1984 Mathematics Road, Vancouver, BC, Canada V6T 1Z2\\
{\tt alex.weekes@gmail.com}}

\end{document}